\documentclass[12pt,centertags,oneside]{amsart}

\usepackage{amsmath,amstext,amsthm,amssymb,amsfonts,amscd}
\usepackage{mathrsfs}
\usepackage[colorlinks=true]{hyperref}
\usepackage{graphicx}
\usepackage{color,tocvsec2}
\usepackage{typearea}
\usepackage[shortlabels]{enumitem}
\usepackage{lscape}
\usepackage[all]{xy}
\usepackage{fouriernc}
\usepackage{amsrefs}
\usepackage{slashed}
\usepackage[size=\tiny]{todonotes}


\usepackage{multicol}        
\makeindex             

\usepackage[a4paper,width=14.66cm,top=2.52cm,bottom=2.52cm]{geometry}


%


\theoremstyle{plain}
\newtheorem{lem}{Lemma}[section]
\newtheorem{thm}[lem]{Theorem}
\newtheorem{prop}[lem]{Proposition}
\newtheorem{cor}[lem]{Corollary}

\theoremstyle{definition}
\newtheorem{defin}[lem]{Definition}
\newtheorem{exa}[lem]{Example}

\theoremstyle{remark}
\newtheorem{re}[lem]{Remark}

\numberwithin{equation}{section}
\numberwithin{figure}{section}

\newcommand{\cU}{\mathcal{U}}

\newcommand{\bR}{\mathbf{R}}
\newcommand{\bC}{\mathbf{C}}

\newcommand{\fg}{\mathfrak{g}}
\newcommand{\fh}{\mathfrak{h}}
\newcommand{\fk}{\mathfrak{k}}
\newcommand{\fm}{\mathfrak{m}}
\newcommand{\fl}{\mathfrak{l}}

\newcommand{\fp}{\mathfrak{p}}
\newcommand{\fq}{\mathfrak{q}}
\newcommand{\ft}{\mathfrak{t}}
\newcommand{\fz}{\mathfrak{z}}
\newcommand{\fu}{\mathfrak{u}}
\newcommand{\fa}{\mathfrak{a}}

\newcommand{\Yok}{Y_{0}^{\mathfrak k}}

\DeclareMathOperator{\Tr}{\mathrm Tr}
\DeclareMathOperator{\Ad}{\mathrm Ad}

\DeclareMathOperator{\vol}{\mathrm vol}

\DeclareMathOperator{\ad}{\mathrm ad}

\DeclareMathOperator{\End}{\mathrm End}
\DeclareMathOperator{\Hom}{\mathrm Hom}

\DeclareMathOperator{\rk}{\mathrm rk}

\newcommand{\<}{\langle}
\renewcommand{\>}{\rangle}
\newcommand{\ol}{\overline}
\newcommand{\ul}{\underline}

\renewcommand{\(}{\left(}
\renewcommand{\)}{\right)}
\renewcommand{\[}{\left[}
\renewcommand{\]}{\right]}
\renewcommand{\l}{\leqslant}

\newcommand{\g}{\geqslant}

\newcommand{\e}{\epsilon}

\begin{document}

\title{Heat kernel, large-time behavior, and  
Representation theory}
\author{Shu Shen}
 \address{Institut de Math\'ematiques de Jussieu-Paris Rive Gauche, 
Sorbonne Universit\'e,  4 place Jussieu, 75252 Paris Cedex 5, France.}
\email{shu.shen@imj-prg.fr}
\author{Yanli Song}
\address{Department of Mathematics,
1 Brookings Dr, St. Louis, MO, 63130, USA}
\email{yanlisong@wustl.edu}

\author{Xiang Tang}
\address{Department of Mathematics,
1 Brookings Dr, St. Louis, MO, 63130, USA}
\email{xtang@math.wustl.edu}

\thanks {}

\makeatletter
\@namedef{subjclassname@2020}{%
  \textup{2020} Mathematics Subject Classification}
\makeatother
\begin{abstract}
Given a real reductive group $G$, the purpose of this paper is to show an asymptotic formula of the large-time behavior of the $G$-trace of the heat operator on the associated symmetric spaces. Together with Carmona's proof on Vogan's lambda map, our results provide a geometric counterpart  of Vogan's minimal $K$-type theory.
\end{abstract}

 \subjclass[2020]{58J20}

\keywords{index theory, $G$-trace formula, lowest $K$-type, analysis on symmetric space}

\date{\today}

\dedicatory{}

\maketitle
\tableofcontents

\settocdepth{section}
\section*{Introduction}

\subsection{Background}
The purpose of this paper is to study the large-time behavior of the von Neumann trace of the heat operator on symmetric spaces, and to relate it to the representation theory of the underlying Lie groups.

Let $G$ be a connected real reductive Lie group with Lie algebra $\fg$, let $K \subset G$ be a maximal compact subgroup, and let $X = G/K$ be the associated symmetric space.
Let $\tau^{E} : K \to {\rm U}(E)$ be a finite-dimensional unitary representation of $K$, and let $F = G \times_K E$ be the associated $G$-equivariant Hermitian vector bundle on $X$.

The study of the spectral theory of the Casimir operator $C^{\fg,X}$ acting on $C^{\infty}(X,F)$ is deeply connected to the tempered representation theory of $G$ (see, e.g., \cite{AS77, Connes82, BCH}) and to Harish-Chandra's Plancherel formula for $G$ \cite{HC75,HC76,HC1976}.

Discrete series representations are the building blocks of tempered representations.
In \cite{HC65,HC_DSII}, Harish-Chandra classified all discrete series representations of $G$. They exist only when $G$ and $K$ have equal complex rank, and in this case, the discrete series representations of $G$ are in one-to-one correspondence with certain properly regular irreducible $K$-representations.

Following Harish-Chandra's classification, the geometric realisation of discrete series representations became a central problem in the 1970s.
In the case where $X = G/K$ is a Hermitian symmetric space, and $F$ is the holomorphic vector bundle associated to such a properly regular irreducible $K$-representation, Narasimhan and Okamoto \cite{NarasimhanOkamoto70} realised the discrete series representation as the $F$-valued $L^{2}$-Dolbeault cohomology.
More generally, when $G$ and $K$ have equal complex rank, Parthasarathy \cite{Parthasarathy72} realised the discrete series representation as $F$-twisted $L^{2}$-harmonic spinors.
In this case, $X$ has even dimension.
If $ \fg=\fp \oplus \mathfrak{k}$ is the Cartan decomposition of $\fg$, assume the adjoint $K$-action on $\mathfrak{p}$ lifts to the spinor $S^{\fp}$ (passing to a double cover if necessary).
If $S^{TX} = G\times_{K} S^{\mathfrak{p}}$ and if $D^{X} : C^{\infty}\left(X,S^{TX} \otimes F\right) \to C^{\infty}\left(X,S^{TX} \otimes F\right)$ is the twisted Dirac operator, then the discrete series representation can be realised as the $L^{2}$-kernel of $D^{X}$.
Parthasarathy's formula states that $\left(D^{X}\right)^{2}$ is a shift of the Casimir operator $C^{\fg,X}$ acting on $C^{\infty}\left(X,S^{TX} \otimes F\right)$.

In a more general context, Atiyah \cite{AtiyahL2} established the $L^{2}$-index theorem, which gives a geometric/topological formula for the $L^{2}$-index of twisted Dirac operators.
Applying these tools, Atiyah and Schmid \cite{AS77} gave a systematic treatment of the discrete series representations.
Index-theoretic methods are not limited to equal-rank groups and discrete series; they also provide a natural abelian group homomorphism from the representation ring of $K$ to the K-theory of the reduced $C^*$-algebra of $G$. 
The Connes–Kasparov isomorphism theorem \cite{wassermann, Larfforgue, CHS, CHST} asserts that this homomorphism is, in fact, an isomorphism.

Beyond discrete series representations, Vogan's minimal $K$-type theory provides a similar description for a class of tempered representations, namely those with real infinitesimal character.
As an analogue and generalisation of Harish-Chandra's description of the discrete series, Vogan \cite{Vogan79} established a one-to-one correspondence between the set of irreducible tempered representations of $G$ with real infinitesimal character and the set of all irreducible representations of $K$.

If $\tau^{E}$ is an irreducible representation of $K$, Harish-Chandra's Plancherel formula implies that the spectrum of the Casimir operator $C^{\fg,X}$ acting on $C^{\infty}(X,F)$ consists of a finite set of discrete points and finitely many half-lines, each bounded from below.
The discrete spectrum and the bottom of these half-lines are precisely realised by the irreducible tempered representations with real infinitesimal character that contain $\tau^{E}$ as a $K$-type.
Among these, those with the lowest spectrum correspond to the representations with minimal $K$-type \cite[Proposition 6.6.2, Lemma 6.6.5]{Vogan-book}.

Some aspects of these deep results can be recovered from the asymptotics of the heat operator $\exp\left(-\frac{t}{2} C^{\fg,X}\right)$ as $t \to \infty$.
In this paper, using Bismut's formula \cite{B09}, we develop a geometric approach to study the large-time behavior of the von Neumann $G$-trace of $\exp\left(-\frac{t}{2} C^{\fg,X}\right)$.
Our results are independent of Harish-Chandra's Plancherel theory, and serve as a geometric counterpart to Vogan's minimal $K$-type theory.

\subsection{The $ G$-trace and Bismut's formula}
The heat operator $ \exp\left(-\frac{t}{2} C^{\fg,X}\right)$ is a $ G$-invariant integral operator on $ C^{\infty}(X,F)$.
Since $ X$ is non-compact, the operator $ \exp\left(-\frac{t}{2} C^{\fg,X}\right)$ is not trace-class, but it has a well-defined von Neumann $ G$-trace $ \Tr_{G} \left[ \exp\left(-\frac{t}{2} C^{\fg,X}\right) \right]\in \mathbf{R}$.

If $ p_{t} \left(x,x^{\prime } \right)\in \Hom\left(F_{x^{\prime } }, F_{x} \right)$ is the integral kernel of $ \exp\left(-\frac{t}{2} C^{\fg,X}\right)$ with respect to a Riemannian volume form on $ X$, the $ G$-invariance property of $ \exp\left(-\frac{t}{2} C^{\fg,X}\right)$ implies that $ \Tr \left[ p_{t} \left(x,x\right) \right] $ is independent of $ x \in X$, so that the von Neumann $G$-trace of $\exp\left(-\frac{t}{2} C^{\fg,X}\right)$ has the following expression, 
\begin{align}\label{eq:xvuz}
 \Tr_{G} \left[ \exp\left(-\frac{t}{2} C^{\fg,X}\right) \right] =  \Tr \left[ p_{t} \left(x,x\right) \right]. 
\end{align}


Motivated by his previous works \cite{B05,B08a,B08,BL08},
Bismut developed an approach reminiscent of local index theory, constructing a family of differential operators $\mathcal{L}_b|_{b> 0 } $ on the extended tangent bundle $G\times_{K} \mathfrak{g}$. These operators fit within H\"ormander's theory of hypoelliptic operators of generalized Kolmogorov type, interpolating between $\frac{1}{2} C^{\mathfrak{g}, X} $ on  $X$  and the generator of the extended geodesic flow on $G\times_{K} \mathfrak{g}$. 
Bismut\footnote{Indeed, Bismut's results hold for all the semisimple orbital integrals.} introduced  a super $ G$-trace $ {\rm Tr}_{{\rm s},G} \left[\exp\left(-t \mathcal{L}_{b} \right)\right]$ and observed that $ {\rm Tr}_{{\rm s},G} \left[\exp\left(-t \mathcal{L}_{b} \right)\right]$ is independent of  $b$, 
a property that follows from the standard supersymmetry trick. 
By taking the limit as  $b \to 0$, he showed
\begin{align}
\lim _{b \rightarrow 0}\Tr_{{\rm s},G}\left[\exp \left(-t \mathcal{L} _b\right)\right]=\Tr_G\left[\exp\left(-\frac{t}{2}C^{\fg, X}\right)\right].  
\end{align}
Using a variation of Getzler's rescaling argument, 
he further computed the limit as  $b \to \infty$ and obtained a remarkable formula (Theorem \ref{thmB09}), which is given by  an integral on 
$\fk$ 
using the Lie theory data.
Various generalisations are obtained in \cite{B19,BSh22,Liu23}.
Successful applications of the Bismut formula have been found in the study of geometry of (locally) symmetric spaces \cite{B19,BMZ,Liu21,Liu24,Shen18,Shen21,Shen23,ShenYu22}.

\subsection{Our main results}
The following is the  main result of this paper. 
\begin{thm}\label{thmit}
(Theorems \ref{thm1} and \ref{thm2}) 
If $ \tau^{E} $ is irreducible, the $G$-trace of the heat operator has the following asymptotic formula,
	\begin{align}\label{eq:asymptotics}
		\Tr_{G}\[\exp\(-\frac{t}{2}C^{\fg,X}\)\]\sim \ul \alpha_{0} 
		t^{\ul\beta_{1}}e^{\ul \gamma_{2}t}, \quad \text{as } t \to \infty, 
	\end{align} 
where the constants $\ul \alpha_{0} > 0, \ul\beta_{1} \in -\frac{\mathbf{N}}{2}$, and $ \ul \gamma_{2} \in \bR$ are explicitly defined in \eqref{eqa0}. 

The condition $ \underline{\beta }_{1}  = 0 $ is realised if and only if $ G$ and $ K$ have the same complex rank, and $ \tau^{E} $ is regular in the sense of \eqref{eq:regHC}.
In this case,  there is $ \epsilon_{0} > 0 $ such that as $ t \to \infty$, 
 \begin{align}\label{eq:introtawi}
	\Tr_{G}\[\exp\(-\frac{t}{2}C^{\fg, 
		X}\)\]= \ul \alpha_{0} e^{t \ul \gamma_{2}} \left(1+\mathcal{O} \left(e^{-\epsilon_{0} t} \right)\right).
 \end{align}
\end{thm}


The second part of our Theorem \ref{thmit} is compatible with the theory of discrete series representations.
Indeed, under one of the equivalent conditions of Theorem \ref{thmit}, 
as  $t \to \infty$, the heat operator $\exp\(-\frac{t}{2}C^{\fg,X}\)$ concentrates on the discrete series representation $ \pi $ with the lowest $ K$-type $\tau^{E} $. 
In this case, the constants have the following representation theoretic interpretations:
\begin{itemize}
\item $\ul \alpha_{0} $ recovers the formal degree of $ \pi $;
\item $\ul\beta_{1}$ vanishes;
\item $\ul \gamma_{2}$ is the infinitesimal character of the negative half Casimir associated to the representation $ \pi $. 
\end{itemize} 

In general, as  $t \to \infty$, the $ G$-trace of the heat operator concentrates on the tempered representation with real infinitesimal character containing  $\tau^{E} $  as its lowest  $K$-type \cite{Vogan79}. 
Moreover, 
\begin{itemize} 
	\item $\ul \alpha_{0} $ is a generalisation of the Mehta-Macdonald integral \cite{Macdonald82}, whose explicit computation  is challenging;
	\item $\ul\beta_{1}$  serves as a Novikov-Shubin-type invariant, encoding local properties of the Plancherel measure on the tempered dual of $G$;
	\item $\ul \gamma_{2}$  is given by Vogan's  $\Lambda$-map \cite{Vogan79}. 
\end{itemize}



Our analysis uses crucially the Lie-theoretic information associated with the root system. This proof reveals a deep connection between the Bismut's hypoelliptic Laplacian on $X$ and Vogan's theory of unitary  $G$-representations.

As an application, Corollary \ref{cor:uemi} allows us to reduce the computation of $ \underline{\alpha} _{0}, \underline{\beta} _{1}, \underline{\gamma} _{2} $ to the special case where $ G$  is quasi-split and  $E$  is small, paralleling Vogan's \cite{Vogan98, Vogan-book} construction of  $G$-representations via cohomological induction. 
Additionally, we apply our main theorem to compute the Novikov-Shubin type invariant for smooth locally symmetric spaces (Proposition \ref{prop:novikovshubin} and Theorem \ref{thm:jwob}), which generalizes the results \cite{Lo-Me, Olbrich}.


Note that the structure of the above asymptotics can be also deduced from Harish-Chandra's Plancherel formula \cite[Theorem 13.11]{Knappsemi}.
The constants $\ul \alpha_{0}, \ul\beta_{1}, \ul \gamma_{2}$ can be determined by the Plancherel measure of $G$ and some $ K$-multiplicities of tempered $ G$ representations, which a priori are difficult to compute explicitly.
Our contribution consists in providing an explicit geometric formula for these constants in terms of the Lie-theoretic data of $\fg$.	

In Corollary \ref{cor:vtpj}, we will see that by combining our results with the (non-explicit) Plancherel formula, we obtain certain existence results for the tempered representations of $ G$.
This provides a geometric counterpart of Vogan's minimal $ K$-type theory. 
In a forthcoming paper, we will give a more precise connection between Bismut's formula and the Plancherel measure. 

\subsection{Organisation of the article}
This article is organised as follows.

In Section \ref{SBformula}, we recall the definition of $G$-trace and  the statement of Bismut's $G$-trace formula.

In Section \ref{SRoot}, we review some constructions related to root systems. 

In Section \ref{Smainresult}, we state our main result Theorem \ref{thmit}.

In Section \ref{sec:application}, we discuss applications of our main results in studying small representations in Vogan's theory and computing the Novikov-Shubin type invariant. 

The purpose of Sections \ref{Spf1} and \ref{SJtda} is to show Theorem \ref{thmit}.

In Section \ref{Spf1}, using Weyl groups, we reduce the proof to establishing a corresponding result for an integral over convex cones. 

Finally, in Section \ref{SJtda}, we study this integral by the Laplace Method. 

\subsection{Acknowledgment}
We would like to thank Alexandre Afgoustidis, Jean-Michel Bismut, and Nigel Higson for encouragement and inspiring discussion.
We owe special thanks to Michèle Vergne for her insightful suggestions, which greatly improved the first version of this paper.
Shen's research was  partially supported by ANR grant ANR-20-CE40-0017.
Song's research was partially supported by the NSF grant  DMS-1952557. Tang's research was partially supported the NSF grants DMS-1952551, DMS-2350181, and Simons Foundation grant MPS-TSM-00007714. 

\settocdepth{subsection}

\section{A geometric formula for $G$-trace}\label{SBformula}
This section aims to define the $G$-trace for the heat semigroup on the symmetric space associated with a connected real reductive Lie group $G$ and to review Bismut's geometric $G$-trace formula \cite[Theorem 6.1.1]{B09}. 

This section is organised as follows. 
In Sections \ref{sNotation}-\ref{sSyms}, we introduce some necessary notation, the reductive group $G$, a maximal compact subgroup $K$, the symmetric space $X=G/K$, Casimir operator $C^{\fg, X}$ on $X$, and some related constructions. 

In Section \ref{sGOIF}, we define the $G$-trace for the heat semigroup associated to the Casimir operator $ C^{\fg,X} $. 

In Section \ref{sGOIF1}, we recall Bismut's formula for the above $G$-trace. 

Finally, in Section \ref{subsec:sl2}, we illustrate the idea behind the proof of the main theorem using the example of ${\rm SL}(2, \bR)$.

\subsection{Notation}\label{sNotation}
We use the convention 
	\begin{align}
		&\mathbf N=\{0,1,2,\ldots\}, &\mathbf N^{*}=\{1,2,\ldots\}, 
		&&\bR_{+}=[0,\infty),&&&\mathbf{R}_{-} = (-\infty,0 ],&&&&\bR^{*}=\mathbf R\backslash 
\{0\}.
	\end{align} 

If $V$ is a real vector space, we denote its dimension by $\dim V$.
We use the notation $V_{\mathbf{C}}$ for its complexification.

If $M$ is a topological  group, $M^{0}$ denotes the connected component of the identity in $M$. 
If $M$ acts on a set $E$, and if $e\in E$ or $F\subset E$, then $M(e)\subset M$ and $M(F)\subset M$ denote the corresponding stabilizers.

\subsection{Real reductive groups}\label{sRed}
Let $G$ be a linear connected real reductive group \cite[p.~3]{Knappsemi}, and let $\theta \in  {\rm Aut}(G)$ be the Cartan 
involution. Let $K\subset G$ be the fixed point set of $\theta$ in $G$. Then $K$ 
is a maximal compact  subgroup of $G$, which is also connected.

Let $\fg$ and $\fk$ be the Lie algebras of $G$ and $K$.  Then 
$\theta$ acts as an automorphism on $\fg$, so that $\fk$ is the  
eigenspace of $\theta$ associated with the eigenvalue $1$. Let 
$\fp\subset \fg$ be the eigenspace of $\theta$ associated with the eigenvalue $-1$. We have the Cartan decomposition
	\begin{align}\label{Cartandecomp}
		\fg=\fp\oplus \fk. 
	\end{align} 
Set
	\begin{align}
&		m=\dim \fp,&n=\dim \fk. 
	\end{align} 
	
Let $B:\fg\times \fg\to \mathbf R$ be a  symmetric nondegenerate 
bilinear form, which is $G$ and $\theta$-invariant. We assume that 
$B$ is positive on $\fp$ and negative on $\fk$. Therefore, $B$ 
induces a Euclidean metric on $\fp\oplus \sqrt{-1}\fk$, which will be denoted by $|\cdot|^{2}$.

Let $\cU(\fg)$ be  the enveloping algebra of $\fg$. Let $C^{\fg}\in 
\cU(\fg)$ be the Casimir operator associated to $B$. If 
$e_{1},\ldots,e_{m+n}$ is a basis of $\fg$, and if 
$e_{1}^{*},\ldots,e_{m+n}^{*}$ is the dual basis of $\fg$ with 
respect to $B$, then 
	\begin{align}
		C^{\fg}=-\sum_{i=1}^{m+n}e_{i}^{*}e_{i}. 
	\end{align} 
Assume that $e_{1},\ldots,e_{m}$ 
is an orthonormal  basis of $\fp$ with respect to $B_{|\fp}$, and if 
$e_{m+1},\ldots,e_{m+n}$ is an orthonormal basis of $\fk$ with 
respect to $-B_{|\fk}$, then 
	\begin{align}\label{eqCfg}
		C^{\fg}=-\sum_{i=1}^{m}e_{i}^{2}+\sum_{i=m+1}^{m+n}e_{i}^{2}. 
	\end{align} 
Classically, $C^{\fg}$ is in the centre of $\cU(\fg)$. 

We define the Casimir   $C^{\fk}\in \cU(\fk)$ of $\fk$ in the same way. 
	If $E$ is a Euclidean or Hermitian space, and if $\tau^{E}:K\to {\rm GL}(E)$ is a finite dimensional  unitary  
representation of $K$. Denote by $C^{\fk,E}\in \End(E)$ the 
corresponding Casimir operator acting on $E$. If $\tau^E$ is 
irreducible, then $C^{\fk,E}\in \bR_{-}$ is a non-positive scalar. 

The group $K$ acts on $\fp$ and $\fk$ by adjoint action. Let 
$C^{\fk,\fp}\in \End(\fp)$ and $C^{\fk,\fk}\in \End(\fk)$ be the 
associated Casimir.  Set 
\begin{align}\label{eqcg}
		c_{\fg}=-\frac{1}{8}\Tr\[C^{\fk,{\fp}}\]-\frac{1}{24}\Tr\[C^{\fk,\fk}\]\in \bR_{+}. 
	\end{align} 
As we will see later in Section \ref{sGOIF1}, the constant  $c_{\fg}$ appears in Bismut's $G$-trace formula. 	

\subsection{Symmetric spaces}\label{sSyms}
Let $\omega^{\fg}\in \Omega^{1}(G,\fg)$ be the canonical 
left-invariant $1$-form on $G$ with values in $\fg$. By 
\eqref{Cartandecomp}, we have a splitting 
	\begin{align}\label{eq:omega-splitting}
		\omega^{\fg}=\omega^{\fp}+\omega^{\fk}. 
	\end{align} 

Let 
	\begin{align}
		X=G/K
	\end{align} 
be the symmetric space associated to $G$.	
The natural projection $p:G\to X$ defines a $K$-principal bundle, and $\omega^{\fk}$ is a connection form. 

Recall that $K$ acts on $\fp$ by adjoint action. The  tangent bundle $TX$ is given by
	\begin{align}
T X = G \times_{K} \fp. 		
	\end{align} 
Then $TX$ is equipped with the scalar product  induced by $B_{|\fp}$, so 
that $X$ is a Riemannian manifold. The connection $\nabla^{TX}$ on 
$TX$ induced by $\omega^{\fk}$ coincides  the Levi-Civita connection 
of $TX$, and its curvature is parallel and non-positive. Moreover,  $G$ acts 
isometrically on the left on $X$. Also, $\theta$ acts as an isometry of  $X$.

More generally, let $\tau^{E}:K\to {\rm U}(E)$ be a unitary representation of $K$. Set
	\begin{align}
		F=G\times_{K}E. 
	\end{align} 
Then, $F$ is a Hermitian vector bundle equipped with a 
connection $\nabla^{F}$ induced by $\omega^{\fk}$.  The $G$ action on 
$X$ lifts to $F$, so that if $g\in G$, the diagram 
	\begin{align}\label{eqgrs}
		\begin{aligned}
		\xymatrix{
F \ar[d] \ar[r]^{g_{*}} & F\ar[d]\\ 
X \ar[r]^g &X
} 
\end{aligned}
	\end{align} 
commutes.
We have the canonical identification of $G$-spaces 
	\begin{align}\label{eqCXFK}
		C^{\infty}(X,F)=\(C^{\infty}(G)\otimes E\)^{K}. 
	\end{align} 
	
The enveloping algebra $\cU(\fg)$ acts on $C^{\infty}(G)$ as left 
invariant differential operators. Using the fact that $C^{\fg}$ is in the centre of 
$\cU(\fg)$ and that  $K$ is connected, we see that $C^{\fg}\otimes 
1$ preserves the $K$-invariant space $\(C^{\infty}(G)\otimes 
E\)^{K}$. By \eqref{eqCXFK}, we obtain an operator $C^{\fg, X}$ 
acting on $C^{\infty}(X, F)$, which is $G$-invariant. 

We equip  $C^{\infty}(X,F)$ with the standard $G$-invariant $L^{2}$-metric. If $-\Delta^{X}$ denotes the Bochner Laplacian \cite[Definition 2.4]{BGV} associated to $\nabla^{TX},\nabla^{F}$,  the 
splitting \eqref{eqCfg} descends to  
	\begin{align}\label{eq:CgX}
		C^{\fg,X}=-\Delta^{X}+C^{\fk,E}. 
	\end{align} 
Then,  $C^{\fg,X}$ is a  self-adjoint generalised Laplacian 
\cite[Definition 2.2]{BGV}.

\subsection{Heat semigroup and  the $G$-trace}\label{sGOIF}
Let $\exp\(-tC^{\fg,X}/2\)_{|t\g0}$ be the heat semigroup of $C^{\fg, X}/2$. 
Since $C^{\fg, X}$  commutes with the $G$-action, $\exp\(-tC^{\fg,X}/2\)$ is $G$-invariant.

For $t>0,x,x'\in X$, denote $p_{t}(x,x')\in \Hom\(F_{x'}, F_{x}\)$  the corresponding  smooth integral kernel with respect to the Riemannian 
volume on $X$. 
Since $\exp\(-tC^{\fg,X}/2\)$ is $G$-invariant, if 
$g\in G$ and if $g_{*}:F_{x}\to F_{gx}$ is the obvious map, we have 
\begin{align}\label{eqptgxx}
	p_{t}(gx,gx')=g_*p_{t}(x,x')g_{*}^{-1}.
\end{align}
Since $G$ acts transitively on $X$, by \eqref{eqptgxx}, we see that $\Tr[p_{t}(x,x)]$ is independent of $x\in X$. 

\begin{defin}\label{def:cc1a}
For $ t> 0 $,  put
 \begin{align}\label{eqTrG}
	\Tr_{G}\[\exp\(-\frac{t}{2}C^{\fg,X}\)\]=\Tr\[p_{t}(x,x)\]. 	
\end{align} 
\end{defin}

\begin{re}
	By \cite[(2.13)]{AS77}, the quantity $\Tr_{G}\left[\exp\left(-tC^{\mathfrak{g},X}/2\right)\right]$ coincides with the $G$-trace of $\exp\left(-tC^{\mathfrak{g},X}/2\right)$ as introduced in \cite[Section 2]{AS77}.
  It also coincides with the orbital integral of $\exp\left(-tC^{\mathfrak{g},X}/2\right)$ at the identity element, as defined in \cite[p.~66]{Selberg56} and \cite[Section 4.2]{B09}. 
\end{re}

\begin{re}
	Since the semi-group $\exp\(-tC^{\fg,X}/2\)$ is 
	non-negative and self-adjoint, the endomorphism $p_{t}(x,x)\in \End(F_{x})$ is also non-negative and self-adjoint. 
	Then,  
	\begin{align}
		\Tr_{G}\[\exp\(-\frac{t}{2}C^{\fg,X}\)\]\g 0. 
	\end{align}
	We will see in Corollary \ref{corTrp} that this quantity is indeed positive.
\end{re}

\subsection{Bismut's formula for 
$\Tr_{G}\[\exp\(-\frac{t}{2}C^{\fg,X}\)\]$}\label{sGOIF1}
%
%

For $x\in \bR$, set 
\begin{align}
	\widehat{A}(x)=\frac{x/2}{\sinh(x/2)}. 
\end{align} 
Then, $\widehat A$ is a smooth even positive function on $\bR$. There 
exists  $C>0$ such that for $x\in \bR$,  
	\begin{align}\label{eqAx}
		 	\widehat{A}(x)\l C(1+|x|)e^{-|x|/2}. 
	\end{align} 

For a Hermitian matrix $H$, define
\begin{align}\label{eqAH12}
	\widehat{A}(H)={\det}^{1/2} \(\frac{H/2}{\sinh(H/2)}\).
\end{align}
Since $\det  \(\frac{H/2}{\sinh(H/2)}\)>0$, the square root in 
\eqref{eqAH12} is defined by the positive square root. 


 
Recall that  $\fk$ acts anti-symmetrically on $\fp$ and $\fk$. For 
$\Yok\in \sqrt{-1}\fk$,  
$\widehat{A}\(\ad\({\Yok}\)_{|\fp}\)$  and  
$\widehat{A}\(\ad\({\Yok}\)_{|\fk}\)$ are well defined positive number. By 
\eqref{eqAx}, there exist   $C_{1}>0$ and $C_{2}>0$ such that for $\Yok\in \sqrt{-1}\fk$,
	\begin{align}\label{eqJrexp}	
		\frac{\widehat{A}\(\ad\({\Yok}\)_{|\fp}\)}{\widehat{A}\(\ad\(\Yok\)_{|\fk}\)}\l C_{1} \exp\(C_{2} 
		\left|\Yok\right|\). 
	\end{align} 
	
 Denote $d\Yok$ the Lebesgue measure on the Euclidean space $\(\sqrt{-1}\fk, B_{|\sqrt{-1}\fk}\)$.  
	
 \begin{thm}\label{thmB09}
	 For $t>0$,  the following identity holds:
	\begin{multline}\label{eqTrr}
		\Tr_{G}\[\exp\(-\frac{t}{2}C^{\fg,X}\)\]=\frac{1}{(2\pi 
		t)^{m/2}}\exp\(-\frac{c_{\fg}}{2}t \)\\ \times 
		\int_{\sqrt{-1}\fk}\frac{\widehat{A}\(\ad\({\Yok}\)_{|\fp}\)}{\widehat{A}\(\ad\(\Yok\)_{|\fk}\)}\Tr\[\tau^{E}\(e^{-\Yok}\)\]\exp\(-\frac{\left|\Yok\right|^{2}}{2t}\)\frac{d\Yok}{(2\pi t)^{n/2}}. 
	\end{multline} 
 \end{thm}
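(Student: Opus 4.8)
The plan is to prove \eqref{eqTrr} by Bismut's hypoelliptic deformation of the Laplacian. First I would introduce Bismut's family of differential operators $\cL_b$, $b>0$, acting on sections of a finite-rank bundle over the total space $\cM=G\times_K\fg$ (built from $E$ and the Clifford/exterior variables of $\fg$); writing $Y=Y^{\fp}+Y^{\fk}$ for the tautological section of the pullback of $\fg$, the operator $\cL_b$ is a harmonic-oscillator-type operator along the fibres plus $b^{-1}$ times a transport term along the geodesic vector field, together with lower-order Clifford terms. It is hypoelliptic of generalized Kolmogorov type (H\"ormander), it satisfies $\cL_b\to\frac12 C^{\fg,X}$ as $b\to0$ after fibrewise integration, and $b^{2}\cL_b$ converges as $b\to\infty$ to the generator of the geodesic flow on $\cM$. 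A preliminary step is to show that $\exp(-t\cL_b)$ has a smooth kernel with Gaussian off-diagonal decay, so that the $G$-trace and its supertrace version are well defined.

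Next I would establish $b$-independence: there is an odd operator $M_b$ such that $\partial_b\cL_b$ equals $[\cL_b,M_b]$ modulo terms of vanishing supertrace, whence $\partial_b\Tr_{{\rm s},G}[\exp(-t\cL_b)]=0$ by the usual supersymmetry (Duhamel and cyclicity) argument. Together with the $b\to0$ limit recalled in the introduction, this yields, for every $b>0$,
\begin{equation*}
\Tr_{{\rm s},G}\bigl[\exp(-t\cL_b)\bigr]=\Tr_G\Bigl[\exp\bigl(-\tfrac{t}{2}C^{\fg,X}\bigr)\Bigr].
\end{equation*}

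The crux is then to evaluate the right-hand side by letting $b\to\infty$. As $b\to\infty$ the heat kernel of $\cL_b$ localizes near the zero section of $\cM$, which is the fixed-point set of the geodesic flow attached to the identity element. After a Getzler-type rescaling of the fibre coordinate and the Clifford variables, adapted to the hypoelliptic setting, $\cL_b$ degenerates to a fibrewise harmonic oscillator whose heat kernel is given by Mehler's formula; integrating the resulting Gaussian over the fibre $\fg$ --- along the $\fk$-directions after the Wick rotation $\Yok\in\sqrt{-1}\fk$ built into Bismut's formula, where convergence is ensured by \eqref{eqJrexp} --- I expect to recover exactly the integrand of \eqref{eqTrr}. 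Concretely: the Mehler determinant along $\fp$ together with the base heat-kernel normalisation produces $(2\pi t)^{-m/2}\widehat{A}(\ad(\Yok)_{|\fp})$, the $\fk$-fibre and its associated Clifford variables produce the reciprocal factor $\widehat{A}(\ad(\Yok)_{|\fk})^{-1}$, the bundle $E$ contributes $\Tr[\tau^{E}(e^{-\Yok})]$, the surviving $\fk$-fibre integration carries the Gaussian measure $(2\pi t)^{-n/2}\exp(-|\Yok|^{2}/2t)\,d\Yok$, and the subleading curvature terms in the rescaled operator --- precisely those producing $-\tfrac18\Tr[C^{\fk,\fp}]-\tfrac1{24}\Tr[C^{\fk,\fk}]$ --- assemble into the scalar $\exp(-c_{\fg}t/2)$ of \eqref{eqcg}.

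The hard part will be the $b\to\infty$ analysis: it requires heat-kernel estimates for the hypoelliptic operator $\cL_b$ that are uniform in $b$, both near and away from the critical manifold, in order to justify the localization and to control the error terms in the Getzler rescaling --- this is the Bismut-Lebeau type analytic localization, and it is by far the most technical ingredient. An alternative would be to bypass the hypoelliptic deformation and compute the diagonal heat kernel $\Tr[p_t(x,x)]$ directly via the inverse $\tau$-spherical transform on $X=G/K$, i.e.\ Harish-Chandra's Plancherel formula; but one would then still have to prove that the Plancherel density, under Fourier transform along $\fk$, turns into the elementary $\widehat{A}$-integrand above, which is itself a nontrivial identity, and this route does not yield the geometric, Lie-theoretic packaging of \eqref{eqTrr}.
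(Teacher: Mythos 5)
The paper's ``proof'' of Theorem~\ref{thmB09} is a one-line citation: it is taken verbatim from Bismut's book, \cite[Theorem~6.1.1]{B09}, specialised to the orbital integral at the identity element. You, by contrast, sketch the \emph{internal} proof of that theorem: the hypoelliptic deformation $\cL_b$ on $G\times_K\fg$, the $b$-independence of the super $G$-trace via a Duhamel/cyclicity argument, the identification of the $b\to 0$ limit with $\Tr_G[\exp(-\tfrac{t}{2}C^{\fg,X})]$, and the evaluation of the $b\to\infty$ limit by an adapted Getzler rescaling and Mehler's formula. This is indeed the route Bismut takes, and your accounting of where each factor in \eqref{eqTrr} comes from --- $\widehat A(\ad(\Yok)_{|\fp})$ from the $\fp$-directions, the reciprocal $\widehat A(\ad(\Yok)_{|\fk})^{-1}$ from the $\fk$-Clifford variables, $\Tr[\tau^E(e^{-\Yok})]$ from the coefficient bundle, the Gaussian $(2\pi t)^{-n/2}e^{-|\Yok|^2/2t}d\Yok$ from the surviving fibre integral over $\sqrt{-1}\fk$, and $e^{-c_\fg t/2}$ from the subleading curvature terms --- matches the structure of Bismut's computation.

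So your proposal is correct in outline, but it attempts to reprove Bismut's theorem rather than to cite it, which is what the present paper does. The gap you explicitly acknowledge --- the uniform-in-$b$ hypoelliptic heat-kernel estimates needed to justify the localisation as $b\to\infty$ --- is not a minor technicality: it occupies a large portion of \cite{B09} (following the Bismut--Lebeau analytic localisation machinery of \cite{BL08}), and without it the $b\to\infty$ evaluation is only formal. Given that, the citation is the appropriate ``proof'' at the level of this paper, and reproducing the full argument would be out of scope. Your alternative suggestion (computing $\Tr[p_t(x,x)]$ by the Plancherel formula and then matching against the $\widehat A$-integrand) is a genuinely different route and is closer in spirit to what the authors discuss around \eqref{eq:x2ki}, but as you note, proving that the Plancherel density Fourier-transforms into the geometric integrand is itself a nontrivial identity and is not pursued here.
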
 
\begin{proof}
	This is Bismut's orbital integral formula \cite[Theorem 
	6.1.1]{B09} associated to the identity element. 
\end{proof}  
 
Note that by \eqref{eqJrexp}, the above integral converges. 
	
\begin{cor}	\label{corTrp}
	For $t>0$, we have 
	\begin{align}\label{eqTrG>0}
	\Tr_{G}\[\exp\(-\frac{t}{2}C^{\fg,X}\)\]>0. 
\end{align} 
\end{cor}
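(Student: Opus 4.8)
The plan is to read off the positivity directly from Bismut's formula \eqref{eqTrr}. First I would observe that the three scalar prefactors $(2\pi t)^{-m/2}$, $\exp(-c_{\fg}t/2)$ and $(2\pi t)^{-n/2}$ appearing on the right-hand side of \eqref{eqTrr} are all strictly positive for $t>0$, so it suffices to prove that the integral over $\sqrt{-1}\fk$ is a strictly positive real number.

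Next I would check that the integrand is a strictly positive function of $\Yok\in\sqrt{-1}\fk$. The factors $\widehat{A}(\ad(\Yok)_{|\fp})$ and $\widehat{A}(\ad(\Yok)_{|\fk})$ are positive real numbers, as recalled just before Theorem \ref{thmB09}, and the Gaussian factor $\exp(-|\Yok|^{2}/(2t))$ is manifestly positive. The one term requiring a comment is $\Tr[\tau^{E}(e^{-\Yok})]$. Here the key point is that, since $\tau^{E}$ is unitary, its differential maps $\fk$ into the skew-Hermitian endomorphisms of $E$; extending $\bR$-linearly, for $\Yok\in\sqrt{-1}\fk$ the endomorphism $d\tau^{E}(\Yok)\in\End(E)$ is therefore Hermitian, so that $\tau^{E}(e^{-\Yok})=\exp(-d\tau^{E}(\Yok))$ is a positive-definite Hermitian endomorphism of the (nonzero) space $E$, hence of strictly positive trace (equal to $\dim E$ at $\Yok=0$). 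Consequently the whole integrand is strictly positive on $\sqrt{-1}\fk$.

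Finally, the integral converges by the estimate \eqref{eqJrexp}, as noted after Theorem \ref{thmB09}, so it is the integral of a strictly positive continuous function over the Euclidean space $\sqrt{-1}\fk$ against Lebesgue measure $d\Yok$; it is therefore a finite, strictly positive real number. Combining this with the positivity of the three prefactors yields \eqref{eqTrG>0}.

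There is no substantial obstacle here: the corollary is a soft consequence of Theorem \ref{thmB09}. The only point worth isolating is the observation that, for $\Yok\in\sqrt{-1}\fk$, the element $e^{-\Yok}$ acts on $E$ through a positive-definite Hermitian operator, which is what forces the character term, and hence the whole orbital integral, to be strictly positive rather than merely nonnegative as in the preceding remark.
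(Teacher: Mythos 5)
Your proof is correct and follows essentially the same route as the paper's: strip off the positive scalar prefactors in Bismut's formula \eqref{eqTrr}, note that the $\widehat{A}$-ratio is positive, and observe that $\tau^{E}(e^{-\Yok})$ is a positive-definite self-adjoint operator on $E$ (since $\Yok\in\sqrt{-1}\fk$), so $\Tr[\tau^{E}(e^{-\Yok})]>0$. The only difference is that you spell out the positive-definiteness argument in slightly more detail than the paper does.
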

\begin{proof}	
If $\Yok\in \sqrt{-1}\fk$, $\tau^{E}\(\Yok\)$ acts on the Hermitian 
space $E$ as a self-adjoint operator. Therefore, 
	\begin{align}\label{eqtrp}
		\Tr\[\tau^{E}\(e^{-\Yok}\)\]>0. 
	\end{align} 
Moreover, we have 
		\begin{align}\label{eqtrp1}
		\frac{\widehat{A}\(\ad\({\Yok}\)_{|\fp}\)}{\widehat{A}\(\ad\(\Yok\)_{|\fk}\)}>0.
	\end{align} 
By \eqref{eqTrr}, \eqref{eqtrp}, and \eqref{eqtrp1}, we get our corollary. 
\end{proof}

\subsection{Motivating example of ${\rm SL}(2, \bR)$}\label{subsec:sl2}
Let $G = {\rm SL}(2, \bR)$ and $K = {\rm SO}(2)$.
We identify $ K$ with $ {\rm U}(1) $ and $ \mathfrak{k} $ with $ \sqrt{-1} \mathbf{R} $.
We choose the bilinear form $ B$, so that the induced metric on $ \sqrt{-1} \mathfrak{k} $ is the standard metric on $ \mathbf{R} $.

If $ \lambda \in \mathbf{Z} $, let $ E$ be the irreducible representation defined by $\tau^{E} : z\in {\rm U} \left(1\right) \to z^{\lambda }\in {\rm U}\left(1\right) $. 
The adjoint action of $ K$ on $ \mathfrak{p}_{\mathbf{C} } $ is a sum of two irreducible representations of $ K$ parametrized by $\lambda =  \pm 2$. 
Combining with the fact that $ \fk$ is commutative, by \eqref{eqcg}, we have
 \begin{align}\label{eq:sjsp}
	c_{\fg} = 1.
 \end{align}
Moreover, if $ x = \Yok \in \sqrt{-1} \mathfrak{k} = \mathbf{R} $, by \eqref{eqAH12}, we have 
 \begin{align}\label{eq:k2dm}
		\frac{\widehat{A}\(\ad\({\Yok}\)_{|\fp}\)}{\widehat{A}\(\ad\(\Yok\)_{|\fk}\)}= \frac{x}{\sinh\left(x\right)}.
 \end{align} 

 By the above normalisation and by \eqref{eq:k2dm}, Bismut's formula (\ref{eqTrr}) reduces to a well-known formula\footnote{The discrepancy with \cite[Section 8.3]{B09} and \cite[p.~233]{McKean72} comes form the different normalisation on the bilinear form $ B$.} (see \cite[Section 8.3]{B09} and \cite[p.~233]{McKean72}), 
 \begin{align}
 \label{eq sl2}
\Tr_{G}\[\exp\(-\frac{t}{2}C^{\fg,X}\)\]=\frac{1}{2\pi 
		t} e^{-t/2} \int_{\bR} \frac{x}{\sinh x}  e^{-\frac{x^2}{2t} - \lambda x}\; \frac{dx}{(2\pi 
		t)^{1/2}}. 
 \end{align}
In the following, we will compute the asymptotics of (\ref{eq sl2}) as $t \to \infty$ using an elementary method, which will be later generalized in the proof of the main Theorem \ref{thmit}.

The integral \eqref{eq sl2} is invariant when changing $ \lambda $ to $ -\lambda $.
We may and we will assume $ \lambda \ge 0$.
For $ w\in \left\{\pm 1\right\} $, put 
\begin{align}\label{eq:liy2}
	I^{\fg}_{t}(\lambda,w) = \frac{1}{2\pi 
	t}\int_{\bR_+} \frac{x}{\sinh x}  e^{-\frac{x^2}{2t} +w\lambda x}\; \frac{dx}{(2\pi 
	t)^{1/2}}.
\end{align}
By \eqref{eq sl2} and \eqref{eq:liy2}, we have
\begin{align}\label{eq:s2bk}
 \Tr_{G}\[\exp\(-\frac{t}{2}C^{\fg,X}\)\]= \left(I^{\fg}_{t}(\lambda,1)+I^{\fg}_{t}(\lambda,-1)\right)e^{-t/2} .
\end{align}

Put 
\begin{align}\label{eq:jnx3}
 \varrho^{\mathfrak{g} } = 1.
\end{align}
We can rewrite \eqref{eq:liy2} as 
  \begin{align} \label{sl2 It}
		I^{\fg}_{t}(\lambda,w) 
						=\frac{1}{\pi 
				t}\int_{\bR_+} \frac{x}{1-e^{-2x}}  e^{-\frac{x^2}{2t} +(w \lambda-\varrho^\fg)  x}\; \frac{dx}{(2\pi 
				t)^{1/2}}.
		\end{align}
We rescale the variable $x$ by $t$ so that,
   \begin{align}
I^{\fg}_{t}(\lambda,w) 
        =\frac{1}{\pi} t^{1/2}  e^{\frac{t}{2}\left(w \lambda-\varrho^\fg\right)^2}  \int_{\bR_+} \frac{x}{1-e^{-2tx}} e^{-\frac{t}{2}\left(x -\left(w \lambda-\varrho^\fg\right) \right)^2} \; \frac{dx}{(2\pi)^{1/2}}.
 \end{align}
Note that the function $ \frac{x}{1-e^{-2tx}}$ is positive on $\bR_+$.

When $ t\to \infty$, the asymptotics of the above integral can be evaluated by Laplace's method.
The leading term of the asymptotics is localised near the minimum point of the action 
\begin{align}\label{eq:3jhr}
 x\in \bR_{+} \to \frac{1}{2} \left(x -\left(w \lambda-\varrho^\fg\right) \right)^2.
\end{align}
Let us discuss case by case according to the location of $w \lambda-\varrho^\fg$.

\underline{Case I : $w \lambda-\varrho^\fg > 0 $}. 
In this case, the minimum point of \eqref{eq:3jhr} is $ w \lambda-\varrho^\fg$.
Let us compute the large-time behavior of the integral in three steps.
\begin{enumerate}[\indent 1)] 
\item  Fix a parameter $0<\epsilon <w \lambda-\varrho^\fg $.
We define 
\begin{align}
 I^{\fg}_{\epsilon, t}(\lambda,w) =\frac{1}{\pi} t^{1/2}  e^{\frac{t}{2}\left(w \lambda-\varrho^\fg\right)^2}  \int_{w \lambda-\varrho^\fg-\epsilon}^{w \lambda-\varrho^\fg+\epsilon}  \frac{x}{1-e^{-2tx}}  e^{-\frac{t}{2}\left(x -\left(w \lambda-\varrho^\fg\right) \right)^2} \;  \frac{dx}{(2\pi)^{1/2}}.
\end{align}
As $ t \to \infty$,  we can localise the integral $ I^{\fg}_{t}(\lambda,w)$, so that 
\begin{align}
I^{\fg}_{t}(\lambda,w)  \sim I^{\fg}_{\epsilon, t}(\lambda,w).
\end{align}
\item We change the variable $x = w \lambda-\varrho^\fg +\frac{y}{t^{1/2}}$,
\begin{align}
 I^{\fg}_{\epsilon, t}(\lambda,w)  =\frac{1}{\pi}  e^{\frac{t}{2}\left(w \lambda-\varrho^\fg\right)^2}  \int_{-\sqrt{t}\epsilon}^{\sqrt{t}\epsilon}  \frac{ w \lambda-\varrho^\fg + \frac{y}{\sqrt{t}}}{1-e^{-2t\left( w \lambda-\varrho^\fg + \frac{y}{\sqrt{t}}\right)}} \cdot e^{-\frac{1}{2}y^{2} } \; \frac{dy}{(2\pi)^{1/2}}.
\end{align}
\item As $t \to \infty$, we have 
 \begin{align}
 I^{\fg}_{\epsilon, t}(\lambda,w)  \sim & \frac{1}{\pi} \cdot  e^{\frac{t}{2}\left(w \lambda-\varrho^\fg\right)^2}   \int_{\bR}  \left(w \lambda-\varrho^\fg\right) \cdot e^{-\frac{1}{2}y^{2} } \; \frac{dy}{(2\pi)^{1/2}}\\
 =   & \frac{1}{\pi} \left(w \lambda-\varrho^\fg\right)  e^{\frac{t}{2}\left(w \lambda-\varrho^\fg\right)^2}.\notag
\end{align}
\end{enumerate}

 \underline{Case II : $w \lambda-\varrho^\fg =  0 $}.
In this case, the minimum point of \eqref{eq:3jhr} is $ w \lambda-\varrho^\fg= 0 $.
We can perform a similar localisation as before.
\begin{enumerate}[\indent 1)]
	\item For a fixed parameter $\epsilon > 0$, put
	\begin{align}
I^{\fg}_{\epsilon, t}(\lambda,w)  =\frac{t^{1/2}}{\pi}  \int_{0}^{\epsilon}  \frac{x}{1-e^{-2tx}} e^{-\frac{t}{2}x^2} \; \frac{dx}{(2\pi)^{1/2}}.
	\end{align}
	As $ t\to \infty$,  
	\begin{align}\label{eq:qrfa}
		I^{\fg}_{t}(\lambda,w)  \sim I^{\fg}_{\epsilon, t}(\lambda,w).
	\end{align}
	\item We  rescale the coordinate $x = \frac{y}{t^{1/2}}$, and obtain
	\begin{align}
I^{\fg}_{\epsilon, t}(\lambda,w)  = \frac{1}{\pi t^{1/2}} \int_{0}^{\sqrt{t}\epsilon}  \frac{y}{1-e^{-2\sqrt{t}y}} \cdot e^{-\frac{1}{2}y^{2} } \; \frac{dy}{(2\pi)^{1/2}}.
 \end{align}
 \item As $ t \to \infty$, by an easy computation, we have 
 \begin{align}\label{eq:pnq1}
	I^{\fg}_{\epsilon, t}(\lambda,w) \sim \frac{1}{\pi}  \left\{\int_{\bR_+} y e^{-\frac{1}{2}y^{2} } \; \frac{dy}{(2\pi)^{1/2}} \right\} t^{-1/2}= \frac{1}{\sqrt{2} \pi^{3/2} } t^{-1/2} .
 \end{align}
\end{enumerate}

\underline{Case III : $w \lambda-\varrho^\fg <  0 $}.  We can directly apply  the dominant convergence theorem to (\ref{sl2 It}): as $ t \to \infty$, 
\begin{align}
		I^{\fg}_{t}(\lambda,w)  \sim   &\frac{1}{\pi 
t}\int_{\bR_+} \frac{x}{1-e^{-2x}}  e^{(w \lambda-\varrho^\fg)  x}\; \frac{dx}{(2\pi 
t)^{1/2}}\\
		=&\frac{1}{2\pi}\left\{\int_{\bR_+} \frac{x}{\sinh x}  e^{w \lambda  x}\; \frac{dx}{(2\pi)^{1/2}} \right\} t^{-3/2}. \notag
\end{align}
 
Theorem \ref{thmit} for $ G= {\rm SL}_{2} \left(\mathbf{R} \right)$ can be deduced easily from the above computations.

\begin{prop}\label{prop:drep}
 There exist constants $ \alpha\in \mathbf{R}^{*}_{+}, \beta\in -\frac{1}{2} \mathbf{N} , \gamma\in \mathbf{R}  $ such that as $ t \to \infty$,
 \begin{align}\label{eqsl22}
	\Tr_{G}\[\exp\(-\frac{t}{2}C^{\fg,X}\)\] \sim \alpha  t^{\beta}  e^{\left(\gamma-1/2\right)t}.
	\end{align}
	Moreover, 
	\begin{enumerate}
		\item when $\lambda \geq 2$, 
		\begin{align}\label{case1}
			 &\alpha =  \frac{\lambda-\varrho^\fg}{\pi }, &\beta =0,  && \gamma =  \frac{1}{2}\left(\lambda-\varrho^\fg\right)^2;
		\end{align}
		\item when $\lambda = 1$, 
		\begin{align}
			 &\alpha = \frac{1}{\sqrt{2 }\pi^{3/2} }, & \beta =  -\frac{1}{2},  && \gamma = 0;
		\end{align}
		\item when $\lambda =0$, 
		\begin{align}\label{case3}
			& \alpha = \frac{1}{2\pi } \int_{\bR} \frac{x}{\sinh x} \; \frac{dx}{(2\pi)^{1/2}}= \frac{\pi^{1/2} }{4 \sqrt{2}}, & \beta =  -\frac{3}{2},  && \gamma =0. 
		\end{align}
		\end{enumerate}	
\end{prop}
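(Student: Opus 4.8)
The plan is to combine equation \eqref{eq:s2bk} with the three case analyses carried out above for the quantities $I^{\fg}_{t}(\lambda,w)$, $w\in\{\pm1\}$, and to read off the dominant contribution. Since $\lambda\ge 0$ and $\rho^{\fg}=1$, we have $-\lambda-\rho^{\fg}\le -1<0$, so the summand $I^{\fg}_{t}(\lambda,-1)$ always falls in Case III and contributes a term of order $t^{-3/2}$ times a positive constant. The behavior of the summand $I^{\fg}_{t}(\lambda,1)$ depends on the sign of $\lambda-\rho^{\fg}=\lambda-1$: for $\lambda\ge 2$ it is in Case I, for $\lambda=1$ in Case II, and for $\lambda=0$ there is only the Case III summand. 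Multiplying the sum by $e^{-t/2}$ as in \eqref{eq:s2bk} then yields \eqref{eqsl22} with $\gamma-1/2$ the exponential rate, where $\gamma$ collects the Gaussian exponent $\tfrac12|\lambda-\rho^{\fg}|^{2}$ in Case I and vanishes otherwise.

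Concretely, first I would treat $\lambda\ge 2$. Here $I^{\fg}_{t}(\lambda,1)\sim \tfrac{1}{\pi}(\lambda-\rho^{\fg})\,e^{\frac{t}{2}(\lambda-\rho^{\fg})^{2}}$ by Case I, while $I^{\fg}_{t}(\lambda,-1)$ is only of polynomial decay, hence negligible against the exponentially growing (or, for $\lambda=2$, still dominant since $(\lambda-\rho^{\fg})^{2}=1>0$) first term. Thus $\Tr_{G}[\exp(-\tfrac t2 C^{\fg,X})]\sim \tfrac{\lambda-\rho^{\fg}}{\pi}\,t^{0}\,e^{(\frac12|\lambda-\rho^{\fg}|^{2}-1/2)t}$, giving \eqref{case1}. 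Next, for $\lambda=1$ we have $\lambda-\rho^{\fg}=0$, so Case II gives $I^{\fg}_{t}(1,1)\sim \tfrac{1}{\sqrt2\pi^{3/2}}t^{-1/2}$, which dominates the $t^{-3/2}$ contribution of $I^{\fg}_{t}(1,-1)$; multiplying by $e^{-t/2}$ produces $\alpha=\tfrac{1}{\sqrt2\pi^{3/2}}$, $\beta=-1/2$, $\gamma=0$. Finally, for $\lambda=0$ the two summands coincide and both are Case III, so $\Tr_{G}[\exp(-\tfrac t2 C^{\fg,X})]\sim 2\cdot\tfrac{1}{2\pi}\big(\int_{\bR_{+}}\tfrac{x}{\sinh x}\tfrac{dx}{(2\pi)^{1/2}}\big)t^{-3/2}e^{-t/2}=\tfrac{1}{2\pi}\big(\int_{\bR}\tfrac{x}{\sinh x}\tfrac{dx}{(2\pi)^{1/2}}\big)t^{-3/2}e^{-t/2}$, using evenness of $x/\sinh x$; the explicit value $\int_{\bR}\tfrac{x}{\sinh x}\,dx=\pi^{2}/2$ then gives $\alpha=\pi^{1/2}/(4\sqrt2)$, $\beta=-3/2$, $\gamma=0$, i.e. \eqref{case3}.

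The routine parts are the Laplace-method estimates, but those have already been carried out in the three itemized cases above, so the proof is essentially a matter of bookkeeping: checking in each regime which of $I^{\fg}_{t}(\lambda,1)$ and $I^{\fg}_{t}(\lambda,-1)$ dominates after multiplication by $e^{-t/2}$, and verifying that the subdominant term is genuinely of strictly smaller order (in the exponential rate for $\lambda\ge2$, in the power of $t$ for $\lambda\in\{0,1\}$). The one small computation worth isolating is the evaluation $\int_{\bR}\frac{x}{\sinh x}\,\frac{dx}{(2\pi)^{1/2}}=\frac{\pi^{1/2}}{4\sqrt2}\cdot 2\pi = \frac{\pi^{2}}{2}\cdot\frac{1}{(2\pi)^{1/2}}$, i.e. the classical identity $\int_{0}^{\infty}\frac{x}{\sinh x}\,dx=\frac{\pi^{2}}{4}$; I would simply cite this. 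The main (very minor) obstacle is organizing the case split cleanly so that the boundary case $\lambda=2$, where the exponential rate is nonzero but the constant $(\lambda-\rho^{\fg})$ equals $1$, is seen to fit the general formula in item (1) rather than needing separate treatment — it does, since Case I applies verbatim whenever $w\lambda-\rho^{\fg}>0$.
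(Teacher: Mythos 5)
Your proof is correct and follows the same case-split strategy as the paper's: determine for each value of $\lambda$ which of $I^{\fg}_{t}(\lambda,\pm 1)$ dominates via Cases I/II/III, then multiply by $e^{-t/2}$ and read off $\alpha,\beta,\gamma$. You supply slightly more detail than the paper (explicitly combining the two Case III integrals at $\lambda=0$ via evenness of $x/\sinh x$, citing $\int_{0}^{\infty}\frac{x}{\sinh x}\,dx=\pi^{2}/4$, and noting $\lambda=2$ needs no special treatment), but the argument is essentially identical.
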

\begin{proof} 
 If $ \lambda \ge 2$,  then $ \lambda - \varrho^{\fg} > 0  $ and $ -\lambda - \varrho^{\fg} < 0 $. 
 By \eqref{eq:s2bk}, only the term $ I^{\mathfrak{g} }_{t} \left(\lambda, 1\right)$ in case (I) will contribute to our asymptotics \eqref{eqsl22}. 
 If $ \lambda = 1$, then $ \lambda - \varrho^{\fg} =  0  $ and $ -\lambda - \varrho^{\fg} < 0 $, so that only the term $ I^{\mathfrak{g} }_{t} \left(\lambda, 1\right)$ in case (II) will contribute to the asymptotics.
 If $ \lambda = 0  $, then $ \pm \lambda - \varrho^{\fg} <  0  $,  both $ I^{\mathfrak{g} }_{t} \left(\lambda, \pm 1\right)$ in case (III) will contribute to the asymptotics.
 Our proposition now follows directly from the previous computations.
\end{proof}

 
As the notations indicate, the group $ \left\{\pm 1\right\}$ is the Weyl group, $ \mathbf{R}_{+} $ is a Weyl chamber, $ \rho^\fg $ is the half sum of positive roots associated to the Lie algebra $ \fg = \mathfrak{sl}(2, \mathbf{R})$.
Also, $ \mathbf{Z} $ is the weight lattice, and $ \lambda\in \mathbf{Z} $ is the (highest) weight of the representation $ \tau^{E}$.
Here, three distinct cases are based on the position of $ w \lambda - \rho^{\fg} $  relative to the Weyl chamber $ \mathbf{R}_{+} $ : (I) inside the chamber, (II) on its wall, and (III) outside the chamber. For a general real reductive Lie group $G$, these cases persist but can occur simultaneously in different directions due to the higher-dimensional structure of the Weyl chamber.


\section{Root systems and related constructions} \label{SRoot}
In this section, we review some basic construction related to a root system. 
Most of the results are well-known in Lie theory. 
We have recalled them in this section for the convenience of the readers. 
But we have omitted most of the proofs. 
 
%


%
%
%
%

This section is organised as follows.  In Section \ref{sMax}, we 
introduce the Cartan subalgebra $\ft$ of   $\fk$, the root system $R(\fk)$ of $\fk$, and $\mathfrak t$-restricted root 
system $R(\fg)$ of $\fg$, and the corresponding  Weyl groups $W(\fk)$, $W(\fg)$. 

In Section \ref{sPRS}, we introduce the positive  root systems 
$R_{+}(\fk)$, $R_{+}(\fg)$ and the associated positive Weyl chambers 
$C_{+}(\fk)$, $ C_{+}(\fg)$. We review also some classic geometric properties  
related to the Weyl chambers and Weyl groups. 

In Section \ref{sPSA}, we define various subspace of $\sqrt{-1}\ft$ associated to a 
subset $\Delta_{0}^{1}$ of a system of simple roots $\Delta_{0}^{\fg}$ in $ R_{+} \left(\mathfrak{g} \right)$. 

In Section \ref{sLeviPara}, we review the theory of 
$\theta$-invariant parabolic subalgebras of $\fg_{\bC}$ associated to $\Delta_{0}^{1}$. 

In Section \ref{sSubK1}, we introduce a semisimple subgroup $K^{1}_{s}$ of $K$ associated to $\Delta_{0}^{1}$. Given an irreducible representation of $\tau^{E}$ of $K$, we construct a corresponding  irreducible representation $\tau^{E,1}$ of $K^{1}_{s}$. 

In Section \ref{sLMAP}, we recall Langlands' combinatorial Lemma  following  Carmona \cite[Section 1]{Carmona83}.

Finally, in Section \ref{sVoganLambda}, we introduce the Lambda map of Vogan 
following Carmona \cite[Section 2]{Carmona83}.

\subsection{Maximal torus and root decompositions}\label{sMax}
Let $T$ be a maximal torus of $K$. Let $\ft\subset \fk$ be the Lie algebra of $T$.  Then, $\ft$ is a Cartan subalgebra of $\fk$. 

Recall that $\(\sqrt{-1}\ft, B_{|\sqrt{-1}\ft}\)$ is a Euclidean space. Let 
$\sqrt{-1}\ft^{*}$ be the dual space with induced Euclidean metric. Let $R(\fk)\subset \sqrt{-1}\ft^{*}$ be the root system\footnote{It 
is an abstract root system on the real subspace of  $\sqrt{-1}\ft^{*}$ spanned by $R(\fk)$.} of $\fk$ with respect to  
$\ft$. If $\alpha\in R(\fk)$, denote by $\fk_{\alpha}\subset 
\fk_{\bC}$ the root space associated to $\alpha$. We have the root  decomposition 
	\begin{align}\label{eqkTwei}
		\fk_{\bC}=\ft_{\bC}\oplus \bigoplus_{\alpha\in R(\fk)}\fk_{\alpha}.
	\end{align} 

We can similarly define $R(\fp)$ and $R(\fg)$ and use the corresponding obvious notation. Clearly, 
	\begin{align}
		R(\fg)=R(\fp)\cup R(\fk). 
	\end{align} 
Let $\fa\subset \fp$ be the centraliser of $\ft$ in $\fp$. Then,  
	\begin{align}\label{eqpTwei}
		&\fp_{\bC}=\fa_{\bC}\oplus \bigoplus_{\alpha\in R(\fp)}\fp_{\alpha},&\fg_{\bC}=\fa_{\bC}\oplus\ft_{\bC}\oplus \bigoplus_{\alpha\in 
		R(\fg)}\fg_{\alpha}. 
	\end{align} 
If $\alpha\in R(\fg)$, by \eqref{eqkTwei} and \eqref{eqpTwei}, we have 
	\begin{align}\label{eqgapaka}
	\fg_{\alpha}=\fp_{\alpha}\oplus \fk_{\alpha}. 
	\end{align} 
	
\begin{re}\label{re:hat}
	If $\fh=\fa\oplus \ft$, then $\fh$ is a $\theta$-invariant Cartan subalgebra of 
	$\fg$ \cite[p.~129]{Knappsemi}. 
\end{re}
	
The following three propositions are well-known \cite{Vogan-book,Vogan79}.	
    
\begin{prop}\label{propRg}
	If $\alpha\in R(\fp)$, we have $\dim \fp_{\alpha}=1$. If $\alpha\in R(\fg)$, we have 
	\begin{align}
		\dim \fg_{\alpha}=\begin{cases}
		2, &\alpha\in R(\fp)\cap R(\fk),\\
		1,&\text{otherwise}. 
		\end{cases}
	\end{align} 	
\end{prop}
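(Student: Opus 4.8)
The plan is to reduce everything to the structure theory of the reductive pair $(\fg,\fk)$ equipped with the $\theta$-stable Cartan subalgebra $\fh=\fa\oplus\ft$, and then exploit the interplay between the $\ft$-weight decomposition and the $\fh$-root decomposition. First I would recall that, since $\fh_{\bC}$ is a genuine Cartan subalgebra of $\fg_{\bC}$, the $\fh_{\bC}$-root spaces of $\fg_{\bC}$ are all one-dimensional; the $\ft$-restricted root space $\fg_\alpha$ for $\alpha\in R(\fg)$ is then the sum of those one-dimensional $\fh_{\bC}$-root spaces whose $\fh_{\bC}$-root restricts to $\alpha$ on $\ft$. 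So $\dim\fg_\alpha$ equals the number of $\fh_{\bC}$-roots restricting to $\alpha$, and I must show this number is $1$ except when $\alpha\in R(\fp)\cap R(\fk)$, where it is $2$.

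The key step is the $\theta$-equivariance of this picture. Since $\fh$ is $\theta$-stable and $\theta$ acts as $+1$ on $\ft$ and $-1$ on $\fa$, the Cartan involution acts on $\fh_{\bC}^{*}$, and an $\fh_{\bC}$-root $\mu$ restricting to $\alpha\in\sqrt{-1}\ft^{*}$ goes under $\theta$ to another root $\theta\mu$ with the same restriction $\alpha$. For each such $\alpha$ there are thus at most two $\theta$-orbits contributing, and one checks the root $\mu$ either satisfies $\theta\mu=\mu$ (equivalently $\mu$ vanishes on $\fa$, so the root vector lies in $\fk_{\bC}$, contributing to $\fk_\alpha$) or $\theta\mu\ne\mu$. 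In the latter case I would pair $\fg_\mu$ with $\fg_{\theta\mu}$ and note that $\fg_\mu\oplus\fg_{\theta\mu}$ breaks into its $\theta=\pm1$ eigenspaces, each one-dimensional, giving one line in $\fk_\alpha$ and one in $\fp_\alpha$; this is exactly the decomposition \eqref{eqgapaka}, and shows $\dim\fp_\alpha=1$ whenever $\alpha\in R(\fp)$. To finish I would argue that no third root can restrict to $\alpha$: if $\mu_1,\mu_2,\mu_3$ all restricted to $\alpha$ then their pairwise differences would be roots (or zero) vanishing on $\ft$, i.e. elements of $R(\fa)$ type, and a short argument using that $\fa$ is the centralizer of $\ft$ in $\fp$ (so $\fg$ has real rank constraints along $\fa$, and the only roots vanishing on $\ft$ come in at most a rank-one family transverse to each $\alpha$) rules this out. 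Concretely: $\mu_i-\mu_j$ vanishes on $\ft$, but two distinct such differences would force a $2$-dimensional family of roots vanishing on $\ft$ sharing the same $\ft$-restriction $\alpha$, contradicting that the $\fh_{\bC}$-roots vanishing on $\ft$ together with their relation to $\alpha$ span at most a rank-$1$ system (since, for instance, $\mathfrak{a}$ meeting a single root $\alpha$-string).

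Assembling: for $\alpha\in R(\fp)\cap R(\fk)$ both a $\theta$-fixed root and a $\theta$-swapped pair can occur — but parity forces exactly one line in $\fk_\alpha$ and one in $\fp_\alpha$ coming from a single $\theta$-pair $\{\mu,\theta\mu\}$, hence $\dim\fg_\alpha=2$; for $\alpha$ in exactly one of $R(\fp)$, $R(\fk)$, only one of the two eigenspaces is nonzero, hence $\dim\fg_\alpha=1$. The statement $\dim\fp_\alpha=1$ for all $\alpha\in R(\fp)$ then follows since $\fp_\alpha$ is always the $\theta=-1$ part of a single $\{\mu,\theta\mu\}$ pair.

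I expect the main obstacle to be the ``no third root'' claim, i.e. ruling out multiplicity $\geq 3$ cleanly without invoking the full classification; the honest way is to use that $\fh_{\bC}$ is a Cartan subalgebra so all $\fh_{\bC}$-root spaces are lines, combined with the fact that for a $\theta$-stable Cartan the set of roots with a fixed $\ft$-restriction $\alpha\neq 0$ is governed by the rank-one (or rank-two, in the $R(\fp)\cap R(\fk)$ case) subsystem generated by $\alpha$ and the $\fa$-direction — a standard but slightly fiddly fact about restricted roots of $\theta$-stable Cartan subalgebras, which the paper is content to cite from \cite{Vogan-book}.
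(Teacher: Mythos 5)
Your overall route is the same one the paper takes in its (suppressed) appendix proof: pass to the $\theta$-stable Cartan $\fh=\fa\oplus\ft$, observe that for $\alpha\in R(\fg)$ the space $\fg_\alpha$ is the sum of the one-dimensional $(\fg_\bC,\fh_\bC)$-root spaces $\fg_\mu$ with $\mu_{|\ft}=\alpha$, and then use the $\theta$-action (which pairs $\mu$ with $\theta\mu$) to split this sum into $\fp$- and $\fk$-pieces. The crux, which you correctly identify, is that for each $\alpha\in R(\fg)$ the $\fa^*$-component $\beta$ with $(\beta,\alpha)\in R(\fg,\fh)$ is unique up to sign, so at most the pair $\{\mu,\theta\mu\}$ contributes.

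However, your argument for that crux has a genuine gap. You assert that if $\mu_1,\mu_2,\mu_3$ all restrict to $\alpha$ on $\ft$, then ``their pairwise differences would be roots (or zero).'' This is false for root systems in general: the difference of two roots is a root only under additional hypotheses. The paper's proof supplies exactly the missing hypothesis by exploiting a sign choice. Writing $\mu_i=(\beta_i,\alpha)$, one can replace $\mu_2$ by $\theta\mu_2=(-\beta_2,\alpha)$ if necessary to arrange $\langle\beta_1,\beta_2\rangle\geq 0$; then $\langle\mu_1,\mu_2\rangle=\langle\beta_1,\beta_2\rangle+|\alpha|^2>0$, and it is \emph{only} with this strict positivity that the root-string lemma (\cite[Proposition 2.48(e)]{KnappLie}) guarantees $\mu_1-\mu_2\in R(\fg,\fh)\sqcup\{0\}$. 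The contradiction then comes from the fact that no root of $(\fg_\bC,\fh_\bC)$ can vanish on $\ft$, precisely because $\fh=\fz_\fg(\ft)$ (this is the second assertion of the paper's Proposition \ref{propalphafg}). Your proposal never states this clean fact; instead you gesture at ``a rank-one family of roots vanishing on $\ft$'' and at $\alpha$-strings, which suggests you think such roots might exist and need to be bounded, when in fact the set is empty. Your closing paragraph on the $R(\fp)\cap R(\fk)$ case is also slightly confused — you say ``both a $\theta$-fixed root and a $\theta$-swapped pair can occur,'' but the up-to-sign uniqueness of $\beta$ shows they cannot coexist over the same $\alpha$; what happens is that $\alpha\in R(\fp)\cap R(\fk)$ forces $\beta\neq0$ and hence exactly one swapped pair. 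You were right to flag the uniqueness step as the main obstacle; the sign-normalization plus positivity is the missing idea.
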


 \begin{prop}\label{thmRg}
The set $R(\fg)$ forms an abstract root system on the real span of 
$R(\fg)$. 
 \end{prop}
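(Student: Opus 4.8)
The plan is to verify the root-system axioms for $R(\fg)$ directly, exploiting the structure of $\fg_{\bC}$ as a reductive Lie algebra with the $\theta$-stable Cartan subalgebra $\fh_{\bC} = \fa_{\bC} \oplus \ft_{\bC}$, restricted to the subspace $\ft_{\bC}$. Since $\fg$ is reductive and $\fh$ is a Cartan subalgebra, the full set of roots $R(\fg_{\bC}, \fh_{\bC}) \subset (\fa^* \oplus \sqrt{-1}\ft^*)_{\bC}$ is a genuine (abstract) root system in the classical sense. The set $R(\fg)$ as defined here is the image of $R(\fg_{\bC}, \fh_{\bC})$ under restriction to $\ft$, discarding the zero restriction (whose root spaces assemble into the centraliser $\fl = \fa \oplus \ft \oplus (\text{stuff})$ — actually $\fm_{\bC} \oplus \fa_{\bC}$ in Langlands notation). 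So the statement to prove is the standard fact that the nonzero restrictions of a root system to a $\theta$-stable subspace again form an abstract root system, in the same spirit as the fact that restricted roots of a real reductive group form a root system.

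First I would fix the Euclidean structure: $-B$ is positive definite on $\sqrt{-1}\ft$, giving an inner product $\langle\cdot,\cdot\rangle$ on $\sqrt{-1}\ft^*$, and I would work on the real span $V \subset \sqrt{-1}\ft^*$ of $R(\fg)$. The axioms to check are: (i) $R(\fg)$ is finite, spans $V$, and $0 \notin R(\fg)$ — immediate from finiteness of $R(\fg_{\bC},\fh_{\bC})$ and the definition; (ii) for $\alpha \in R(\fg)$, the only multiples of $\alpha$ in $R(\fg)$ are $\pm\alpha$ — this requires a small argument since restriction could in principle collapse $\alpha$ and $2\alpha$ together, but one uses that $\fg_\alpha = \fp_\alpha \oplus \fk_\alpha$ (equation \eqref{eqgapaka}) together with Proposition \ref{propRg}; (iii) the reflection $s_\alpha(v) = v - \frac{2\langle v,\alpha\rangle}{\langle\alpha,\alpha\rangle}\alpha$ preserves $R(\fg)$; (iv) the Cartan integers $\frac{2\langle\beta,\alpha\rangle}{\langle\alpha,\alpha\rangle} \in \bZ$ for all $\alpha,\beta \in R(\fg)$. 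For (iii) and (iv) the cleanest route is to realise that for each $\alpha \in R(\fg)$ one can choose an $\mathfrak{sl}_2$-triple $\{e_\alpha, f_\alpha, h_\alpha\}$ inside $\fg_\alpha \oplus \fg_{-\alpha} \oplus [\fg_\alpha,\fg_{-\alpha}]$ with $h_\alpha \in \sqrt{-1}\ft$ (using $\theta$-stability and the fact that $\alpha$ restricted to $\fa$ is independent data — here one should observe $\alpha|_{\fa}$ can be nonzero, so some care is needed; the correct statement uses that $\ft_{\bC}$ is itself a Cartan subalgebra of a suitable $\theta$-stable reductive subalgebra). Then $\ad(h_\alpha)$ acting on $\fg_\beta$ has integer eigenvalues, which gives integrality, and the $\mathfrak{sl}_2$-representation theory gives that applying the triple moves $\fg_\beta$ to $\fg_{s_\alpha\beta}$, yielding reflection-invariance.

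The main obstacle I anticipate is axiom (iii)/(iv) in the case where $\alpha \in R(\fp) \cap R(\fk)$, i.e.\ where $\dim\fg_\alpha = 2$ and the root space has both a $\fp$-part and a $\fk$-part; one must check the $\mathfrak{sl}_2$-triple can still be arranged with semisimple element in $\sqrt{-1}\ft$ and that the reflection argument is unaffected. A subtlety is that the relevant copy of $\mathfrak{sl}_2$ must be built from the \emph{complexified} root spaces and the element $h_\alpha$ pinned down by the condition $\alpha(h_\alpha) = 2$, which determines it in $\sqrt{-1}\ft$ only after checking $h_\alpha$ indeed lies there rather than in $\fa_{\bC}$ — this follows because $[\fg_\alpha, \fg_{-\alpha}]$ meets $\fh_{\bC}$ in the line $\bC h_\alpha$ and $\theta$-invariance of the whole configuration forces $h_\alpha \in \ft_{\bC} \cap \sqrt{-1}\ft$ (as $\alpha$ is real on $\sqrt{-1}\ft$). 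Given how standard this is, in practice I would expect the paper to simply cite \cite{Vogan-book} or \cite{Knappsemi}, and indeed the excerpt flags these as "well known" — so the honest proof is a one-line reference, with the above being the sketch one would fill in if pressed.
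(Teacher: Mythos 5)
You correctly anticipate that the paper offers no proof here --- it declares the three propositions ``well known,'' cites \cite{Vogan-book}, and moves on --- and your sketch is a reasonable filling-in. The route you propose, building an $\mathfrak{sl}_2$-triple with semisimple element in $\sqrt{-1}\ft$ and invoking $\mathfrak{sl}_2$-theory, is a legitimate alternative to the case reduction the authors gesture at. But the resolution you offer for the obstacle you yourself flag does not go through. Suppose $\alpha\in R(\fp)\cap R(\fk)$, so $\dim\fg_{\alpha}=2$ by Proposition~\ref{propRg}. In the root system of the fundamental Cartan $\fh_{\bC}=\fa_{\bC}\oplus\ft_{\bC}$, the space $\fg_{\alpha}$ splits as $\fg_{\tilde\alpha}\oplus\fg_{\theta\tilde\alpha}$ for a single $\fh$-root $\tilde\alpha$ with $\tilde\alpha|_{\fa}\neq 0$, and $[\fg_{\alpha},\fg_{-\alpha}]\cap\fh_{\bC}$ contains the two \emph{independent} coroots $h_{\tilde\alpha}$ and $\theta h_{\tilde\alpha}$, each of which has a nonzero $\fa$-component. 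So $[\fg_{\alpha},\fg_{-\alpha}]$ does not meet $\fh_{\bC}$ in a line, there is no single $h_{\alpha}$ pinned down, and your claim that ``$\theta$-invariance of the whole configuration forces $h_{\alpha}\in\ft_{\bC}$'' is exactly the sticking point rather than a consequence.

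The repair --- essentially what the authors do --- is to note that in this case $\alpha\in R(\fk)$ by \eqref{eqgapaka}, so one may take the canonical $\mathfrak{sl}_2$-triple \emph{inside} $\fk_{\bC}$ attached to the $\fk$-root $\alpha$: its semisimple element lies in $\ft_{\bC}$ automatically, and its adjoint action preserves $\fp_{\bC}$ and $\fk_{\bC}$ separately, so $s_{\alpha}$ preserves both $R(\fp)$ and $R(\fk)$. Equivalently, $s_{\alpha}\in W(\fk)$ lifts to $\Ad(k)$ for some $k\in N_{K}(T)$, which permutes the $T$-weight spaces in \eqref{eqkTwei} and \eqref{eqpTwei}. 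Integrality of the Cartan integers follows by the same dichotomy: if at least one of the two $\ft$-roots in play lifts to an $\fh$-root with vanishing $\fa$-component, reduce to the root system of $(\fg_{\bC},\fh_{\bC})$ together with the fact that $|\tilde\alpha|^{2}/|\alpha|^{2}\in\{1,2,4\}$; if both $\fa$-components are nonzero, both $\ft$-roots lie in $R(\fk)$ and one quotes that $R(\fk)$ is a root system. Once this case split replaces the single $\mathfrak{sl}_2$-triple in $\fg$ that does not exist, your plan closes.
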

	
Let $W(\fk), W\left(\mathfrak{g} \right)$ be the Weyl groups of $R(\fk), R\left(\mathfrak{g} \right)$. 	
If $\alpha \in 
R(\fg)$, denote by $s_{\alpha} $ the reflection on $ \sqrt{-1} \mathfrak{t}^{*} $ associated to $\alpha$.	
Then, $ W\left(\fk\right)$ and $ W\left(\mathfrak{g} \right)$ are generated respectively by the reflections $s_{\alpha}$ with $\alpha\in R(\fk)$ and $\alpha\in R(\fg)$.
Since $R(\fk)\subset R(\fg)$, we have  
	\begin{align}
		W(\fk)\subset W(\fg). 
	\end{align} 

\begin{prop}\label{propWgainv}
The function  $\alpha\in R(\fg)\to \dim \fg_{\alpha}$ is $W(\fg)$-invariant. 
\end{prop}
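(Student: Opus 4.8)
The plan is to prove this as a standard structural fact about $W(\fg)$-invariance of root-space dimensions, using Proposition \ref{propRg} as the input together with transitivity of the Weyl group on roots of each fixed length. First I would recall from Proposition \ref{propRg} that $\dim\fg_\alpha$ takes only the values $1$ and $2$, and that $\dim\fg_\alpha = 2$ precisely when $\alpha\in R(\fp)\cap R(\fk)$, i.e. when $\alpha$ is simultaneously a $\ft$-root of $\fp$ and of $\fk$. So the claim reduces to showing that the subset $R(\fp)\cap R(\fk)\subset R(\fg)$ is stable under the action of $W(\fg)$.

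The key point is that $\fa\oplus\ft$ is a $\theta$-invariant Cartan subalgebra, and $\theta$ acts on $\fg_\bC$ preserving each root space $\fg_\alpha$ (since $\theta$ fixes $\ft$ pointwise and acts by $-1$ on $\fa$, hence fixes $\sqrt{-1}\ft^*$ and the root-space grading indexed by $R(\fg)\subset\sqrt{-1}\ft^*$). Then $\fg_\alpha = \fp_\alpha\oplus\fk_\alpha$ is precisely the decomposition of $\fg_\alpha$ into the $(-1)$- and $(+1)$-eigenspaces of $\theta$, so $\dim\fg_\alpha = \dim\fp_\alpha + \dim\fk_\alpha$, with $\dim\fg_\alpha=2$ exactly when both summands are nonzero. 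Now take $\alpha\in R(\fg)$ and $w\in W(\fg)$; I must check $\dim\fg_{w\alpha}=\dim\fg_\alpha$. Since $W(\fg)$ is generated by the reflections $s_\beta$, $\beta\in R(\fg)$, it suffices to treat $w=s_\beta$. One argues via the reflection formula $s_\beta(\fk_\alpha)=\fk_{s_\beta\alpha}$ whenever the relevant bracket maps are isomorphisms: more robustly, one uses that for any abstract root system the Weyl group acts transitively on the set of roots of a given length, and that $\dim\fg_\alpha$ depends only on $|\alpha|^2$ because the ``long'' roots (those with $\dim\fg_\alpha=2$) form a single $W(\fg)$-orbit (they are exactly the roots in $R(\fp)\cap R(\fk)$, and these are characterized metrically once one checks that in each irreducible component the doubled roots are the long ones). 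Alternatively and more cleanly: $W(\fg)=W(K)\cdot$(something), but the simplest route is to invoke that $W(\fg)$ permutes $R(\fg)$ and preserves the inner product, combined with the fact that $\{\alpha : \dim\fg_\alpha=2\}$ is $W(\fg)$-invariant because it is defined by a metric/combinatorial condition invariant under $W(\fg)$.

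I expect the main obstacle to be making precise \emph{why} the set $\{\alpha\in R(\fg):\dim\fg_\alpha=2\}$ is $W(\fg)$-invariant without circularity, since a naive length argument can fail when $R(\fg)$ is not irreducible or when different irreducible factors have roots of coinciding lengths. The clean fix is the eigenspace argument above: $\theta\in\mathrm{Aut}(\fg)$ commutes with $\mathrm{ad}(t)$ for $t\in\ft$, hence preserves every $\fg_\alpha$ and acts there as an involution whose $\pm1$-eigenspaces are $\fp_\alpha,\fk_\alpha$; and since $\theta$ is an automorphism of $\fg$ fixing $\ft$, it normalizes $N_K(\ft)$ and commutes with the $W(\fg)$-action on $R(\fg)$ realized by $\mathrm{Ad}$ of Cartan-subalgebra normalizers (after complexification and using that elements of $\mathrm{Int}(\fg_\bC)$ representing $W(\fg)$ can be chosen $\theta$-equivariantly). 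Therefore conjugating the eigenspace decomposition $\fg_\alpha=\fp_\alpha\oplus\fk_\alpha$ by such a representative of $w$ gives $\fg_{w\alpha}=\fp_{w\alpha}\oplus\fk_{w\alpha}$ with $\dim\fp_{w\alpha}=\dim\fp_\alpha$ and $\dim\fk_{w\alpha}=\dim\fk_\alpha$, whence $\dim\fg_{w\alpha}=\dim\fg_\alpha$. Since this is ``well known'' (Vogan-book), I would in practice state it with a brief indication along these lines and a citation rather than a full verification.
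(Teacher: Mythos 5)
Your reduction is correct: by Proposition~\ref{propRg} the claim is equivalent to $W(\fg)$-stability of the set $R(\fp)\cap R(\fk)$, and the identification of $\fg_{\alpha}=\fp_{\alpha}\oplus\fk_{\alpha}$ as the $\theta$-eigenspace decomposition (since $\theta$ fixes $\ft$ and hence preserves each $\ft$-weight space) is also correct. You rightly discard the length-based transitivity argument. But the replacement you propose --- that ``elements of $\mathrm{Int}(\fg_{\bC})$ representing $W(\fg)$ can be chosen $\theta$-equivariantly'' --- is asserted, not proved, and this is precisely the substance of the proposition. For a reflection $s_{\gamma}$ with $\gamma\in R(\fk)$ the representative $\mathrm{Ad}(k)$, $k\in N_{K}(T)$, is visibly $\theta$-fixed; but when $\gamma\in R(\fp)\setminus R(\fk)$, the corresponding root $\alpha=(0,\gamma)\in R(\fg,\fh)$ is noncompact imaginary (its root vectors lie in $\fp_{\bC}$, so $\theta X_{\pm\alpha}=-X_{\pm\alpha}$), the Cayley element $n_{\alpha}=\exp(X_{\alpha})\exp(-X_{-\alpha})\exp(X_{\alpha})$ satisfies $\theta(n_{\alpha})=n_{\alpha}^{-1}$, and one must then check case by case whether $n_{\alpha}^{2}=\exp(\pi i H_{\alpha})$ is trivial in $\mathrm{Int}(\fg_{\bC})$ to get a $\theta$-fixed representative. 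In short, your ``clean fix'' quietly smuggles in the very dichotomy (imaginary versus not) that the proof must establish. Incidentally, $\theta$-equivariance is also \emph{more} than you need: any $\phi\in\mathrm{Int}(\fg_{\bC})$ that normalises $\ft_{\bC}$ and induces $w$ on $\ft_{0}$ already gives $\phi(\fg_{\gamma'})=\fg_{w\gamma'}$ and hence equality of dimensions, with no need to track $\fk_{\gamma'}$ and $\fp_{\gamma'}$ separately; but proving even this weaker realisability requires the same case split.

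The paper proves the statement quite differently. It works on the Cartan subalgebra $\fh=\fa\oplus\ft$ and uses the root system $R(\fg,\fh)$: by Proposition~\ref{proproot0}, the root $(\beta,\gamma)\in R(\fg,\fh)$ lying over $\gamma\in R(\fg)$ is unique up to the sign of $\beta$, and $\dim\fg_{\gamma}$ is $1$ or $2$ according to whether $\beta=0$ or $\beta\neq0$. Given $\gamma_{1},\gamma_{2}\in R(\fg)$, if $\beta_{1}=0$ the explicit reflection formula
\begin{align}
s_{(0,\gamma_{1})}(\beta_{2},\gamma_{2})=(\beta_{2},s_{\gamma_{1}}\gamma_{2})
\end{align}
shows the $\fa$-component of the root over $s_{\gamma_{1}}\gamma_{2}$ is still $\beta_{2}$, so the dimension is unchanged --- no group element is needed, just root-system arithmetic in $R(\fg,\fh)$. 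If $\beta_{1}\neq0$ then $\gamma_{1}\in R(\fp)\cap R(\fk)$, so $s_{\gamma_{1}}\in W(\fk)$ is realised by $\mathrm{Ad}(k_{1})$ for $k_{1}\in N_{K}(T)$, which preserves each of $R(\fp)$ and $R(\fk)$. To complete your argument along your own lines you would need to supply exactly this dichotomy and the attendant verification, which is where all the work lies.
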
 	
	
 \begin{re}
	 The root system $R(\fg)$ is possibly  non reduced. Indeed, if  
	 $\fg=\mathfrak{sl}(3,\bR)$ and  
	$\fk=\mathfrak{so}(3,\bR)$, we can take $\ft=\mathfrak{so}(2,\bR)$. If 
	$R(\fk)=\{\pm \alpha\}$, then   $R(\fp)=\{\pm \alpha, \pm 2\alpha\}$. In 
	particular, 
	\begin{align}
	R(\fg)=	\{\pm \alpha, \pm 2\alpha\}.
	\end{align} 
\end{re}

\begin{re}
	The sets $R(\fk), R(\fg)$ are   abstract root systems on their 
	own real spans. These two real spans do not always coincide. Indeed, if $\fg=\mathfrak{sl}(2,\bR)$ and 	
	$\fk=\mathfrak{so}(2,\bR)$, we have $\ft=\fk$, so $R(\fk)$ is 
	empty which spans $\{0\}$. However,  $R(\fg)$ spans $\sqrt{-1}\ft^{*}$.
\end{re}

In the  sequel, we will denote 
	\begin{align}
		\ft_{0}=\sqrt{-1}\ft.
	\end{align} 
Then, $ \left(\mathfrak{t}_{0}, B_{|\mathfrak{t}_{0} } \right)$ is a Euclidean space.	
We will identify $\ft_{0}$ with $\ft_{0}^{*}=\sqrt{-1}\ft^{*}$ via the metric $B_{|\ft_{0}}$. 

\subsection{Positive root systems}\label{sPRS}
Let $R_{+}(\fk)\subset R(\fk)$ be a positive root system of $R(\fk)$. 
Let $C_{+}(\fk)\subset \ft_{0}$ be the closed positive Weyl chamber of $R_{+}(\fk)$. Then,
	\begin{align}\label{eqCk}
		C_{+}(\fk)=\left\{Y_{0}^{}\in \ft_{0}: \text{ for all 
		}\alpha\in R_{+}(\fk), \left\langle \alpha, 
		Y_{0}^{}\right\rangle\g 0 \right\}. 
	\end{align} 
We  denote by ${\rm Int}\(C_{+}(\fk)\)$ the interior of $C_{+}(\fk)$, which is an open cone. 

	The group $W(\fk)$ acts on $\ft_{0}$, so that $C_{+}(\fk)$ is a 
	fundamental domain. 
We have 
	\begin{align}\label{eqt0wk}
		\ft_{0}=\bigcup_{w\in W(\fk)} w^{-1}C_{+}(\fk),
	\end{align} 
where the intersection of two different closed Weyl chambers is negligible. 

Set 
	\begin{align}
		\varrho^{\fk}=\frac{1}{2}\sum_{\alpha\in R_{+}(\fk)}\alpha\in 	\ft_{0}. 
	\end{align} 
The $\pi$-function for $R_{+}(\fk)$  is  a real polynomial on 
$\ft_{0}$  defined  for $Y_{0}^{}\in \ft_{0}$ by 
\begin{align}\label{eqpikh}
\pi^{\fk}\(Y_{0}^{}\)=\prod_{\alpha\in 
R_{+}(\fk)}\left\<\alpha,Y_{0}^{}\right\>.	
\end{align}

%
If $w\in W(\fk)$, then $w$ acts  isometrically on $\ft_{0}$. Set
	\begin{align}
		\e_{w}=\det \(w_{|\ft_{0}}\)\in \{\pm1\}. 
	\end{align} 
Classically,
\begin{align}\label{eqew}
		\pi^{\fk}\(wY_{0}^{}\)=\e_{w}\pi^{\fk}\(Y_{0}^{}\). 
\end{align} 

We choose a compatible positive root system  $R_{+}(\fg)\subset R(\fg)$ for $R(\fg)$ in the sense that
\begin{align}\label{eqRkR+}
	R_{+}(\fk)\subset R_{+}(\fg). 
\end{align}
It is worth noting that a compatible positive root system $R_+(\fg)$ always exists.
	
Let $C_{+}(\fg)\subset \ft_{0}$ be the positive closed Weyl Chambre of $R_{+}(\fg)$, i.e.,
	\begin{align}\label{eqCg}
		C_{+}(\fg)=\left\{Y_{0}^{}\in \ft_{0}: \text{ for all 
		}\alpha\in R_{+}(\fg), \left\langle \alpha, 	Y_{0}^{}\right\rangle\g 0 \right\}.
	\end{align} 
As in \eqref{eqt0wk}, we have a decomposition 
	\begin{align}\label{eqt0wg}
		\ft_{0}=\bigcup_{w\in W(\fg)} w^{-1}C_{+}(\fg),
	\end{align} 
and the intersection of two different closed Weyl chambers is negligible.

By \eqref{eqCk},  \eqref{eqRkR+}, and \eqref{eqCg}, we have 
	\begin{align}
		C_{+}(\fg)\subset C_{+}(\fk). 
	\end{align} 
	
Set  	
	\begin{align}
		W(\fg, \fk)=\left\{w\in W(\fg): w^{-1}C_{+}(\fg)\subset C_{+}(\fk)\right\}. 
	\end{align} 	
By \eqref{eqt0wk} and \eqref{eqt0wg}, we have 
	\begin{align}
		C_{+}(\fk)=\bigcup_{w\in W(\fg, \fk)}w^{-1}C_{+}(\fg). 
	\end{align} 	
If $w\in W(\fg, \fk)$, using $w^{-1}C_{+}(\fg)\subset 
C_{+}(\fk)$, we see that
$R_{+}(\fk)$ is nonnegative on $w^{-1}C_{+}(\fg)$, so that 
	\begin{align}\label{eqwR+kR+g}
		wR_{+}(\fk)\subset R_{+}(\fg). 
	\end{align} 
And therefore, 
	\begin{align}\label{eqwR+kR+g1}
		R_{+}(\fk)=R(\fk) \cap w^{-1}R_{+}(\fg). 
	\end{align} 
	
In general, $W(\fg, \fk)$ is not a group. 
It is a system of representatives of the quotient space $W(\fg)/W(\fk)$. 
Indeed, if $w\in W(\fg)$, there is a unique $w_{2}\in W(\fk)$ sends $w^{-1}C_{+}(\fg)$ into $C_{+}(\fk)$, and a unique $w_{1}\in W(\fg, 
\fk)$ such that $w_{1}^{-1}C_{+}(\fg)=w_{2}w^{-1}C_{+}(\fg)$. This 
way gives a unique decomposition 
	\begin{align}\label{eqwww12}
		&w=w_{1}w_{2}, & w_{1}\in W(\fg,\fk), &&w_{2}\in 
		W(\fk).
	\end{align}

	
	


Put
	\begin{align}\label{eqCgchek}
\check C_{+}(\fg)	=\left \{u\in \ft_{0}: \text{ for all } Y_{0}^{}\in 
C_{+}(\fg), \left\langle u,Y_{0}^{}\right\rangle \g 0\right\}. 
	\end{align} 
 
The following two propositions are elementary consequences of the positivity of the cones and Chevalley's Lemma \cite[Proposition 2.72]{KnappLie}. 
We omit the proof.  
\begin{prop}\label{Propv2v2}
	If $u\in C_+(\fg)$ and if $v\in u+ \check C_+(\fg)$, then 
	\begin{align}
		|v|\g \left|u\right|,
	\end{align} 
	where the equality holds if and only if $u=v$. 
\end{prop}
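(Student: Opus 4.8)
The statement is an elementary consequence of the fact that, by \eqref{eqCgchek}, the cone $\check C_{+}(\fg)$ is exactly the dual cone of $C_{+}(\fg)$. The plan is to write $v=u+c$ with $c\in\check C_{+}(\fg)$ and to expand the squared norm:
\begin{align*}
|v|^{2}=|u|^{2}+2\<u,c\>+|c|^{2}.
\end{align*}
The only point that needs a word of justification is the sign of the middle term. Since $u\in C_{+}(\fg)$ and $c\in\check C_{+}(\fg)$, applying the defining inequality of $\check C_{+}(\fg)$ in \eqref{eqCgchek} with $Y_{0}^{}=u$ gives $\<u,c\>\g 0$. Therefore $|v|^{2}\g|u|^{2}+|c|^{2}\g|u|^{2}$, which is the asserted inequality.

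For the equality case, if $|v|=|u|$, then the displayed chain of inequalities forces $|c|^{2}=0$, whence $c=0$ and $v=u$; the reverse implication is trivial. This disposes of the statement in full.

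There is essentially no obstacle to overcome here: the entire content reduces to the observation that a vector in a closed convex cone pairs nonnegatively with every element of its dual cone, together with the strict positivity of $|\cdot|$. In particular, the argument uses neither Chevalley's Lemma (which is needed only for the companion proposition) nor any special feature of $R(\fg)$, such as the possibility that $R(\fg)$ is non-reduced or that it fails to span $\ft_{0}$.
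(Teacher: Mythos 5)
Your argument is correct and coincides with the one the paper itself sketches (and leaves in a commented-out block): expand $|v|^{2}=|u|^{2}+2\langle u,v-u\rangle+|v-u|^{2}$ and use the nonnegativity of $\langle u,v-u\rangle$ that comes directly from \eqref{eqCgchek}. You are also right that, although the paper's preamble attributes both this proposition and Proposition~\ref{propuvC} to ``positivity of the cones and Chevalley's Lemma,'' Chevalley's Lemma plays no role here and is only needed for the companion result.
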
 	

\begin{prop}\label{propuvC}	
If $u,v\in C_{+}(\fg)$, and if $w\in W(\fg)$, then   
	\begin{align}\label{eqw-}
		\<u,v\>\g \<u,wv\>.
	\end{align} 
The	equality holds if and only if there exist $w',w''\in W(\fg)$ such that 
\begin{align}\label{eqwww}
	&w=w'w'',	&w'u=u,&&w''v=v. 
	\end{align} 
In this case, $ w^{\prime } $ is generated by the reflection $s_\alpha$ where $ \alpha\in R\left(\mathfrak{g} \right)$ such that 
 		\begin{align}
 		\<\alpha,u\>=0,
 	\end{align} 
 	and $w''$ is generated by $s_{\alpha}$ where $ \alpha\in R\left(\mathfrak{g} \right)$ such that 
		\begin{align}
  \<\alpha,v\>=0. 
 	\end{align} 
\end{prop}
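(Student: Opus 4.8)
The statement is a sharpening of Proposition \ref{Propv2v2}, and in fact both follow from the same geometric picture: $C_+(\fg)$ is a fundamental domain for the action of $W(\fg)$ on $\ft_0$, and for $u$ in this domain the point of the orbit $W(\fg)u$ closest to any given $v\in C_+(\fg)$ is $u$ itself. The plan is to establish the inequality \eqref{eqw-} first, then analyse the equality case. For the inequality, I would write $|v-u|^2 = |v|^2 - 2\<u,v\> + |u|^2$ and $|v-wu|^2 = |v|^2 - 2\<v,wu\> + |wu|^2 = |v|^2 - 2\<v,wu\> + |u|^2$ (using that $w$ acts isometrically), so that \eqref{eqw-} is equivalent to $|v - wu|\ge |v-u|$. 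Since $v, u \in C_+(\fg)$ and $C_+(\fg)$ is a fundamental domain, the standard fact (a consequence of Chevalley's Lemma, cf.\ \cite[Proposition 2.72]{KnappLie}) that $|v - wu| \ge |v - u|$ for all $w\in W(\fg)$ when $u,v$ lie in the same closed chamber gives the inequality. Alternatively, and more self-containedly, write $w$ as a product of simple reflections $s_{\beta_1}\cdots s_{\beta_\ell}$ of minimal length relative to the walls bounding $C_+(\fg)$ and induct on $\ell$, using $\<s_\beta u, v\> = \<u,v\> - \<\beta^\vee,u\>\<\beta,v\>$ and the fact that both $\<\beta^\vee, u\>$ and $\<\beta, v\>$ are $\ge 0$ for $\beta$ a positive root, hence each elementary step does not decrease $\<u,v\>$ minus... — here one has to be slightly careful about the non-reduced case and the fact that $\ft_0$ is not spanned by $R(\fg)$, but the component of $u$ (resp.\ $v$) in the span of the roots is what matters and that component still lies in the corresponding chamber.

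\textbf{Equality case.} For the ``if'' direction: if $w = w'w''$ with $w'u = u$ and $w''v = v$, then $\<u, wv\> = \<u, w'w''v\> = \<(w')^{-1}u, w''v\> = \<u, v\>$, giving equality. For the ``only if'' direction, suppose $\<u,v\> = \<u,wv\>$. Set $z = wv$; then $z \in W(\fg)v$ and $|z| = |v|$, and equality in \eqref{eqw-} says $\<u, v - z\> = 0$. Now $v$ is the representative of the orbit $W(\fg)v$ lying in $C_+(\fg)$; I claim $z \in v + \check C_+(\fg)$, i.e.\ $\<z - v, Y_0\> \ge 0$ for all $Y_0 \in C_+(\fg)$ — this is precisely the dual formulation of ``$v$ is the closest orbit point to the chamber'', and it is the geometric heart of the matter. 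Granting it, we have $v - z \in -\check C_+(\fg)$ while $u \in C_+(\fg)$, so $\<u, v-z\>\le 0$ with equality forcing (by Proposition \ref{Propv2v2} applied appropriately, or directly by Chevalley's Lemma) that $v - z$ is orthogonal to $u$ and that $z$ is obtained from $v$ by a sequence of reflections $s_\alpha$ in walls with $\<\alpha, v\> = 0$; call the resulting element $w'' \in W(\fg)$, so $z = w''^{-1}v$ wait — rather arrange $w'' v = v$ is wrong since $z\ne v$ in general; instead $z = (w'')v$ with $w''$ a product of such $s_\alpha$. Hmm: the cleanest route is to use Chevalley's Lemma in the form: the stabilizer of $v$ in $W(\fg)$ is generated by the reflections $s_\alpha$ with $\<\alpha,v\>=0$, and similarly for $u$; and $\<u, v - wv\> = 0$ with the chamber positivity forces $wv$ to differ from $v$ only by the stabilizer of $u$ acting — then one extracts $w'$ from $W(\fg)(u)$ and $w''$ from $W(\fg)(v)$ and checks $w = w'w''$ works.

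\textbf{Main obstacle.} The genuinely delicate point is the last paragraph's combinatorics in the possibly non-reduced case and in the presence of the extra flat directions of $\ft_0$ not spanned by $R(\fg)$: one must verify that decomposing $w$ as $w'w''$ with $w'\in W(\fg)(u)$, $w''\in W(\fg)(v)$ is actually possible whenever equality holds, i.e.\ that $w$ lies in the set product $W(\fg)(u)\cdot W(\fg)(v)$. The right tool is the theory of the double-coset decomposition $W(\fg)(u)\backslash W(\fg)/W(\fg)(v)$ together with the length-additivity for minimal double-coset representatives, combined with the observation that the only $w$ achieving equality is the identity double coset. I would argue: pick $w$ of minimal length in its coset $W(\fg)(u)\,w\,W(\fg)(v)$; minimality means $w^{-1}$ maps the simple roots in the facet of $u$ to positive roots and $w$ maps those in the facet of $v$ to positive roots, which combined with $\<u,v\> = \<u,wv\>$ and the positivity $\<\alpha,u\>\ge 0$, $\<\alpha,v\>\ge0$ forces $w = 1$ as the minimal representative; hence $w\in W(\fg)(u)\cdot W(\fg)(v)$, and the explicit description of the generators of these stabilizers by Chevalley's Lemma finishes the proof. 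Since the paper says this proposition is an elementary consequence of cone positivity and Chevalley's Lemma and the proof is omitted, I would present only this skeleton and refer to \cite[Proposition 2.72]{KnappLie} for the stabilizer description.
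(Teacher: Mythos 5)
There are genuine gaps. (1) The geometric claim that $z := wv$ satisfies $z - v \in \check C_+(\fg)$ has the sign reversed: the proposition's inequality, applied with $Y_0$ ranging over $C_+(\fg)$, says $\<Y_0, v - wv\> \ge 0$, i.e.\ $v - wv \in \check C_+(\fg)$. Your version would give $\<u, v-wv\>\le 0$ for all $u\in C_+(\fg)$, which together with the inequality being proved would force $\<u,v\>=\<u,wv\>$ for \emph{every} $w$ --- an absurdity. (2) The ``self-contained'' induction on a reduced word $w = s_{\beta_1}\cdots s_{\beta_\ell}$ does not work as stated: after peeling off one factor, the vector $s_{\beta_\ell}v$ (or $s_{\beta_1}u$, peeling from the left) generally leaves $C_+(\fg)$, so the sign conditions $\<\beta,\cdot\>\ge 0$ are lost and the claimed term-by-term monotonicity fails. (3) The double-coset skeleton is the right idea, but it is incomplete: to force the minimal representative $\bar w$ of $W(\fg)(u)\,w\,W(\fg)(v)$ to be $1$ you need the inequality to be \emph{strict} for minimal $\bar w\neq 1$, and establishing that requires precisely the root-by-root sign analysis that the self-contained route was supposed to supply (for $\bar w$ minimal and $\bar w\alpha<0$, one must check that $\alpha$ lies outside the facet of $v$ and $-\bar w\alpha$ outside that of $u$, so that both factors are strictly nonzero).

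The paper's own (omitted, but present in the source) proof handles all of this by a single induction on $\ell(w)$. Choose a simple root $\alpha$ with $w\alpha<0$, so that $\ell(ws_\alpha)=\ell(w)-1$, and use the identity $\<u, wv\> = \<u, ws_\alpha v\> + \frac{2\<v,\alpha\>}{\<\alpha,\alpha\>}\<u, w\alpha\>$. Since $\alpha>0$, $w\alpha<0$, and $u,v$ both lie in $C_+(\fg)$, one has $\<v,\alpha\>\ge 0$ and $\<u,w\alpha\>\le 0$, so the correction term is nonpositive --- and crucially the two chamber vectors $u$, $v$ never move, so the induction hypothesis applies directly to $ws_\alpha$. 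Equality propagates down the induction and additionally forces $\<v,\alpha\>\<u,w\alpha\>=0$, i.e.\ $s_\alpha v = v$ or $s_{w\alpha}u = u$; in the first case $s_\alpha$ is absorbed into $w''$, in the second $s_{w\alpha}$ is absorbed into $w'$ (using $ws_\alpha = s_{w\alpha}w$), and the decomposition $w=w'w''$ with the stated generation properties is assembled along the way. This is the ingredient missing from your sketch, and it is also exactly what closes the gap in your double-coset variant.
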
 	

Set 
	\begin{align}\label{eqrhog}
		\varrho^{\fg}=\frac{1}{2}\sum_{\alpha\in 
		R_{+}(\fg)}\dim \fg_{\alpha}\cdot \alpha
        = \frac{1}{2}\sum_{\alpha\in 
		R_{+}(\fp)} \alpha+\frac{1}{2}\sum_{\alpha\in 
		R_{+}(\fk)} \alpha \in \ft_{0}. 
	\end{align}

	\begin{re}\label{rerhott}
    Since a root $ \alpha $ of $ \left(\mathfrak{g}, \mathfrak{h} \right)$ has non zero restriction on $ \mathfrak{t} $, and since the vector $ \left(\pm \alpha_{|\mathfrak{a} }, \pm \alpha_{\mathfrak{t} } \right)$ are root of  $ \left(\mathfrak{g}, \mathfrak{h} \right)$, we see that $ R_{+} \left(\fg\right)$ induces a system of positive roots $ R_{+} \left(\mathfrak{g}, \mathfrak{h} \right)$ for $ \left(\mathfrak{g}, \mathfrak{h} \right)$.
		Also, the associated $\varrho$-vector are 
		\begin{align}\label{eq:bgr3}
		 \left(0, \varrho^{\mathfrak{g} } \right).
		\end{align}
		By \cite[(2.47)]{BSh22}, \eqref{eqcg}, and \eqref{eq:bgr3}, we have 
		\begin{align}\label{eq:zh3k}
		 c_{\mathfrak{g} } = \left| \varrho^{\mathfrak{g} } \right|^{2}. 
		\end{align}
		
\end{re}

If $w\in W(\fg)$, 
$w^{-1}R_{+}(\fg)$ is another positive root system. By Proposition 
\ref{propWgainv},  the corresponding $\varrho^{\fg}$ vector is 
$w^{-1}\varrho^{\fg}$.

\subsection{Subsets of $\Delta_{0}^{\fg}$}\label{sPSA}
Recall that  $R_{+}(\fg)\subset R(\fg)$ is a positive root system of $R(\fg)$, and that  $\Delta_{0}^{\fg}\subset R_{+}(\fg)$ is the associated system of simple roots. Notation and convention of this section follows \cite[Section 1.2]{LabesseWaldspruger13}. 

Let $\ft_{0}^{\fg}$ be the subspace of $\ft_{0}$ 
spanned by $\Delta_{0}^{\fg}$. 
Let $\ft_{\fg}$ be the orthogonal complement of $\ft_{0}^{\fg}$ in $\ft_{0}$, so that 
\begin{align}\label{eqft012}
\ft_{0}=\ft_{0}^{\fg}\oplus \ft_{\fg}. 
\end{align}
Clearly, $ \ft_{\mathfrak{g} } $ is just the compact part of the centre of $ \mathfrak{g} $.

Let $\Delta_{0}^{1}\subset \Delta_{0}^{\fg}$ be a subset of $\Delta_{0}^{\fg}$. 
Let $\ft_{0}^{1}$ be the subspace of $\ft_{0}^{\fg}$ spanned by  $\Delta_{0}^{1}$. 
Let $\ft_{1}^{\fg}$ be the orthogonal complement  $\ft_{0}^{1}$ in $\ft^{\fg}_{0}$, so that 
\begin{align}\label{eqft012g}
\ft_{0}^{\fg}=\ft_{0}^{1}\oplus \ft_{1}^\fg.
\end{align}

Let $\Delta_{0}^{2}\subset \Delta_{0}^{\fg}$ be another subset of $\Delta_{0}^{\fg}$ such that $\Delta_{0}^{1}\subset \Delta_{0}^{2}$. 
Then, $\ft_{0}^{1}\subset \ft_{0}^{2}$. Let $\ft_{1}^{2}\subset \ft_{0}^{2}$ be the orthogonal complement of $\ft_{0}^{1}$ in $ \ft_{0}^{2}$.
Then, we have the orthogonal decomposition 
\begin{align}\label{eqt012orth}
\ft_{0}^{\fg}=\ft_{0}^{1}\oplus \ft_{1}^{2}\oplus \ft_{2}^{\fg}. 
\end{align} 
Let $P_{0}^{1}$, $P_{1}^{2}$, and $P_{2}^{\fg}$ be respectively the corresponding orthogonal projections onto the above three spaces.

Note that $\Delta_{0}^{\fg}$ forms a basis of $\ft_{0}^{\fg}$. 
Denote by $\check\Delta_{0}^{\fg}$ the basis of $\ft_{0}^{\fg}$ which is dual to $\Delta_{0}^{\fg}$ with respect to the Euclidean metric. 
If $\alpha\in \Delta_{0}^{\fg}$, the corresponding element in $\check \Delta_{0}^{\fg}$ will be 
denoted by $\omega_{\alpha}$, so that for $\alpha,\beta\in 
\Delta_{0}^{\fg}$, 
\begin{align}
\left\langle \alpha,\omega_{\beta}\right\rangle = 
\begin{cases}
	1, &\text{if } \alpha=\beta,\\
	0, &\text{otherwise.}
\end{cases}
\end{align}

\begin{defin}\label{defDelta12}
	Set 
\begin{align}
&\Delta_{1}^{2}=\left\{P_{1}^{2}\alpha: \alpha\in \Delta_{0}^{2}\backslash 
\Delta_{0}^{1}\right\}, &\check\Delta_{1}^{2}=\left\{P_{1}^{2}\omega_\alpha: \alpha\in \Delta_{0}^{2}\backslash 
\Delta_{0}^{1}\right\}. 
\end{align} 
In particular, we define $\check\Delta_{0}^{1}, \Delta_{1}^{\fg}, 
\check \Delta_{1}^{\fg}$ by considering  the pair $\(\varnothing, 
\Delta_{0}^{1}\)$ or $\(\Delta_{0}^{1}, \Delta_{0}^{\fg}\).$
\end{defin} 

A simple argument in linear algebra shows that $\Delta_{1}^{2}$ and 
$\check\Delta_{1}^{2}$ are two bases of $\ft_{1}^{2}$. 

 Since 
$\ft_{0}^{2}$ is generated by $\Delta_{0}^{2}$, if $\alpha\in 
\Delta_{0}^{2}$, we have 
	\begin{align}\label{eqan01}
P_{0}^{2}\alpha=\alpha. 
	\end{align} 	
 If $\alpha\in \Delta_{0}^{\fg}\backslash \Delta_{0}^{1}$, then $\omega_\alpha$ is orthogonal to $\Delta_{0}^{1}$, and therefore 
contains in $\ft_{1}^{\fg}$, so that 
	\begin{align}\label{eqan02}
P_{1}^{\fg}\omega_\alpha=\omega_\alpha.		
	\end{align} 
By \eqref{eqan01} and \eqref{eqan02}, if $\alpha,\beta\in 
\Delta_{0}^{2}\backslash \Delta_{0}^{1}$, we have 
\begin{align}
\left\langle P_{1}^{2}\alpha, P_{1}^{2}\omega_{\beta}\right\rangle =\left\langle P_{0}^{2}\alpha, 
P_{1}^{\fg}\omega_{\beta}\right\rangle =\left\langle \alpha, \omega_{\beta}\right\rangle. 
\end{align} 
Thus, $\Delta_{1}^{2}$ and $\check\Delta_{1}^{2}$  are dual to 
each other. 

We have the classical well-known results \cite[Lemmas 1.2.4-1.2.6]{LabesseWaldspruger13}.
\begin{prop}\label{propobact}
	Given $\Delta_{0}^{1}, \Delta_{0}^{2}\subset \Delta_{0}^{\fg}$ 
	such that $\Delta_{0}^{1}\subset  \Delta_{0}^{2}$, then 
	 $\Delta_{1}^{2}$  forms an obtuse basis of $\ft_{1}^{2}$, i.e., if $\alpha, 
	 \alpha'\in \Delta_{1}^{2}$ with $\alpha\neq\alpha'$, then
	 	\begin{align}\label{eqaa-0}
		\left\langle \alpha,\alpha'\right\rangle\l 0.  
	\end{align} 
	 Moreover, $\check\Delta_{1}^{2}$ forms an  acute basis of 
	 $\ft_{1}^{2}$, i.e., if 
	 $\omega,\omega'\in \check\Delta_{1}^{2}$, then 
	 	\begin{align}
		\left\langle \omega,\omega'\right\rangle \g 0. 
	\end{align} 
\end{prop}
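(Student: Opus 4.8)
The plan is to reduce the whole statement to a single elementary fact about obtuse bases. Suppose $v_{1},\dots,v_{r}$ is a basis of a Euclidean space with $\langle v_{i},v_{j}\rangle\le 0$ for $i\ne j$; then the Gram matrix $G=(\langle v_{i},v_{j}\rangle)$ is symmetric, positive definite, and has non-positive off-diagonal entries, hence is a Stieltjes matrix and $G^{-1}$ has nonnegative entries. One sees this by writing $G=D-B$ with $D$ the (positive) diagonal part and $B\ge 0$ with zero diagonal: positive definiteness forces the spectral radius of $D^{-1}B$ to be $<1$, so the Neumann series $(I-D^{-1}B)^{-1}=\sum_{k\ge 0}(D^{-1}B)^{k}$ has nonnegative entries, and $G^{-1}=(I-D^{-1}B)^{-1}D^{-1}$ has nonnegative entries. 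I will use two consequences: (i) the dual basis $w_{1},\dots,w_{r}$ has Gram matrix $G^{-1}$, hence is acute, $\langle w_{i},w_{j}\rangle\ge 0$; and (ii) $w_{i}=\sum_{j}(G^{-1})_{ij}v_{j}$ has nonnegative coordinates in the basis $(v_{j})$. Only this $M$-matrix fact is not bookkeeping; the possible non-reducedness of $R(\fg)$ is irrelevant, since all that matters is that $\Delta_{0}^{\fg}$, and hence each subset $\Delta_{0}^{1}\subset\Delta_{0}^{2}$, is a linearly independent family with pairwise non-positive inner products, i.e. an obtuse basis of its span.

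For the obtuseness of $\Delta_{1}^{2}$, I would fix distinct $\alpha,\alpha'\in\Delta_{0}^{2}\setminus\Delta_{0}^{1}$. Since $\alpha,\alpha'\in\ft_{0}^{2}=\ft_{0}^{1}\oplus\ft_{1}^{2}$, one has $P_{1}^{2}\alpha=\alpha-P_{0}^{1}\alpha$ and likewise for $\alpha'$, and expanding with $P_{0}^{1}$ self-adjoint and $P_{0}^{1}\alpha,P_{0}^{1}\alpha'\in\ft_{0}^{1}$ gives $\langle P_{1}^{2}\alpha,P_{1}^{2}\alpha'\rangle=\langle\alpha,\alpha'\rangle-\langle P_{0}^{1}\alpha,P_{0}^{1}\alpha'\rangle$. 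Here $\langle\alpha,\alpha'\rangle\le 0$ since $\alpha\ne\alpha'$ are simple roots, so it remains to check $\langle P_{0}^{1}\alpha,P_{0}^{1}\alpha'\rangle\ge 0$. For this, note $P_{0}^{1}\alpha\in\ft_{0}^{1}$ with $\langle P_{0}^{1}\alpha,\beta\rangle=\langle\alpha,\beta\rangle\le 0$ for all $\beta\in\Delta_{0}^{1}$ (as $\alpha\notin\Delta_{0}^{1}$): writing $P_{0}^{1}\alpha$ in the basis of $\ft_{0}^{1}$ dual to $\Delta_{0}^{1}$ shows its coordinates there are $\le 0$, and then (ii) applied to the obtuse basis $\Delta_{0}^{1}$ shows $P_{0}^{1}\alpha=\sum_{\gamma\in\Delta_{0}^{1}}e_{\gamma}\gamma$ with all $e_{\gamma}\le 0$; similarly for $\alpha'$. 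Then $\langle P_{0}^{1}\alpha,P_{0}^{1}\alpha'\rangle=\langle P_{0}^{1}\alpha,\alpha'\rangle=\sum_{\gamma}e_{\gamma}\langle\gamma,\alpha'\rangle\ge 0$, being a sum of products of pairs of non-positive numbers ($\gamma\ne\alpha'$ are again simple roots). Hence $\langle P_{1}^{2}\alpha,P_{1}^{2}\alpha'\rangle\le 0$; together with the already-noted fact that $\Delta_{1}^{2}$ is a basis of $\ft_{1}^{2}$, this gives the first assertion.

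For the acuteness of $\check\Delta_{1}^{2}$, I would use that $\Delta_{1}^{2}$ is now known to be an obtuse basis of $\ft_{1}^{2}$ whose dual basis in $\ft_{1}^{2}$ is exactly $\check\Delta_{1}^{2}$ (recorded just before the Proposition), and apply (i) to conclude $\langle\omega,\omega'\rangle\ge 0$ for all $\omega,\omega'\in\check\Delta_{1}^{2}$. The substance of the argument is the $M$-matrix lemma of the first paragraph; the remaining steps amount to tracking orthogonal projections and the bases in which vectors are expanded, which I expect to be the only place where care is needed but where there is no real obstacle. This is \cite[Lemmes 1.2.4--1.2.6]{LabesseWaldspruger13}.
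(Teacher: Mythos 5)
Your argument is correct, and it is essentially the standard proof from the reference the paper itself defers to (\cite[Lemmes 1.2.4--1.2.6]{LabesseWaldspruger13}): the paper omits its own proof and simply cites that source. Your reduction to the Stieltjes/$M$-matrix fact, the verification that $\langle P_{1}^{2}\alpha,P_{1}^{2}\alpha'\rangle=\langle\alpha,\alpha'\rangle-\langle P_{0}^{1}\alpha,P_{0}^{1}\alpha'\rangle$, and the sign bookkeeping via the non-negative coordinates of $P_{0}^{1}\alpha$ in the obtuse basis $\Delta_{0}^{1}$ are all sound, and your remark that only linear independence and pairwise non-positivity of $\Delta_{0}^{\fg}$ (not reducedness of $R(\fg)$) is used is accurate.
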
 

\begin{defin}
	Set 
		\begin{align}\label{eqC12}
			\begin{aligned}
		C^{2}_{1}&=\left\{Y\in \ft_{1}^{2}: \text{ for all }\alpha\in 
		\Delta_{1}^{2},  \left\langle 
		\alpha,Y\right\rangle \g 0\right\},\\
		\check C^{2}_{1}&=\left\{Y\in \ft_{1}^{2}: \text{ for all 
		}\omega\in 
		\check\Delta_{1}^{2},  \left\langle 
		\omega,Y\right\rangle \g 0\right\}.
		\end{aligned}
	\end{align} 
\end{defin} 

Then, $C^{2}_{1}$ is an acute cone generated by nonnegative  linear 
combination of $\check\Delta_{1}^{2}$, and $\check C^{2}_{1}$ is 
an obtuse cone generated by nonnegative  linear combination of $\Delta_{1}^{2}$.
By \eqref{eqan01} and \eqref{eqan02}, we have
	\begin{align}\label{eqC12g0}
		&\check C^{1}_{0}\subset \check C^{2}_{0},& 	
		C_{2}^{\fg}\subset C_{1}^{\fg}.
	\end{align} 
By \eqref{eqCg}, \eqref{eqCgchek}, and \eqref{eqC12}, we have 
	\begin{align}\label{eqCgot}
		&C_{+}(\fg)=C^{\fg}_{0}\times \ft_{\fg},&\check 
		C_{+}(\fg)=\check  C^{\fg}_{0}. 
	\end{align}

If $Y_{0}^{\fg}=\sum_{\alpha\in \Delta_{0}^{\fg}}y^{\alpha}\omega_{\alpha}\in 
\ft_{0}^{\fg}$, write 
\begin{align}\label{eqy123}
& Y_{1}=\sum_{\alpha\in \Delta_{0}^{1}} 
	y^{\alpha}\omega_{\alpha}, &Y_{2}=	\sum_{\alpha\in 
	\Delta_{0}^{2}\setminus \Delta_{0}^{1}}y^{\alpha} 
	{\omega_{\alpha}}, &&Y_{3}=\sum_{\alpha\in \Delta_{0}^{\fg}\setminus 
	\Delta_{2}^{\fg}}y^{\alpha} {\omega_{\alpha}}, 
\end{align} 
so that 
	\begin{align}\label{eqY0123g}
			Y_{0}^{\fg}=Y_{1}+Y_{2}+Y_{3}. 
	\end{align} 		
Using  $\(Y_{0}^{1}, Y_{1}^{2}, 
Y_{2}^{\fg}\)=Y_{0}^{\fg}$ and \eqref{eqY0123g}, we get a linear map 
	\begin{align}\label{eqpyFG}
\(Y_{0}^{1}, Y_{1}^{2}, Y_{2}^{\fg}\)\to 
\(P_{0}^{1}Y_{1},P_{1}^{2}Y_{2}, Y_{3}\)
\end{align} 
on $\ft_{0}^{1}\oplus \ft_{1}^{2}\oplus 
	\ft_{2}^{\fg} $.

Let $dY_{0}^{\fg}, dY_{0}^{1},dY_{1}^{2}$,  and  $dY_{2}^{\fg}$ be 
the Euclidean volumes on $\ft_{0}^{\fg}$, $\ft_{0}^{1}$,  $\ft_{1}^{2}$, and $\ft_{2}^{\fg}$. Then, 
	\begin{align}
		dY_{0}^{\fg}=  dY_{0}^{1} dY_{1}^{2}dY_{2}^{\fg}. 
	\end{align}

\begin{prop}\label{propintVVp}
	The linear map \eqref{eqpyFG}	is a volume preserving 
	isomorphism of $\ft_{0}^{1}\oplus \ft_{1}^{2}\oplus 
	\ft_{2}^{\fg}$. 
\end{prop}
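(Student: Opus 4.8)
The plan is to unravel the definitions. The map \eqref{eqpyFG} sends $(Y_0^1, Y_1^2, Y_2^\fg)$, living in $\ft_0^1 \oplus \ft_1^2 \oplus \ft_2^\fg$, to $(P_0^1 Y_1, P_1^2 Y_2, Y_3)$, where $Y_1, Y_2, Y_3$ are extracted from $Y_0^\fg = Y_0^1 + Y_1^2 + Y_2^\fg$ via its expansion in the basis $\check\Delta_0^\fg = \{\omega_\alpha\}$. So the first step is to write everything out in the coordinates $y^\alpha$, $\alpha \in \Delta_0^\fg$: the input datum $(Y_0^1, Y_1^2, Y_2^\fg)$ is determined by the $y^\alpha$ split according to whether $\alpha \in \Delta_0^1$, $\alpha \in \Delta_0^2 \setminus \Delta_0^1$, or $\alpha \in \Delta_0^\fg \setminus \Delta_0^2$, while the output $(P_0^1 Y_1, P_1^2 Y_2, Y_3)$ repackages the same three blocks of coordinates. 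Linearity is then immediate, and bijectivity follows because the map respects the block decomposition and is, block by block, built from the projections $P_0^1$, $P_1^2$ together with the identity on the third block.

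The key point to check is that each block map is a \emph{volume-preserving} isomorphism of the relevant Euclidean space onto itself, after which the claim follows since $dY_0^\fg = dY_0^1\, dY_1^2\, dY_2^\fg$ factors along the orthogonal decomposition \eqref{eqt012orth}. For the third block there is nothing to do: $Y_3 = \sum_{\alpha \in \Delta_0^\fg \setminus \Delta_0^2} y^\alpha \omega_\alpha$ is already an element of $\ft_2^\fg$ and the assignment $Y_2^\fg \mapsto Y_3$ is the identity once one identifies $\ft_2^\fg$ with itself via the basis $\check\Delta_2^\fg$ versus the coordinates $y^\alpha$. So the real content is the first two blocks: one must show that $Y_1 = \sum_{\alpha \in \Delta_0^1} y^\alpha \omega_\alpha \mapsto P_0^1 Y_1$ is a volume-preserving automorphism of $\ft_0^1$, and similarly $Y_2 = \sum_{\alpha \in \Delta_0^2 \setminus \Delta_0^1} y^\alpha \omega_\alpha \mapsto P_1^2 Y_2$ is one of $\ft_1^2$.

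Here the mechanism is the same in both cases, so I will describe it for the pair $(\Delta_0^1, \Delta_0^\fg)$, i.e. the map $Y_1 \mapsto P_1^{\fg} Y_1$ wait --- let me phrase it uniformly. Consider the pair $(\emptyset, \Delta_0^1)$ and the map $\sum_{\alpha \in \Delta_0^1} y^\alpha \omega_\alpha \mapsto \sum_{\alpha \in \Delta_0^1} y^\alpha\, (P_0^1 \omega_\alpha)$, i.e. the linear map of $\ft_0^1$ carrying the basis $\{\omega_\alpha : \alpha \in \Delta_0^1\}$ of $\ft_0^\fg$-elements (projected into $\ft_0^1$) onto the dual basis $\check\Delta_0^1 = \{P_0^1 \omega_\alpha\}$ of $\ft_0^1$. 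Its determinant with respect to a fixed orthonormal basis of $\ft_0^1$ is the ratio of the two Gram determinants, or more directly: the matrix of this map in the basis $\Delta_0^1$ (on the source, thought of as a basis of $\ft_0^1$ via $\alpha \mapsto P_0^1\alpha = \alpha$) versus the basis $\check\Delta_0^1$ (on the target) is, by the duality relation $\langle P_0^1\alpha, P_0^1\omega_\beta\rangle = \langle \alpha, \omega_\beta\rangle = \delta_{\alpha\beta}$ established just before Proposition~\ref{propobact}, the identity matrix; hence the Jacobian determinant of the map expressed in orthonormal coordinates has absolute value $1$. The same computation applied to $(\Delta_0^1, \Delta_0^2)$ handles the second block, using that $\Delta_1^2$ and $\check\Delta_1^2$ are mutually dual bases of $\ft_1^2$. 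Multiplying the three Jacobians gives total Jacobian $1$, which is the assertion.

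I expect the only genuine obstacle to be bookkeeping: keeping straight which vectors are being regarded as elements of $\ft_0^\fg$ versus their projections into $\ft_0^1$, $\ft_1^2$, or $\ft_2^\fg$, and using \eqref{eqan01}--\eqref{eqan02} at the right moments to see that the mutual-duality of $\Delta_j^k$ and $\check\Delta_j^k$ is exactly what forces each block to have unit Jacobian. There is no hard analysis or geometry here; it is a linear-algebra computation whose cleanest phrasing is "the map sends one basis to its dual basis, hence has the same Gram determinant on both sides, hence is a Euclidean volume isometry up to sign on each orthogonal summand."
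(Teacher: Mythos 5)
Your proposal contains two genuine errors, one in the Jacobian computation for the blocks and one in the structure of the map itself.

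The computational error: you claim that because the matrix of the block map written with respect to the basis $\Delta_0^1$ on the source and the dual basis $\check\Delta_0^1$ on the target is the identity matrix, the Jacobian in orthonormal coordinates is $1$. This inference is false. \emph{Any} linear map carrying a basis $\{e_i\}$ to a basis $\{f_i\}$ has identity matrix when written with $\{e_i\}$ on the source and $\{f_i\}$ on the target; its orthonormal Jacobian is $|\det f|/|\det e|$, which by your own ``ratio of Gram determinants'' observation equals $\sqrt{\det G_f/\det G_e}$. For a basis and its dual one has $G_{\check\Delta_0^1} = G_{\Delta_0^1}^{-1}$, so this ratio is $1/\det G_{\Delta_0^1}$, not $1$, unless the simple roots happen to be orthonormal. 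The block maps you propose (``carry $\Delta_0^1$ to $\check\Delta_0^1$'') are not volume-preserving, and the computation does not go through.

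The structural error: the map does not respect the block decomposition of $\ft_0^1 \oplus \ft_1^2 \oplus \ft_2^{\fg}$. Since $y^\alpha = \left\langle Y_0^{\fg}, \alpha \right\rangle$ and a root $\alpha \in \Delta_0^{\fg}\setminus\Delta_0^2$ is in general not orthogonal to $\ft_0^2$, the third-slot output $Y_3 = \sum_{\alpha\notin\Delta_0^2} y^\alpha\omega_\alpha$ depends on all of $Y_0^1$, $Y_1^2$, $Y_2^{\fg}$, not on $Y_2^{\fg}$ alone; similarly $P_1^2 Y_2$ depends on $Y_0^1$ as well as $Y_1^2$. The map is block \emph{lower-triangular} with respect to the flag $\ft_0^1 \subset \ft_0^1\oplus\ft_1^2 \subset \ft_0^{\fg}$, not block-diagonal. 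Your reduction ``the claim follows since $dY_0^{\fg}$ factors and each block preserves volume'' silently presupposes block-diagonality, which fails.

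What is actually true, and what the paper uses, is that the diagonal blocks of this triangular map are literally the identity. By \eqref{eqan02}, $\omega_\alpha \perp \ft_0^1$ for $\alpha\notin\Delta_0^1$, so $P_0^1 Y_1 = P_0^1 Y_0^{\fg} = Y_0^1$: the first diagonal block is the identity on the nose. Freezing $Y_0^1 = 0$ one gets $P_1^2 Y_2 = Y_1^2$ by the same dual-basis expansion $\sum_\alpha \left\langle Y_1^2, P_1^2\alpha\right\rangle P_1^2\omega_\alpha = Y_1^2$, and likewise for the third slot. The paper's proof implements this as a composition of two unipotent shears: first subtract the $Y_0^1$-dependent contributions $P_1^2 Y_1$ and $P_2^{\fg}Y_1$ from the second and third slots, then subtract the $Y_1^2$-dependent contribution $P_2^{\fg}Y_2$ from the third. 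Each shear has Jacobian $1$. Your dual-basis intuition is close to the right ingredient, but it has to be applied to the actual diagonal blocks $Y_0^1 \mapsto P_0^1 Y_1$, etc., where it yields the identity rather than a change of basis $\Delta_0^1\to\check\Delta_0^1$, and it must be combined with triangularity rather than a false block-diagonality.
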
 
\begin{proof}
	By \eqref{eqan02}, we have 
	\begin{align}
&P_{0}^{1}Y_{0}^{\fg}=P_{0}^{1}Y_{1}, 
&	P_{1}^{2}Y_{0}^{\fg}=P_{1}^{2}(Y_{1}+Y_{2}),  
&&	P_{2}^{\fg}Y_{0}^{\fg}=P_{2}^{\fg}(Y_{1}+Y_{2})+Y_{3}. 
\end{align} 
Therefore,  we have 
	\begin{align}\label{eqY123id123}
	\(Y_{0}^{1}, Y_{1}^{2}, Y_{2}^{\fg}\)=\(P_{0}^{1}Y_{1}, 
	P_{1}^{2}(Y_{1}+Y_{2}), P_{2}^{\fg}(Y_{1}+Y_{2})+Y_{3}\). 
\end{align} 

Since $\check \Delta_{0}^{1}$ forms a basis of $\ft_{0}^{1}$, we see 
that $P_{0}^{1}Y_{1}\to P_{1}^{2}Y_{1}$, $P_{0}^{1}Y_{1}\to P_{2}^{\fg}Y_{1}$ are well-defined linear maps. 
By \eqref{eqY123id123}, we see that 
	\begin{align}
	\(Y_{0}^{1}, Y_{1}^{2}, Y_{2}^{\fg}\)\to \(P_{0}^{1}Y_{1}, P_{1}^{2}Y_{2}, P_{2}^{\fg}Y_{2}+Y_{3}\)
\end{align} 
is a volume preserving isomorphism of $\ft_{0}^{1}\oplus \ft_{1}^{2}\oplus 
	\ft_{2}^{\fg}$.  By a similar arguments using $P_{1}^{2}Y_{2}\to P_{2}^{\fg}Y_{2}$, we get our proposition. 
\end{proof} 

\subsection{Levi subalgebras and parabolic subalgebras}\label{sLeviPara}
Let us recall the theory of standard $\theta$-invariant parabolic subalgebras, which is an analogue of the real standard parabolic subalgebras.  

We use the notation of the previous section. Recall that 
we have fixed $\Delta_{0}^{\fg}$. Let  
$\Delta_{0}^{1}\subset \Delta_{0}^{\fg}$ be a subset of 
$\Delta_{0}^{\fg}$.  

\begin{defin}
Let $\mathfrak l^{1}\subset \fg$ be the centraliser of 
$\ft_{1}^{\fg}$ in $\fg$. Let $K^{1}\subset K$ be the centraliser of 
$\ft_{1}^{\fg}$ in $K$. 
\end{defin} 

Then,  $\mathfrak l^{1}$ is a $\theta$-invariant real Lie subalgebra of $\fg$, which is reductive by \cite[Corollary 4.61 (b)]{KnappCohomology}.
We use the superscript $1$ to emphasis that the map $\Delta^{1}_{0}\to \mathfrak l^{1}$ is order preserving with respect 
to inclusion. 
	
By \cite[Corollary 4.51]{KnappLie}, $K^{1}$ is a connected compact Lie 
subgroup of $K$. Denote by $\fk^{1}$ the Lie algebra of $K^{1}$. 
Write 
	\begin{align}
		\mathfrak l^{1}=\fp^{1}\oplus \fk^{1}
	\end{align} 
the  Cartan decomposition of $\fl^{1}.$

Clearly, $\ft\subset \mathfrak k^{1}$ is the Cartan subalgebra. The 
root systems of $\mathfrak l^{1}$ and $\mathfrak k^{1}$ with respect 
to $\ft$ are given by 
	\begin{align}
&	R\(\mathfrak l^{1}\) =\left\{\alpha\in R(\fg): \alpha_{|\ft_{1}^{\fg}}= 
0\right\}, &	R\(\fk^{1}\)=\left\{\alpha\in R(\fk): 
\alpha_{|\ft_{1}^{\fg}}=
0\right\}. 
	\end{align} 
Let $W\(\mathfrak l^{1}\), W\(\fk^{1}\)$ be the corresponding Weyl 
groups.  Sometimes, we will also use the notations   
\begin{align}\label{eq:RW01}
&R_{0}^{1}=R\(\mathfrak l^{1}\),& W_{0}^{1}=W\(\mathfrak l^{1}\).  
\end{align} 

By Chevalley's Lemma\footnote{In \cite[Proposition 2.72]{KnappLie}, Chevalley's 
Lemma is stated for reduced root systems. Using   \cite[Lemma 
2.91]{KnappLie}, it is easy to extend the lemma to  nonreduced root systems.}, 
$W\(\mathfrak l^{1}\)\subset W(\fg)$ is the centraliser  of $\ft_{1}^{\fg}$ in 
$W(\fg)$, and $W\(\fk^{1}\)\subset W(\fk)$ is the centraliser of 
$\ft_{1}^{\fg}$ in $W(\fk)$, so that 
\begin{align}
W\(\fk^{1}\)=W\(\mathfrak l^{1}\)\cap W(\fk).
\end{align}

Put 
	\begin{align}
		&R_{+}\(\mathfrak l^{1}\)=R_{}\(\mathfrak l^{1}\)\cap 
		R_{+}(\fg), 	&	R_{+}\(\mathfrak k^{1}\)=R_{}\(\mathfrak 
		k^{1}\)\cap R_{+}(\fg). 
	\end{align} 
Then, 
\begin{align}
R_{+}\(\mathfrak k^{1}\)\subset R_{+}\(\mathfrak l^{1}\), 
\end{align} 
so that $R_{+}\(\mathfrak l^{1}\), R_{+}\(\mathfrak k^{1}\)$ are compatible
positive root systems.

Moreover, $\Delta^{1}_{0}$ is the  system of  simple roots of 
$R_{+}\(\mathfrak l^{1}\)$.   By \cite[Proposition 2.62]{KnappLie}, 
$W\(\mathfrak l^{1}\)$ is the subgroup of $W(\fg)$ generated by the 
reflections $s_{\alpha}$ with  $\alpha\in \Delta^{1}_{0}$. 

\begin{prop}\label{propw12l1}
If $w\in W\(\mathfrak l^{1}\)$ such that $w=w_{1}w_{2}$ with $w_{1}\in W(\fg, 
\fk)$ and $w_{2}\in W(\fk)$, then
\begin{align}
&w_{1}\in W\(\fl^{1},\fk^{1}\),&w_{2}\in  W\(\mathfrak k^{1}\).
\end{align} 
\end{prop}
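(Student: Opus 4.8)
The plan is to read off $w_{2}$ and $w_{1}$ from the geometry behind the decomposition \eqref{eqwww12}: by construction $w_{2}\in W(\fk)$ is the unique element with $w_{2}w^{-1}C_{+}(\fg)\subset C_{+}(\fk)$, and then $w_{1}=ww_{2}^{-1}\in W(\fg,\fk)$. Recall also that $W(\fl^{1})$ is the centraliser of $\ft_{1}^{\fg}$ in $W(\fg)$, so every element of $W(\fl^{1})$ fixes $\ft_{1}^{\fg}$ pointwise. I would establish the two assertions in turn: first $w_{2}\in W(\fk^{1})$, then $w_{1}\in W(\fl^{1},\fk^{1})$.

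For the first (and main) step I would introduce the test vector $Y_{\star}=\sum_{\alpha\in\Delta_{0}^{\fg}\setminus\Delta_{0}^{1}}\omega_{\alpha}$, which lies in $\ft_{1}^{\fg}$ by \eqref{eqan02}. Expanding a positive root $\gamma\in R_{+}(\fg)$ in the simple roots $\Delta_{0}^{\fg}$ and using that $\check\Delta_{0}^{\fg}$ is dual to $\Delta_{0}^{\fg}$, one checks that $\<\gamma,Y_{\star}\>$ is the sum of the coefficients of the simple roots outside $\Delta_{0}^{1}$; hence $\<\gamma,Y_{\star}\>\g 0$, with equality precisely when $\gamma$ is supported on $\Delta_{0}^{1}$, i.e.\ when $\gamma\in R_{+}(\fl^{1})$. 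In particular $Y_{\star}\in C_{+}(\fg)\subset C_{+}(\fk)$, and, by Chevalley's Lemma, the stabiliser of $Y_{\star}$ in $W(\fk)$ is generated by the reflections $s_{\alpha}$ with $\alpha\in R(\fk)$ and $\<\alpha,Y_{\star}\>=0$, that is, by those with $\alpha\in R(\fk^{1})$, so it equals $W(\fk^{1})$. Now since $w\in W(\fl^{1})$ fixes $\ft_{1}^{\fg}$ pointwise, $w^{-1}Y_{\star}=Y_{\star}$, hence $Y_{\star}\in w^{-1}C_{+}(\fg)$ and therefore $w_{2}Y_{\star}\in w_{2}w^{-1}C_{+}(\fg)\subset C_{+}(\fk)$. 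As $Y_{\star}$ and $w_{2}Y_{\star}$ both lie in the fundamental domain $C_{+}(\fk)$ they coincide, whence $w_{2}\in W(\fk^{1})$.

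For the second step, $w_{2}\in W(\fk^{1})\subset W(\fl^{1})$ and $w\in W(\fl^{1})$ force $w_{1}=ww_{2}^{-1}\in W(\fl^{1})$, so $w_{1}$ permutes $R(\fl^{1})$; combined with $w_{1}R_{+}(\fk)\subset R_{+}(\fg)$ (valid since $w_{1}\in W(\fg,\fk)$, by \eqref{eqwR+kR+g}) this gives, for every $\alpha\in R_{+}(\fk^{1})\subset R_{+}(\fk)$, that $w_{1}\alpha\in R(\fl^{1})\cap R_{+}(\fg)=R_{+}(\fl^{1})$. Thus $w_{1}R_{+}(\fk^{1})\subset R_{+}(\fl^{1})$; since $R_{+}(\fk^{1})\subset R_{+}(\fl^{1})$ are compatible, for $Y\in C_{+}(\fl^{1})$ and $\alpha\in R_{+}(\fk^{1})$ we get $\<\alpha,w_{1}^{-1}Y\>=\<w_{1}\alpha,Y\>\g 0$, i.e.\ $w_{1}^{-1}C_{+}(\fl^{1})\subset C_{+}(\fk^{1})$, which is precisely the condition defining $W(\fl^{1},\fk^{1})$ (the analogue of $W(\fg,\fk)$ for the pair $(\fl^{1},\fk^{1})$).

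The heart of the matter is the first step: a priori $w_{2}$ only lives in the large group $W(\fk)$, and the problem is to force it into the subgroup $W(\fk^{1})$. The device that makes this work is the vector $Y_{\star}$, which is simultaneously fixed by all of $W(\fl^{1})$ and has $W(\fk)$-stabiliser exactly $W(\fk^{1})$; the chamber condition characterising $w_{2}$ then traps $w_{2}Y_{\star}$ in the fundamental domain together with $Y_{\star}$. An alternative route via reduced expressions and length additivity in $W(\fk)\subset W(\fg)$ is conceivable, but it looks fiddlier, since the length functions of $W(\fk^{1})$, $W(\fk)$ and $W(\fg)$ need not restrict to one another; the chamber argument sidesteps this entirely.
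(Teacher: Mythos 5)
Your proof is correct, and the first (harder) step is essentially the paper's argument with a concrete choice of test vector: you take $Y_{\star}=\sum_{\alpha\in\Delta_{0}^{\fg}\setminus\Delta_{0}^{1}}\omega_{\alpha}$, whereas the paper takes an arbitrary $u\in\mathrm{Int}(C_{1}^{\fg})$, but the chain of deductions — the vector lies in $C_{+}(\fg)\subset C_{+}(\fk)$, is fixed by $w$, lands in $C_{+}(\fk)$ again after applying $w_{2}$ because $w_{1}\in W(\fg,\fk)$, and its $W(\fk)$-stabiliser is $W(\fk^{1})$ by Chevalley — is identical.

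For the second step the routes diverge a little. The paper decomposes $C_{+}(\fl^{1})=C_{0}^{1}\times\ft_{1}$, disposes of the $\ft_{1}$-factor trivially, and then checks $\langle w_{1}^{-1}P_{0}^{1}\omega_{\alpha},\beta\rangle=\langle w_{1}^{-1}\omega_{\alpha},\beta\rangle\geq 0$ generator by generator, using $P_{0}^{1}\beta=\beta$ and the fact that $w_{1}$ is the identity on $\ft_{1}^{\fg}$. You instead first show $w_{1}\in W(\fl^{1})$ (so $w_{1}$ permutes $R(\fl^{1})$), combine this with $w_{1}R_{+}(\fk)\subset R_{+}(\fg)$ to get $w_{1}R_{+}(\fk^{1})\subset R(\fl^{1})\cap R_{+}(\fg)=R_{+}(\fl^{1})$, and then the chamber inclusion $w_{1}^{-1}C_{+}(\fl^{1})\subset C_{+}(\fk^{1})$ falls out by duality. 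This is a slightly more root-theoretic phrasing of the same fact — it avoids the explicit projections $P_{0}^{1}$ and the generator-by-generator check — and is arguably a touch cleaner, but the content is the same. Both versions are valid.
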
 
\begin{proof}
	Firstly, let us show $w_{2}\in W\(\fk^{1}\)$. Take $u\in {\rm 
	Int}\(C_{1}^{\fg}\)$. Since $w\in W\(\fl^{1}\)$, we have $w u=u$. Since 
	$w=w_{1}w_{2}$, we have
	\begin{align}\label{eqw2w1u}
w_{2}u=w_{1}^{-1}u. 
\end{align} 
Since $u\in  {\rm Int}\(C_{1}^{\fg}\)\subset C_{+}(\fg)$, by 
definition of $W(\fg, \fk)$, 	we have 
$w_{1}^{-1}u\in C_{+}(\fk)$. By \eqref{eqw2w1u}, we get 
\begin{align}\label{eqw2w1u1}
w_{2}u\in C_{+}(\fk). 
\end{align} 

By $u\in C_{+}(\fg) \subset C_{+}(\fk)$, and by \eqref{eqw2w1u1}, we get 
\begin{align}
w_{2}u=u.
\end{align} 
By Chevalley's Lemma \cite[Proposition 2.72]{KnappLie}, we get $w_{2}\in 
W\(\fk^{1}\)$.

Then, $w_{1}=ww_{2}^{-1}\in W\(\fl^{1}\)$. It remains to show 
\begin{align}
w_{1}^{-1}C_{+}\(\fl^{1}\)\subset C_{+}\(\fk^{1}\). 
\end{align} 
Or equivalently, for any $Y\in C_{+}\(\fl^{1}\)$ and $\beta\in 
R_{+}\(\fk^{1}\)$, we need to show 
\begin{align}\label{eq277}
\left\<w_{1}^{-1}Y, \beta\right\>\g 0. 
\end{align} 

By \eqref{eqCgot}, we have 
\begin{align}
C_{+}\(\fl^{1}\)=C_{0}^{1}\times \ft_{1}. 
\end{align} 
Since $w_{1}$ acts as identity on $\ft_{1}$, and 
$\beta$ vanishes on $\ft_{1}$, we see that \eqref{eq277} holds when $Y\in 
\ft_{1}$.  Note that $C_{0}^{1}$ is given by the nonnegative linear 
combinations of  $\check \Delta_{0}^{1}$. If 
$\alpha\in \Delta_{0}^{1}$, using again that $w_{1}$ acts as identity on $\ft_{1}$, and using $P_{0}^{1}\beta=\beta$, we have 
\begin{align}\label{eq279}
\left\<w_{1}^{-1} P_{0}^{1}\omega_\alpha, \beta\right\>= \left\< 
w_{1}^{-1}\omega_\alpha, \beta\right\>.
\end{align} 
The right hand side of \eqref{eq279} is nonnegative since $w_{1}\in W(\fg, \fk)$. 
This finishes the proof of \eqref{eq277} and completes the proof of our proposition. 
\end{proof} 

\begin{re}
By Proposition \ref{propw12l1}, if $w\in W\(\fl^{1}\)$, the decomposition  $w=w_{1}w_{2}$ in \eqref{eqwww12} is just the one associated to smaller Lie algebras $\fl^{1},\fk^{1}$ and to $R_{+}\(\fl^{1}\)$, $R_{+}\(\fk^{1}\)$.
By the uniqueness of the decomposition, we have 
\begin{align}
W(\fg,\fk)\cap W\(\fl^{1}\)=W\(\fl^{1},\fk^{1}\). 
\end{align} 
\end{re}

Set 
	\begin{align}\label{eq:R1fg}
		&R_{1}=R(\fg)\backslash R^{1}_{0},&R_{1,+}=R_{+}(\fg)\backslash 
		R^{1}_{0,+}. 
	\end{align} 
Since an element in $R(\fg)$ is a nonnegative or nonpositive linear combination of $\Delta_{0}^{\fg}$, for a given $ \alpha\in R_{1} $, we observe that $\alpha\in R_{1, +}$ if and only if $\alpha$ takes positive values on the interior of $C_{1}^{\fg}$.

\begin{defin}
Put 
	\begin{align}\label{equ12}
		\fu_{1}=\bigoplus_{\alpha\in R_{1,+}}\fg_{\alpha}.
	\end{align} 	
\end{defin} 	
Then, $\fu_{1}$ is a $\theta$-invariant complex Lie algebra, which is 
 nilpotent by \cite[Corollary 4.61]{KnappCohomology}. Here, we use the subscript $1$ to emphasize the map $\Delta^{1}_{0}\to \mathfrak \fu_{1}$ is order reversing with respect 
to inclusion.

\begin{defin}
Put 
	\begin{align}\label{eqq1}
		\fq^{1}=\mathfrak l_{\bC}^{1}\oplus \fu_{1}. 
	\end{align} 	
\end{defin} 
	
Then, $\fq^{1}$ is a $\theta$-invariant complex parabolic subalgebra 
of $\fg_{\bC}$. The map $\Delta^{1}_{0}\to \fq^{1}$ is order preserving as indicated by the superscript. 

The parabolic subalgebra $\fq^{1}$ constructed in this way will be 
called standard parabolic $\theta$-invariant subalgebra of 
$\fg_{\bC}$. It is equivalent to the one constructed in \cite[Proposition 4.76]{KnappCohomology}.

If  $\Delta^{1}_{0}=\varnothing$, the corresponding object will be  given 
the superscript or subscript $0$. Then, 
	\begin{align}\label{eqflq0}
&		\fl^{0}=\fa\oplus\ft, & \fu_{0}=\bigoplus _{\alpha\in R_{+}(\fg)} 
\fg_{\alpha}, && \fq^{0}= \fl^{0}_{\mathbf{C} } \oplus \fu_{0}. 
	\end{align} 

Let $\Delta_{0}^{2}$ be another subset of $\Delta_{0}^{\fg}$, so that 
	\begin{align}
		\fq^{2}=\mathfrak l ^{2}_{\bC}\oplus \fu_{2}. 
	\end{align} 
Assume $\Delta_{0}^{1}\subset \Delta_{0}^{2}$. Then, 
	\begin{align}
		\fq^{1}\subset \fq^{2}. 
	\end{align}

Consider  $\mathfrak l^{2}$ as the ambient reductive Lie algebra with 
system of simple root $\Delta_{0}^{2}$.  We  can define a standard $\theta$-invariant parabolic subalgebra of $\mathfrak l^{2}$ associated to $\Delta_{0}^{1}$, 
	\begin{align}\label{eqqst}
\mathfrak q_{*}=	\mathfrak l^{1}_{\bC}\oplus \fu_{1}^{2}. 
	\end{align}
	
Applying \eqref{equ12} with $\fg$ replaced by $\mathfrak l^{2}$, we obtain 
	\begin{align}\label{equu12}
		\fu_{1}^{2}= \bigoplus_{\alpha\in R^{2}_{0,+}\backslash 
		R^{1}_{0,+}} 
		\fg_{\alpha}. 
	\end{align}
By \eqref{equ12} and \eqref{equu12}, we get 
	\begin{align}\label{equ112}
\fu_{1}= \fu_{1}^{2}\oplus \fu_{2}. 		
	\end{align} 
	By \eqref{eqq1}, \eqref{eqqst}, and \eqref{equ112},  we have 
	\begin{align}
		\mathfrak q^{1}=\mathfrak q_{*}\oplus \fu_{2}. 
	\end{align}

From the above observations, it is clear that, given $\fq^{2}$, the map 
	\begin{align}\label{eqq1qs1}
		\fq^{1}\to \fq_{*}
	\end{align} 
establishes a bijection between the set of standard $\theta$-invariant parabolic subalgebras of $\fg_{\bC}$ contained in $\mathfrak q^{2}$ and the set of the standard $\theta$-invariant parabolic subalgebras of $\mathfrak l^{2}_{\mathbf{C} } $. 
The inverse map is given by 
	\begin{align}\label{eqq1qs2}
		\fq_{*}\to \fq_{*}\oplus \fu_{2}. 
	\end{align} 


%

\subsection{A subgroup of  $K^{1}$}\label{sSubK1}

Let $\Delta_{0}^{1}\subset \Delta_{0}^{\fg}$. Then, $\sqrt{-1}\ft_{1}\subset \fk^{1}$. Let  
$\fk_{s}^{1}$ be the orthogonal complement of $\sqrt{-1}\ft_{1}$ in $ \fk^{1}$ with respect to $B$, so that 
	\begin{align}
		\fk^{1}=\fk^{1}_{s}\oplus \sqrt{-1}\ft_{1}. 
	\end{align} 
Then,  $\fk^{1}_{s}$ is a Lie subalgebra of $\fk^{1}$, and $\sqrt{-1}\ft_{0}^{1}\subset \fk^{1}_{s}$ is a Cartan subalgebra of $\fk^{1}_{s}$, so 
that we have the root decomposition 
	\begin{align}
		\fk^{1}_{s\bC}=\ft_{0\bC}^{1}\oplus \bigoplus_{\alpha\in 
		R\(\fk^{1}\)}\fk_{\alpha}. 
	\end{align} 
Since $\ft_{0}^{1}$ is generated by  $R\(\fk^{1}\)$, we know that 
$\fk^{1}_{s}$ is a semisimple Lie algebra and $\sqrt{-1}\ft_{1}$ is the 
center  of $\fk^{1}$. 

Let $K^{1}_{s}, T^{1}_{0}, T_{1}\subset K^{1}$ be the Lie subgroups of $K^{1}$ 
associated to the Lie algebras $\fk^{1}_{s}, \sqrt{-1}\ft_{0}^{1}, 
\sqrt{-1}\ft_{1}$. By \cite[Theorem 4.29]{KnappLie}, $K^{1}_{s}$ is 
compact. Moreover, $T^{1}_{0}$ is a maximal torus of $K^{1}_{s}$, and 
$T_{1}$ is  the connected component of the center of $K^{1}$ that contains the identity. 

Let $\fm^{1}$  be the orthogonal subspace to 
$\sqrt{-1}\ft_{1}$ in $\fl^{1}$ with respect to $B$, so that 
	\begin{align}\label{eq:l1m1t1}
		\fl^{1}=\fm^{1}\oplus \sqrt{-1}\ft_{1}. 
	\end{align} 
As before, $\fm^{1}$ is a Lie subalgebra of $\fl^{1}$. 
It is easy to see that the compact component of the centre of $\fl^{1}$ is $\sqrt{-1}\ft_{1}$, and $\fm^{1}$ is the direct sum of the semisimple part of $\fl^{1}$ with the noncompact component of the centre of $\fl^{1}$. 
In particular, $\fm^{1}$ is reductive with the Cartan decomposition 
	\begin{align}\label{eqmfks}
		\fm^{1}=\fp^{1}\oplus \fk^{1}_{s}. 
	\end{align} 

Clearly, $R\(\fk^{1}_{s}\)=R\(\fk^{1}\),$ $ R\(\fm^{1}\)=R\(\fl^{1}\)$. 
Take 
	\begin{align}
		&R_{+}\(\fk^{1}_{s}\)=R_{+}(\fk^{1}),& R_{+}\(\fm^{1}\)=R_{+}(\fl^{1}).  
	\end{align} 
	
\begin{prop}
We have 
	\begin{align}\label{eqk1rho}
		&\varrho^{\fk^{1}_{s}}=\varrho^{\fk}{}_{|\ft_{0}^{1}},&  
		\varrho^{\fm^{1}}=\varrho^{\fg}{}_{|\ft_{0}^{1}}. 
	\end{align} 
\end{prop}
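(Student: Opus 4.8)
The statement asserts that the $\varrho$-vectors of the semisimple subgroup $\fk^1_s$ and the Levi factor $\fm^1$ are obtained by restricting $\varrho^{\fk}$ and $\varrho^{\fg}$ to $\ft_0^1$. The natural approach is to compute both sides directly from the definitions, using the decomposition of root systems already established in Section \ref{sSubK1}.

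First I would recall that $R\bigl(\fk^1_s\bigr)=R\bigl(\fk^1\bigr)$ consists exactly of those roots $\alpha\in R(\fk)$ that vanish on $\ft_1^{\fg}$, and that, by the choice $R_+\bigl(\fk^1_s\bigr)=R_+(\fk^1)=R(\fk^1)\cap R_+(\fk)$, the positive system for $\fk^1_s$ is the restriction of $R_+(\fk)$. Thus
\begin{align}\label{eqpropprf1}
\varrho^{\fk^1_s}=\frac12\sum_{\alpha\in R_+(\fk^1)}\alpha.
\end{align}
On the other hand, $\varrho^{\fk}=\frac12\sum_{\alpha\in R_+(\fk)}\alpha$, and its restriction to $\ft_0^1$ is $\frac12\sum_{\alpha\in R_+(\fk)}\alpha_{|\ft_0^1}$. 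The key point is that the roots in $R_+(\fk)\setminus R_+(\fk^1)$ contribute $0$ after restriction: this is exactly the cancellation coming from the fact that $\ft_1^{\fg}$ is orthogonal to $\ft_0^1$, so that $W(\fk^1)$ — equivalently, the reflections $s_\alpha$ for $\alpha\in R(\fk^1)$ — acts on $\sqrt{-1}\ft_0^1$ and fixes $\ft_1^{\fg}$ pointwise. More precisely, I would use the standard fact that $\sum_{\alpha\in R_+(\fk)\setminus R_+(\fk^1)}\alpha$ is $W(\fk^1)$-invariant (because $R(\fk^1)$-reflections permute the complement $R(\fk)\setminus R(\fk^1)$, just as in the proof that $\varrho^{\fg}-\varrho_{\fl}$ is dominant), hence its orthogonal projection onto $\ft_0^1$ is fixed by all such reflections; since those reflections act on $\ft_0^1$ with no common nonzero fixed vector (the roots of $\fk^1_s$ span $\ft_0^1$), the projection vanishes. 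This gives $\varrho^{\fk}{}_{|\ft_0^1}=\frac12\sum_{\alpha\in R_+(\fk^1)}\alpha_{|\ft_0^1}=\varrho^{\fk^1_s}$, where in the last step one uses that each $\alpha\in R(\fk^1)$ already lies in $\ft_0^1$, so restriction is the identity on it.

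The argument for $\varrho^{\fm^1}=\varrho^{\fg}{}_{|\ft_0^1}$ is formally identical, replacing $R(\fk)$ by $R(\fg)$ and $R(\fk^1)$ by $R(\fl^1)=R(\fm^1)$, and keeping track of multiplicities $\dim\fg_\alpha$ in the definition \eqref{eqrhog}: one writes $\varrho^{\fg}=\frac12\sum_{\alpha\in R_+(\fg)}\dim\fg_\alpha\cdot\alpha$, splits the sum over $R_+(\fl^1)$ and its complement $R_{1,+}$, notes by Proposition \ref{propWgainv} that $\alpha\mapsto\dim\fg_\alpha$ is $W(\fl^1)$-invariant, so $\sum_{\alpha\in R_{1,+}}\dim\fg_\alpha\cdot\alpha$ is $W(\fl^1)$-invariant, and concludes its projection onto $\ft_0^1$ vanishes exactly as before. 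Finally one observes that $\dim\fg_\alpha$ computed inside $\fm^1$ agrees with $\dim\fg_\alpha$ computed inside $\fg$ for $\alpha\in R(\fl^1)$, so that $\frac12\sum_{\alpha\in R_+(\fl^1)}\dim\fg_\alpha\cdot\alpha=\varrho^{\fm^1}$.

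I expect the main (minor) obstacle to be the careful bookkeeping of multiplicities and the verification that the $W(\fl^1)$-invariant complementary sum projects to zero on $\ft_0^1$ — this is the only place where something beyond unwinding definitions is needed, and it hinges on the orthogonal decomposition $\ft_0^{\fg}=\ft_0^1\oplus\ft_1^{\fg}$ together with the fact that $R(\fl^1)$ spans $\ft_0^1$. Alternatively, one can bypass the invariance argument entirely: since $\varrho^{\fg}$ is the $\ft$-restriction of the $\varrho$-vector of $(\fg,\fh)$ (Remark \ref{rerhott}), and standard parabolic theory identifies $\varrho^{\fm^1}$ with the corresponding $\varrho$-vector of the Levi $\fm^1$, the identity $\varrho^{\fg}{}_{|\ft_0^1}=\varrho^{\fm^1}$ follows from the classical relation between $\varrho$ for a reductive Lie algebra and $\varrho$ for a Levi subalgebra after restriction to the semisimple part; the statement for $\fk^1_s$ is then the special case $\fg=\fk$, $\fm^1=\fk^1$. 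I would likely present the direct computation as the primary proof and mention this second route as a remark.
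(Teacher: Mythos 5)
Your proof is correct and reaches the same reduction as the paper --- namely, to show that
$\sum_{\alpha\in R_{+}(\fk)\setminus R_{+}(\fk^{1})}\alpha$ has vanishing orthogonal projection onto $\ft_{0}^{1}$, and the analogous claim for $R_{+}(\fg)$ with multiplicities $\dim\fg_{\alpha}$ --- but the mechanism you use for that step is genuinely different from the paper's. The paper identifies $\sum_{\alpha\in R_{+}(\fk)\setminus R_{+}(\fk^{1})}\langle\alpha,Y_{0}^{1}\rangle$ with $\Tr^{\fu_{1}\cap\fk_{\bC}}[\ad(Y_{0}^{1})]$ and then argues this trace vanishes because $\fk^{1}_{s}$ is semisimple, so every $Y_{0}^{1}\in\ft_{0}^{1}$ is a sum of commutators in $\fk^{1}_{s}$, and $\ad$ of a commutator has trace zero on any $\fk^{1}_{s}$-module. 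You instead observe that $W(\fk^{1})$, which fixes $\ft_{1}^{\fg}$ pointwise and preserves $R(\fk)$, permutes the set $R_{+}(\fk)\setminus R_{+}(\fk^{1})$ (a root in $R_{1,+}$ is characterized by positivity on ${\rm Int}(C_{1}^{\fg})\subset\ft_{1}^{\fg}$, which is preserved under $w$), so the sum is $W(\fk^{1})$-invariant; since $W(\fk^{1})$ preserves the orthogonal splitting $\ft_{0}=\ft_{0}^{1}\oplus\ft_{1}$, the projection onto $\ft_{0}^{1}$ is also $W(\fk^{1})$-invariant and must vanish because $R(\fk^{1})$ spans $\ft_{0}^{1}$. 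Both arguments lean on exactly the same background assertion from Section~\ref{sSubK1} (that $R(\fk^{1})$ spans $\ft_{0}^{1}$, equivalently that $\fk^{1}_{s}$ is semisimple), so neither is logically stronger; your version is purely root-theoretic and avoids any reference to the nilradical $\fu_{1}$ or to traces on Lie algebra modules, which makes it a bit more self-contained, while the paper's trace computation is shorter and matches the style of trace identities used elsewhere in the text. The concluding ``alternative route'' you sketch (restricting the $(\fg,\fh)$-level $\varrho$ through a Levi) is plausible but as written is closer to a heuristic than a proof; the $W(\fl^{1})$-invariance argument you give first is the one to keep as the actual proof.
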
 	
\begin{proof}
The first identity of \eqref{eqk1rho} is 
		equivalent to 
	\begin{align}\label{eqk1rho1}
		\sum_{\alpha\in R_{+}(\fk)\backslash 
		R_{+}\(\fk^{1}\)}\alpha_{|\ft_{0}^{1}}=0. 
	\end{align} 
By \eqref{eq:R1fg} and \eqref{equ12}, if $Y_{0}^{1}\in \ft_{0}^{1}$, we have  
	\begin{align}\label{eqk1rho2}
		\sum_{\alpha\in R_{+}(\fk)\backslash 
		R_{+}\(\fk^{1}\)}\left\langle \alpha, 
		Y_{0}^{1}\right\rangle =\Tr^{\fu_{1}\cap \fk_{\bC}}\[\ad\(Y_{0}^{1}\)\]. 
	\end{align} 

Since $\fk_{s}^{1}$ is semisimple, then 
$[\fk_{s}^{1},\fk_{s}^{1}]=\fk_{s}^{1}$. In particular, $\sqrt{-1} Y_{0}^{1}\in 
\sqrt{-1} \ft_{0}^{1}$ is a commutator  of elements in $\fk_{s}^{1}$. 
Since $\fk_{s}^{1}$ acts on $\fu_{1}\cap \fk_{\bC}$ and since the trace vanishes on commutators, we see that 
	\begin{align}\label{eqk1rho3}
		\Tr^{\fu_{1}\cap \fk_{\bC}}\[\ad\(Y_{0}^{1}\)\]=0. 
	\end{align} 
By \eqref{eqk1rho2} and \eqref{eqk1rho3}, we get \eqref{eqk1rho1}. 
	
By a similar method, we get  the second identity of \eqref{eqk1rho}. 
\end{proof} 

If $E$ is an irreducible representation of $K$ with highest weight 
$\lambda^{E}$. Set 
	\begin{align}\label{eqK1zL}
		\lambda^{E,1}=\lambda^{E}_{|\ft_{0}^{1}}. 
	\end{align} 
Since  $\lambda^{E}$ is a $T$-weight, we know that $\lambda^{E,1}$ is 
a $T_{0}^{1}$-weight. Moreover,  
	$\lambda^{E,1}\in C_{0}^{1}$. Therefore, $\lambda^{E,1}$ is a 
	$R_{+}\(\fk^{1}_{s}\)$-dominant 
	$T_{0}^{1}$-weight.

\begin{defin}\label{deftauE1}
Let $\tau^{E,1}: K^{1}_{s}\to {\rm U}\(E^{1}\)$ be the irreducible 
unitary representation of $K_{1}^{s}$ of highest weight $\lambda^{E, 1}$. 
\end{defin}

\subsection{Langlands' combinatorial Lemma}\label{sLMAP}
We have fixed $R_{+}(\fg)\subset R(\fg)$. 
We use the notation in Section \ref{sPSA}. 
Recall that 
	\begin{align}
		&C_{+}(\fg)=C^{\fg}_{0}\times \ft_{\fg},&\check 
		C_{+}(\fg)=\check  C^{\fg}_{0}. 
	\end{align} 


Take $v\in \ft_{0}$.  Since $C_{+}(\fg)$ is closed and convex, the function 
	\begin{align}\label{eqv00}
		Y_{0}\in C_+(\fg)\to 
\left|Y_{0}-v\right|\in \bR_{+}
	\end{align}  
	has  one and only one minimal point, which is  called the 
	projection of $v$ onto $C_{+}(\fg)$. 
	By the classical  property of the projection onto a closed  cone,  the projection of $v$ is the unique element 
	$v_{*}\in C_{+}(\fg)$ such that 
\begin{align}\label{eqvv0}
&\<v-v_{*},v_{*}\>=0,	& v_{*}-v\in \check C_{0}^{\fg}. 
\end{align} 
	
%

The following proposition is due to Langlands \cite[Lemma 
4.4]{Langlands89}. 
The details can be found in 
\cite[Corollaire 1.4]{Carmona83}. 

\begin{prop}\label{propLC1}
	For $v\in \ft_{0}$, there exist uniquely  two 
	subsets $\Delta_{0}^{1},\Delta_{0}^{2}$  of $\Delta_{0}^{\fg}$ with 
	$\Delta_{0}^{1}\subset \Delta_{0}^{2}$, such that 
		\begin{align}\label{eqvFG}
&		v=v_{0}^{1}+v_{2}, &v_{0}^{1}\in -{\rm Int}\(\check 
C_{0}^{1}\),&& 
v_{2}\in {\rm Int}\(C_{2}^{\fg}\) \times \ft_{\fg}. 
	\end{align} 
 Moreover, 
\begin{align}\label{eqv0}
	v_{*}=v_{2}. 
\end{align} 
\end{prop}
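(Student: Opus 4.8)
The plan is to build the decomposition directly from the orthogonal projection $v_{*}$ of $v$ onto the closed convex cone $C_{+}(\fg)$, whose characterization is already recorded in \eqref{eqvv0}, and then to read off the nested subsets $\Delta_{0}^{1}\subseteq\Delta_{0}^{2}$ from the coordinates of $v-v_{*}$ and of $v_{*}$. As a byproduct this yields the identity $v_{*}=v_{2}$ of \eqref{eqv0} with no extra work.

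For existence, I would start from \eqref{eqvv0} and \eqref{eqCgot}: since $v_{*}-v\in\check C_{0}^{\fg}$, which is the cone in $\ft_{0}^{\fg}$ generated by $\Delta_{0}^{\fg}$, we may write $v-v_{*}=-\sum_{\alpha\in\Delta_{0}^{\fg}}c_{\alpha}\alpha$ with $c_{\alpha}\ge 0$; and since $v_{*}\in C_{+}(\fg)$, the $\ft_{0}^{\fg}$-component of $v_{*}$ is $\sum_{\alpha\in\Delta_{0}^{\fg}}d_{\alpha}\omega_{\alpha}$ with $d_{\alpha}=\langle\alpha,v_{*}\rangle\ge 0$. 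I then set $\Delta_{0}^{1}=\{\alpha:c_{\alpha}>0\}$ and $\Delta_{0}^{2}=\{\alpha:d_{\alpha}=0\}$. The orthogonality relation $\langle v-v_{*},v_{*}\rangle=0$ becomes $\sum_{\alpha}c_{\alpha}d_{\alpha}=0$; since all terms are nonnegative each vanishes, whence $\Delta_{0}^{1}\subseteq\Delta_{0}^{2}$. It remains to match $v_{0}^{1}:=v-v_{*}$ and $v_{2}:=v_{*}$ with the advertised cones. On one hand, $-v_{0}^{1}=\sum_{\alpha\in\Delta_{0}^{1}}c_{\alpha}\alpha$ is a positive combination of the basis $\Delta_{0}^{1}$ of $\ft_{0}^{1}$, which by the $(\varnothing,\Delta_{0}^{1})$-specialization of \eqref{eqC12} is exactly ${\rm Int}(\check C_{0}^{1})$, so $v_{0}^{1}\in-{\rm Int}(\check C_{0}^{1})$. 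On the other hand, for $\alpha\notin\Delta_{0}^{2}$ one has $\langle\omega_{\alpha},\beta\rangle=0$ for all $\beta\in\Delta_{0}^{2}$, so $\omega_{\alpha}$ lies in $\ft_{2}^{\fg}$, the orthogonal complement of $\ft_{0}^{2}$ in $\ft_{0}^{\fg}$, and $P_{2}^{\fg}\omega_{\alpha}=\omega_{\alpha}$; hence the $\ft_{0}^{\fg}$-component of $v_{2}$ equals $\sum_{\alpha\notin\Delta_{0}^{2}}d_{\alpha}P_{2}^{\fg}\omega_{\alpha}$, a positive combination of the basis $\check\Delta_{2}^{\fg}=\{P_{2}^{\fg}\omega_{\alpha}\}_{\alpha\in\Delta_{0}^{\fg}\setminus\Delta_{0}^{2}}$ of $\ft_{2}^{\fg}$, i.e.\ it lies in ${\rm Int}(C_{2}^{\fg})$, while the $\ft_{\fg}$-component of $v_{2}$ is free. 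Thus $v=v_{0}^{1}+v_{2}$ satisfies \eqref{eqvFG}.

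For uniqueness, and simultaneously for the claim $v_{*}=v_{2}$ in general, I would take an arbitrary decomposition $v=v_{0}^{1}+v_{2}$ satisfying \eqref{eqvFG} for some pair $\Delta_{0}^{1}\subseteq\Delta_{0}^{2}$ and verify that $v_{2}$ fulfils the three conditions in \eqref{eqvv0}, which force $v_{2}=v_{*}$. Namely: $v_{2}\in{\rm Int}(C_{2}^{\fg})\times\ft_{\fg}\subseteq C_{0}^{\fg}\times\ft_{\fg}=C_{+}(\fg)$ because $\check\Delta_{2}^{\fg}\subseteq\check\Delta_{0}^{\fg}$ (using $P_{2}^{\fg}\omega_{\alpha}=\omega_{\alpha}$ for $\alpha\notin\Delta_{0}^{2}$) and $C_{0}^{\fg}$ is generated by $\check\Delta_{0}^{\fg}$; next $v_{2}-v=-v_{0}^{1}$ is a positive combination of $\Delta_{0}^{1}\subseteq\Delta_{0}^{\fg}$, hence lies in $\check C_{0}^{\fg}=\check C_{+}(\fg)$; and $\langle v-v_{2},v_{2}\rangle=\langle v_{0}^{1},v_{2}\rangle=0$ because $v_{0}^{1}\in\ft_{0}^{1}\subseteq\ft_{0}^{2}$ is orthogonal to $\ft_{2}^{\fg}$ where the $\ft_{0}^{\fg}$-component of $v_{2}$ sits, and its $\ft_{\fg}$-component is orthogonal to $\ft_{0}^{\fg}$. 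So $v_{2}=v_{*}$ and $v_{0}^{1}=v-v_{*}$ for every admissible decomposition. The pair $(\Delta_{0}^{1},\Delta_{0}^{2})$ is then forced as well: $\Delta_{0}^{1}$ is the set of $\alpha$ whose coefficient in the $\Delta_{0}^{\fg}$-expansion of $v-v_{*}$ is nonzero, and $\Delta_{0}^{2}$ is the set of $\alpha$ whose $\omega_{\alpha}$-coordinate in the $\ft_{0}^{\fg}$-component of $v_{*}$ vanishes; both are intrinsic to $v$, so uniqueness follows.

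I expect the mathematical content to be light --- projection onto a simplicial cone is governed by the dual basis, and the faces met by $v_{*}$ record the combinatorics --- so the main obstacle is the bookkeeping of Section~\ref{sPSA}: reconciling the cones $\check C_{0}^{1}$ and $C_{2}^{\fg}$ with the positive spans of $\Delta_{0}^{1}$ and of $\check\Delta_{2}^{\fg}$ through the specializations of Definition~\ref{defDelta12} and \eqref{eqC12}, and in particular producing the inclusion $\Delta_{0}^{1}\subseteq\Delta_{0}^{2}$ cleanly, which is exactly the complementary-slackness identity $\sum_{\alpha}c_{\alpha}d_{\alpha}=0$. The summand $\ft_{\fg}$, the part of $\ft_{0}$ not spanned by $R(\fg)$, plays no role beyond being carried along freely inside $v_{2}$.
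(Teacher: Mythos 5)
Your proof is correct and takes essentially the same route as the paper: existence is extracted from the coordinates of $v-v_{*}$ and $v_{*}$ in the basis $\Delta_{0}^{\fg}$ and its dual $\check\Delta_{0}^{\fg}$, with the inclusion $\Delta_{0}^{1}\subset\Delta_{0}^{2}$ coming from complementary slackness in $\langle v-v_{*},v_{*}\rangle=0$, and uniqueness by verifying that any $v_{2}$ from a decomposition \eqref{eqvFG} satisfies the projection characterization \eqref{eqvv0}.
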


Let $\ul v\in \ft_{0}$ be another element. Denote by $\ul 
\Delta_{0}^{1}$, $\ul \Delta_{0}^{2}$, $\ul v_{0}^{1}$, and $\ul 
v_{2}$ the associated objects.  The proof of the following proposition is essentially due to Langlands \cite[Corollary 4.6]{Langlands89}. 
\begin{prop}\label{propv22delta}
Let $v,\ul v\in \ft_{0}$ be such that $v\in \ul v + \check C_{0}^{\fg}$. 
Then,  
	\begin{align}\label{eqLanC}
&		v_{2}-\ul v_{2}\in \check C^{\fg}_{0},&\left|v_{2}\right|\g \left|\ul v_{2}\right|.
	\end{align} 
Moreover, 
	\begin{align}\label{eqLanC0}
		|v_{2}|= \left|\ul 
v_{2}\right|\iff v_{2}=\ul v_{2} \iff \left\<	v-\ul v,v_{2}\right\>=0. 
	\end{align} 
Also, if one of the conditions of \eqref{eqLanC0} holds, we have 	
	\begin{align}\label{eqDinD}
		&\Delta_{0}^{1}\subset \ul \Delta_{0}^{1},&\Delta_{0}^{2} =  \ul \Delta_{0}^{2}.
	\end{align} 
\end{prop}

\subsection{Vogan's Lambda map}\label{sVoganLambda}
%
Let $\mu\in \ft_{0}$. Choose a positive root system 
$R_{+}(\fg)\subset R(\fg)$ of $R(\fg)$ such that  
\begin{align}\label{eqmuCpp}
		\mu\in C_{+}(\fg). 
	\end{align} 
Clearly, the choice of $R_{+}(\fg)$ is not unique. 
	
Let $\varrho^{\fg}$ be the vector  defined in \eqref{eqrhog}. By Proposition 
\ref{propLC1}, there exist $\Delta_{0}^{1},\Delta_{0}^{2}\subset 
\Delta_{0}^{\fg}$ and a  decomposition 
	\begin{align}\label{eqmuvarholl}
		\mu-\varrho^{\fg}=\(\mu-\varrho^{\fg}\)_{0}^{1}+\(\mu-\varrho^{\fg}\)_{2}. 
	\end{align} 
Note that $\varrho^{\fg}$ and the decomposition \eqref{eqmuvarholl}  depend on the choice of $R_{+}(\fg)$. 

\begin{re}\label{re:n12h}
 If $ w\in W\left(\mathfrak{g} \right)$, applying $ w^{-1} $ to \eqref{eqmuvarholl}, we get 
 \begin{align}\label{eqmuvarholl1}
	w^{-1} \mu-w^{-1} \varrho^{\fg}=w^{-1}\(\mu-\varrho^{\fg}\)_{0}^{1}+w^{-1}\(\mu-\varrho^{\fg}\)_{2}. 
\end{align} 
Since $ w^{-1} \mu\in w^{-1} C\left(\mathfrak{g} \right)$, by \eqref{eqvv0} and \eqref{eqvFG}, we see that \eqref{eqmuvarholl1} is just the decomposition of \eqref{eqmuvarholl} associated to $ w^{-1} \mu $ and $ w^{-1} C_{+} \left(\mathfrak{g} \right)$. 
\end{re}

Let $W_{0}^{1}, W_{0}^{2}$ be the Weyl groups of the root systems $R_{0}^{1}$ and $R_{0}^{2}$. Clearly, 
\begin{align}
W_{0}^{1}\subset W_{0}^{2}. 
\end{align} 

By Proposition \ref{propuvC}, $ w^{-1} C_{+} \left(\mathfrak{g} \right)$ is another Weyl chambre satisfying \eqref{eqmuCpp} if and only if 
\begin{align}\label{eq:pam1}
 w \mu = \mu. 
\end{align}
The following proposition is due to Carmona \cite[Proposition 2.2]{Carmona83}.

\begin{prop}\label{propCVogan}
 If $ w\in W\left(\mathfrak{g} \right)$ such that  $ w\mu= \mu$, then 
 \begin{align}\label{eq:lsos}
&w\in W_{0 }^{1}, &w^{-1} \left(\varrho^{\fg}+\(\mu-\varrho^{\fg}\)_{0}^{1}\right)= \varrho^{\fg}+\(\mu-\varrho^{\fg}\)_{0}^{1},&&w^{-1} \(\mu-\varrho^{\fg}\)_{2}=	\(\mu-\varrho^{\fg}\)_{2}. 
 \end{align}
In particular, the vectors  $\varrho^{\fg}+\(\mu-\varrho^{\fg}\)_{0}^{1}$ and  $\(\mu-\varrho^{\fg}\)_{2}\in \ft_{0}$ are independent of the choice of the positive root system $R_{+}(\fg)$ with the property \eqref{eqmuCpp}. 
\end{prop}

\begin{defin}\label{defLambda}
Let $\Lambda: \ft_{0}\to \ft_{0}$ be the map  defined by  $\mu\in \ft_{0}\to \(\mu-\varrho^{\fg}\)_{2}\in \ft_0$.
\end{defin} 


In \cite[Proposition 2.2]{Carmona83}, Carmona showed that $\Lambda$ coincides with Vogan's Lambda map  \cite[Proposition 
4.1]{Vogan-book}. Vogan's constructions are obtained  by a recurrence 
procedure.   The results of our paper do not rely  on Vogan's constructions.

\section{Statement of our main results}\label{Smainresult}
Assume $E$ is an irreducible representation of $K$. We fix a positive 
system $R_{+}(\fk)$ of $R(\fk)$. Let $\lambda^{E}\in \ft_{0}$ be the 
highest weight of $E$. Then, 
	\begin{align}
		\lambda^{E}+\varrho^{\fk}\in {\rm Int}\  C_{+}(\fk) 
	\end{align} 
is in the open Weyl chamber.

We fix $R_{+}(\fg)\subset R(\fg)$ a positive root system such that 
\begin{align}\label{eqlambdaEinC}
\lambda^{E}+2\varrho^{\fk}\in C_{+}(\fg). 
	\end{align} 
Since $\lambda^{E}+2\varrho^{\fk}\in C_{+}(\fk)$, we see that 
\begin{align}
C_{+}(\fg)\subset C_{+}(\fk), 
\end{align} 
so that $R_{+}(\fg)$ and $R_{+}(\fk)$ are compatible.  
As we have already observed in Section \ref{sVoganLambda}, $R_{+}(\fg)$ is generally not unique. 	
  
By Proposition \ref{propLC1}, associated with the vector $\lambda^{E}+2\varrho^{\fk}-\varrho^{\fg}$, there exist subsets $\Delta_{0}^{1}$, $\Delta_{0}^{2}$ of $\Delta_{0}^{\fg}$ such that $\Delta_{0}^{1}\subset \Delta_{0}^{2}$. 
This leads to the following orthogonal decomposition, 
	\begin{align}\label{eq:t0d1}
		\ft_{0}= \ft_{0}^{1}\oplus \ft_{1}^{2}\oplus \ft_{2}. 
	\end{align} 

\begin{exa} Suppose that $ G = {\rm SL}(2, \bR)$ and $K = {\rm SO}(2)$.
We use the notations in Section \ref{subsec:sl2}.
The decomposition \eqref{eq:t0d1} depends on the highest weight $\lambda^E\in \mathbf{Z} $. 
If $ \lambda^{E} \ge 0 $, we can choose $ C_{+} \left(\mathfrak{g} \right)$ such that $ \rho^{\fg} = 1$.
The three cases in \eqref{case1}-\eqref{case3} give the decompositions of $\ft_0$ as follows:
\begin{align}
    \begin{cases}
   \ft_{0}^{1} = 0, \quad \ft_{1}^{2} = 0,\quad \ft_{2} = \ft_0,  & \text{if } \lambda^E \geq 2 ;\\
   \ft_{0}^{1} =0, \quad \ft_{1}^2 = \ft_0, \quad\ft_{2} = 0,  & \text{if } \lambda^E = 1; \\
   \ft_{0}^{1} = \ft_0, \quad \ft_{1}^2  = 0,  \quad\ft_{2} = 0,   & \text{if } \lambda^E = 0. \\
\end{cases}
\end{align}
\end{exa}
Set 
	\begin{align}\label{eq:rdecp}
&		r_{0}=\dim \ft_{0}, &r_{0}^{1}=\dim \ft_{0}^{1}, && 
r_{1}^{2}=\dim \ft_{1}^{2}, &&& r_{2}=\dim \ft_{2}. 
	\end{align} 
By \eqref{eq:t0d1} and \eqref{eq:rdecp}, we have  
	\begin{align}
		r_{0}=r_{0}^{1}+r_{1}^{2}+r_{2}. 
	\end{align} 

As in \eqref{eqmuvarholl}, we write 
	\begin{align}
		\lambda^{E}+2\varrho^{\fk}-\varrho^{\fg}=\(\lambda^{E}+2\varrho^{\fk}-\varrho^{\fg}\)_{0}^{1}+\(\lambda^{E}+2\varrho^{\fk}-\varrho^{\fg}\)_{2},
	\end{align} 
such that 
	\begin{align}
&		\(\lambda^{E}+2\varrho^{\fk}-\varrho^{\fg}\)_{0}^{1}\in -\check 
		C_{0}^{1},& \(\lambda^{E}+2\varrho^{\fk}-\varrho^{\fg}\)_{2}\in 
		C_{2}^{\fg}\times \ft_{\fg}. 
	\end{align} 

Recall that $ R_{0 }^{1}, R_{0 }^{2}, R_{1}, R_{2} $ are defined in \eqref{eq:RW01} and \eqref{eq:R1fg}.
If $Y_{1}^{2}\in \ft_{1}^{2}, Y_{2}\in \ft_{2}$, set  
	\begin{align}
&		\pi_{1}^{2}\(Y_{1}^{2}\)=\prod_{\alpha\in R_{0, +}^{2} \backslash 
R_{0 ,+}^{1}}\left\langle \alpha, 
Y_{1}^{2}\right\rangle,&\pi_{2}\(Y_{2}\)=\prod_{\alpha\in R_{2, 
+}}\left\langle \alpha, Y_{2}\right\rangle. 
	\end{align} 
	
Let $\fq^{1}=\fl^{1}\oplus \fu_{1}, \fq^{2}=\fl^{2}\oplus \fu_{2}$ be the standard $\theta$-invariant  parabolic 
subalgebras of $\fg_{\bC}$ associated to $\Delta_{0}^{1}$ and 
$\Delta_{0}^{2}$. 
Let $K^{1}_{s}$ be the  compact  semisimple Lie group defined in Section \ref{sSubK1}. 
Write 
	\begin{align}
		n_{s}=\dim K_{s}^{1}. 
	\end{align} 
Recall that $\fm^{1}$ is the reductive Lie 
algebra with Cartan decomposition 
	\begin{align}
		\fm^{1}=\fp^{1}\oplus \fk^{1}_{s}. 
	\end{align} 
And $\tau^{E,{1}}$ is the induced representation of $K^{1}_{s}$ defined in Definition \ref{deftauE1}.

Recall also that $c_{\mathfrak{g}}$ is defined in (\ref{eqcg}). 

\begin{defin}\label{defin31}
	Set 
		\begin{align}\label{eq313}
			\notag
		\alpha_{0}^{1}&=\pi^{\fk^{1}_{s}}\(\varrho^{\fk^{1}_{s}}\)\int_{ \sqrt{-1}{\fk}^{1}_{s}}\frac{\widehat 
		A\(\ad\(Y^{\fk^1_{s}}_{0}\)_{|{\fp}^{1}}\)}{\widehat 
A\(\ad\(Y^{\fk^{1}_{s}}_{0}\)_{\fk^{1}_{s}}\)}\Tr\[\tau^{E,1}\(e^{-Y^{\fk^1_{s}}_{0}}\)\]\frac{dY_{0}^{\fk^{1}_{s}}}{(2\pi)^{n_{s}^{1}/2}}.\\
\alpha_{1}^{2}&=\int_{ C^{2}_{1}}\pi_{1}^{2}\(Y_{1}^{2}\) 
		\exp\(-\frac{1}{2}\left|Y_{1}^{2}\right|^{2}\)\frac{dY_{1}^{2}}{(2\pi)^{r_{1}^{2}/2}},\\
		\alpha_{2}&=\pi_{2}\(\(\lambda^{E}+2\varrho^{\fk}-\varrho^{\fg}\)_{2}\).\notag
	\end{align} 
Define 
	\begin{align}\label{eqa0}
	&\alpha_{0} = \alpha_{0}^{1}\alpha_{1}^{2}\alpha_{2},& 
	\ul\alpha_{0}=\frac{\alpha_{0}}{(2\pi)^{m/2}}\[\pi^{\fk}\(\rho^{\fk}\)\]^{-1},\notag\\
		&	\beta_{1}=	-\frac{1}{2}r_{0}^{1}+\frac{1}{2}\dim_{\bC} 
\fu_{1}^{2}+\dim_{\bC} 
\fu_{2},&  \ul 
\beta_{1}=\beta_{1}-\frac{m+n-r_{0}}{2},\\
	&\gamma_{2}=\frac{1}{2}\left\|\(\lambda^{E}+2\varrho^{\fk}-\varrho^{\fg}\)_{2}\right\|^{2}, &\ul\gamma_{2}=\gamma_{2}-\frac{c_{\fg}}{2}. \notag
	\end{align} 
\end{defin} 
\begin{re}
	If $\Delta_{0}^{1}=\varnothing$, then $C_{1}^{2}=C_{0}^{2}$ is a Weyl 
chamber, so that 
	\begin{align}\label{eqa12}
		\alpha_{1}^{2}=\frac{1}{\left|W_{0}^{2}\right|}\int_{\ft_{0}^{2}} \left|\pi_{0}^{2}\(Y_{0}^{2}\)\right|\exp\(-\frac{1}{2}\left|Y_{0}^{2}\right|^{2}\)\frac{dY_{0}^{2}}{(2\pi)^{r_{0}^{2}/2}}.
	\end{align} 
This is  the Mehta-Macdonald integral \cite{Macdonald82}.  An  explicit evaluation  is given by Opdam \cite[Theorem 6.4]{Opdam89}. 
\end{re}

\begin{re}\label{re:b1}
	Using $ m+n = \dim \mathfrak{l}^{1} +2 \dim_{\mathbf{C} } \mathfrak{u}_{1} $, by \eqref{eq:l1m1t1} and \eqref{eqa0}, we have  
	\begin{align}
		\ul\beta_{1}=-\frac{1}{2}\(\dim \fm^{1}+\dim_{\bC} \fu_{1}^{2}  \)\in - \frac{1}{2}\mathbf N. 
	\end{align} 
\end{re}

\begin{re}
	By Definition \ref{defLambda} and by \eqref{eqa0}, we can write   
	\begin{align}
		\gamma_{2}=\pi_{2}\(\Lambda\(\lambda^{E}+2\varrho^{\fk}\)\).  
	\end{align} 
\end{re}

\begin{re}
	All the constants defined in Definition \ref{defin31} are independent of the choice of $R_{+}(\fg)$. 
	Indeed, if $R_{+}(\fg)$ is replaced by another positive root system $ w^{-1} R_{+} \left(\mathfrak{g} \right)$ satisfying \eqref{eqlambdaEinC}, by Proposition \ref{propCVogan}, we know that
	$\(\lambda^{E}+2\varrho^{\fk}-\varrho^{\fg}\)_{2}$ remains unchanged, so that $ \gamma_{2} $ is also unchanged.
	Moreover, $\ft_{0}^{1},\ft_{1}^{2}, \ft_{2}$ remain unchanged, so that $ \mathfrak{l}^{1}, \mathfrak{l}^{2} $ are unchanged, and therefore
	$ \beta_{1} $ is unchanged.
	Additionally, since $ w $ preserves $ R_{0 }^{1}, R_{0 }^{2}  $ and acts as the identity on  $ \ft_{1}^{2},\mathfrak{t}_{2} $, we know that $ \alpha_{1}^{2},\alpha_{2}  $ are unchanged.	
	Furthermore, $ K^{1}_{s} $ and $ R_{+} \left(\fk^{1}_{s} \right)$ are unchanged, so that 
	$\tau^{E,1}$ is also unchanged.

	Let us note, however, that $\Delta_{0}^{1}$ changes to $w^{-1}\Delta_{0}^{1}$.  
\end{re}


Now we can state the main results of our paper. 

 \begin{thm}\label{thm1} 
	If $ E$ is irreducible, with the notations in this section, as $t\to \infty$, 
	\begin{align}
		\Tr_{G}\[\exp\(-\frac{t}{2}C^{\fg, 
		X}\)\]\sim 
		\ul \alpha_{0} t^{\ul \beta_{1}}e^{t \ul \gamma_{2}}. 
	\end{align} 
 \end{thm}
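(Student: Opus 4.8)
The plan is to start from Bismut's explicit formula (Theorem~\ref{thmB09}) and convert the integral over $\sqrt{-1}\fk$ into a Laplace-type integral over $\ft_0=\sqrt{-1}\ft$ whose large-$t$ behaviour can be read off, in the spirit of the motivating $SL(2,\bR)$ computation of Section~\ref{subsec:sl2}. First I would apply the Weyl integration formula to replace $\int_{\sqrt{-1}\fk}$ by a constant times $\int_{\ft_0}$ with Jacobian $\pi^\fk(Y_0)^2$, and insert the Weyl character formula for $\Tr[\tau^E(e^{-Y_0})]$. The Weyl denominator produced this way cancels against $\pi^\fk(Y_0)^2\big/\widehat A(\ad(Y_0)_{|\fk})$, so that, after the substitution $Y_0\mapsto wY_0$ unfolding the remaining $W(\fk)$-antisymmetrization (all $|W(\fk)|$ summands becoming equal), one is reduced, up to the prefactor $(2\pi t)^{-(m+n)/2}e^{-c_\fg t/2}$ and an explicit constant involving $\pi^\fk(\varrho^\fk)$, to studying
\begin{equation*}
\int_{\ft_0}\widehat A\bigl(\ad(Y_0)_{|\fp}\bigr)\,\pi^\fk(Y_0)\,e^{-\langle\lambda^E+\varrho^\fk,\,Y_0\rangle}\,e^{-|Y_0|^2/(2t)}\,dY_0 .
\end{equation*}
This is the content of Section~\ref{Spf1}.

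Next I would decompose $\ft_0=\bigcup_{w\in W(\fg)}w^{-1}C_+(\fg)$ and substitute $Y_0\mapsto w^{-1}Y_0$ on each piece. On a fixed chamber the sign of $\langle\alpha,Y_0\rangle$ is constant for every root, so the positive even function $\widehat A(\ad(Y_0)_{|\fp})=\prod_{\alpha\in R_+(\fp)}\widehat A(\langle\alpha,Y_0\rangle)$ factors as a monomial in the $\langle\alpha,Y_0\rangle$ times an explicit exponential $e^{-\langle\sigma_w,Y_0\rangle}$ times a bounded factor $\prod_\alpha(1-e^{-|\langle\alpha,Y_0\rangle|})^{-1}$ tending to $1$; combined with $e^{-\langle w(\lambda^E+\varrho^\fk),Y_0\rangle}$, the integrand on the $w$-chamber concentrates, after rescaling $Y_0=tZ$ and completing the square, around $t\,\mathrm{proj}_{C_+(\fg)}(b_w)$ for an explicit $b_w$ depending on $w$ through the Weyl action on $\lambda^E,\varrho^\fk$ and the sign vector $\sigma_w$, with exponential rate $\tfrac12\bigl|\mathrm{proj}_{C_+(\fg)}(b_w)\bigr|^2$. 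The main obstacle is the combinatorial statement that the supremum of these rates over all chambers equals $\gamma_2=\tfrac12\|(\lambda^E+2\varrho^\fk-\varrho^\fg)_2\|^2$ — attained, using $\varrho^\fg=\varrho^\fp+\varrho^\fk$ and $R_+(\fk)\subset R_+(\fg)$, on the anti-dominant chamber $-C_+(\fg)$ where $b_w$ is an isometric image of $\lambda^E+2\varrho^\fk-\varrho^\fg$ preserving $C_+(\fg)$ — and that the full equality set is exactly the family of chambers meeting $C_+(\fl^1)$. This is what Langlands' combinatorial Lemma (Proposition~\ref{propLC1}, the monotonicity of the Langlands projection under translation by $\check C_+(\fg)$ in \eqref{eqLanC}--\eqref{eqDinD}, Proposition~\ref{propv22delta}), together with Proposition~\ref{propuvC}, Chevalley's Lemma, and Proposition~\ref{propw12l1}, is designed to supply. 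Summing over the surviving chambers reassembles the $\ft_0^1$-directions of the integral into an integral of Bismut type for the pair $(\fm^1,\fk^1_s)$ with the representation $\tau^{E,1}$ — this produces the factor $\alpha_0^1$ — while the $\ft_1^2$-directions become an integral over the acute cone $C^2_1$, producing $\alpha_1^2$. The uniform bounds needed to localize rigorously and to discard the subleading chambers are furnished by the elementary estimate \eqref{eqJrexp}, exactly as in the three cases of Section~\ref{subsec:sl2}.

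Finally (Section~\ref{SJtda}) the surviving integral is analysed by the Laplace method around the point $Z=(\lambda^E+2\varrho^\fk-\varrho^\fg)_2$, which by Proposition~\ref{propLC1} lies in $\mathrm{Int}(C^{\fg}_2)\times\ft_\fg$. Using the volume-preserving splitting $\ft_0=\ft_0^1\oplus\ft_1^2\oplus\ft_2$ of Proposition~\ref{propintVVp}, the asymptotics factorises into three independent blocks with different scalings: an honest Gaussian of width $O(t^{1/2})$ in the $\ft_2$-directions whose polynomial prefactor at the critical point evaluates to $\pi_2\bigl((\lambda^E+2\varrho^\fk-\varrho^\fg)_2\bigr)=\alpha_2$; a one-sided Gaussian of width $O(t^{1/2})$ localised at the apex of $C^2_1$ in the $\ft_1^2$-directions, giving $\alpha_1^2$ together with the additional powers of $t$ recorded by $\dim_\bC\fu_1^2$ and $\dim_\bC\fu_2$; and the $\ft_0^1$-block, where $e^{-|Y_0|^2/(2t)}\to1$ and the integrand collapses to the $t$-frozen Bismut integrand for $K^1_s$, giving $\alpha_0^1$. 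Collecting these against $(2\pi t)^{-(m+n)/2}e^{-c_\fg t/2}$ and the rescaling Jacobian $t^{r_0}$, and matching the powers of $t$ in terms of $\dim\fa$, $\dim K^1_s$, $r_0^1$, $r_1^2$, $\dim_\bC\fu_1^2$ and $\dim_\bC\fu_2$, yields exactly $\ul\alpha_0\,t^{\ul\beta_1}\,e^{t\ul\gamma_2}$ with the constants as in \eqref{eqa0}. Besides the combinatorial comparison of the $b_w$, the one genuinely delicate point is verifying that the $\ft_0^1$-block decouples precisely as the lower-rank problem for $(\fm^1,\fk^1_s,\tau^{E,1})$ — the geometric counterpart of Vogan's cohomological induction.
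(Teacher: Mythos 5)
Your proposal follows the paper's strategy faithfully: Weyl integration and the Weyl character formula to reduce to the integral $I^\fg_t(\lambda^E+\varrho^\fk)$ over $\ft_0$ (Section \ref{sWeylint}), decomposition into Weyl chambers with the $\widehat A = e^{-\langle\cdot,\cdot\rangle/2}\cdot{\rm Td}$ factorization producing the shifted parameters $\ul\mu(w)$ (Section \ref{sItww}), Langlands' combinatorial lemma to locate the projected critical point and single out the dominating $w\in W_0^2$ and then $w\in W_0^1$ (Theorems \ref{thm33}--\ref{thm34}), and finally the nonhomogeneous rescaling across $\ft_0^1\oplus\ft_1^2\oplus\ft_2$ giving the three factors $\alpha_0^1,\alpha_1^2,\alpha_2$, with the $\ft_0^1$-block reassembling into the Bismut integral for $(\fm^1,\fk^1_s,\tau^{E,1})$ (Theorem \ref{thm35}). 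Two cosmetic remarks: your phrase ``bounded factor $\prod_\alpha(1-e^{-|\langle\alpha,Y_0\rangle|})^{-1}$'' is imprecise, since that product alone blows up at the chamber walls and it is only the combination ${\rm Td}(x)=x/(1-e^{-x})$ that is bounded near zero; and your ``anti-dominant chamber'' statement reflects keeping the exponent $e^{-\langle\lambda^E+\varrho^\fk,\,Y_0\rangle}$ rather than flipping the sign as in \eqref{eqIfgt}, which relocates the winning chamber from $w=1$ to $w=w_0$ but is otherwise equivalent.
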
 
\begin{proof}
		The proof of our theorem will be given in Section 	\ref{sPfthm1}. 
\end{proof}   

\begin{thm}\label{thm2}
 The condition $ \underline{\beta }_{1}  = 0 $ is realised if and only if $ E$ is irreducible such that $ \Delta_{0}^{1} = \Delta_{0}^{2} = \varnothing $ and $ G$ is equal rank, i.e., $ {\rm rk}_{\mathbf{C} } G= {\rm rk}_{\mathbf{C} } K$.
 In this case,  there is $ \epsilon_{0} > 0 $ such that as $ t \to \infty$, 
 \begin{align}\label{eq:tawi}
	\Tr_{G}\[\exp\(-\frac{t}{2}C^{\fg, 
		X}\)\]= \ul \alpha_{0} e^{t \ul \gamma_{2}} \left(1+\mathcal{O} \left(e^{-\epsilon_{0} t} \right)\right).
 \end{align}
\end{thm}
\begin{proof}
	By Remark \ref{re:b1}, we see that 
	\begin{align}\label{eq:hiko}
	 \underline{\beta }_{1}=0 \iff \dim \mathfrak{m}^{1} = 0,\ \dim_{\mathbf{C} }  \mathfrak{u}_{1}^{2} = 0. 
	\end{align}
	By the first equation of \eqref{eqflq0} and by \eqref{eq:l1m1t1}, we know that 
	\begin{align}\label{eq:pwhn}
	 \dim \mathfrak{m}^{1} = 0 \iff \ft_{0 }^{1}  = \varnothing, \ \mathfrak{l}^{1} = \ft \iff \Delta _{0 }^{1} = \varnothing,\  \fa= 0 .
	\end{align}
	By \eqref{equu12}, we have 
	\begin{align}\label{eq:c3ar}
		\mathfrak{u}_{1}^{2} = 0 \iff R_{0 }^{1} = R_{0 }^{2} \iff \Delta_{0}^{1} = \Delta_{0}^{2}.  
	\end{align}
	By \eqref{eq:pwhn} and by \eqref{eq:c3ar}, we get our first statement.
	
	The proof of the second statement will be given in Section \ref{sPfthm1}. 
 \end{proof}

\section{Applications of Theorems \ref{thm1} and \ref{thm2}}\label{sec:application}
The purpose of this section is to discuss some immediate applications of our main result Theorems \ref{thm1} and \ref{thm2}.

This section is organised as follows. 
In Section \ref{App ds repn}, we provide a 
representation-theoretic interpretation of 
Theorem \ref{thm2}. 

In Section \ref{SApplication}, we discuss how the asymptotics in Theorem \ref{thm1} is related to the corresponding asymptotics associated to some quasi-split subreductive group and small representations.

In Section \ref{sSpecMeas}, we study the spectral measure of the operator $ C^{\mathfrak{g}, X} $.

In Section \ref{NS invariant}, we discuss how to use Theorem \ref{thm1} to study the Novikov-Shubin-type invariant on locally symmetric spaces. 

In Section \ref{sNSinvtwist}, we study the classical Novikov-Shubin invariant for a class of Hermitian flat vector bundles on locally symmetric spaces.

\subsection{Application to discrete series representations} \label{App ds repn}
We assume that $G$ has a compact Cartan subgroup, or equivalently, $\rk_{\mathbf{C} }  G =  \rk_{\mathbf{C} }  K $. 
In this case,  $G$ has discrete series representations. 

Let $E$ be an irreducible representation of $K$ with a highest weight $\lambda^E \in \ft_0^{*} $. 
We assume that $\lambda^E$ is regular in the following sense:
\begin{align}\label{eq:regHC}
\left\langle \lambda^{E}+2\varrho^{\fk}-\varrho^{\fg}, \alpha \right\rangle > 0, \quad \forall \alpha \in R_{+}(\fg). 
\end{align}
In particular, we have  
\begin{align}
\lambda^{E}+2\varrho^{\fk}-\varrho^{\fg} =  \(\lambda^{E}+2\varrho^{\fk}-\varrho^{\fg}\)_{2},   
\end{align}
and $\Delta_{0}^{1}= \Delta_{0}^{2} =\emptyset$.

By Theorem \ref{thm2}, we have 
\begin{align}
		\Tr_{G}\[\exp\(-\frac{t}{2}C^{\fg, 
		X}\)\]= 	\ul \alpha_{0} e^{t \ul \gamma_{2}}\left(1+\mathcal{O} \left(e^{-\epsilon_{0} t} \right)\right),
\end{align}
where
\begin{align}\label{eq:disccase}
&\ul\alpha_{0}= \frac{\pi^{\mathfrak{g} } \left(\frac{\lambda^{E}+2\varrho^{\fk}-\varrho^{\fg}}{2\pi } \right)}{\pi^{\fk}\(\frac{\rho^{\fk}}{2\pi } \)},
&\ul\gamma_{2}= \frac{1}{2}\left\|\lambda^{E}+2\varrho^{\fk}-\varrho^{\fg}\right\|^{2} -\frac{c_{\fg}}{2}.
\end{align}
Up to a constant depending on the choice of the Haar measure on $G$,  $\ul\alpha_{0}$ equals to the formal degree of the discrete series representation with Harish-Chandra parameter $\lambda^{E}+2\varrho^{\fk}-\varrho^{\fg}$.


\subsection{Reduction to small representations}\label{SApplication}
 

	

Let $\Delta_{0}^{2}\subset \Delta_{0}^{\fg}$ be the subset of simple roots associated to the vector $\lambda^{E}+2\varrho^{\fk}-\varrho^{\fg}$ as in Section \ref{Smainresult}.   
We have the corresponding orthogonal decomposition following \eqref{eqft012} and (\ref{eqft012g}), 
	\begin{align}
		\ft_{0}=\ft_{0}^{2}\oplus \ft_{2}.
	\end{align} 
Then, 
$\left(\lambda^{E}+2\varrho^{\fk}-\varrho^{\fg}\right)_2$ is in $ {\rm Int}\left(C_{2} \right)$. Let $\mathfrak l^{2}\subset \fg$ be the centraliser of $\ft_{2}^{\fg}$ in $\fg$. 
Let $\fq^{2}=\fl^{2}\oplus \fu_{2}$ be the standard $\theta$-invariant parabolic subalgebras of $\fg_{\bC}$ associated to $\Delta_{0}^{2}$. 
Let $ L^{2} $ be the connected Lie subgroup associated to $ \fl^{2} $.
Let $K^{2}\subset K$ be the centraliser of $\ft_{2}^{\fg}$ in $K$.
By \cite[Corollary 4.51]{KnappLie}, $ K^{2} $ is connected, so that $ K^{2} $ is a maximal compact subgroup of $ L^{2} $.
Then $ \left(L^{2}, K^{2} \right)$ is a subreductive pair of $ \left(G,K\right)$.
We have the corresponding Cartan decomposition
\begin{align}
		\mathfrak l^{2}=\fp^{2}\oplus \fk^{2}.
\end{align} 
Put 
\begin{align}\label{eq:byso}
 X^{2} = L^{2} /K^{2}. 
\end{align}

Set 
	\begin{align}
		\lambda^{E,2}=\lambda^{E}_{|\ft_{0}^{2}}. 
	\end{align} 
As in \eqref{eqK1zL}, $\lambda^{E,2}$ is a $R_{+}\(\fk^{2}\)$-dominant $T_{0}^{2}$-weight.
Let 
    \begin{align*}
       \tau^{E,2}: K^{2}\to {\rm U}\(E^{2}\) 
    \end{align*} be the irreducible 
unitary representation of $K^{2}$ of highest weight $\lambda^{E, 2}$. 

Let $ C^{\mathfrak{l}^{2}, X^{2} } $ be the associated Casimir operator acting on the sections of $ L^{2} \times_{K^{2} } E^{2} $. 
Let $ c_{\mathfrak{l}^{2} } $ be the constant  defined in \eqref{eqcg} while replacing $ \fg$ by $ \mathfrak{l}^{2} $.

\begin{prop}\label{prop:cckw}
 As $ t \to \infty$, we have 
 \begin{align}\label{eq:ngmm}
	\Tr_{L^{2} } \[\exp\(-\frac{t}{2}C^{\fl^{2} , 
	 X_{2} }\)\] \sim \frac{\alpha_{0}^{1} \alpha_{1}^{2}  }{\left(2\pi \right)^{\dim \mathfrak{p}^{2} /2} } \left[\pi^{\mathfrak{k}^{2} } \left(\varrho^{\fk^{2} } \right)\right]^{-1} t^{\underline{\beta }_{1} } e^{- c_{\mathfrak{l}^{2}} t/2}. 
 \end{align}
\end{prop}
\begin{proof}
	It is enough to apply Theorem \ref{thm1} to $ \left(L^{2}, K^{2},\tau^{E,2}  \right)$ together with the observations that the  constant $ \left(\alpha_{0}^{1},\alpha_{1}^{2},\underline{\beta }_{1}\right) $ associated to  $ \left(L^{2}, K^{2},\tau^{E,2}  \right)$ are unchanged\footnote{Here, our new $ \alpha_{1}^{2}$ is an integral over larger domain $ C_{1}^{2} \times \mathfrak{t}_{2}^{\mathfrak{g} } $, while its value is unchanged.
	}, while the corresponding $\left( \alpha_{2},\gamma_{2} \right) $ now becomes $\left( 1,0 \right)$.
\end{proof}

\begin{cor}[Reduction to $ \left(L^{2}, K^{2}, \tau^{E,2} \right)$]
	\label{cor:uemi}
 As $ t \to \infty$, we have 
 \begin{align}\label{eq:kjbh}
	\frac{\Tr_{G}\[\exp\(-\frac{t}{2}C^{\fg, 
	X}\)\]}{\Tr_{L^{2} } \[\exp\(-\frac{t}{2}C^{\fl^{2} , 
	X_{2} }\)\] } \sim \alpha_{2} \frac{\pi^{\mathfrak{k}^{2} }\left({2\pi }{\varrho^{\mathfrak{k}^{2} } } \right) }{\pi^{\fk} \left({2\pi }{\varrho^{\mathfrak{k} } } \right)}  e^{\left(\underline{\gamma} _{2} +c_{\mathfrak{l}^{2} }/2 \right)t}. 
 \end{align}
\end{cor}
\begin{proof}
 By Theorem \ref{thm2} and Proposition \ref{prop:cckw}, it is enough to show 
 \begin{align}\label{eq:wlcn}
	\dim \mathfrak{p} -\dim \fp^{2} = \dim \mathfrak{k} -\dim \fk^{2}. 
 \end{align}
	This follows from the fact that a generic element of $ \mathfrak{t}_{2}^{\mathfrak{g}} $ induces an isomorphism between the orthogonal complements of $ \fp^{2} $ in $ \mathfrak{p} $ and of $ \fk^{2} $ in $ \mathfrak{k} $.	
\end{proof}

\begin{re}\label{re:vko2}
	Following Vogan \cite[Definition 6.1, Theorem 6.4]{Vogan79}, $ \mathfrak{l}^{2} $ is quasi-split and  $\tau^{E,{2}}$ is small.
\end{re}

\subsection{The $ G$-spectral measure of $ C^{\mathfrak{g},X} $ }\label{sSpecMeas}
By the abstract spectral theory, there exists a Radon measure $ \mu $ on $ \bR $ such that
\begin{align}\label{eq:x2ki}
 \Tr_{G}  \left[ \exp\left(-\frac{t}{2} C^{\mathfrak{g}, X} \right) \right] = \int_{\bR} e^{-\frac{t}{2} \lambda} d\mu(\lambda).
\end{align}

\begin{prop}\label{prop:quth}
	The measure $ \mu $ is supported on $ \left[-2\underline{\gamma} _{2},+\infty \right)$. Moreover, $ -2\underline{\gamma} _{2}$ is contained in the support of $ \mu $.
	Also, if $ \underline{\beta} _{1}= 0  $, then $ \mu $ is supported on $ \left\{-2 \underline{\gamma }_{2} \right\}\cup \left[-2\underline{\gamma} _{2}+2\epsilon_{0},+\infty \right)$, and 
	\begin{align}\label{eq:un1m}
	 \mu \left(\left\{ -2\underline{\gamma }_{2} \right\}\right)= \underline{\alpha} _{0}. 
	\end{align}
	Furthermore, if $ \underline{\beta }_{1} < 0 $, then $ \mu $ has non spectral gap at $ -2 \underline{\gamma }_{2} $, and 
    \begin{align}\label{eq:un1mss}
	 \mu \left(\left\{ -2\underline{\gamma }_{2} \right\}\right)= 0.
	\end{align}
\end{prop}
\begin{proof}
Our proposition follows immediately from  Theorems \ref{thm1} and \ref{thm2}.
\end{proof}

The following corollary is useful to show the existence of unitary tempered representation.

\begin{cor}\label{cor:vtpj}
 There exists an irreducible unitary tempered representation $ \pi $ of $ G $ such that that 
 \begin{align}\label{eq:famz}
	&\pi \left(C^{\fg} \right) = -2 \underline{\gamma} _{2},& \left[\pi^{*}  \otimes E\right]^{K} \neq 0. 
 \end{align}
\end{cor}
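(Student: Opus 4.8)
The plan is to combine the explicit large‑time asymptotics of Theorem \ref{thm1} with Harish‑Chandra's Plancherel decomposition of the measure $\mu$ from \eqref{eq:x2ki}. First I would recall that by the Plancherel formula \cite[Theorem 13.11]{Knappsemi}, the $G$-trace $\Tr_G[\exp(-\frac t2 C^{\fg,X})]$ decomposes as a sum over (a parametrised family of) tempered representations $\pi$ of contributions of the form $\int e^{-\frac t2\pi(C^{\fg})}\,[\pi^{*}\otimes E]^{K}\,d\nu(\pi)$, where $\nu$ is the Plancherel measure; only those $\pi$ with $[\pi^{*}\otimes E]^K\neq 0$ contribute, and each such contribution is nonnegative because $p_t(x,x)$ is nonnegative self-adjoint (Corollary \ref{corTrp}). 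In particular the measure $\mu$ of \eqref{eq:x2ki} is the pushforward of $\nu$ weighted by the $K$-multiplicities under $\pi\mapsto\pi(C^{\fg})$, so $\Supp\mu$ is contained in the closure of $\{\pi(C^{\fg}): \pi \text{ tempered},\ [\pi^{*}\otimes E]^K\neq 0\}$.

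Next I would invoke Proposition \ref{prop:quth}: the point $-2\ul\gamma_2$ lies in $\Supp\mu$. Since $\mu$ is a sum of an atomic part supported on a discrete set and an absolutely continuous part with \emph{continuous} Radon–Nikodym density, a point in $\Supp\mu$ must either be an atom of $\mu$ or a limit of points in $\Supp\mu$ coming from the support of the continuous density. In either case one extracts, from the Plancherel description, a tempered representation $\pi$ with $[\pi^{*}\otimes E]^K\neq 0$ and $\pi(C^{\fg})=-2\ul\gamma_2$: if $-2\ul\gamma_2$ is an atom this is immediate (the atomic part comes from the discrete series and limits of discrete series, each appearing with a genuine $K$-multiplicity), and if it is an accumulation point of the support of the absolutely continuous part one uses that the map $\pi\mapsto\pi(C^{\fg})$ is continuous and proper on each Harish-Chandra parameter family together with the fact that $\{\pi: [\pi^{*}\otimes E]^K\neq 0\}$ is a closed condition on each such family, so the value $-2\ul\gamma_2$ is actually attained.

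A cleaner route, which I would prefer to present, avoids a case distinction: Proposition \ref{prop:quth} shows $\Supp\mu\subset[-2\ul\gamma_2,\infty)$ with $-2\ul\gamma_2\in\Supp\mu$, i.e. $-2\ul\gamma_2=\inf\Supp\mu$. On the other hand the set
$$S=\bigl\{\pi(C^{\fg}): \pi\ \text{unitary tempered},\ [\pi^{*}\otimes E]^{K}\neq 0\bigr\}$$
is, by the Plancherel formula, a union of finitely many continuous proper images of the parameter spaces of the finitely many relevant discrete‑series‑of‑Levi families, hence a closed subset of $\bR$ whose closure is exactly $\ol{\Supp\mu}$; since $\inf\Supp\mu=-2\ul\gamma_2$ and $S$ is closed, $-2\ul\gamma_2\in S$, which is precisely \eqref{eq:famz}. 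I would write this up citing \cite[Theorem 13.11]{Knappsemi} for the Plancherel decomposition and the properness/closedness properties of the tempered parameter spaces, and Proposition \ref{prop:quth} for the location of $\inf\Supp\mu$.

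The main obstacle is the topological/real-analytic bookkeeping needed to guarantee that the infimum of the support is \emph{attained} by an actual tempered representation rather than merely approached: one must argue that the contributing set of tempered $\pi$ is a finite union of parameter families (indexed, via Harish‑Chandra, by cuspidal parabolics $P=MN$ with the discrete series of $M$ twisted by unitary characters of $A$) on each of which $\pi\mapsto\pi(C^{\fg})$ is a continuous proper (indeed polynomial, up to constants) function, and that the $K$-multiplicity $[\pi^{*}\otimes E]^K$ is locally constant along each family so that $\{[\pi^{*}\otimes E]^K\neq 0\}$ is open-and-closed in each family. Granting these standard structural facts about the tempered dual, the closedness of $S$ follows and the corollary drops out; the only real work is to cite them correctly and to check that the Casimir eigenvalue is bounded below on each family precisely by the relevant $\gamma_2$-type quantity so that no family contributes below $-2\ul\gamma_2$.
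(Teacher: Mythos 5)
Your proposal takes essentially the same route as the paper: the paper's proof is exactly the combination of the second statement of Proposition \ref{prop:quth} (placing $-2\ul\gamma_{2}$ in $\Supp\mu$) with Harish-Chandra's Plancherel formula (identifying $\Supp\mu$ with the attained Casimir eigenvalues of tempered representations admitting $E$ as a $K$-type). Your further discussion of properness and closedness of the Casimir map across the finitely many cuspidal-parabolic families, and of local constancy of the relevant $K$-multiplicity, just unpacks the standard structural facts about the tempered dual that the paper treats as immediate.
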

\begin{proof}
Our corollary follows immediately from the second statement of Proposition \ref{prop:quth} and the Harish-Chandra's Plancherel formula.  
\end{proof}

\begin{re}\label{re:kaep}
According to Vogan \cite{Vogan-book,Vogan79},  the above $ E$ is the minimal $ K$-type of the unitary tempered representation $ \pi $.
\end{re}

\subsection{Novikov-Shubin type invariant for Casimir} \label{NS invariant}
We present an application of Theorem \ref{thm1} to study the Novikov-Shubin type invariant of locally symmetric spaces. 

The Novikov-Shubin invariants are topological invariants for closed manifolds defined by the large-time behavior of the heat operator of the Hodge Laplacian on the universal cover \cite{Novikov-Shubin, Efremov-Shubin, Lott}. 
Let us introduce a version of Novikov-Shubin type invariant for Casimir operators on locally symmetric spaces.
 
Let $\Gamma$ be a discrete cocompact and torsion free subgroup of $G$. Then, the quotient space $X_\Gamma =\Gamma \backslash G/K$ is a closed smooth manifold. 
Then, the $G$-equivariant Hermitian vector bundle $ F= G \times_{K} E$ on $X$ descends to a Hermitian vector bundle $ F_{\Gamma} $  on $X_\Gamma$. 


The $\Gamma$-trace of $\exp\(-\frac{t}{2}C^{\fg, 
		X}\)$  is computed by the following integral
\begin{align}\label{eq:kpzx}
	\Tr_{\Gamma} \left[\exp\(-\frac{t}{2}C^{\fg, 
	X}\)\right]=\int_{U} {\rm Tr}\left[p_t(x,x)\right]{\rm d}x,
\end{align}
where $U$ is a fundamental domain on $X$ with respect to the $\Gamma$-action, and $p_t(x, x)$ is defined in (\ref{eqptgxx}). 
By \eqref{eqTrG} and \eqref{eq:kpzx}, 
\begin{equation}\label{eq:gammatrace}
\Tr_{\Gamma} \left[\exp\(-\frac{t}{2}C^{\fg, 
		X}\)\right]= \operatorname{vol(X_\Gamma)}\operatorname{Tr}_G\left[\exp\(-\frac{t}{2}C^{\fg, 
		X}\)\right].
\end{equation}

We use the assumptions and notations in Theorem \ref{thm1}.
In the sequel, we assume that 
\begin{align}\label{eq:iody}
	\ul\gamma_{2} \le 0.
\end{align}
By Proposition \ref{prop:quth}, $ \mu$ is supported on $ \left[-2\ul\gamma_{2},\infty\right) \subset \mathbf{R}_{+} $.

\begin{defin}\label{def:suuy}
 The Novikov-Shubin-type invariant $ \alpha_{\Gamma }\in [0, \infty]$ of $X_\Gamma$ associated with $F_{\Gamma} $ is defined by\footnote{The supremum of an empty set by convention is $\infty$.}
 \begin{align}\label{eq:snu3}
	\alpha_{F_{\Gamma} }=\sup\left\{\beta \ge 0 :  \Tr_\Gamma\left[\exp\(-\frac{t}{2}C^{\fg, 
		 X}\)\right]-{\rm vol}\left(X_{\Gamma } \right)\mu\left(0 \right)=  \mathcal{O}\left( t^{-\frac{\beta}{2} }\right),\ t\to \infty\right\}.
 \end{align}
\end{defin}



\begin{prop}\label{prop:novikovshubin}
Suppose \eqref{eq:iody} holds.
\begin{itemize}
\item[i)] If $ \underline{\gamma }_{2} < 0 $ or if $ \underline{\beta }_{1} = \underline{\gamma }_{2} = 0 $,  then Novikov-Shubin invariant $ \alpha_{F_{\Gamma} } = \infty$.
\item[ii)] If $ \underline{\beta }_{1} < 0 $ and if $ \underline{\gamma }_{2} = 0 $, then
\begin{align}
  \alpha_{F_{\Gamma} }=-2\ul\beta_1 \in \mathbf{N}, 
\end{align}
\end{itemize}
\end{prop}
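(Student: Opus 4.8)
The plan is to deduce everything from Theorem \ref{thm1} together with the elementary identity \eqref{eq:gammatrace}, which expresses the $\Gamma$-trace as $\vol(X_\Gamma)$ times the $G$-trace, and from Proposition \ref{prop:quth}. The only input not already contained in Theorem \ref{thm1} is the value $\mu(\{0\})$ subtracted in Definition \ref{def:suuy}, so I would settle that first, and then the two assertions are a matter of bookkeeping with the asymptotics.

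\medskip

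First I would show that, under hypothesis \eqref{eq:iody}, one has $\mu(\{0\})=0$ in both cases. In case (i), since $\ul\gamma_{2}<0$ we have $-2\ul\gamma_{2}>0$, and by Proposition \ref{prop:quth} the measure $\mu$ is supported on $[-2\ul\gamma_{2},\infty)$, which does not contain $0$; hence $\mu(\{0\})=0$. In case (ii), $\mu$ is supported on $[0,\infty)$, and by \eqref{eq:x2ki} we may write $\Tr_{G}\[\exp(-\tfrac{t}{2}C^{\fg,X})\]=\int_{[0,\infty)}e^{-t\lambda/2}\,d\mu(\lambda)$. Since this integral is finite for every $t>0$, $\mu$ is finite on compact subsets of $[0,\infty)$; as $t\to\infty$ the integrands decrease pointwise to $\mathbf 1_{\{0\}}$ and are dominated, for $t\ge 1$, by the $\mu$-integrable function $e^{-\lambda/2}$, so dominated convergence gives $\Tr_{G}\[\exp(-\tfrac{t}{2}C^{\fg,X})\]\to\mu(\{0\})$. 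But by Theorem \ref{thm1} this quantity is $\sim\ul\alpha_{0}\,t^{\ul\beta_{1}}\to 0$ because $\ul\beta_{1}<0$, whence $\mu(\{0\})=0$ again.

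\medskip

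Having established $\mu(\{0\})=0$, I would feed this into \eqref{eq:gammatrace}: in either case the quantity entering \eqref{eq:snu3} is
\begin{equation*}
\Tr_{\Gamma}\[\exp\(-\tfrac{t}{2}C^{\fg,X}\)\]-\vol(X_{\Gamma})\,\mu(\{0\})=\vol(X_{\Gamma})\,\Tr_{G}\[\exp\(-\tfrac{t}{2}C^{\fg,X}\)\]\sim\vol(X_{\Gamma})\,\ul\alpha_{0}\,t^{\ul\beta_{1}}e^{t\ul\gamma_{2}}.
\end{equation*}
In case (i), $\ul\gamma_{2}<0$ forces the right-hand side to decay faster than any power of $t$, so it is $\mathcal O(t^{-\beta/2})$ for every $\beta\ge 0$ and $\alpha_{F_{\Gamma}}=\infty$. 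In case (ii), $\ul\gamma_{2}=0$, so the expression is $\sim\vol(X_{\Gamma})\,\ul\alpha_{0}\,t^{\ul\beta_{1}}$, and since $\ul\alpha_{0}>0$ (Theorem \ref{thmit}) this is a genuine two-sided equivalence. Then $t^{\ul\beta_{1}}=\mathcal O(t^{-\beta/2})$ holds precisely when $\ul\beta_{1}\le-\beta/2$, i.e. $\beta\le-2\ul\beta_{1}$, while for $\beta>-2\ul\beta_{1}$ the product $t^{\ul\beta_{1}+\beta/2}$ is unbounded; hence $\alpha_{F_{\Gamma}}=-2\ul\beta_{1}$. Finally, by the Remark following Definition \ref{defin31} we have $\ul\beta_{1}\in-\tfrac12\mathbf N$, so $-2\ul\beta_{1}\in\mathbf N$, which completes the proof.

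\medskip

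I do not anticipate any real obstacle: the argument is essentially an assembly of Theorem \ref{thm1}, the trace identity \eqref{eq:gammatrace}, and Proposition \ref{prop:quth}. The only point needing mild care is the interchange of limit and integral used to evaluate $\mu(\{0\})$ in case (ii), which is legitimate once one observes that $\mu$ restricted to compacts is finite (because $\int e^{-t\lambda/2}\,d\mu<\infty$); and one must invoke $\ul\alpha_{0}>0$ to know the asymptotic equivalence in Theorem \ref{thm1} is two-sided, so that the supremum in \eqref{eq:snu3} is attained exactly at $-2\ul\beta_{1}$ rather than at something larger.
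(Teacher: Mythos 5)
Your proof is correct, and it follows the same route the paper intends --- combine the identity $\Tr_\Gamma = \vol(X_\Gamma)\,\Tr_G$ with the asymptotics of Theorem \ref{thm1}. The paper's own proof is a one-liner that silently assumes $\mu(\{0\})=0$; your dominated-convergence argument (in case (ii)) and the support observation from Proposition \ref{prop:quth} (in case (i)) are exactly the details needed to make that implicit step rigorous, and your final bookkeeping with the two-sided asymptotic and $\ul\alpha_0>0$ is also correct.
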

\begin{proof}
	We observe from (\ref{eq:gammatrace}) that $\Tr_\Gamma\left[\exp\(-\frac{t}{2}C^{\fg, 
	X}\)\right]$ and $\Tr_G\left[\exp\(-\frac{t}{2}C^{\fg, 
	X}\)\right]$ are related by the volume of $X_\Gamma$. 
	By Theorems \ref{thm1} and \ref{thm2}, 
	we get our proposition.
\end{proof}

\subsection{Novikov-Shubin invariant for twisted Hodge Laplacian}\label{sNSinvtwist}
Let $ \rho^{V} : G\to \End\left(V \right)$ be an irreducible finite dimensional $ G$ representation.
Assume that $ V$ is equipped with a Hermitian metric $ \left\langle \cdot, \cdot\right\rangle_{V} $ such that $ \mathfrak{p} $ acts symmetrically and $ \mathfrak{k} $ acts anti-symmetrically.

Then, $ W = G\times_{K} V$ is a Hermitian vector bundle on $ X$.
The map $\left(g,v\right)\in G \times V\to \left(g,gv\right)\in G \times V$ induces a $G$-equivariant canonical trivialization, 
\begin{align}\label{eq:rgvf}
 W \simeq X \times V.
\end{align}

Recall that $ \Gamma \subset G$ is cocompact and torsion free.
Then, $ W_{\Gamma } = \Gamma\backslash G\times_{K} V$ is a Hermitian vector bundle on $ X_{\Gamma } $.
By \eqref{eq:rgvf}, we have 
\begin{align}\label{eq:kkqj}
	W_{\Gamma }  = \Gamma \backslash \left(X\times V\right).
\end{align}
Therefore, $ W_{\Gamma } $ is a flat vector bundle with holonomy representation $ \rho^{V} _{|\Gamma} : \Gamma \to {\rm GL}\left(V\right)$. 
For a detailed discussion of the geometry of the flat vector bundle $ W_{\Gamma } $, we refer the reader to \cite{BMZ,Liu21,Shen21}.

We have identifications 
\begin{align}\label{eq:acjc}
 &\Omega^{\cdot} \left(X , W \right)=C^{\infty} \left( G,\Lambda^{\cdot} \left(\mathfrak{p}^{*} \right)\otimes_{\mathbf{R} } V\right)^{K},&\Omega^{\cdot} \left(X_{\Gamma } , W_{\Gamma } \right)=C^{\infty} \left(\Gamma \backslash G,\Lambda^{\cdot} \left(\mathfrak{p}^{*} \right)\otimes_{\mathbf{R} } V\right)^{K}. 
\end{align}
Classically, the associated Hodge Laplacian (see \cite[Proposition 4.2]{Shen21}) are given by 
\begin{align}\label{eq:p3ur}
 &\Box^{X} = C^{\fg,X} -C^{\fg,V},&\Box^{X_{\Gamma } } = C^{\fg,X_{\Gamma } } -C^{\fg,V}.
\end{align}

By spectral theory, we have 
\begin{align}\label{eq:j2vq}
 {\rm Sp}\left(\Box^{X} \right) \subset \mathbf{R}_{+}. 
\end{align}

If $0 \l i \l m$, the Novikov-Shubin invariant $ a_{i,V,\Gamma } $ is defined by a formula similar to \eqref{eq:snu3}, while replacing the Casimir by the Hodge Laplacian $ \Box^{X} $.

Put 
\begin{align}\label{eq:lb2z}
 \delta \left(G\right)= {\rm rk}_{\mathbf{C} } G-{\rm rk}_{\mathbf{C} } K.
\end{align}
Note that $ \delta \left(G\right)$ and $ m$ have the same parity.
Note also that $ \rho^{V}\circ\theta $ is another irreducible  representation of $ G$, where $\theta \in  {\rm Aut}(G)$ is the Cartan involution.

\begin{thm}\label{thm:jwob}
	The following statements hold.
	\begin{enumerate}[\indent a)]
		\item If $ \rho^{V} \not\simeq \rho^{V} \circ\theta $ or if $ i\not\in \left[\frac{m-\delta \left(G\right)}{2}, \frac{m+\delta \left(G\right)}{2}\right]$, then	the spectrum of $ \Box^{X} $ is bounded from below by a positive real number.
	 \item If $ \rho^{V} \simeq \rho^{V} \circ\theta $,  $ \delta \left(G\right)= 0 $, and $ i =  m/2$,  then $ 0\in {\rm Sp}\left( \Box^{X}\right) $ and $ \Box^{X} $ has a spectral gap at $ 0 $. 
	 \item If $ \rho^{V} \simeq \rho^{V} \circ\theta $,  $ \delta \left(G\right) > 0 $, and $ i\in \left[\frac{m-\delta \left(G\right)}{2}, \frac{m+\delta \left(G\right)}{2}\right]$,  then $ 0\in {\rm Sp}\left( \Box^{X}\right) $ and $ \Box^{X} $ has no spectral gap at $ 0 $.  
	 \item The Novikov-Shubin invariant $ \alpha_{i,V,\Gamma } $ is finite if and only if $\rho^{V} \simeq \rho^{V} \circ \theta, \delta\left(G\right)> 0  $ and $ i\in \left[\frac{m-\delta \left(G\right)}{2}, \frac{m+\delta \left(G\right)}{2}\right]$.
	In this case, 
	\begin{align}\label{eq:bcrp}
	 \alpha_{i,V,\Gamma } = \delta \left(G\right).
	\end{align}
	\end{enumerate}
 \end{thm}
 \begin{proof}
	Let $ \mathfrak{h} = \mathfrak{a} \oplus \mathfrak{t} $ be the Cartan subalgebra of $ \mathfrak{g} $ defined in Remark \ref{re:hat}.
	Let $ R_{+} \left(\mathfrak{g},\mathfrak{h}  \right)$ be the positive root system of introduced in Remark \ref{rerhott}.
	
	Let $ \lambda^{V}\in \mathfrak{a}^{*} \oplus \mathfrak{t}_{0}^{*} $ be the highest weight of $ \rho^{V}$.
	By \eqref{eq:bgr3}, we have the classical formula
	\begin{align}\label{eq:ghah}
	- C^{\mathfrak{g}, V} = \left| \varrho^{\mathfrak{g} } + \lambda^{V}_{|\mathfrak{t}_{0} }  \right|^{2} + \left| \lambda^{V}_{|\mathfrak{a} }  \right|^{2} - \left| \varrho^{\mathfrak{g} }  \right|^{2}. 
	\end{align}

	By our convention on $ R_{+} \left(\mathfrak{g}, \mathfrak{h} \right)$, the highest weight of $ \rho^{V}\circ \theta $ is 
	\begin{align}\label{eq:vb3z}
	 \left(-\lambda^{V}_{|\mathfrak{a} }, \lambda^{V}_{|\mathfrak{t}_{0 }  } \right) .
	\end{align}
	Then, we have 
	\begin{align}\label{eq:cjpb}
		\rho^{V} \simeq \rho^{V} \circ\theta \iff \lambda^{V}_{|\mathfrak{a} }= 0. 
	\end{align}

	Let $ E$ be an irreducible $ K$-representation of highest weight 
	\begin{align}\label{eq:mgez}
		\lambda^{E} = \sum_{\alpha \in R_{+} \left(\mathfrak{p} \right) }^{} \alpha + \lambda^{V}_{|\ft_{0 } }.  
	\end{align}
	Using $ \lambda^{E} +2\varrho^{\fk} -\varrho^{\fg} = \varrho^{\fg}+\lambda^{V} _{|\mathfrak{t}_{0 }  } $, by \eqref{eq:ghah}, we have  
	\begin{align}\label{eq:msji}
&	 \Delta_{0}^{1} = \Delta_{0}^{2} = \varnothing, &\underline{\beta }_{1} = - \frac{1}{2}\dim \mathfrak{a} =- \frac{1}{2} \delta(G),&& \underline{\gamma}_{2} + \frac{1}{2} C^{\mathfrak{g}, V}= -  \frac{1}{2} \left| \lambda^{V}_{|\mathfrak{a} }  \right|^{2}.   
	\end{align}
	
	By \eqref{eqpTwei}, using the weight decomposition of $ \Lambda^{i} \left(\mathfrak{p}^{*} \right)$ and $ V$, we see that $ E$ is a subrepresentation of  $ \Lambda^{i} \left(\mathfrak{p}^{*} \right)\otimes V$ if and only if $ i\in \left[\frac{m-\delta \left(G\right)}{2}, \frac{m+\delta \left(G\right)}{2}\right]$.
  Moreover, the highest weight of the other subrepresentations of $ \Lambda^{\cdot} \left(\mathfrak{p}^{*} \right)$ have the form
 \begin{align}\label{eq:kmif}
	\lambda^{E}   - \beta, 
 \end{align}
 where $ \beta $ is a sum of certain roots in $ R_{+} \left(\mathfrak{g} \right)$.
 By Proposition \ref{propv22delta}, the corresponding $\underline{\gamma}_{2} + \frac{1}{2} C^{\mathfrak{g}, V} $ is strictly smaller than $ - \frac{1}{2}\left| \lambda^{V}_{|\mathfrak{a} }  \right|^{2}$.
 Our theorem now follows immediately from \eqref{eq:cjpb}, \eqref{eq:msji}, and the above observations.
 \end{proof}

 \begin{re}\label{re:crxp}
	If $ \rho^{V} $ is trivial, the above results can be found in \cite[Proposition 11.1]{Lo-Me} and \cite[Theorem 1.1]{Olbrich}, whose proof relies on Harish-Chandra's Plancherel formula and Lie algebra cohomology.
	Our proof does not use these tools.
	\end{re}
	
	\begin{re}\label{re:sll1}
		The condition $ \rho^{V} \circ \theta \not\simeq \rho^{V} $ is closely related to Borel-Wallach's vanishing theorem \cite[Theorem VII.6.7]{Borel-Wallach}.
	\end{re}

\section{Large times behavior of the $G$-trace}\label{Spf1}
The purpose of this section is to show our main result Theorems \ref{thm1} and \ref{thm2}. 

This section is organised as follows.
In Section \ref{sWeylint}, using the Weyl's integral formula with respect to $K$, we rewrite our $ G$-trace as an integral $I_{t}$ over $\ft_{0}$.

In Section \ref{sItww}, choosing a compatible positive root system $R_{+}(\fg)$,  we write $I_{t}$ as a sum of integral $I_{t}(w)$ over the  Weyl chambers $w^{-1}C_{+}(\fg)$ with $w\in C_{+}(\fg)$.  

In Section \ref{sAsyI}, we study the asymptotic of $I_{t}(w)$ when 
$t\to \infty$. The detailed proof will be given in Section \ref{SJtda}. 

Finally, in Section \ref{sPfthm1}, we deduce  Theorems \ref{thm1} and \ref{thm2}.

\subsection{An application of Weyl's integral formula}\label{sWeylint}
We use the notation in Section \ref{Smainresult}. In particular, we have fixed a positive root system $R_{+}(\fk)\subset R(\fk)$ for 
$R(\fk)$. 
Recall that $\ft_{0}=\sqrt{-1}\ft$ is a Euclidean space of dimension $ r_{0 } $, and it is 
identified with its dual $\ft_{0}^{*}$ by the Euclidean metric $B_{|\ft_{0}}$. 
Recall also that $\pi^{\fk}$ is a real polynomial on $\ft_{0}$ defined by $R_{+}(\fk)$. 

\begin{defin}
	For $t>0$ and $\mu\in \ft_{0}$, set 
\begin{align}\label{eqIfgt}
I^{\fg}_{t}(\mu)=	
\int_{\ft_{0}}\pi^{\fk}\(Y^{}_0\)\widehat{A}\(\ad\(Y^{}_{0}\)_{|\fp}\)\exp\(\left\<\mu,Y^{}_0\right\>-\frac{\left|Y_{0}\right|^{2}}{2t}\) \frac{dY_0}{(2 \pi t)^{r_{0}/2}}. 
\end{align} 
\end{defin}

\begin{prop}
If $w\in W(\fk)$, for $t>0$ and $\mu\in 
\ft_{0}$, we have 
\begin{align}\label{eqwew}
	I^{\fg}_{t}(w\mu)= \e_{w} I^{\fg}_{t}(\mu). 
\end{align}  
\end{prop}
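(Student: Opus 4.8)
The plan is to establish \eqref{eqwew} by the change of variables $Y_0\mapsto wY_0$ in the integral \eqref{eqIfgt} defining $I^{\fg}_t(w\mu)$ and to track how each factor of the integrand transforms. Since $w\in W(\fk)$ acts as a linear isometry of $\ft_0$, the Lebesgue measure $dY_0$ and the Gaussian factor $\exp(-|Y_0|^2/(2t))$ are invariant, while the linear form transforms by $\langle w\mu, wY_0\rangle=\langle\mu,Y_0\rangle$. By \eqref{eqew}, the $\pi$-function produces the sign: $\pi^{\fk}(wY_0)=\e_w\,\pi^{\fk}(Y_0)$. Hence the identity \eqref{eqwew} reduces to the invariance
$$\widehat{A}\(\ad\(wY_0\)_{|\fp}\)=\widehat{A}\(\ad\(Y_0\)_{|\fp}\),\qquad Y_0\in\ft_0,\ w\in W(\fk).$$

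To prove this last invariance --- which is really the only point of substance --- I would lift $w$ to a representative $k\in N_K(T)$, so that $\Ad(k)$ preserves $\ft$, hence $\ft_0=\sqrt{-1}\ft$, and acts on $\ft_0$ as $w$. The transformation $\Ad(k)$ also acts on $\fp$, preserving $B_{|\fp}$, and extends complex-linearly to a transformation of $\fp_{\bC}$. Since the Lie bracket of $\fg_{\bC}$ is $\Ad(k)$-equivariant, one has, as operators on $\fp_{\bC}$,
$$\ad\(wY_0\)_{|\fp}=\ad\(\Ad(k)Y_0\)_{|\fp}=\Ad(k)_{|\fp}\,\ad\(Y_0\)_{|\fp}\,\Ad(k)_{|\fp}^{-1}.$$
Thus $\ad(wY_0)_{|\fp}$ and $\ad(Y_0)_{|\fp}$ are conjugate, hence have the same eigenvalues; since by \eqref{eqAH12} the quantity $\widehat{A}(H)$ depends only on the spectrum of the Hermitian operator $H$, the claimed invariance follows.

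Putting the pieces together, the substitution $Y_0\mapsto wY_0$ in \eqref{eqIfgt} for $I^{\fg}_t(w\mu)$ turns the integrand into $\e_w$ times the integrand of $I^{\fg}_t(\mu)$, which gives \eqref{eqwew}. I do not foresee a serious obstacle; the only delicate point is bookkeeping on the complexification, namely that $\ad(Y_0)_{|\fp}$ for $Y_0\in\sqrt{-1}\ft$ is interpreted, following \eqref{eqAH12}, as the Hermitian operator on $\fp_{\bC}$ obtained from the adjoint action, and that $\Ad(k)$, being $B_{|\fp}$-orthogonal, acts unitarily there, so the spectral argument applies unchanged.
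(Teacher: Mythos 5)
Your proof is correct and follows the same route as the paper: lift $w$ to a representative $k\in N_K(T)$ to get the $\Ad(k)$-conjugation invariance $\widehat{A}\(\ad(wY_0)_{|\fp}\)=\widehat{A}\(\ad(Y_0)_{|\fp}\)$, then combine with \eqref{eqew} and a change of variables in \eqref{eqIfgt}. You spell out the spectral argument for the $\widehat A$-invariance in more detail than the paper, which states it without elaboration, but the underlying reasoning is identical.
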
 
\begin{proof}
If $w\in W(\fk)$, there is $k\in N_{K}(T)$ such that  when acting on $\ft_{0}$, 
	\begin{align}
		w=\Ad(k). 
	\end{align} 
Thus,
	\begin{align}\label{eqApw}
		\widehat{A}\(\ad\(w Y_{0}^{}\)_{|\fp}\)=\widehat{A}\(\ad\( 
		Y_{0}^{}\)_{|\fp}\). 
	\end{align} 
By \eqref{eqew}, \eqref{eqIfgt}, \eqref{eqApw},  and by a change 
of variable, we get our proposition. 
\end{proof} 

Let ${\rm vol}(K/T)$ be the Riemannian volume of $K/T$ with respect 
to the Riemannian metric induced by $-B_{|\fk}$. By \cite[Corollary 
7.27]{BGV}, we have 
\begin{align}\label{eqvolKT}
{\rm vol}(K/T)=\[\pi^{\fk}\(\frac{\varrho^{\fk}}{2\pi}\)\]^{-1}. 
\end{align} 

Recall that  $\tau^{E}$ is an irreducible representation of $K$ with highest $\lambda^{E}\in \ft_{0}$.

\begin{prop}
For  $t>0$, we have 
\begin{align}\label{eqTrGwi}
	\Tr_{G}\[\exp\(-\frac{t}{2}C^{\fg,X}\)\]=\frac{1}{(2\pi)^{m/2}}\[\pi^{\fk}\(\varrho^{\fk}\)\]^{-1}t^{-\frac{m+n-r_{0}}{2}}e^{-c_{\fg}t/2}I^{\fg}_{t}\(\lambda^{E}+\varrho^{\fk}\). 
\end{align}
In particular, 
\begin{align}
	I^{\fg}_{t}\(\lambda^{E}+\varrho^{\fk}\)>0. 
\end{align}
\end{prop}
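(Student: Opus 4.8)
The plan is to derive \eqref{eqTrGwi} from Bismut's formula \eqref{eqTrr} by applying Weyl's integration formula for the adjoint action of $K$ on $\fk$ (equivalently, on $\sqrt{-1}\fk$). First I would note that the integrand in \eqref{eqTrr}, as a function of $\Yok\in\sqrt{-1}\fk$, is $\Ad(K)$-invariant: the factors $\widehat A(\ad(\Yok)_{|\fp})$ and $\widehat A(\ad(\Yok)_{|\fk})$ depend only on the spectra of the relevant operators, $\Tr[\tau^{E}(e^{-\Yok})]$ is a character, and $|\Yok|^{2}$ is $\Ad(K)$-invariant since $B$ is. By Weyl's integration formula the integral over $\sqrt{-1}\fk$ then equals
\begin{equation*}
\frac{\vol(K/T)}{|W(\fk)|}\int_{\ft_{0}}\frac{\widehat A(\ad(Y_{0})_{|\fp})}{\widehat A(\ad(Y_{0})_{|\fk})}\Tr[\tau^{E}(e^{-Y_{0}})]\,e^{-|Y_{0}|^{2}/2t}\,\bigl[\pi^{\fk}(Y_{0})\bigr]^{2}\,dY_{0},
\end{equation*}
the Jacobian being the square $\bigl[\pi^{\fk}(Y_{0})\bigr]^{2}$, which arises from the pairs of opposite roots of $\fk$.

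The next step is a purely Lie-theoretic simplification. Since $\ad(Y_{0})_{|\fk}$ has eigenvalue $0$ on $\ft$ and eigenvalues $\pm\langle\alpha,Y_{0}\rangle$ on the root planes ($\alpha\in R_{+}(\fk)$), one has $\widehat A(\ad(Y_{0})_{|\fk})=\prod_{\alpha\in R_{+}(\fk)}\widehat A(\langle\alpha,Y_{0}\rangle)$, whence $\bigl[\pi^{\fk}(Y_{0})\bigr]^{2}/\widehat A(\ad(Y_{0})_{|\fk})=\pi^{\fk}(Y_{0})\prod_{\alpha\in R_{+}(\fk)}\bigl(e^{\langle\alpha,Y_{0}\rangle/2}-e^{-\langle\alpha,Y_{0}\rangle/2}\bigr)$. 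Combining the Weyl denominator identity with the Weyl character formula for $\tau^{E}$ yields
\begin{equation*}
\prod_{\alpha\in R_{+}(\fk)}\bigl(e^{\langle\alpha,Y_{0}\rangle/2}-e^{-\langle\alpha,Y_{0}\rangle/2}\bigr)\Tr[\tau^{E}(e^{-Y_{0}})]=(-1)^{|R_{+}(\fk)|}\sum_{w\in W(\fk)}\e_{w}\,e^{-\langle w(\lambda^{E}+\varrho^{\fk}),Y_{0}\rangle},
\end{equation*}
so that the integral becomes a $W(\fk)$-alternating sum of Gaussian-type integrals over $\ft_{0}$.

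Finally I would symmetrise: in the summand indexed by $w$, the substitution $Y_{0}=wY'_{0}$ together with \eqref{eqApw}, \eqref{eqew} and the $W(\fk)$-invariance of $|Y_{0}|^{2}$ shows that this summand is independent of $w$, so the $|W(\fk)|$ terms collapse to $|W(\fk)|$ copies of $\int_{\ft_{0}}\widehat A(\ad(Y_{0})_{|\fp})\pi^{\fk}(Y_{0})\,e^{-\langle\lambda^{E}+\varrho^{\fk},Y_{0}\rangle}e^{-|Y_{0}|^{2}/2t}\,dY_{0}$; one further change $Y_{0}\mapsto-Y_{0}$, using that $\widehat A$ is even and $\pi^{\fk}$ homogeneous of degree $|R_{+}(\fk)|$, identifies this with $(-1)^{|R_{+}(\fk)|}(2\pi t)^{r_{0}/2}I^{\fg}_{t}(\lambda^{E}+\varrho^{\fk})$, and the two sign factors cancel. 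Inserting $\vol(K/T)=(2\pi)^{(n-r_{0})/2}\bigl[\pi^{\fk}(\varrho^{\fk})\bigr]^{-1}$ from \eqref{eqvolKT} (by homogeneity of $\pi^{\fk}$) and collecting the powers of $(2\pi t)$ from \eqref{eqTrr} gives \eqref{eqTrGwi}. The positivity $I^{\fg}_{t}(\lambda^{E}+\varrho^{\fk})>0$ follows at once from Corollary \ref{corTrp} and the positivity of the explicit prefactors, since $\pi^{\fk}(\varrho^{\fk})>0$ as $\varrho^{\fk}$ lies in the open Weyl chamber of $R_{+}(\fk)$. The only real difficulty here is the bookkeeping: tracking the several $(-1)^{|R_{+}(\fk)|}$ signs and the powers of $2\pi$ and $t$ so that the normalisation matches \eqref{eqTrGwi} exactly; the conceptual ingredients — Weyl integration on $\fk$ and the Weyl character formula — are entirely standard.
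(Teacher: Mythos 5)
Your proof is correct and follows essentially the same route as the paper's: Weyl's integration formula on $\sqrt{-1}\fk$, the Weyl character (and denominator) formula to express the angular factor as a $W(\fk)$-alternating sum, and then a $W(\fk)$-symmetrisation plus the change of variable $Y_{0}\mapsto -Y_{0}$ to identify the result with $(2\pi t)^{r_{0}/2}I^{\fg}_{t}(\lambda^{E}+\varrho^{\fk})$. The only cosmetic difference is that you track the $(-1)^{|R_{+}(\fk)|}$ signs explicitly, while the paper absorbs them into $\pi^{\fk}(-Y_{0})$ in \eqref{eqWeyl1} before the change of variables.
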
 
\begin{proof}
By Theorem \ref{thmB09}, Corollary \ref{corTrp}, and \eqref{eqvolKT}, it is enough to  show	
\begin{multline}\label{eqTrGwi1}
	\int_{\sqrt{-1}\fk}\frac{\widehat 
	A\(\ad\(\Yok\)_{|\fp}\)}{\widehat 
	A\(\ad\(\Yok\)_{|\fk}\)}\Tr\[\tau^{E}\(e^{-\Yok}\)\]\exp\(-\frac{1}{2t}\left|\Yok\right|^{2}\)d\Yok\\
	={\rm vol}(K/T)(2\pi t)^{r_{0}/2}I_{t}^{\fg}\(\lambda^{E}+\varrho^{\fk}\). 
\end{multline} 

Since the integrand of the left hand side of \eqref{eqTrGwi1} is 
$\Ad(K)$-invariant,  using Weyl's integration formula \cite[Lemma 11.4]{Knappsemi} on  
$\sqrt{-1}\fk$,  we get 
\begin{multline}\label{eqWeyl}
	\int_{\sqrt{-1}\fk}\frac{\widehat 
	A\(\ad\(\Yok\)_{|\fp}\)}{\widehat 
	A\(\ad\(\Yok\)_{|\fk}\)}\Tr\[\tau^{E}\(e^{-\Yok}\)\]\exp\(-\frac{1}{2t}\left|\Yok\right|^{2}\)d\Yok\\
	=\frac{\vol\(K/T\)}{\left|W\(\fk\)\right|} 
 \int_{\ft_{0}}\left|\pi^{\fk}\(Y^{}_0\)\right|^{2}\frac{\widehat 
	A\(\ad\(Y^{}_0\)_{|\fp}\)}{\widehat 
	A\(\ad\(Y^{}_0\)_{|\fk}\)}\Tr\[\tau^{E}\(e^{-Y^{}_0}\)\]\exp\(-\frac{1}{2t}\left|Y^{}_0\right|^{2}\)dY^{}_0. 
\end{multline} 
Since $\pi^{\fk}$ is real on $\ft_{0}$,  for $Y^{}_0\in \ft_{0}$, we have 
\begin{align}\label{eqpppi}
	\left|\pi^{\fk}\(Y^{}_0\)\right|^{2}=\[\pi^{\fk}\(Y^{}_0\)\]^{2}.
\end{align} 
By Weyl's character formula \cite[VI.1.7]{BrockerDieck}, for 
$Y_{0}^{}\in \ft_{0}$, we have 
	\begin{align}\label{eqWeyl2}
		\Tr\[\tau^{E}\(e^{-Y^{}_0}\)\]\prod_{\alpha\in 
		R_{+}(\fk)}\(e^{-\left\langle \alpha,Y^{}_0\right\rangle 
		/2}-e^{\left\langle \alpha,Y^{}_0\right\rangle 
		/2}\)=\sum_{w\in W\(\fk\)}\e_{w}e^{-\left\<\lambda^{E}+\varrho^{\fk},wY^{}_0\right\>}. 
	\end{align} 
From \eqref{eqew}, \eqref{eqpppi}, and \eqref{eqWeyl2},  if  
$Y_{0}^{}\in \ft_{0}$, we have 
\begin{align}\label{eqWeyl1}
	\begin{aligned}
	\left|\pi^{\fk}\(Y^{}_0\)\right|^{2}{\widehat 
	A^{-1}\(\ad\(Y^{}_0\)_{|\fk}\)}\Tr\[\tau^{E}\(e^{-Y^{}_0}\)\]
	=&\pi^{\fk}\(-Y^{}_0\)\sum_{w\in 
	W\(\fk\)}\e_{w}e^{-\left\<\lambda^{E}+\rho^{\fk},wY^{}_0\right\>}\\=&\sum_{w\in 
	W\(\fk\)}\pi^{\fk}\(-wY^{}_0\)e^{-\left\<\lambda^{E}+\varrho^{\fk},wY^{}_0\right\>}.
	\end{aligned}
\end{align} 
	
By \eqref{eqIfgt},  \eqref{eqApw}, \eqref{eqWeyl},  \eqref{eqWeyl1}, and by a change of variables, we get \eqref{eqTrGwi1}.  
\end{proof} 
\subsection{The functional $I_{t}^{\fg}(\mu,w)$}\label{sItww}
Recall that in Section \ref{Smainresult}, we have fixed a compatible  positive root system $R_{+}(\fg)$ in $R(\fg)$ such that \eqref{eqlambdaEinC} holds. 

%

	\begin{defin}
	For $t>0$,  $\mu\in \ft_{0}$,  and  $w\in W(\fg)$, set 
\begin{align}\label{eqImw}
I^{\fg}_{t}(\mu,w)=	
\int_{w^{-1} 
C_{+}(\fg)}\pi^{\fk}\(Y_{0}^{}\)\widehat{A}\(\ad\(Y_{0}^{}\)_{|\fp}\)\exp\(\left\<\mu,Y_{0}^{}\right\>-\frac{1}{2t}\left|Y_{0}^{}\right|^{2}\)\frac{dY_0}{(2 \pi t)^{{r_{0}}/{2}}}.
\end{align} 
\end{defin}
By \eqref{eqIfgt} and \eqref{eqImw},  we have 
	\begin{align}\label{eq:Itmu}
		I_{t}^{\fg}(\mu)=\sum_{w\in W(\fg)}I^{\fg}_{t}(\mu,w). 
	\end{align}

We write $w=w_{1}w_{2}$ as in  \eqref{eqwww12}. 

\begin{prop}\label{propIgw}
For $t>0$, $\mu\in \ft_{0}$, and $w\in W(\fg)$, we have  
\begin{align}\label{eqIgnw}
I^{\fg}_{t}(\mu,w)=	
\e_{w_{2}}\int_{C_{+}(\fg)}\pi^{\fk}\(w^{-1}_{1}Y_{0}^{}\)\widehat{A}\(\ad\(w^{-1}_{1}Y_{0}^{}\)_{|\fp}\)\exp\(\left\<w \mu,Y_{0}^{}\right\>-\frac{\left|Y_{0}^{}\right|^{2}}{2t}\)\frac{dY_{0}^{}}{(2\pi t)^{{r_{0}}/{2}}}. 
\end{align} 
In particular, 
	\begin{align}\label{eqew2Ig}
	\e_{w_{2}}	I_{t}^{\fg}(\mu,w)>0. 
	\end{align} 
\end{prop}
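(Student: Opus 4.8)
The plan is to evaluate the integral $I^{\fg}_{t}(\mu,w)$ in \eqref{eqImw} directly by performing the linear change of variables $Y_0 \mapsto w^{-1}_1 Y_0$, combined with the decomposition $w = w_1 w_2$ of \eqref{eqwww12}, and to track the effect of this change of variables on each factor of the integrand.

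First I would start from the definition \eqref{eqImw} of $I^{\fg}_{t}(\mu,w)$ as an integral over the Weyl chamber $w^{-1}C_{+}(\fg)$. I would substitute $Y_0 = w_1^{-1} Z$, so that $Z$ ranges over $w_1 w^{-1} C_{+}(\fg) = w_1 (w_1 w_2)^{-1} C_{+}(\fg) = w_2^{-1} C_{+}(\fg)$. Since $w_1 \in W(\fg,\fk)$, by definition $w_1^{-1} C_{+}(\fg) \subset C_{+}(\fk)$, hence $w_2^{-1}\big(w_1^{-1}C_{+}(\fg)\big) \subset w_2^{-1}C_{+}(\fk)$; but in fact I want the domain expressed more conveniently. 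Let me instead substitute $Y_0 = w_1^{-1} Z$ directly in \eqref{eqImw}: the domain $w^{-1}C_{+}(\fg)$ becomes $w_1 w^{-1} C_{+}(\fg) = w_2^{-1} C_{+}(\fg)$. The map $w_1^{-1}$ is an isometry of $\ft_0$, so $|Y_0|^2 = |Z|^2$ and the Lebesgue measure $dY_0$ is preserved. The pairing transforms as $\langle \mu, Y_0\rangle = \langle \mu, w_1^{-1} Z\rangle = \langle w_1 \mu, Z\rangle$. Then I would perform a second change of variables $Z = w_2^{-1} Y_0'$ to bring the domain to $C_{+}(\fg)$; again this is an isometry preserving measure, the quadratic term is unchanged, and $\langle w_1\mu, Z\rangle = \langle w_1 \mu, w_2^{-1} Y_0'\rangle = \langle w_2 w_1 \mu, Y_0'\rangle$. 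Here I need to be careful: I want the final exponent to read $\langle w\mu, Y_0'\rangle = \langle w_1 w_2 \mu, Y_0'\rangle$, so I should organize the two substitutions so that the composite sends the domain $w^{-1}C_{+}(\fg)$ to $C_{+}(\fg)$ via multiplication by $w$ and correspondingly replaces $\langle \mu, Y_0\rangle$ by $\langle w\mu, Y_0'\rangle$; concretely, substituting $Y_0 = w^{-1} Y_0'$ does exactly this in one step, giving $\langle \mu, w^{-1}Y_0'\rangle = \langle w\mu, Y_0'\rangle$ and domain $C_{+}(\fg)$. The subtlety the $w = w_1 w_2$ split addresses is the $\widehat A$ factor, which is \emph{not} $W(\fg)$-invariant but only $W(\fk)$-invariant: writing $w^{-1} = w_2^{-1} w_1^{-1}$ and using \eqref{eqApw} (i.e. $\widehat A(\ad(w_2 X)_{|\fp}) = \widehat A(\ad(X)_{|\fp})$ for $w_2 \in W(\fk)$, since $w_2 = \Ad(k)$ for some $k \in N_K(T)$), the $w_2$-part of the substitution leaves $\widehat A$ invariant while converting $\widehat A(\ad(w^{-1}Y_0')_{|\fp})$ into $\widehat A(\ad(w_1^{-1}Y_0')_{|\fp})$. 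For the $\pi^{\fk}$ factor, $\pi^{\fk}$ is $W(\fk)$-equivariant with sign: by \eqref{eqew}, $\pi^{\fk}(w_2^{-1} \bullet) = \e_{w_2^{-1}}\pi^{\fk}(\bullet) = \e_{w_2}\pi^{\fk}(\bullet)$, which produces the prefactor $\e_{w_2}$; the residual $w_1^{-1}$ inside $\pi^{\fk}$ stays because $w_1 \notin W(\fk)$ in general. Assembling these three observations gives exactly \eqref{eqIgnw}.

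For the second claim \eqref{eqew2Ig}, I would multiply \eqref{eqIgnw} through by $\e_{w_2}$ (using $\e_{w_2}^2 = 1$) to get
\begin{align*}
\e_{w_2} I^{\fg}_{t}(\mu,w) = \int_{C_{+}(\fg)} \pi^{\fk}(w_1^{-1}Y_0)\, \widehat A\big(\ad(w_1^{-1}Y_0)_{|\fp}\big)\, \exp\Big(\langle w\mu, Y_0\rangle - \tfrac{|Y_0|^2}{2t}\Big)\, \frac{dY_0}{(2\pi)^{r_0/2}},
\end{align*}
and argue that the integrand is nonnegative and not identically zero on $C_{+}(\fg)$. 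The factor $\widehat A(\ad(\cdot)_{|\fp})$ is strictly positive by construction. The Gaussian factor is positive. The only point needing justification is that $\pi^{\fk}(w_1^{-1}Y_0) \geq 0$ for $Y_0 \in C_{+}(\fg)$: since $w_1 \in W(\fg,\fk)$, we have $w_1^{-1}C_{+}(\fg) \subset C_{+}(\fk)$, so $w_1^{-1}Y_0 \in C_{+}(\fk)$, and on the closed positive Weyl chamber $C_{+}(\fk)$ the $\pi$-function $\pi^{\fk}(Y) = \prod_{\alpha\in R_{+}(\fk)}\langle \alpha, Y\rangle$ is a product of nonnegative numbers, hence $\geq 0$. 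It is strictly positive on the interior, so the integral over the full-dimensional region $C_{+}(\fg)$ is strictly positive. This yields \eqref{eqew2Ig}.

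The main obstacle — though it is more a matter of bookkeeping than of genuine difficulty — is keeping the two sub-substitutions straight so that the asymmetric behavior of $\widehat A$ (invariant only under $W(\fk)$) and of $\pi^{\fk}$ (equivariant with a sign only under $W(\fk)$) is handled correctly, and confirming that the residual $w_1^{-1}$ cannot be removed: this is precisely why the statement retains $\pi^{\fk}(w_1^{-1}Y_0)$ and $\widehat A(\ad(w_1^{-1}Y_0)_{|\fp})$ rather than simplifying them, and why $W(\fg,\fk)$ (not $W(\fg)$) is the relevant index set ensuring $w_1^{-1}C_{+}(\fg) \subset C_{+}(\fk)$ so that positivity of $\pi^{\fk}$ survives. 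Everything else is a routine isometric change of variables.
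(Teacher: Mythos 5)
Your proof is correct and follows essentially the same route as the paper: the single change of variables $Y_0 = w^{-1}Y_0'$, then factoring $w^{-1}=w_2^{-1}w_1^{-1}$ and using the $W(\fk)$-invariance of $\widehat A$ (equation \eqref{eqApw}) and the $W(\fk)$-antisymmetry of $\pi^{\fk}$ (equation \eqref{eqew}) to peel off the $w_2$ factor while retaining $w_1^{-1}$, followed by positivity of the resulting integrand on $C_+(\fg)$. Your justification of $\pi^{\fk}(w_1^{-1}Y_0)\geq 0$ via $w_1^{-1}C_+(\fg)\subset C_+(\fk)$ (the definition of $W(\fg,\fk)$) is equivalent to the paper's appeal to $w_1 R_+(\fk)\subset R_+(\fg)$ from \eqref{eqwR+kR+g}, and both give strict positivity on the interior; the only cosmetic difference is that your writeup wanders through an abandoned two-step substitution before settling on the correct one-step version.
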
 
\begin{proof}
	  Equation \eqref{eqIgnw} follows from \eqref{eqew}, \eqref{eqApw},  
	  \eqref{eqImw}, and a change of variables. 
	 By \eqref{eqwR+kR+g}, for $ Y_{0 }\in {\rm Int} \left(C_{+} \left(\mathfrak{g} \right)\right)$, we have  $\pi^{\mathfrak{k} }\(w_{1}^{-1} Y_{0 }  \)> 0 $.
	In particular, the integrand on the right hand side of 	 \eqref{eqIgnw} is positive on  ${\rm Int}\(C_{+}(\fg)\)$,  which gives \eqref{eqew2Ig}. 
\end{proof} 

	
For $x\in \bR$, set 
\begin{align}
	{\rm Td}(x)=\frac{x}{1-e^{-x}}. 
\end{align} 
Then, ${\rm Td}$ is a positive function such that for $x\in\bR$, 
	\begin{align}\label{eqATd}
		\widehat A(x)=e^{-x/2}{\rm Td}(x). 
	\end{align} 
Moreover, there is $C>0$ such that 	for $x\g0$, we have 
	\begin{align}\label{eqTdc}
{\rm Td}(x)\l C(1+|x|).\end{align}

\begin{defin}
	For $w_{1}\in W(\fg, \fk)$, set 
	\begin{align}\label{eqpipihat}
		\begin{aligned}
	\pi_{0}\(w_{1},Y_{0}\)&=\prod_{\alpha\in w_{1}R_{+}(\fk)}\left\langle 
	\alpha,Y_{0}\right\rangle \prod_{\alpha\in w_{1}R(\fp)\cap R_{+}(\fg)}\left\langle 
	\alpha,Y_{0}\right\rangle,\\
	\widehat{\pi}_{0}\(w_{1},Y_{0}\)&=\prod_{\alpha\in w_{1}R_{+}(\fk)}\left\langle 
	\alpha,Y_{0}\right\rangle \prod_{\alpha\in w_{1}R(\fp)\cap 
	R_{+}(\fg)}{\rm Td}\(\left\langle 
	\alpha,Y_{0}\right\rangle\). 
	\end{aligned}
	\end{align} 
\end{defin} 
Then, $\pi_{0}(w_{1},\cdot)$ is a  polynomial of degree $|R_{+}(\fg)|$ on $\ft_{0}$. 
By abuse of notation, we call  $\widehat \pi_{0}(w_{1},\cdot)$ has degree 
$|R_{+}(\fg)|$. Clearly, there is $C>0$  such that for  $Y_0^{}\in C_{+}(\fg)$, 
\begin{align}\label{eqTdexp}
0\l \widehat{\pi}_{0}\(w_{1},Y_{0}\) \l 
	C\(1+\left|Y_0^{}\right|\)^{\left|R_{+}(\fg)\right|}.
\end{align}

\begin{prop}\label{propAtoTod}
	For $w_{1}\in W(\fg, \fk)$ and $Y_0^{}\in {\ft_{0}}$,	we have 
\begin{align}\label{eqAexp1} 
	\pi^{\fk}\(w^{-1}_{1}Y_{0}^{}\) \widehat{A}\(\ad\(w_{1}^{-1}Y_0^{}\)_{|\fp}\)= 
 	\widehat\pi_{0}\(w_{1},Y_{0}\)\exp\(\left\<w_{1}\varrho^{\fk}-\varrho^{\fg},Y_0^{}\right\>\). 
\end{align} 
\end{prop}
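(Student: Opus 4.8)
The proof of Proposition \ref{propAtoTod} is a direct computation that tracks how the various factors in $\widehat A$ and $\pi^{\fk}$ transform under $w_1$. The plan is to expand both sides using the root decompositions \eqref{eqpTwei}, \eqref{eqgapaka} together with Proposition \ref{propRg}, and then match factors indexed by roots.

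First I would rewrite the left-hand side. By definition \eqref{eqAH12} and the fact that $\ad(Y)_{|\fp}$ acts on $\fp_{\bC}=\fa_{\bC}\oplus\bigoplus_{\alpha\in R(\fp)}\fp_\alpha$ with eigenvalue $\langle\alpha,Y\rangle$ on $\fp_\alpha$ (and $0$ on $\fa_\bC$, where $\dim\fp_\alpha = 1$ by Proposition \ref{propRg}), we have $\widehat A\bigl(\ad(w_1^{-1}Y_0)_{|\fp}\bigr)=\prod_{\alpha\in R_+(\fp)}\widehat A\bigl(\langle\alpha,w_1^{-1}Y_0\rangle\bigr)=\prod_{\alpha\in R_+(\fp)}\widehat A\bigl(\langle w_1\alpha,Y_0\rangle\bigr)$, where the product over $R_+(\fp)$ is meant with multiplicity when a root appears twice in $R(\fp)$; using $\widehat A$ even, the sign of $w_1\alpha$ is irrelevant. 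Now invoke \eqref{eqATd}: $\widehat A(x)=e^{-x/2}\mathrm{Td}(x)$, so this product equals $\exp\bigl(-\tfrac12\sum_{\alpha\in R_+(\fp)}\langle w_1\alpha,Y_0\rangle\bigr)\prod_{\alpha\in R_+(\fp)}\mathrm{Td}\bigl(\langle w_1\alpha,Y_0\rangle\bigr)$, again with multiplicity. Similarly $\pi^{\fk}(w_1^{-1}Y_0)=\prod_{\alpha\in R_+(\fk)}\langle\alpha,w_1^{-1}Y_0\rangle=\prod_{\alpha\in R_+(\fk)}\langle w_1\alpha,Y_0\rangle$.

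Next I would reorganize the products according to the partition of roots. Since $w_1\in W(\fg,\fk)$, by \eqref{eqwR+kR+g} we have $w_1 R_+(\fk)\subset R_+(\fg)$, so $\prod_{\alpha\in R_+(\fk)}\langle w_1\alpha,Y_0\rangle=\prod_{\beta\in w_1 R_+(\fk)}\langle\beta,Y_0\rangle$ with no sign issue. For the $\fp$-part, each $\alpha\in R_+(\fp)$ satisfies either $w_1\alpha\in R_+(\fg)$ or $-w_1\alpha\in R_+(\fg)$; writing $\langle w_1\alpha,Y_0\rangle$ and collecting the $\mathrm{Td}$ factors I would use the identity $\mathrm{Td}(x)=e^{x}\mathrm{Td}(-x)$ (equivalently $\frac{x}{1-e^{-x}}=\frac{-x}{1-e^{x}}e^{x}$) to flip each $\alpha$ with $w_1\alpha<0$ into $w_1 R_+(\fg)$, accumulating a compensating exponential factor. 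After this bookkeeping, the $\mathrm{Td}$-product becomes $\prod_{\beta\in w_1 R(\fp)\cap R_+(\fg)}\mathrm{Td}(\langle\beta,Y_0\rangle)$ (with multiplicity, matching \eqref{eqpipihat} and Proposition \ref{propRg}), the non-$\mathrm{Td}$ factors combine with the $\fk$-part to give $\widehat\pi_0(w_1,Y_0)$, and the exponential prefactors — coming from the $e^{-x/2}$ in $\widehat A$ and from the sign flips — should assemble, using the definitions $\varrho^{\fk}=\tfrac12\sum_{\alpha\in R_+(\fk)}\alpha$ and $\varrho^{\fg}=\tfrac12\sum_{\alpha\in R_+(\fp)}\alpha+\tfrac12\sum_{\alpha\in R_+(\fk)}\alpha$ from \eqref{eqrhog}, into exactly $\exp\bigl(\langle w_1\varrho^{\fk}-\varrho^{\fg},Y_0\rangle\bigr)$.

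The main obstacle is the exponent bookkeeping in the last step: one must check carefully that the half-sum of $w_1$-images of positive $\fp$-roots, after the sign flips, reorganizes into $w_1\varrho^{\fk} + (\text{something})$ and that the "$1/2$" shifts from $\widehat A$ combine correctly. Concretely, the key identity to verify is $\sum_{\alpha\in w_1 R(\fp)\cap R_+(\fg)}\alpha \;=\; 2\varrho^{\fg} - 2 w_1\varrho^{\fk}$ as elements of $\ft_0$ (using that $w_1 R_+(\fk)$ and $w_1 R(\fp)\cap R_+(\fg)$ together, with multiplicities, exhaust $R_+(\fg)$ with multiplicities, by \eqref{eqgapaka}, \eqref{eqwR+kR+g1}, and $W(\fg)$-invariance of $\varrho^{\fg}$ noted after \eqref{eqrhog}). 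Granting this, comparing with the $e^{-x/2}$-factors gives the claimed exponential, and the proof is complete.
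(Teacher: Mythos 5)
Your proof is correct and follows essentially the same route as the paper: both expand $\widehat A(\ad(w_1^{-1}Y_0)_{|\fp})$ as a root product, factor out $e^{-x/2}$ via $\widehat A = e^{-x/2}\,\mathrm{Td}$, reindex (the paper chooses the half $\{\alpha\in R(\fp): w_1\alpha>0\}$ directly via evenness of $\widehat A$, whereas you start from $R_+(\fp)$ and flip via $\mathrm{Td}(x)=e^x\mathrm{Td}(-x)$, which is the same thing in different bookkeeping), and reduce to the key half-sum identity, which the paper proves as \eqref{eqAtoTod} via the decomposition $R(\fg)=R(\fp)\cup R(\fk)$, \eqref{eqwR+kR+g1}, and $W(\fg)$-invariance of $\varrho^{\fg}$ — exactly the ingredients you cite.
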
 
\begin{proof}
		By \eqref{eqpipihat}, it is enough to show
	\begin{align}\label{eqAexp}
	\widehat 
	A\(\ad\(w_{1}^{-1}Y_0^{}\)_{|\fp}\)= 
	\exp\(\left\<w_{1}\varrho^{\fk}-\varrho^{\fg},Y_0^{}\right\>\)
 	{\prod_{\substack{\alpha\in R(\fp)\\ w_{1}\alpha>0}} {\rm 
	Td}\(\left\langle w_{1}\alpha,Y_0^{}\right\rangle \)}.  
\end{align} 

We have a disjoint union  
\begin{align}\label{eqRfp}
	R(\fp)=\{\alpha\in R(\fp): w_{1}\alpha>0\}\sqcup \{\alpha\in 
	R(\fp): w_{1}\alpha<0\}. 
\end{align} 
Observes that the map $\alpha\in R(\fp)\to -\alpha \in R(\fp)$ 
interchanges the two subsets on the right hand side of \eqref{eqRfp}.  
Since $\widehat A$ is an even function, by the above observation, we 
have 
\begin{align}\label{eq526}
{	\widehat{A}}\(\ad\(w^{-1}_{1}Y_{0}^{}\)_{|\fp}\)= 
\prod_{\substack{\alpha\in R(\fp) \\ 
	w_{1}\alpha>0 }} {	\widehat{A}} \(
\left\<\alpha, w_{1}^{-1}Y_{0}\right\>\). 
\end{align} 
By \eqref{eqATd} and \eqref{eq526}, we  get  
\begin{align}\label{eqAtoTod0}
	\begin{aligned}
{	
\widehat{A}}\(\ad\(w^{-1}_{1}Y_{0}^{}\)_{|\fp}\)=&\exp\Bigg(-\frac{1}{2}\sum_{\substack{\alpha\in R(\fp) \\ 
	w_{1}\alpha>0 }}\left\<\alpha,w_{1}^{-1}Y_{0}^{}\right\>\Bigg)\prod_{\substack{\alpha\in 
	R(\fp)\\ w_{1}\alpha>0}}{\rm Td}\(\left\langle 
	\alpha,w^{-1}_{1}Y_{0}^{}\right\rangle \)\\
	=&\exp\Bigg(-\frac{1}{2}\sum_{\substack{\alpha\in R(\fp) \\ 
	w_{1}\alpha>0 }}\left\<w_{1}\alpha,Y_{0}^{}\right\>\Bigg)\prod_{\substack{\alpha\in 
	R(\fp)\\ w_{1}\alpha>0}}{\rm Td}\(\left\langle 
w_{1}	\alpha,Y_{0}^{}\right\rangle \).
\end{aligned}
\end{align} 


We claim that  
	\begin{align}\label{eqAtoTod}
-\frac{1}{2}\sum_{\substack{\alpha\in 
R(\fp)\\ 
	w_{1}\alpha>0}}\alpha=\varrho^{\fk}-w_{1}^{-1}\varrho^{\fg}. 
	\end{align} 
Indeed, we can write 
	\begin{align}\label{eqAToTod1}
		-\frac{1}{2}\sum_{\substack{\alpha\in R(\fp) \\ 
	w_1\alpha>0 }}\alpha=\frac{1}{2}\sum_{\substack{\alpha\in R(\fk) \\ 
	w_1\alpha>0 }}\alpha-\frac{1}{2}\Big(\sum_{\substack{\alpha\in R(\fp) \\ 
	w_1\alpha>0 }}\alpha+\sum_{\substack{\alpha\in R(\fk) \\ 
	w_1\alpha>0 }}\alpha\Big). 
	\end{align} 
By \eqref{eqwR+kR+g1}, we have  
	\begin{align}\label{eqAToTod15}
		\frac{1}{2}\sum_{\substack{\alpha\in R(\fk) \\ 
	w_{1}\alpha>0 }}\alpha=\varrho^{\fk}. 
	\end{align} 
By the observation given after Remark \ref{rerhott}, 	we have 
	\begin{align}\label{eqAToTod2}
		\frac{1}{2}\Big(\sum_{\substack{\alpha\in R(\fp) \\ 
	w_{1}\alpha>0 }}\alpha+\sum_{\substack{\alpha\in R(\fk) \\ 
	w_1\alpha>0 }}\alpha\Big)=\frac{1}{2}\sum_{\substack{\alpha\in 
	R(\fg)\\ 
	w_{1}\alpha>0 }}\dim \fg_{\alpha}\cdot \alpha=w_{1}^{-1}\varrho^{\fg}. 
	\end{align} 
By 	\eqref{eqAToTod1}-\eqref{eqAToTod2}, we get our claim \eqref{eqAtoTod}. 
		
		By \eqref{eqAtoTod0} and \eqref{eqAtoTod}, we get 
		\eqref{eqAexp} and finish the proof of our proposition. 
\end{proof} 

\begin{defin}\label{defmuw1}
If $\mu\in \ft_{0}$ and if $w=w_{1}w_{2}\in W(\fg)$ satisfying \eqref{eqwww12}, denote by 
\begin{align}\label{eqmuwul}
\ul \mu(w)=w\mu+w_{1}\varrho^{\fk}-\varrho^{\fg}. 
\end{align} 
If $w=1$, we write 
\begin{align}
\ul \mu= \mu+\varrho^{\fk}-\varrho^{\fg}. 
\end{align} 
\end{defin} 

\begin{cor}\label{corIgmuw1}
For $t>0$,  $\mu\in \ft_{0}$, and $w=w_{1}w_{2}\in W$ satisfying \eqref{eqwww12},  we have 
	\begin{align}\label{eqIgmuw1}
	I^{\fg}_{t}(\mu,w)=\e_{w_{2}}\int_{C_{+}(\fg)}	
\widehat\pi_{0}\(w_{1}, Y_{0}\)
	\exp\(\left\<\ul \mu(w),Y_{0}^{}\right\>-\frac{1}{2t}|Y_{0}^{}|^{2}\)\frac{dY_{0}^{}}{(2\pi t)^{r_{0}/2}}. 
	\end{align} 
\end{cor}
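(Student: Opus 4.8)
The plan is to derive the formula directly from Proposition \ref{propIgw} and Proposition \ref{propAtoTod}; no new idea is needed, only a change of variables and the substitution of a closed form for the integrand. Starting from the integral representation \eqref{eqImw} of $I^{\fg}_{t}(\mu,w)$ over $w^{-1}C_{+}(\fg)$, I would substitute $Z=wY_{0}$. Since $w$ acts isometrically on $\ft_{0}$, this change of variables is volume preserving and carries $w^{-1}C_{+}(\fg)$ onto $C_{+}(\fg)$; the Gaussian weight and the factor $(2\pi t)^{-r_{0}/2}$ are therefore unchanged, while $\langle\mu,Y_{0}\rangle=\langle w\mu,Z\rangle$. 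Writing $w^{-1}=w_{2}^{-1}w_{1}^{-1}$ with $w_{2}^{-1}\in W(\fk)$, the antisymmetry \eqref{eqew} of $\pi^{\fk}$ under $W(\fk)$ produces the sign $\e_{w_{2}^{-1}}=\e_{w_{2}}$, and \eqref{eqApw} applied to $w_{2}^{-1}$ gives $\widehat{A}(\ad(w^{-1}Z)_{|\fp})=\widehat{A}(\ad(w_{1}^{-1}Z)_{|\fp})$. This reproduces \eqref{eqIgnw}.

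Next I would insert the identity \eqref{eqAexp1} of Proposition \ref{propAtoTod} into the integrand, replacing $\pi^{\fk}(w_{1}^{-1}Z)\,\widehat{A}(\ad(w_{1}^{-1}Z)_{|\fp})$ by $\widehat{\pi}_{0}(w_{1},Z)\exp(\langle w_{1}\varrho^{\fk}-\varrho^{\fg},Z\rangle)$. The two exponential factors then merge into $\exp(\langle w\mu+w_{1}\varrho^{\fk}-\varrho^{\fg},Z\rangle-\tfrac{1}{2t}|Z|^{2})$, and by the definition $\ul\mu(w)=w\mu+w_{1}\varrho^{\fk}-\varrho^{\fg}$ this is exactly $\exp(\langle\ul\mu(w),Z\rangle-\tfrac{1}{2t}|Z|^{2})$. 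Renaming $Z$ as $Y_{0}$ yields \eqref{eqIgmuw1}.

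There is essentially no obstacle: the corollary is a reformulation of \eqref{eqIgnw} obtained by substituting the closed-form expression for the Bismut integrand given by Proposition \ref{propAtoTod}, together with the bookkeeping of the $W(\fk)$-signs. The one point worth stating explicitly is that the substitution $Y_{0}\mapsto wY_{0}$ is volume preserving, so the normalizing constant $(2\pi t)^{-r_{0}/2}$ of \eqref{eqImw} is carried over without modification, matching the constant in \eqref{eqIgmuw1}.
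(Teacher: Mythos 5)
Your proof is correct and is exactly the paper's intended argument, which simply combines Propositions \ref{propIgw} and \ref{propAtoTod}; you have merely written out the routine change-of-variables and exponent-merging steps that the paper leaves implicit. One small bonus: you correctly carry the normalization $(2\pi t)^{-r_{0}/2}$ through the orthogonal change of variables, matching the corollary's statement (the displayed equation \eqref{eqIgnw} in the paper appears to have dropped the $t$ in a typo).
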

\begin{proof}
This is a consequence of Propositions \ref{propIgw} and \ref{propAtoTod}, and \eqref{eqmuwul}.
\end{proof} 	

Denote by $\pi_{0}^{\fg}(w_{1},\cdot)$, $\widehat \pi_{0}^{\fg}(w_{1},\cdot)$ the restrictions of 
$\pi_{0}(w_{1},\cdot)$, $\widehat\pi_{0}^{\fg}(w_{1},\cdot)$ to $\ft_{0}^{\fg}$. If $Y_{0}=Y_{0}^{\fg}+Y_{\fg}\in \mathfrak{t}_{0}^{\mathfrak{g} } \oplus \mathfrak{t}_{\mathfrak{g} } $, we have 
	\begin{align}\label{eqEdexp11}
&		\pi_{0}^{\fg}\(w_{1} , Y_{0}^{\fg}\)=	\pi_{0}\(w_{1} , 
		Y_{0}\), & \widehat \pi_{0}^{\fg}\(w_{1} , Y_{0}^{\fg}\)=	\widehat{\pi} _{0}\(w_{1} , Y_{0}\). 
	\end{align} 
Moreover, if $\mu_{0}=\mu_{0}^{\fg}+\mu_{\fg}$, we use the obvious notation 
	\begin{align}
		\ul\mu(w)=\ul\mu_{0}^{\fg}(w)+\mu_{\fg}. 
	\end{align} 


\begin{cor}
For $t>0$, $\mu_{0}=\mu_{0}^{\fg}+\mu_{\fg}$,  and $w=w_{1}w_{2}\in W$ satisfying \eqref{eqwww12}, we have
	\begin{multline}\label{eq532}
	I^{\fg}_{t}(\mu,w)=\e_{w_{2}} 
	\exp\(\frac{t}{2}\left|\mu_{\fg}\right|^{2}\)\\
	\times \int_{C_{0}^{\fg}}	
	\widehat \pi_{0}^{\fg}\(w_{1}, Y_{0}^{\fg}\)
	\exp\(\left\<\ul 
	\mu_{0}^{\fg}(w),Y_{0}^{\fg}\right\>-\frac{1}{2t}\left|Y_{0}^{\fg}\right|^{2}\)\frac{dY_{0}^{\fg}}{(2\pi t)^{r_{0}^{\fg}/2}}. 
	\end{multline} 	
\end{cor}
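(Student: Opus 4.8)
The plan is to derive \eqref{eq532} directly from Corollary~\ref{corIgmuw1} by integrating out the ``central'' variable in the orthogonal splitting $\ft_{0}=\ft_{0}^{\fg}\oplus\ft_{\fg}$ of \eqref{eqft012}. First I would invoke \eqref{eqCgot}, which identifies $C_{+}(\fg)$ with $C_{0}^{\fg}\times\ft_{\fg}$, to write $Y_{0}=Y_{0}^{\fg}+Y_{\fg}$ with $Y_{0}^{\fg}\in C_{0}^{\fg}$ and $Y_{\fg}\in\ft_{\fg}$, so that $\left|Y_{0}\right|^{2}=\left|Y_{0}^{\fg}\right|^{2}+\left|Y_{\fg}\right|^{2}$, $dY_{0}=dY_{0}^{\fg}\,dY_{\fg}$ and $r_{0}=\dim\ft_{0}^{\fg}+\dim\ft_{\fg}$.

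The next step is to check that the integrand in \eqref{eqIgmuw1} splits as a product of a function of $Y_{0}^{\fg}$ with a function of $Y_{\fg}$. Since $\ft_{0}^{\fg}$ is spanned by $\Delta_{0}^{\fg}$ and every element of $R(\fg)$ is an integer combination of the simple roots, the real span of $R(\fg)$ is exactly $\ft_{0}^{\fg}$; hence each root, regarded as an element of $\ft_{0}$ via the metric identification, lies in $\ft_{0}^{\fg}$ and is orthogonal to $\ft_{\fg}$, so $\left\<\alpha,Y_{0}\right\>=\left\<\alpha,Y_{0}^{\fg}\right\>$ for all $\alpha\in R(\fg)$. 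As the factors of $\widehat\pi_{0}\(w_{1},\cdot\)$ in \eqref{eqpipihat} are the forms $\left\<\alpha,\cdot\right\>$ and ${\rm Td}\(\left\<\alpha,\cdot\right\>\)$ with $\alpha\in R(\fg)$, this gives $\widehat\pi_{0}\(w_{1},Y_{0}\)=\widehat\pi_{0}^{\fg}\(w_{1},Y_{0}^{\fg}\)$. Similarly $W(\fg)$ is generated by reflections fixing $\ft_{\fg}$ pointwise, and $\varrho^{\fk},\varrho^{\fg}\in\ft_{0}^{\fg}$ by \eqref{eqrhog} and the definition of $\varrho^{\fk}$, so $w\mu=w\mu_{0}^{\fg}+\mu_{\fg}$ with $w\mu_{0}^{\fg}\in\ft_{0}^{\fg}$; hence $\ul\mu(w)=\ul\mu_{0}^{\fg}(w)+\mu_{\fg}$ and $\left\<\ul\mu(w),Y_{0}\right\>=\left\<\ul\mu_{0}^{\fg}(w),Y_{0}^{\fg}\right\>+\left\<\mu_{\fg},Y_{\fg}\right\>$.

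With these identities the integral over $C_{+}(\fg)$ in \eqref{eqIgmuw1} factors as the integral over $C_{0}^{\fg}$ appearing on the right-hand side of \eqref{eq532}, carrying $(2\pi t)^{-\dim\ft_{0}^{\fg}/2}$, times the Gaussian integral $\int_{\ft_{\fg}}\exp\(\left\<\mu_{\fg},Y_{\fg}\right\>-\frac{1}{2t}\left|Y_{\fg}\right|^{2}\)(2\pi t)^{-\dim\ft_{\fg}/2}\,dY_{\fg}$. Completing the square rewrites the exponent as $-\frac{1}{2t}\left|Y_{\fg}-t\mu_{\fg}\right|^{2}+\frac{t}{2}\left|\mu_{\fg}\right|^{2}$, and the remaining normalized Gaussian integrates to $1$, so this second factor equals $\exp\(\frac{t}{2}\left|\mu_{\fg}\right|^{2}\)$. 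Multiplying the two factors, and using $(2\pi t)^{-\dim\ft_{0}^{\fg}/2}(2\pi t)^{-\dim\ft_{\fg}/2}=(2\pi t)^{-r_{0}/2}$ as in \eqref{eqIgmuw1}, yields \eqref{eq532}.

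I do not expect a genuine obstacle: this corollary is a bookkeeping reduction whose only computation is an elementary Gaussian integral. The point that deserves a little care is the verification that $\widehat\pi_{0}(w_{1},\cdot)$ is truly constant along $\ft_{\fg}$ — equivalently, that $\ft_{0}^{\fg}$ already contains the whole span of $R(\fg)$ — together with the matching invariance of $\varrho^{\fk}$, $\varrho^{\fg}$ and of the $W(\fg)$-action, so that $\ul\mu(w)$ decomposes compatibly with the splitting $\ft_{0}=\ft_{0}^{\fg}\oplus\ft_{\fg}$.
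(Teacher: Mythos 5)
Your argument is correct and is exactly the computation the paper has in mind when it cites Corollary~\ref{corIgmuw1}: using the splitting $C_{+}(\fg)=C_{0}^{\fg}\times\ft_{\fg}$ from \eqref{eqCgot}, observing that $R(\fg)\subset\ft_{0}^{\fg}$ (so $\widehat\pi_{0}(w_{1},\cdot)$ depends only on $Y_{0}^{\fg}$) and that $W(\fg)$, $\varrho^{\fk}$, $\varrho^{\fg}$ are supported on $\ft_{0}^{\fg}$ (so $\ul\mu(w)$ decomposes compatibly), and then performing the one-dimensional Gaussian in $Y_{\fg}$. The only discrepancy is a harmless typo in the paper's display \eqref{eq532}, where the normalization should read $(2\pi t)^{r_{0}^{\fg}/2}$ as in \eqref{eqItvr}, exactly as your computation produces.
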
 
\begin{proof}
	This is a consequence of \eqref{eqCgot} and Corollary \ref{corIgmuw1}. 
\end{proof}

\subsection{Intermediate results}\label{sAsyI}
%

  \begin{thm}\label{thm32}
	  For $w\in W(\fg)$, there exist $\alpha _{w}>0$, $\beta _{w}\in \frac{1}{2} \mathbf Z$,  and $\gamma_{w}\g0$ such that  as $t\to \infty$,
	  \begin{align}
		I^{\fg}_{t}\(\lambda^{E} + \varrho^{\mathfrak{k} }  ,w\)\sim  \e_{w_{2}}\alpha_{w}t^{\beta_{w}}e^{\gamma_{w}t}. 
	\end{align} 
 \end{thm}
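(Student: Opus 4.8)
The plan is to analyze the integral $I^{\fg}_{t}(\mu,w)$ via Corollary \ref{corIgmuw1}, which expresses it (up to the sign $\e_{w_{2}}$ and a $t$-power normalization) as an integral over the cone $C_{+}(\fg)$ of a polynomial $\widehat\pi_{0}(w_{1},\cdot)$ against the Gaussian weight $\exp(\<\ul\mu(w),Y_{0}\>-|Y_{0}|^{2}/2t)$. First I would complete the square: writing $\<\ul\mu(w),Y_{0}\>-\frac{1}{2t}|Y_{0}|^{2}=\frac{t}{2}|\ul\mu(w)|^{2}-\frac{1}{2t}|Y_{0}-t\,\ul\mu(w)|^{2}$, one extracts an overall exponential factor $e^{t|\ul\mu(w)|^{2}/2}$, and the remaining integral is a Laplace-type integral whose mass concentrates near the point of $C_{+}(\fg)$ closest to $t\,\ul\mu(w)$, i.e. (after rescaling $Y_{0}=tZ$) near the projection $\ul\mu(w)_{*}$ of $\ul\mu(w)$ onto $C_{+}(\fg)$. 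This is exactly the geometric setup of Langlands' combinatorial lemma (Proposition \ref{propLC1}): decompose $\ul\mu(w)=\ul\mu(w)^{1}_{0}+\ul\mu(w)_{2}$ with $\ul\mu(w)^{1}_{0}\in-\mathrm{Int}(\check C^{1}_{0})$ and $\ul\mu(w)_{2}\in\mathrm{Int}(C^{\fg}_{2})\times\ft_{\fg}$, so that the projection is $\ul\mu(w)_{2}$ and $\gamma_{w}=\frac{1}{2}|\ul\mu(w)_{2}|^{2}$.

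Next I would localize: for small $\epsilon>0$ the integral over $C_{+}(\fg)$ is asymptotic to the integral over an $\epsilon$-neighborhood of $\ul\mu(w)_{2}$, by the exponential decay of the Gaussian away from the minimizing point (using the estimate \eqref{eqTdexp} on the polynomial growth of $\widehat\pi_{0}$ to dominate). Near $\ul\mu(w)_{2}$ the cone $C_{+}(\fg)$ looks like $\ft_{0}^{1}\oplus\ft_{1}^{2}\oplus\ft_{2}^{\fg}\oplus\ft_{\fg}$, where in the $\ft^{1}_{0}$-directions $\ul\mu(w)$ points strictly into the interior of $-\check C^{1}_{0}$ (so the half-space constraint is active and one is at a genuine corner), in the $\ft_{1}^{2}$-directions the relevant component vanishes (so the minimizer sits on a wall), and in the $\ft_{2}^{\fg}\oplus\ft_{\fg}$-directions $\ul\mu(w)_{2}$ is interior. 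The three regimes reproduce precisely Cases I, II, III from the $SL(2,\bR)$ computation in Section \ref{subsec:sl2}: in the interior directions one rescales $Y=t\,\ul\mu(w)_{2}+\sqrt{t}\,y$ and the polynomial contributes its value at $\ul\mu(w)_{2}$ (giving $\pi_{2}$-type factors and no extra $t$-power beyond the Gaussian normalization); in the wall directions one rescales $Y_{1}^{2}=\sqrt{t}\,y$ over the cone $C^{2}_{1}$ (the integral $\alpha^{2}_{1}=\int_{C^{2}_{1}}\pi^{2}_{1}(y)e^{-|y|^{2}/2}\,dy$ appears, with a $t$-power shift from the degree of that part of the polynomial); and in the corner directions the factor $\widehat\pi_{0}$ including the $\mathrm{Td}$ terms contributes the full $\fk^{1}_{s}$-integral $\alpha^{1}_{0}$, again with an associated $t$-power. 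Collecting powers of $t$ from all three pieces together with the $(2\pi t)^{-r_{0}/2}$ prefactor yields $\beta_{w}$, and the product of the three constants (times $\e_{w_{2}}$) yields $\alpha_{w}$; positivity of $\alpha_{w}$ follows from positivity of each factor, as in Corollary \ref{corTrp} and Proposition \ref{propIgw}.

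The main obstacle is the careful bookkeeping in the mixed/corner directions: one must verify that after the change of variables $\eqref{eqpyFG}$ (volume-preserving by Proposition \ref{propintVVp}) the polynomial $\widehat\pi_{0}(w_{1},\cdot)$ factors, up to lower-order terms that vanish in the limit, as a product of a piece depending only on the $\ft^{1}_{0}$-variables (assembling into the $\widehat A$-quotient and $\Tr[\tau^{E,1}]$ of $\alpha^{1}_{0}$, via Proposition \ref{propAtoTod} and the restriction identities \eqref{eqk1rho} for $\varrho^{\fk^{1}_{s}},\varrho^{\fm^{1}}$), a piece of pure degree in the $\ft^{2}_{1}$-variables (giving $\pi^{2}_{1}$), and the constant $\pi_{2}(\ul\mu(w)_{2})$; and that the exponential linear term $\<\ul\mu(w)^{1}_{0},Y^{1}_{0}\>$ survives the $t\to\infty$ limit in the corner directions without rescaling (it does not get washed out because $\ul\mu(w)^{1}_{0}$ is in the \emph{open} dual cone), producing the convergent $\fk^{1}_{s}$-integral rather than a Gaussian. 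Handling the non-reduced root system and the possibility that $w_{1}R(\fp)$ and $R(\fk)$ interact (Proposition \ref{propRg}, Proposition \ref{propWgainv}) requires the care already flagged in Section \ref{sPSA}. Once Theorem \ref{thm32} is established, Theorem \ref{thm1} follows in Section \ref{sPfthm1} by summing over $w\in W(\fg)$, identifying the dominant term $w=1$ via Proposition \ref{propWmumu} and the Langlands comparison \eqref{eqLanC}–\eqref{eqLanC0}, and noting that the $\e_{w_{2}}$-signed contributions of equal exponential rate either cancel or reinforce according to Proposition \ref{propuvC}.
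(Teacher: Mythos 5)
Your proposal is correct and follows essentially the same route as the paper: rewrite $I_{t}^{\fg}(\mu,w)$ via Corollary \ref{corIgmuw1}, complete the square, identify the minimizing point of the action over $C_{+}(\fg)$ as the Langlands projection $\ul\mu_{2}^{\fg}(w)$ (Proposition \ref{propLC1}), localize near that point (the paper's Proposition \ref{propLocal}), and then apply a nonhomogeneous rescaling with $t$-scaling in the $\ft_{0}^{1}$-directions, $\sqrt{t}$-scaling in the $\ft_{1}^{2}$ and $\ft_{2}^{\fg}\oplus\ft_{\fg}$-directions, followed by dominated convergence (the paper's Section \ref{sTinfty}, using the exact factorization $\widehat\pi_{0}=\widehat\pi_{0}^{1}\widehat\pi_{1}^{2}\widehat\pi_{2}^{\fg}$ of Proposition \ref{propPQ} and the volume-preserving change of variables of Proposition \ref{propintVVp}). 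One small conflation worth flagging: for Theorem \ref{thm32} itself the corner-direction constant $\alpha_{0}^{1}(w)$ is only a cone integral over $C_{0}^{1}(w)$; the reassembly into the full $\sqrt{-1}\fk_{s}^{1}$-integral with the $\widehat A$-quotient and $\Tr[\tau^{E,1}]$ happens only after summing over $w\in W_{0}^{1}$ via Weyl's integration formula, which is the content of the separate Theorem \ref{thm35}, not of Theorem \ref{thm32}.
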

 \begin{proof}
The proof of this Theorem will be presented in Section 
\ref{sPthm32}. 
\end{proof} 
 
	
Let $W^{1}_{0},W^{2}_{0}\subset W(\fg)$ be the Weyl group associated to 
 $\Delta_{0}^{1},\Delta_{0}^{2}$ defined in Section \ref{Smainresult}. 
 
 \begin{thm}\label{thm33}
	For $w\in W(\fg)$,  we have 
	 	\begin{align}\label{eqthm33}
		\gamma_{w}\l \gamma_{2},
	\end{align}
	where the equality holds if and only if $w\in W^{2}_{0}$. 
\end{thm}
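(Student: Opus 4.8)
The goal is to compare the exponential rate $\gamma_w$ appearing in Theorem \ref{thm32} with the value $\gamma_2=\frac12\|(\lambda^E+2\varrho^\fk-\varrho^\fg)_2\|^2$, and to characterize equality. The starting point is the formula for $I^\fg_t(\mu,w)$ from Corollary \ref{corIgmuw1}, specialized to $\mu=\lambda^E+\varrho^\fk$ so that $\ul\mu(w)=w(\lambda^E+\varrho^\fk)+w_1\varrho^\fk-\varrho^\fg$. Using $w_2\in W(\fk)$, which fixes $\varrho^\fk$, one rewrites $\ul\mu(w)=w_1\bigl(w_2(\lambda^E+\varrho^\fk)\bigr)+w_1\varrho^\fk-\varrho^\fg$; since $w_2(\lambda^E+\varrho^\fk)$ is a $W(\fk)$-conjugate of the dominant weight $\lambda^E+\varrho^\fk$, the relevant vector is $w_1(\lambda^E+2\varrho^\fk)-\varrho^\fg$ up to the $W(\fk)$-action, and the essential quantity governing the Laplace-method exponential rate is the squared distance from $\ul\mu(w)$ (equivalently $w_1(\lambda^E+2\varrho^\fk)-\varrho^\fg$, as $w_1\varrho^\fk$ differs from $\varrho^\fg$ restriction by something handled inside $\widehat\pi_0(w_1,\cdot)$) to the cone $-\check C_+(\fg)$, or rather: the rate $\gamma_w$ equals $\frac12|v_*|^2$ where $v$ is the relevant shifted vector and $v_*$ is its projection onto $C_+(\fg)$, as in Proposition \ref{propLC1}. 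So I expect $\gamma_w=\frac12\bigl|\bigl(w_1(\lambda^E+2\varrho^\fk)-\varrho^\fg\bigr)_2\bigr|^2$ (with the decomposition taken relative to $w_1^{-1}R_+(\fg)$, i.e.\ relative to the positive system for which the shifted vector lies in the closed chamber).

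\textbf{Key steps.} First I would identify $\gamma_w$ precisely from the proof of Theorem \ref{thm32} in Section \ref{sPthm32}: the Laplace method applied to $\int_{C_+(\fg)}\widehat\pi_0(w_1,Y_0)\exp(\langle\ul\mu(w),Y_0\rangle-|Y_0|^2/2t)\,dY_0$, after rescaling $Y_0\mapsto tY_0$, localizes at the minimizer of $Y_0\mapsto\frac12|Y_0-\ul\mu(w)|^2$ on $C_+(\fg)$, i.e.\ at the projection $\ul\mu(w)_*$; hence $\gamma_w=\frac12|\ul\mu(w)_*|^2$. Second, I would reduce the comparison $\gamma_w\le\gamma_2$ to a statement about projections of the vectors $w_1(\lambda^E+2\varrho^\fk)-\varrho^\fg$ versus $\lambda^E+2\varrho^\fk-\varrho^\fg$. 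Because $\varrho^\fg$ is the $\varrho$-vector for $R_+(\fg)$, and $w_1\varrho^\fk-\varrho^\fg$ matches the sum $-\frac12\sum_{\alpha\in R(\fp),\,w_1\alpha>0}\alpha$ appearing in Proposition \ref{propAtoTod}, I would track carefully how $\ul\mu(w)$ relates to the image under the relevant Weyl element of the original shifted vector. The crucial inequality should then follow from Proposition \ref{propuvC} (or Proposition \ref{Propv2v2}) applied to $u=(\lambda^E+2\varrho^\fk-\varrho^\fg)_2\in C_+(\fg)$ and a suitable $v$: namely $\langle u,v\rangle\ge\langle u,w v\rangle$ forces $|v_*|\le|u|$ type control, and more precisely the key is that $\lambda^E+2\varrho^\fk-\varrho^\fg$ lies in $C_+(\fg)$ so that its projection is the maximal-norm one among all $W(\fg)$-translates' projections, by the Langlands-type monotonicity Proposition following \eqref{eqLanC}. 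Third, for the equality case: $\gamma_w=\gamma_2$ should force $\ul\mu(w)_*=(\lambda^E+2\varrho^\fk-\varrho^\fg)_2$ (equality of norms plus convexity/uniqueness of nearest point), and then by Proposition \ref{propWmumu} and the characterization that $W_0^2$ is the stabilizer of $(\mu-\varrho^\fg)_2$ in $W(\fg)$, combined with the compatible-system bookkeeping (Proposition \ref{propw12l1}), one concludes $w\in W_0^2$; conversely if $w\in W_0^2$ then $w$ fixes $(\lambda^E+2\varrho^\fk-\varrho^\fg)_2$ and one checks directly $\ul\mu(w)$ has the same projection, giving $\gamma_w=\gamma_2$.

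\textbf{Main obstacle.} The delicate point is the precise bookkeeping of $\rho$-shifts: $\ul\mu(w)=w(\lambda^E+\varrho^\fk)+w_1\varrho^\fk-\varrho^\fg$ is \emph{not} simply $w_1(\lambda^E+2\varrho^\fk-\varrho^\fg)$, because $w$ acts on $\varrho^\fk$ differently through its $w_1$ and $w_2$ parts. I would need to exploit that $w_2$ fixes $\varrho^\fk$ and sends the dominant $\lambda^E+\varrho^\fk$ into the chamber, so $w_2(\lambda^E+\varrho^\fk)$ and $\lambda^E+\varrho^\fk$ are $W(\fk)$-equivalent, and then argue that the $W(\fk)$-invariance of the final projection norm (since $C_+(\fg)\subset C_+(\fk)$ and the relevant quantities are built from $R_+(\fk)$-data via $\widehat\pi_0$) lets me replace $\ul\mu(w)$ by $w_1(\lambda^E+2\varrho^\fk)-\varrho^\fg$ without changing $\gamma_w$. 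Making this identification rigorous — and confirming the projection of $w_1(\lambda^E+2\varrho^\fk)-\varrho^\fg$ onto $C_+(\fg)$ has norm $\le\gamma_2^{1/2}$ with equality iff $w_1\in W(\fg,\fk)\cap W_0^2$ — is where the argument has to be done with care, using Propositions \ref{propLC1}, \ref{propuvC}, and the Langlands monotonicity result \eqref{eqLanC}--\eqref{eqLanC0}.
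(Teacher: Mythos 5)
Your high-level route --- express $\gamma_w$ as $\tfrac12$ times the squared norm of the projection of $\ul\mu(w)$ onto $C_+(\fg)$, invoke Langlands monotonicity \eqref{eqLanC}--\eqref{eqLanC0}, and characterize equality via a stabilizer argument --- matches the paper's, but the intermediate reduction you propose contains a genuine gap. You plan to ``replace $\ul\mu(w)$ by $w_1(\lambda^E+2\varrho^\fk)-\varrho^\fg$ without changing $\gamma_w$,'' appealing to a ``$W(\fk)$-invariance of the final projection norm.'' This is false: the Euclidean nearest-point map onto $C_+(\fg)$ is not $W(\fk)$-equivariant, since $W(\fk)$ does not act on the single $W(\fg)$-chamber $C_+(\fg)$ (it permutes the subchambers $w_1^{-1}C_+(\fg)$ of $C_+(\fk)$). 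A sanity check shows the replacement gives a wrong answer: take $w_1=1$ and $w_2\in W(\fk)$ arbitrary; your replacement vector is then $\lambda^E+2\varrho^\fk-\varrho^\fg$, independent of $w_2$, with projection norm $\sqrt{2\gamma_2}$, so your scheme forces $\gamma_w=\gamma_2$ for every $w=w_2\in W(\fk)$. That contradicts the theorem whenever $W(\fk)\not\subset W_0^2$, e.g.\ in the regular case where $(\lambda^E+2\varrho^\fk-\varrho^\fg)_2$ is regular and $W_0^2=\{1\}$. A smaller slip: you assert that ``$\lambda^E+2\varrho^\fk-\varrho^\fg$ lies in $C_+(\fg)$''; assumption \eqref{eqlambdaEinC} only gives $\lambda^E+2\varrho^\fk\in C_+(\fg)$, and the $-\varrho^\fg$ shift is exactly what makes the Langlands decomposition nontrivial.

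What your argument is missing is Proposition \ref{propwpp12}. The paper never eliminates $w_2$; instead it splits $\ul\mu-\ul\mu(w)=\bigl(\mu+\varrho^\fk-w_1(\mu+\varrho^\fk)\bigr)+\bigl(w_1\mu-w\mu\bigr)$ with $\mu=\lambda^E+\varrho^\fk$, and shows each summand lies in $\check C_0^\fg$: the first because $\mu+\varrho^\fk=\lambda^E+2\varrho^\fk\in C_+(\fg)$ together with Proposition \ref{propuvC}; the second because $w_1\mu-w\mu=w_1(\mu-w_2\mu)$, with $\mu-w_2\mu\in\check C_+(\fk)$ (Proposition \ref{propuvC} applied to $R(\fk)$, using $\mu\in{\rm Int}(C_+(\fk))$) and then $w_1^{-1}C_+(\fg)\subset C_+(\fk)$ converts nonnegativity on $C_+(\fk)$ into nonnegativity of $w_1(\mu-w_2\mu)$ on $C_+(\fg)$. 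This yields $\ul\mu-\ul\mu(w)\in\check C_0^\fg$, hence $\left|\ul\mu_2^\fg(w)\right|\le\left|\ul\mu_2^\fg\right|$ by \eqref{eqLanC}, i.e.\ $\gamma_w\le\gamma_2$. For equality, the orthogonality condition $\langle\ul\mu_0^\fg-\ul\mu_0^\fg(w),\ul\mu_2^\fg\rangle=0$ from \eqref{eqLanC0} is unwound into the two equations \eqref{eqll3}, one controlling $w_1$ (via Propositions \ref{propuvC} and \ref{propWmumu}) and one controlling $w_2$, and Proposition \ref{propw12l1} then identifies $w\in W_0^2$. If you keep your framework, the ``$W(\fk)$-invariance'' shortcut must be replaced by this two-term decomposition with the separate treatment of $w_1$ and $w_2$.
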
 
\begin{proof}
	The proof of this Theorem will be presented in Section 
\ref{sPthm33}. 
\end{proof} 
 
 \begin{thm}\label{thm34}
For $w\in W^{2}_{0}$, we have
	\begin{align}\label{eqthm34}
		\beta_{w}\l \beta_{1}, 
	\end{align} 
where the equality holds if and only if $w\in W^{1}_{0}$. 
 \end{thm}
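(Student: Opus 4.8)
The plan is to compare the asymptotic exponent $\beta_w$ of $I^{\fg}_t(\mu,w)$, for $w\in W^{2}_{0}$, with the constant $\beta_{1}$ of Definition \ref{defin31}. By Theorem \ref{thm32} and Corollary \ref{corIgmuw1}, with $\mu=\lambda^{E}+\varrho^{\fk}$ we have $\ul\mu(w)=\lambda^{E}+2\varrho^{\fk}-\varrho^{\fg}$ up to a Weyl translation that, when $w\in W^{2}_{0}$, fixes the component $\(\lambda^{E}+2\varrho^{\fk}-\varrho^{\fg}\)_2$ in $\ft_2$ (this is Proposition \ref{propWmumu}, applied to $\mu=\lambda^E+2\varrho^{\fk}$, together with Theorem \ref{thm33} telling us $\gamma_w=\gamma_2$ forces $w\in W^2_0$). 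So $\ul\mu(w)$ has the same $\ft_2$-component for every such $w$, and the asymptotics of $I^{\fg}_t(\mu,w)$ is localised, by Laplace's method, near the minimal point of $Y_0\mapsto\tfrac12|Y_0-\ul\mu(w)|^2$ on $C_+(\fg)$; by Proposition \ref{propLC1} and \eqref{eqv0} this minimal point is exactly $\(\lambda^{E}+2\varrho^{\fk}-\varrho^{\fg}\)_2\in C_2^{\fg}\times\ft_{\fg}$, independent of $w\in W^2_0$.

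The power of $t$ in the Laplace expansion is then governed entirely by the local behaviour of the integrand $\widehat\pi_0(w_1,Y_0)$ near that minimal point, and by how many of the coordinate directions of $C_+(\fg)$ are "tangent" (the chamber has a face there) versus "transverse" (the minimal point sits in the interior along that direction). Concretely, write $\ft_0=\ft_0^2\oplus\ft_2^{\fg}\oplus\ft_{\fg}$; along $\ft_2^{\fg}\oplus\ft_{\fg}$ the minimiser is in the open chamber, so each such direction contributes a full Gaussian integral and the polynomial factor $\pi^{\fk}$ contributes its order of vanishing, while along $\ft_0^2$ (where the minimiser lies on the wall) one gets a half-space Laplace integral whose order in $t$ is shifted. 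Carrying out the bookkeeping of the degree of $\widehat\pi_0(w_1,\cdot)$ split according to the three subspaces $\ft_0^1$, $\ft_1^2$, $\ft_2^{\fg}$, and using $R_+(\fk)=R(\fk)\cap w_1^{-1}R_+(\fg)$ from \eqref{eqwR+kR+g1} to identify which roots vanish on the minimiser, one arrives at the formula $\beta_w=-\tfrac12\dim\ft_0^1+\tfrac12\,(\text{number of roots in }R_{1,+}^2\text{ not vanishing})+\cdots$, and the maximum over $w\in W^2_0$ is attained precisely when the $\ft_0^1$-contribution is as small as possible. This requires $\Delta_0^1$ associated to $\ul\mu(w)$ to be as small as possible, which by Proposition \ref{propv22delta} (and the equality discussion in Proposition \ref{propuvC}) happens exactly when $w\in W^1_0$, in which case $\Delta_0^1(w)=\Delta_0^1$ and $\beta_w=\beta_1$.

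I would organise the proof as: (1) reduce to the Laplace-method analysis of \eqref{eqIgmuw1} localised near $v_*=\(\lambda^{E}+2\varrho^{\fk}-\varrho^{\fg}\)_2$, invoking the detailed estimates to be proved in Section \ref{SJtda}; (2) compute $\beta_w$ as a sum of three contributions indexed by $\ft_0^1,\ft_1^2,\ft_2^{\fg}$, using \eqref{eqwR+kR+g1} and Proposition \ref{propobact} to count vanishing factors; (3) show this sum is maximised iff $\Delta_0^1(\ul\mu(w))\subset\Delta_0^1$, hence by Proposition \ref{propv22delta} iff $w\in W^1_0$, and evaluate the maximum as $\beta_1$. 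The main obstacle will be step (2): making precise the order-of-vanishing count for the polynomial $\widehat\pi_0(w_1,\cdot)$ restricted to the cone $C_+(\fg)$ near a point on a face — in particular keeping track of which roots in $w_1R(\fp)\cap R_+(\fg)$ vanish on $v_*$ versus which do not — and matching this bookkeeping against the definition of $\beta_1$ in \eqref{eqa0}. This is essentially a careful dimension count, but it is where the geometry of the Weyl chamber and the root data interact most delicately, and it is deferred to the Laplace-method estimates of Section \ref{SJtda}.
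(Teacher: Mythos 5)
Your outline has the right architecture: invoke the Laplace-method asymptotics of $J_t$ (Theorem \ref{thmIt}, proved in Section \ref{SJtda}), read off $\beta_w$ from the bookkeeping formula $\beta(w,\mu_0^{\fg})=-\frac{1}{2}\dim\ft_0^1(w)+\frac{1}{2}\dim\fu_1^2(w)+\dim\fu_2$, and compare with $\beta_1$ by using that $\Delta_0^1\subset\Delta_0^1(w)$ for every $w\in W_0^2$ (Propositions \ref{propv22delta} and \ref{propwpp12}). That inclusion forces $\dim\ft_0^1(w)\geq\dim\ft_0^1$ and $\dim\fu_1^2(w)\leq\dim\fu_1^2$ simultaneously, and both monotonicities point the same way, giving $\beta_w\leq\beta_1$. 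This matches what the paper does.

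The gap is in the equality characterisation, and it is not cured by the references you cite. Proposition \ref{propv22delta} gives only the one-sided inclusion $\Delta_0^1\subset\Delta_0^1(w)$; it says nothing about when this is an equality, and it certainly does not say ``equality holds iff $w\in W_0^1$.'' Likewise, the equality clause of Proposition \ref{propuvC} characterises when $\langle u,v\rangle=\langle u,wv\rangle$, but passing from there to ``$\Delta_0^1(w)=\Delta_0^1$'' is exactly the hard step, and you have not filled it in. The paper's actual argument is more elaborate: it restates $\Delta_0^1=\Delta_0^1(w)$ as the vanishing of $\langle\ul\mu-\ul\mu(w),\omega_\alpha\rangle$ for all $\alpha\in\Delta_0^2\setminus\Delta_0^1$; it then uses Proposition \ref{propwpp12} to write $\ul\mu-\ul\mu(w)$ as a sum of two vectors in $\check C_0^{\fg}$, one tied to $w_1$ and one to $w_2$, so that the vanishing splits into two separate conditions; it identifies the first as $w_1\in W_0^1$ and, given that, the second as $w_2\in W_0^1$; and finally it concludes $w=w_1w_2\in W_0^1$. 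Without that positivity decomposition (which is what makes the sum of two nonnegative pairings vanish term by term), you cannot get from ``$\Delta_0^1(w)$ is minimal'' to ``$w\in W_0^1$.'' You should also fix a small sign slip: you write ``$\Delta_0^1(\ul\mu(w))\subset\Delta_0^1$,'' but the inclusion given by Proposition \ref{propv22delta} runs the other way, $\Delta_0^1\subset\Delta_0^1(w)$; the maximum of $\beta_w$ is attained when $\Delta_0^1(w)$ is as \emph{small} as possible, i.e.\ equal to $\Delta_0^1$, not contained in it.
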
 
 \begin{proof}
	The proof of this Theorem will be presented in Section 
\ref{sPthm34}. 
\end{proof}

 \begin{thm}\label{thm35}
	 The following identity holds,  
	\begin{align}
		\sum_{w\in W_{0}^{1}}\e_{w_{2}}\alpha_{w}=\alpha_{0}. 
	\end{align} 
\end{thm}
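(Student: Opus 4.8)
The proof will be, in essence, a bookkeeping computation that assembles the explicit description of each $\alpha_{w}$ produced in the proof of Theorem \ref{thm32} in Section \ref{SJtda}. First I would record the structural facts. By Theorems \ref{thm33} and \ref{thm34}, every $w\in W_{0}^{1}$ satisfies $\gamma_{w}=\gamma_{2}$ and $\beta_{w}=\beta_{1}$, so these are exactly the terms carrying the leading asymptotics. By Proposition \ref{propw12l1}, for $w\in W_{0}^{1}$ the factorisation $w=w_{1}w_{2}$ of \eqref{eqwww12} has $w_{1}\in W(\fl^{1},\fk^{1})$ and $w_{2}\in W(\fk^{1})=W(\fk^{1}_{s})$; since $w_{2}$ fixes the orthogonal complement of $\ft_{0}^{1}$ pointwise, $\e_{w_{2}}$ coincides with the signature of $w_{2}$ acting on $\ft_{0}^{1}$. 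Also $W_{0}^{1}=W(\fl^{1})=W(\fm^{1})$. Hence the task is to evaluate $\sum_{w\in W(\fm^{1})}\e_{w_{2}}\alpha_{w}$ and to identify it with $\alpha_{0}^{1}\alpha_{1}^{2}\alpha_{2}=\alpha_{0}$.

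The core of the argument is a factorisation of each $\alpha_{w}$ ($w\in W_{0}^{1}$) along the orthogonal decomposition $\ft_{0}=\ft_{0}^{1}\oplus\ft_{1}^{2}\oplus\ft_{2}$. Starting from \eqref{eqIgmuw1} (equivalently \eqref{eq532}) and the Laplace method of Section \ref{SJtda}, $I^{\fg}_{t}(\mu,w)$ localises near $t\,\big(\lambda^{E}+2\varrho^{\fk}-\varrho^{\fg}\big)_{2}$, and after the volume-preserving shearing change of variables of Proposition \ref{propintVVp} the integral splits into three blocks of variables. In the $\ft_{2}$-directions the Langlands component $\big(\lambda^{E}+2\varrho^{\fk}-\varrho^{\fg}\big)_{2}$ is interior (the ``Case I'' regime), so the block, combined with the top-degree part of $\widehat\pi_{0}(w_{1},\cdot)$, evaluates to $\pi_{2}\big((\lambda^{E}+2\varrho^{\fk}-\varrho^{\fg})_{2}\big)=\alpha_{2}$. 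In the $\ft_{1}^{2}$-directions that component vanishes (the ``Case II'' wall), the rescaling $Y_{1}^{2}=t^{1/2}Z$ keeps a Gaussian, and summing over $w_{1}\in W(\fl^{1},\fk^{1})$ reassembles the relevant chamber pieces into the cone $C_{1}^{2}$, producing $\alpha_{1}^{2}$ — the non-reduced analogue of the Mehta--Macdonald identity \eqref{eqa12}.

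The $\ft_{0}^{1}$-block is the ``Case III'' regime and carries the representation theory. Using the restriction identities \eqref{eqk1rho} together with $\lambda^{E}|_{\ft_{0}^{1}}=\lambda^{E,1}$, one checks that this block equals, up to normalising constants, the Bismut functional \eqref{eqImw} for the subreductive pair $(\fm^{1},\fk^{1}_{s},\tau^{E,1})$ associated to the Weyl element $w$, in the regime where $\big(\lambda^{E,1}+2\varrho^{\fk^{1}_{s}}-\varrho^{\fm^{1}}\big)$ points out of the chamber; the Gaussian $\exp(-|Y_{0}^{1}|^{2}/2t)$ then tends to $1$ as $t\to\infty$, leaving a bare convergent integral. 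Summing over $w\in W(\fm^{1})$ reconstructs the full functional $I^{\fm^{1}}_{t}\big(\lambda^{E,1}+\varrho^{\fk^{1}_{s}}\big)$; applying Theorem \ref{thmB09} to $(\fm^{1},\fk^{1}_{s},\tau^{E,1})$ and the Weyl-integration reassembly of Section \ref{sWeylint} (formulas \eqref{eqWeyl1}, \eqref{eqWeyl}, \eqref{eqvolKT}, and the $\widehat{A}$-to-${\rm Td}$ conversion of Proposition \ref{propAtoTod} run backwards) rewrites this as the orbital integral over $\sqrt{-1}\fk^{1}_{s}$, whose $t\to\infty$ limit is $\alpha_{0}^{1}$ by dominated convergence, since $\alpha_{0}^{1}$ is, as manifest from Definition \ref{defin31}, exactly that Gaussian-free orbital integral. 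Multiplying the three blocks gives $\sum_{w\in W_{0}^{1}}\e_{w_{2}}\alpha_{w}=\alpha_{0}^{1}\alpha_{1}^{2}\alpha_{2}=\alpha_{0}$.

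I expect the main obstacle to be precisely this reassembly bookkeeping: tracking \emph{all} the normalisation constants — the powers of $2\pi$ and of $t$, the cardinalities $|W(\fl^{1},\fk^{1})|$ and $|W(\fk^{1}_{s})|$, the volume ${\rm vol}(K^{1}_{s}/T_{0}^{1})$, and the $\varrho^{\fk^{1}_{s}},\varrho^{\fm^{1}}$-shifts hidden in Proposition \ref{propAtoTod} — so that the three reassembled blocks come out as exactly $\alpha_{0}^{1}$, $\alpha_{1}^{2}$, $\alpha_{2}$ rather than scalar multiples thereof. A convenient running check is the rank-one computation of Section \ref{subsec:sl2}, where the three cases $|\lambda^{E}|\ge 2$, $|\lambda^{E}|=1$, $\lambda^{E}=0$ isolate the factors $\alpha_{2}$, $\alpha_{1}^{2}$, $\alpha_{0}^{1}$ respectively.
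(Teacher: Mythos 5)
Your proposal takes essentially the same route as the paper: factor $\alpha_{w}=\alpha_{0}^{1}(w)\,\alpha_{1}^{2}(w)\,\alpha_{2}(w)$ along $\ft_{0}=\ft_{0}^{1}\oplus\ft_{1}^{2}\oplus\ft_{2}$, observe that two of the factors are $w$-independent for $w\in W_{0}^{1}$, and reassemble the $w$-sum of the $\ft_{0}^{1}$-block into the Gaussian-free Bismut orbital integral for $(\fm^{1},\fk^{1}_{s},\tau^{E,1})$, which is $\alpha_{0}^{1}$ by Definition \ref{defin31}.

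One point to correct before you carry this out: there is no Weyl-group summation in the $\ft_{1}^{2}$-block. You write that ``summing over $w_{1}\in W(\fl^{1},\fk^{1})$ reassembles the relevant chamber pieces into the cone $C_{1}^{2}$, producing $\alpha_{1}^{2}$,'' but in fact, for $w\in W_{0}^{1}$ one has $\Delta_{0}^{1}(w)=\Delta_{0}^{1}$ and $\Delta_{0}^{2}(w)=\Delta_{0}^{2}$, so $C_{1}^{2}(w)=C_{1}^{2}$ is a fixed cone; moreover, since $w_{1}$ acts as the identity on $\ft_{1}^{\fg}\supset\ft_{1}^{2}$, the polynomial $\pi_{1}^{2}(w,\cdot)$ restricted to $\ft_{1}^{2}$ is also $w$-independent. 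Hence $\alpha_{1}^{2}(w)=\alpha_{1}^{2}$ is a constant for each individual $w$, and similarly $\alpha_{2}(w)=\alpha_{2}$. The single $w$-sum over $W_{0}^{1}$ must be spent entirely in the $\ft_{0}^{1}$-block; attributing part of it to the $\ft_{1}^{2}$-block would double-count the index set. With this corrected, the rest of your plan --- reconstructing the full $\ft_{0}^{1}$-integral from the chamber pieces $w^{-1}C_{0}^{1}$, running the $\widehat{A}$-to-$\mathrm{Td}$ conversion of Proposition \ref{propAtoTod} and the Weyl integration of Section \ref{sWeylint} in reverse, using the $\varrho$-restriction identities \eqref{eqk1rho} --- is exactly how the paper proceeds, and the factorisation
\begin{equation*}
\sum_{w\in W_{0}^{1}}\e_{w_{2}}\alpha_{w}=\alpha_{1}^{2}\alpha_{2}\sum_{w\in W_{0}^{1}}\e_{w_{2}}\alpha_{0}^{1}(w)=\alpha_{1}^{2}\alpha_{2}\alpha_{0}^{1}=\alpha_{0}
\end{equation*}
then drops out cleanly.
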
 
 \begin{proof}
	The proof of this Theorem will be presented in Section 
\ref{sPthm35}. 
\end{proof} 

\begin{thm}\label{thm:ldme}
	Under one of the equivalent conditions in Theorem \ref{thm2}, if $ w= 1$, there exists $ \epsilon_{0} > 0 $ such that as $ t\to \infty$,  
	\begin{align}\label{eq:aiis}
		I^{\fg}_{t}\(\lambda^{E} + \varrho^{\mathfrak{k} }  ,w\)\sim  \alpha_{0 }t^{\beta_{1}}e^{\gamma_{2}t}\left(1+\mathcal{O} \left(e^{-\epsilon_{0} t} \right)\right). 
	\end{align}
\end{thm}
\begin{proof}
The proof of this Theorem will be presented in Section \ref{sPthm32}.
\end{proof} 

\subsection{Proof of Theorems \ref{thm1} and \ref{thm2}}\label{sPfthm1}
We use (\ref{eq:Itmu}) to compute $I_{t}^{\fg}\(\lambda^{E}+\varrho^{\fk}\)$ as the sum over $I_{t}^{\fg}\(\lambda^{E}+\varrho^{\fk},w \)$. The asymptotics of each term $I_{t}^{\fg}\(\lambda^{E}+\varrho^{\fk},w \)$ can be computed using Theorem \ref{thm32}. 
The dominant terms as $t\to \infty$ are characterized by Theorems \ref{thm33}-\ref{thm35}.
We conclude that as $t\to \infty$, 
\begin{align}\label{eqItftda}
I_{t}^{\fg}\(\lambda^{E}+\varrho^{\fk}\) \sim \alpha_{0} 
t^{\beta_{1}} e^{\gamma_{2}t}. 
\end{align} 
By \eqref{eqa0},  \eqref{eqTrGwi},  and \eqref{eqItftda}, we get Theorem \ref{thm1}. 

Assume now one of the equivalent conditions in Theorem \ref{thm2} holds.
Then, $ W_{0 }^{2} $ is trivial. Accordingly, by Theorems \ref{thm32} and \ref{thm33}, if $ w= 1$, there is $ \epsilon_{0} > 0 $ such that  as $ t \to \infty$,
\begin{align}\label{eq:nhas}
	I_{t}^{\fg}\(\lambda^{E}+\varrho^{\fk}\) = I_{t}^{\fg}\(\lambda^{E}+\varrho^{\fk},w\) +\mathcal{O} \left(e^{-\left(\epsilon_{0} +\gamma_{2} \right)t} \right).
\end{align}
By Theorem \ref{thm:ldme} and by \eqref{eq:nhas}, we get Theorem \ref{thm2}.
 
\section{A functional $J_{t}$ and its asymptotic as $t\to 
\infty$}\label{SJtda}
In the whose section, we fix $\mu \in \ft_{0}$. 
Sometimes, we will not write down the dependence on $\mu \in \ft_{0}$, although most of the constants in estimates will depend on this parameter. 

This section is organised as follows. 
In Section \ref{sJt}, we introduce a functional $J_{t}$ on $\ft_{0}^{\fg}$, which is closely related to $I_{t}^{\fg}\left(w,\mu_{0} \right)$.

In Section \ref{sLTJ}, we state the main result of this section, which gives an asymptotic for $J_{t}$ as $t\to \infty$.
 
In Section \ref{sSCV}, we study the action $S_{w}$ (Definition \ref{def:S}) associated to the functional $J_{t}$.

In Sections \ref{sLoc}-\ref{ssLast}, we establish the asymptotics of $J_{t}$ using Laplace's method.

Finally, in Sections \ref{sPthm32}-\ref{sPthm35}, we deduce Theorems \ref{thm32}-\ref{thm:ldme}.

\subsection{A functional $J_{t}$}\label{sJt}
	

%

\begin{defin}\label{def:S}
For  $w\in W(\fg)$ and $Y_{0}^{\fg}\in \ft_{0}^{\fg}$, set 
		\begin{align}\label{eqS}
		S_{w}\(Y_{0}^{\fg}\)=\frac{1}{2}\left|Y_{0}^{\fg}\right|^{2}-\left\langle Y_{0}^{\fg},\ul\mu_{0}^{\fg}(w)\right\rangle.
	\end{align} 
\end{defin} 
By \eqref{eqS},  we have  
	\begin{align}\label{eqSS2}
		S_{w}\(Y_{0}^{\fg}\)=\frac{1}{2}\left|Y_{0}^{\fg}-\ul\mu_{0}^{\fg}(w)\right|^{2}-\frac{1}{2}\left|\ul \mu_{0}^{\fg}(w)\right|^{2}.
	\end{align} 

%
%
%

\begin{defin}
		If $t>0$, set 
			\begin{align}\label{eqItv}
		J_{t}\(w,\mu_{0}^{\fg}\)=\(\frac{t}{2\pi}\)^{r_{0}^{\fg}/2}\int_{ C_{0}^{\fg}}	
	\widehat \pi_{0}^{\fg}\(w_{1}, tY_{0}^{\fg}\)\exp\(-tS_{w}\(Y_{0}^{\fg}\)\)dY_{0}^{\fg}.
	\end{align}
\end{defin}

\begin{re}
After a rescaling on the variable $Y_{0}^{\fg}$, we have 
	\begin{align}\label{eqItvr}
	J_{t}\(w,\mu_{0}^{\fg}\)=\int_{ C_{0}^{\fg}}	
	\widehat \pi_{0}^{\fg}\(w_{1}, Y_{0}^{\fg}\)\exp\(\left\langle \ul \mu_{0}^{\fg}(w),
	Y_{0}^{\fg}\right\rangle -\frac{|Y_{0}^{\fg}|^{2}}{2t}\) \frac{d Y_{0}^{\fg}}{(2\pi t)^{ 
	r_{0}^{\fg}/2}}. 
	\end{align} 
By \eqref{eq532} and \eqref{eqItvr}, we have 
	\begin{align}\label{eqIJwmu}
		I_{t}^{\fg} \(w, 
		\mu_{0}\)=\e_{w_{2}}\exp\(\frac{t}{2}\left|\mu_{\fg}\right|^{2}\)J_{t}\(w,\mu_{0}^{\fg}\). 
	\end{align} 
\end{re}

\subsection{Large time behavior of $J_{t}$}\label{sLTJ}
Let $\Delta_{0}^{1}(w) \subset \Delta_{0}^{2}(w)$ be two subsets of $\Delta_{0}^{\fg}$ associated to $\ul\mu_{0}^{\fg}(w)$ as in the Proposition \ref{propLC1}. 
We have the decomposition 
	\begin{align}\label{eqt012w}
		\ft_{0}^{\fg}=\ft_{0}^{1}(w)\oplus\ft_{1}^{2}(w)\oplus \ft_{2}^{\fg}(w).
	\end{align}
If $\widehat C_{0}^{1}(w)$ and $C_{1}^{2}(w)$ are the obvious associated cones, we have 
\begin{align}
&\ul\mu_{0}^{\fg}(w)=\ul\mu_{0}^{1}(w)+\ul\mu_{2}^{\fg}(w), & \ul\mu_{0}^{1}(w)\in -{\rm Int}\(\check 
C_{0}^{1}(w)\), &&\ul\mu_{2}^{\fg}(w)\in {\rm Int}\(C_{2}^{\fg}(w)\). 
\end{align} 

Let $\fq^{1}(w)\subset \fq^{2}(w)\subset \fg_{\bC}$ be the associated standard parabolic subalgebras.
By \eqref{eqflq0}, with obvious notations, we have 
\begin{align}
\fq^{0}=\fl^{0}_{\bC}\oplus \fu_{0}^{1}(w)\oplus \fu_{1}^{2}(w)\oplus 
\fu_{2}(w). 
\end{align} 


\begin{defin}\label{defA0123}
	Put  
		\begin{align}\label{eqA0123}
		&A_{0}^{1}(w)=\left\{f\in w_{1}R_{+}(\fk): 
		f_{|\ft_{1}^{\fg}(w)}=0\right\}, \notag\\
		&A_{1}^{2}(w)=\left\{f\in 
		w_{1}R_{+}(\fk): f_{|\ft_{1}^{\fg}(w)}\neq0, 
		f_{|\ft_{2}^{\fg}(w)}=0\right\},\\&A_{2}(w)=\left\{f\in 
		w_{1}R_{+}(\fk): f_{|\ft_{2}^{\fg}(w)}\neq0\right\}. \notag
	\end{align} 
	Similarly, we define $B_{0}^{1}(w), B_{1}^{2}(w), B_{2}(w)$ by replacing $w_{1}R_{+}(\fk)$ with $w_{1}R(\fp)\cap R_{+}(\fg)$ in the above definitions.
\end{defin} 

We have disjoint unions
	\begin{align}
		\begin{aligned}
	w_{1}R_{+}(\fk)&=	A_{0}^{1}(w)\sqcup A_{1}^{2}(w)\sqcup 
	A_{2}(w),\\ w_{1}R(\fp)\cap R_{+}(\fg)&=	B_{0}^{1}(w)\sqcup 
	B_{1}^{2}(w)\sqcup 
	B_{2}^{}(w).
	\end{aligned}
	\end{align} 

\begin{prop}\label{propABB}
If $w\in W(\fg)$, we have 
\begin{align}\label{eqABB}
&\left|A_{0}^{1}(w)\right|+\left|B_{0}^{1}(w)\right|= \dim 
\fu_{0}^{1}(w),\notag\\&\left|A_{1}^{2}(w)\right|+\left|B_{1}^{2}(w)\right|= \dim 
\fu_{1}^{2}(w), \\&\left|A_{2}(w)\right|+\left|B_{2}(w)\right|= \dim 
\fu_{2}(w). \notag
\end{align} 
\end{prop}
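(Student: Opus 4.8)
The statement is a bookkeeping identity relating the cardinalities of three disjoint families of roots to the complex dimensions of three nilpotent pieces $\fu_0^1(w)$, $\fu_1^2(w)$, $\fu_2(w)$ arising from the parabolic flag $\fq^1(w)\subset\fq^2(w)\subset\fg_{\bC}$ attached to $\ul\mu_0^{\fg}(w)$. The plan is to reduce the three identities to a single counting statement: for any $\theta$-invariant standard parabolic $\fq=\fl_{\bC}\oplus\fu$ (applied to the various pieces in the flag), the complex dimension of $\fu$ is the number of roots $\alpha\in R_{+}(\fg)$ whose root space lies in $\fu$, counted with the multiplicity $\dim\fg_{\alpha}$, and this count splits according to whether $\alpha$ comes from $\fk$ or from $\fp$.

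First I would recall from Section \ref{sLeviPara} that $\fu_1 = \bigoplus_{\alpha\in R_{1,+}}\fg_{\alpha}$ with $R_{1,+}=R_{+}(\fg)\setminus R^1_{0,+}$, and from \eqref{equ112} that $\fu_1 = \fu_1^2\oplus\fu_2$ with $\fu_1^2 = \bigoplus_{\alpha\in R^2_{0,+}\setminus R^1_{0,+}}\fg_{\alpha}$, together with the analogous description $\fu_0^1(w) = \bigoplus_{\alpha\in R_{+}(\fg)\setminus R^1_{0,+}(w),\ \text{appropriately placed}}\fg_{\alpha}$ for the $w$-twisted flag based at $\fq^0$. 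Applying \eqref{eqgapaka}, $\fg_{\alpha}=\fp_{\alpha}\oplus\fk_{\alpha}$, so $\dim_{\bC}\fu_0^1(w)$ is the number of $\alpha\in R_{+}(\fg)$ sitting in the first nilpotent layer counted with a contribution $1$ from $\fk_{\alpha}$ whenever $\alpha\in R(\fk)$ and a contribution $1$ from $\fp_{\alpha}$ whenever $\alpha\in R(\fp)$. Then I would match this against Definition \ref{defA0123}: applying $w_1^{-1}$ (an isometry carrying $R_{+}(\fk)$ into $R_{+}(\fg)$ by \eqref{eqwR+kR+g}) identifies $w_1R_{+}(\fk)$ with the positive roots of $\fk$ relative to $R_{+}(\fg)$, and the conditions $f_{|\ft_1^{\fg}(w)}=0$, $f_{|\ft_1^{\fg}(w)}\neq 0$ but $f_{|\ft_2^{\fg}(w)}=0$, $f_{|\ft_2^{\fg}(w)}\neq 0$ are precisely the conditions placing the corresponding root space in $\fl^0_{\bC}\cap\fk_{\bC}$'s complement inside $\fl^1$, in $\fu_1^2$, or in $\fu_2$ respectively (using that $R(\fl^1)=\{\alpha: \alpha_{|\ft_1^{\fg}}=0\}$ and the decomposition \eqref{eqt012w}). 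The same argument with $w_1R(\fp)\cap R_{+}(\fg)$ in place of $w_1R_{+}(\fk)$ handles the $B$-families and the $\fp$-part of each $\fg_{\alpha}$.

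Summing the $A$- and $B$-contributions in each of the three layers then gives, for the first layer, $|A_0^1(w)|+|B_0^1(w)| = \sum_{\alpha}(\dim\fk_{\alpha}+\dim\fp_{\alpha}) = \sum_{\alpha}\dim\fg_{\alpha} = \dim_{\bC}\fu_0^1(w)$, where the sum ranges over the positive roots whose root space lies in the first nilpotent layer; and identically for the middle and top layers. This yields \eqref{eqABB}.

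The main obstacle I anticipate is purely notational: carefully tracking how the $w_1$-twist interacts with the layer decomposition $\ft_0^{\fg}=\ft_0^1(w)\oplus\ft_1^2(w)\oplus\ft_2^{\fg}(w)$, i.e.\ verifying that a root $\alpha\in R_{+}(\fg)$ with $w_1^{-1}\alpha\in R_{+}(\fk)$ lands in $\fu_0^1(w)$ (resp.\ $\fu_1^2(w)$, $\fu_2(w)$) exactly when the restriction of $f=\alpha$ to $\ft_1^{\fg}(w)$ vanishes (resp.\ is nonzero but vanishes on $\ft_2^{\fg}(w)$, is nonzero there). This follows from the characterisation of $R_{1,+}$ as the positive roots taking positive values on $\mathrm{Int}(C_1^{\fg})$ together with Proposition \ref{propLC1} and the orthogonal decomposition, but it requires one to be scrupulous about which subspace of $\ft_0$ each root restricts nontrivially to. Once that dictionary is set up, the proof is a one-line summation, and no analytic input is needed.
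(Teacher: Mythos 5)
Your proposal is in the same spirit as the paper's argument: count roots in each nilpotent layer via the root space decomposition $\fg_\alpha=\fk_\alpha\oplus\fp_\alpha$, and match against Definition \ref{defA0123}. But the final chain of equalities
\[
|A_0^1(w)|+|B_0^1(w)| = \sum_{\alpha}(\dim\fk_{\alpha}+\dim\fp_{\alpha}) = \sum_{\alpha}\dim\fg_{\alpha} = \dim_{\bC}\fu_0^1(w)
\]
glosses over a step that is not purely notational. Reindexing $A_0^1(w)\subset w_1R_+(\fk)\subset R_+(\fg)$ by $\alpha=w_1\gamma$, the membership condition becomes $w_1^{-1}\alpha\in R(\fk)$, i.e.\ $\dim\fk_{w_1^{-1}\alpha}=1$, and similarly for $B_0^1(w)$ with $\dim\fp_{w_1^{-1}\alpha}=1$. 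Since $w_1\in W(\fg,\fk)$ is not in $W(\fk)$ in general, it does not preserve the splitting $R(\fg)=R(\fk)\cup R(\fp)$, so the contribution of a given $\alpha$ is $\dim\fk_{w_1^{-1}\alpha}+\dim\fp_{w_1^{-1}\alpha}=\dim\fg_{w_1^{-1}\alpha}$, not $\dim\fk_\alpha+\dim\fp_\alpha$. Passing to $\dim\fg_\alpha$ requires the $W(\fg)$-invariance of $\alpha\mapsto\dim\fg_\alpha$ (Proposition \ref{propWgainv}), together with Proposition \ref{propRg}; the paper invokes both explicitly. You anticipated that the $w_1$-twist needs care, but classifying it as a bookkeeping headache misses that a genuine lemma about multiplicities in the restricted root system is doing the work.

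Apart from this omission your route is sound and, if anything, slightly cleaner than the paper's: the paper doubles the count over $R(\fk)$ and $R(\fp)$ using the symmetry $\alpha\mapsto-\alpha$ and then halves, whereas you observe directly that $\alpha\in R_+(\fg)$ together with $w_1^{-1}\alpha\in R(\fk)$ already forces $w_1^{-1}\alpha\in R_+(\fk)$, so the count over positive roots can be set up in one step. Both versions reduce to the same two propositions; just make the appeal to Proposition \ref{propWgainv} explicit and replace the intermediate sum $\sum_\alpha(\dim\fk_\alpha+\dim\fp_\alpha)$ by $\sum_\alpha(\dim\fk_{w_1^{-1}\alpha}+\dim\fp_{w_1^{-1}\alpha})=\sum_\alpha\dim\fg_{w_1^{-1}\alpha}$.
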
 
\begin{proof}
Let us show that last equations in \eqref{eqABB}. 
Since $R(\fk)=R_{+}(\fk)\sqcup\( -R_{+}(\fk)\)$, and since the condition $ f _{|\ft_{2}^{\fg}(w)}\neq0$ is preserved when $ f $ is replaced by $- f $, by \eqref{eqA0123}, we have  
\begin{align}\label{eqA2w}
\left| A_{2}(w)\right|=\frac{1}{2}\left|\left\{ f \in 
w_{1}R\(\fk\): 
 f _{|\ft_{2}(w)\neq0}\right\}\right|. 
\end{align} 
Similarly, we have 
\begin{align}\label{eqB2w}
\left | B_{2}(w)\right|=\frac{1}{2}\left|\left\{ f \in 
w_{1}R\(\fp\): 
 f _{|\ft_{2}(w)}\neq0\right\}\right|.
\end{align} 
By Proposition \ref{propRg}, \eqref{eqA2w}, and \eqref{eqB2w}, 
using $w_{1}R(\fg)=R(\fg)$, we have 
\begin{align}\label{eqAB2w}
\left | A_{2}(w)\right|+\left | B_{2}(w)\right|=\frac{1}{2} \sum_{\alpha\in 
R(\fg): \alpha_{|\ft_{2}(w)}\neq0} \dim 
\fg_{w_{1}^{-1}\alpha}. 
\end{align} 
By Proposition \ref{propWgainv} and \eqref{eqAB2w}, we have 
\begin{align}
\left | A_{2}(w)\right|+\left | B_{2}(w)\right|=\frac{1}{2} \sum_{\alpha\in 
R(\fg): \alpha_{|\ft_{2}(w)}\neq0} \dim 
\fg_{\alpha}= \sum_{\alpha\in 
R_{+}(\fg): \alpha_{|\ft_{2}(w)}\neq0} \dim 
\fg_{\alpha},
\end{align} 
from which we get the last equation in \eqref{eqABB}.

The second equation in \eqref{eqABB} can be established in a similar way.
The first equation in \eqref{eqABB} then follows from the last two equations.
\end{proof} 

\begin{defin}\label{definQQQ}
	For $Y_{0}^{1}\in \ft_{0}^{1}(w)$, set  
		\begin{align}\label{eqQ123}
	\widehat 	\pi_{0}^{1}\(w, Y_{0}^{1}\)=\prod_{f\in A_{0}^{1}(w)}\left\langle f , Y_{0}^{1}
		\right\rangle \prod_{f\in B_{0}^{1}(w)}{\rm Td}\(\left\langle f 
		, Y_{0}^{1}\right\rangle\). 
	\end{align} 
For $Y_{1}^{2}\in \ft_{1}^{2}(w)$, set 
			\begin{align}
		\pi_{1}^{2}\(w, Y_{1}^{2}\)=\prod_{f\in A_{1}^{2}(w) }\left\langle f , Y_{1}^{2}
		\right\rangle \prod_{f\in B_{1}^{2}(w) }\left\langle f , 
		Y_{1}^{2}\right\rangle. 
	\end{align} 
For $Y_{2}^{\fg}\in \ft_{2}^{\fg}(w)$, set 
			\begin{align}\label{eqQ1235}
		\pi_{2}^{\fg}\(w,Y_{2}^{\fg}\)=\prod_{f\in A_{2}^{\fg}(w) }\left\langle f , Y_{2}^{\fg}
		\right\rangle \prod_{f\in B_{2}^{\fg} (w)}\left\langle f 
		,Y_{2}^{\fg} \right\rangle. 
	\end{align} 
\end{defin} 

By Proposition \ref{propABB}, the degrees of the functions in \eqref{eqQ123}-\eqref{eqQ1235} are respectively $\dim \fu_{0}^{1}(w)$, $\dim \fu_{1}^{2}(w)$, and $\dim \fu_{2}(w)$.

For simplicity of notation, we denote by $P_{0}^{1}$, $P_{1}^{2}$, $P_{2}^{\fg}$ the orthogonal projections with respect to the decomposition \eqref{eqt012w}. 
We have the following inclusion, 
 	\begin{align}\label{eqABCCC}
A_{0}^{1}(w)\cup B_{0}^{1}(w)&\subset  
 \check C_{0}^{1}(w)\backslash \{0\},\notag\\
		P_{1}^{2}\(A_{1}^{2}(w)\cup B_{1}^{2}(w)\)&\subset \check 
		C_{1}^{2}(w)\backslash \{0\}, \\
		P_{2}^{\fg}\(A_{2}^{\fg}(w)\cup B_{2}^{\fg}(w)\)&\subset 
\check		C_{2}^{\fg}(w)\backslash \{0\}. \notag
	\end{align} 
 In particular, $\widehat 	\pi_{0}^{1}$, $\pi_{1}^{2}$, and 
 $\pi_{2}^{\fg}$ satisfy similar properties as $\pi_{0}$ and 
 $\widehat\pi_{0}$. 

\begin{defin}
	Define 
		\begin{align}\label{eqa101212}
			\begin{aligned}
	\alpha_{0}^{1}\left(w,\mu_0^{\fg}\right)&=	\int_{Y_{0}^{1}\in 
		C_{0}^{1}(w)}\widehat \pi_{0}^{1}\(w, Y_{0}^{1}\)\exp\(\left\langle 
		Y_{0}^{1},\ul\mu_{0}^{1}(w)\right\rangle 
		\)\frac{dY_{0}^{1}}{(2\pi)^{r_{0}^{1}(w)/2}},\\
		\alpha_{1}^{2}\left(w,\mu_0^{\fg}\right)&=\int_{Y_{1}^{2}\in 
		C_{1}^{2}(w)}\pi_{1}^{2}\(w, 
		Y_{1}^{2}\)\exp\(-\frac{1}{2}\left|Y_{1}^{2}\right|^{2}\)\frac{dY_{1}^{2}}{(2\pi)^{r_{1}^{2}(w)/2}},\\
		\alpha_{2}^{\fg}\left(w,\mu_0^{\fg}\right)&=\pi_{2}^{\fg}\(w,\ul \mu_{2}^{\fg}(w)\). 
	\end{aligned} 
		\end{align} 
	Set
		\begin{align}\label{eqrAc}
		\alpha\(w,\mu_{0}^{\fg}\)=\alpha_{0}^{1}(w)\alpha_{1}^{2}(w)\alpha_{2}^{\fg}(w). 
	\end{align} 
\end{defin} 

The integrands in $\alpha_{0}^{1}(w)$ and $\alpha_{1}^{2}(w)$ are integrable, and is positive on the corresponding open cones by \eqref{eqABCCC}. 
Then, 
	\begin{align}
		&\alpha_{0}^{1}(w)>0, &\alpha_{1}^{2}(w)>0, 
		&&\alpha_{2}^{\fg}(w)>0,
	\end{align} 
	so that 
		\begin{align}
		\alpha\(w,\mu_{0}^{\fg}\)>0. 
	\end{align} 
	
\begin{defin}
	Set
	\begin{align}\label{eqrmu0}
	\beta\(w,\mu_{0}^{\fg}\)=-\frac{1}{2}r_{0}^{1}(w)+\frac{1}{2}\dim_{\mathbf{C} }  
	\fu_{1}^{2}(w)+\dim_{\mathbf{C} } \fu_{2}^{\fg}(w)\in \frac{1}{2} \mathbf{Z}. 		
	\end{align} 
\end{defin}

	
	
The main results of this section are the following. 

\begin{thm}\label{thmIt}
Given $ \mu_{0}^{\mathfrak{g} }\in \mathfrak{t}_{0}^{\mathfrak{g} } $ and $ w\in W\left(\mathfrak{g} \right)$, as $t\to \infty$, we have 
	\begin{align}\label{eqItasy}
		J_{t}\(w,\mu_{0}^{\fg}\)\sim  \alpha\(w,\mu_{0}^{\fg}\)
		t^{\beta\(w,\mu_{0}^{\fg}\)}\exp\(\frac{t}{2}\left|\ul\mu_{2}^{\fg}(w)\right|^{2}\). 
	\end{align} 
\end{thm}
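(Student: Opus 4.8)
The plan is to evaluate the integral \eqref{eqItvr} by Laplace's method, using the orthogonal decomposition \eqref{eqt012w} adapted to the point $\ul\mu_0^{\fg}(w)$ supplied by Langlands' combinatorial lemma (Proposition \ref{propLC1}). By \eqref{eqSS2} the action $S_w$ achieves its unique minimum on $\ft_0^{\fg}$ at $\ul\mu_0^{\fg}(w)$; however $\ul\mu_0^{\fg}(w)$ is generally \emph{not} in the closed cone $C_0^{\fg}$, so the constrained minimum over $C_0^{\fg}$ is attained instead at the projection of $\ul\mu_0^{\fg}(w)$ onto $C_0^{\fg}$, which by \eqref{eqv0} equals $\ul\mu_2^{\fg}(w)$. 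This explains the exponential rate $\exp\bigl(\tfrac t2|\ul\mu_2^{\fg}(w)|^2\bigr)$, since $S_w(\ul\mu_2^{\fg}(w))=-\tfrac12|\ul\mu_2^{\fg}(w)|^2$ by the orthogonality relation $\langle \ul\mu_0^{\fg}(w)-\ul\mu_2^{\fg}(w),\ul\mu_2^{\fg}(w)\rangle=0$ from \eqref{eqvv0}.

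The main step is a precise localisation of the integral near $\ul\mu_2^{\fg}(w)$. First I would show, using Proposition \ref{Propv2v2} (or its proof) together with the estimate \eqref{eqTdexp} on $\widehat\pi_0^{\fg}$, that the contribution to $J_t(w,\mu_0^{\fg})$ from the complement of a fixed small neighbourhood of $\ul\mu_2^{\fg}(w)$ in $C_0^{\fg}$ is exponentially smaller than the leading term; this reduces the problem to a neighbourhood on which we can use coordinates adapted to \eqref{eqt012w}. Near $\ul\mu_2^{\fg}(w)$ the cone $C_0^{\fg}$ looks locally like $-\check C_0^1(w)\times\ft_1^2(w)\times C_2^{\fg}(w)$ (with $\ft_2^{\fg}(w)$-directions free since $\ul\mu_2^{\fg}(w)$ lies in the interior of $C_2^{\fg}(w)\times\ft_{\fg}$); writing $Y_0^{\fg}=Y_0^1+Y_1^2+Y_2^{\fg}$ and using $\langle \ul\mu_0^{\fg}(w),Y_0^{\fg}\rangle=\langle\ul\mu_0^1(w),Y_0^1\rangle+\langle\ul\mu_2^{\fg}(w),Y_1^2+Y_2^{\fg}\rangle$ (which holds because $\ul\mu_0^1(w)\perp\ft_1^2(w)\oplus\ft_2^{\fg}(w)$ and $\ul\mu_2^{\fg}(w)\perp\ft_0^1(w)$), the integral factorises in the limit into three pieces. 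On the $\ft_0^1(w)$ factor the Gaussian term is negligible and one gets the convergent integral $\alpha_0^1(w)$ over the cone $C_0^1(w)$ (the scaling $Y_0^1$ of order $1$ gives no power of $t$, but each of the $r_0^1(w)$ Gaussian normalising factors $(2\pi t)^{-1/2}$ contributes, yielding the $t^{-r_0^1(w)/2}$). On the $\ft_1^2(w)$ factor the linear term vanishes on the relevant directions so one rescales $Y_1^2\mapsto \sqrt t\,Y_1^2$, extracting $t^{\frac12\dim\fu_1^2(w)}$ from the polynomial $\widehat\pi_1^2$ (which by Definition \ref{definQQQ} and Proposition \ref{propABB} has degree $\dim\fu_1^2(w)$) against a pure Gaussian over the cone $C_1^2(w)$, giving $\alpha_1^2(w)$; the Td-factors tend to $1$ after rescaling since their arguments blow up. On the $\ft_2^{\fg}(w)$ factor one performs a steepest-descent around the interior point $\ul\mu_2^{\fg}(w)$: rescale $Y_2^{\fg}=\ul\mu_2^{\fg}(w)+Y/\sqrt t$, the polynomial and Td factors converge to $\pi_2^{\fg}(w,\ul\mu_2^{\fg}(w))=\alpha_2^{\fg}(w)$, and the Gaussian integration exactly cancels the $(2\pi t)^{-r_2^{\fg}(w)/2}$ normalisation, contributing $t^{\dim\fu_2(w)}$ from the $\dim\fu_2(w)$ powers in $\widehat\pi_2^{\fg}$ that survive (here one must be a little careful: the Td-factors in $B_2(w)$ also go to $1$ at rate fast enough). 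Collecting the three exponents gives $\beta(w,\mu_0^{\fg})=-\tfrac12 r_0^1(w)+\tfrac12\dim\fu_1^2(w)+\dim\fu_2(w)$ as in \eqref{eqrmu0}, and the product of the three factors gives $\alpha(w,\mu_0^{\fg})$ as in \eqref{eqrAc}.

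The delicate point, and the one I expect to require the most care, is controlling the cross-terms in $\widehat\pi_0^{\fg}$ that mix the three groups of variables. Indeed $\widehat\pi_0^{\fg}(Y_0^{\fg})$ is a product over $f\in w_1R_+(\fk)$ and $f\in w_1R(\fp)\cap R_+(\fg)$ of $\langle f,Y_0^{\fg}\rangle$ or $\mathrm{Td}(\langle f,Y_0^{\fg}\rangle)$, and a given $f$ need not be supported in a single summand of \eqref{eqt012w}; the decomposition into $A_0^1(w), A_1^2(w), A_2(w)$ (and the $B$'s) is precisely the bookkeeping that says each such $f$, \emph{evaluated at the localisation point and to leading order in the rescaled variables}, only sees one block. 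Making this rigorous requires showing that on the shrinking neighbourhood the ``wrong-block'' parts of $\langle f,Y_0^{\fg}\rangle$ are lower order, which uses the obtuse/acute structure of the bases $\Delta_1^2,\check\Delta_1^2$ (Proposition \ref{propobact}) and the inclusions \eqref{eqABCCC} to guarantee that the surviving linear forms stay positive and the Td-arguments stay bounded away from $0$ (or blow up) in the right way. Once this separation of scales is established, assembling the three one-block asymptotics and invoking dominated convergence on each rescaled integral completes the proof of \eqref{eqItasy}.
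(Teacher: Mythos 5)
Your strategy matches the paper's: localize near the projection $\ul\mu_2^\fg(w)$ of $\ul\mu_0^\fg(w)$ onto $C_0^\fg$, apply the three-scale rescaling adapted to the Langlands decomposition $\ft_0^\fg=\ft_0^1(w)\oplus\ft_1^2(w)\oplus\ft_2^\fg(w)$, and read off the exponent from the Gaussian normalizations together with the degree count of Proposition \ref{propABB}. The three block contributions $\alpha_0^1(w)$, $\alpha_1^2(w)$, $\alpha_2^\fg(w)$ you describe are the paper's, and the localization via Proposition \ref{Propv2v2} and \eqref{eqTdexp} is exactly Proposition \ref{propLocal}.

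The gap is in the assertion that the integral factorizes. Your local model $-\check C_0^1(w)\times\ft_1^2(w)\times C_2^\fg(w)$ for the cone is not the decomposition of $C_0^\fg$ near $\ul\mu_2^\fg(w)$ (it looks as though you transferred the cones appearing in the decomposition of $\ul\mu_0^\fg(w)$ itself onto the integration variable). More importantly, even the correct limiting cone $C_\infty(w)=\{Y:\langle\alpha,Y\rangle\geq 0,\ \alpha\in\Delta_0^2(w)\}$ is \emph{not} a product $C_0^1(w)\times C_1^2(w)\times\ft_2^\fg(w)$ in the orthogonal coordinates $(Y_0^1,Y_1^2,Y_2^\fg)$: for $\alpha\in\Delta_0^2\setminus\Delta_0^1$ the wall inequality reads $\langle P_0^1\alpha,Y_0^1\rangle+\langle P_1^2\alpha,Y_1^2\rangle\geq 0$, so the $Y_1^2$-cone shifts with $Y_0^1$. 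The paper decouples this by passing to the linear coordinates of \eqref{eqpyFG}, $(Y_0^1,Y_1^2,Y_2^\fg)\mapsto(P_0^1Y_1,P_1^2Y_2,Y_3)$, and invoking Proposition \ref{propintVVp} to see that this map is volume-preserving and carries $C_\infty(w)$ onto the product cone; that step is precisely what produces $\alpha(w,\mu_0^\fg)=\alpha_0^1(w)\alpha_1^2(w)\alpha_2^\fg(w)$. You flag the cross-term difficulty for the integrand (which the paper handles in Proposition \ref{propPQ} with the domination \eqref{eqGaul}) but not for the domain, and without either this coordinate change or an explicit argument that the $O(t^{-1/2})$ drift of the wall is washed out by dominated convergence, the claimed value of the constant is not justified.
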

\begin{proof}
	The proof of our theorem will be given in Sections \ref{sSCV}-\ref{sTinfty}. 
	We will apply Laplace's method, which relies on a detailed analysis of the minimum of $S_{w}$ on $C_{0}^{\fg}$ and its local behavior near the minimal point.
\end{proof}

\begin{thm}\label{thm:hjmy}
 Given $ \mu_{0}^{\mathfrak{g} }\in \mathfrak{t}_{0}^{\mathfrak{g} } $ and $ w= 1$ such that $ \Delta_ {0}^{1} = \Delta_{0}^{2} = \varnothing $, there exits $\epsilon_0>0$ such that as $ t \to \infty $, we have 
 \begin{align}\label{eq:3jnt}
	J_{t}\(1,\mu_{0}^{\fg}\) \sim \alpha\(1,\mu_{0}^{\fg}\)	t^{\beta\(1,\mu_{0}^{\fg}\)}\exp\(\frac{t}{2}\left|\ul\mu_{0 }^{\fg}\right|^{2}\)\left(1+\mathcal{O} \left(e^{-\epsilon_{0} t} \right)\right).
 \end{align}
\end{thm}

\begin{re}
	When $\Delta_{0}^{2}(w)=\varnothing$, i.e., 
	$\ul\mu_{0}^{\fg}(w)=\ul\mu_{2}^{\fg}(w)$ is in the  interior of 
	$C_{0}^{\fg}$. By \eqref{eqItasy}, as $t\to 
	\infty$, we have 
		\begin{align}
		J_{t}\(w,\mu_{0}^{\fg}\)\sim  \pi_{0}^{\fg}\(w,\ul\mu_{0 }^{\mathfrak{g} } (w)\) t^{\dim_{\mathbf{C}} \fu_{0}}\exp\(\frac{t}{2}\left|\ul\mu_{0 }^{\fg}(w)\right|^{2}\),
	\end{align} 
which is  a well-known consequence of the Laplace method \cite[Section IV.2.5]{Dieudonne68}. 
\end{re}

\begin{re}
	When $\Delta_{0}^{1}(w)=\Delta_{0}^{\fg}$, i.e., $\ul\mu_{0}^{\fg}(w)$ is in the 
	interior of $-\check C_{0}^{\fg}$ and $\ul\mu_{2}^{\fg}(w)=0$. Then,
	$\exp\(\left\langle Y_{0}^{\fg},\ul\mu_{0}^{\fg}(w)\right\rangle \)$ is 
	integrable on $C_{0}^{\fg}$. 	By \eqref{eqItasy}, as 	$t\to \infty$, we have  
	\begin{align}
	J_{t}\(w,\mu_{0}^{\fg}\) \sim t^{-r_{0}^{\fg}/2}  \int_{Y_{0}^{\fg}\in C_{0}^{\fg}}	
	\widehat \pi_{0}^{\fg}\(w, Y_{0}^{\fg}\)\exp\(\left\langle 
	Y_{0}^{\fg},\ul\mu_{0}^{\fg}(w)\right\rangle \)\frac{dY_{0}^{\fg}}{(2\pi)^{r_{0}^{\fg}/2}},
	\end{align} 
	which can be  obtained  directly from  \eqref{eqItvr}. 
\end{re}

\subsection{Study of the action $S_{w}$}\label{sSCV}
By \eqref{eqv00} and \eqref{eqSS2}, we see that  
	\begin{align}
		\min_{C_{0}^{\fg}} S_{w}= 
		S_{w}\(\ul\mu_{2}^{\fg}(w)\)=-\frac{1}{2}\left|\ul\mu_{2}^{\fg}(w)\right|^{2}.
	\end{align} 

Let us study the local behaviors of $S_{w}$ near $\ul\mu_{2}^{\fg}(w)$. 
In the statement of the following proposition, we use  \eqref{eqy123} for the expression $Y_{0}^{\fg}=Y_{1}+Y_{2}+Y_{3}$. 

\begin{prop}
	For $Y_{0}^{\fg}\in \ft_{0}^{\fg}$, we 	have 
		\begin{multline}\label{eqS0}
		S_{w}\(Y_{0}^{\fg}+\ul\mu_{2}^{\fg}(w)\)=S_{w}\(\ul\mu_{2}^{\fg}(w)\)+\frac{1}{2}\left|P_{0}^{2}Y_{1}\right|^{2}	+\frac{1}{2}\left|P_{1}^{2}Y_{2}\right|^{2}	+\left\<P_{1}^{2}Y_{1},P_{1}^{2}Y_{2}\right\>\\+\frac{1}{2}\left|P_{2}^{\fg}\(Y_{1}+Y_{2}\)+Y_{3}\right|^{2}-\left\<P_{0}^{1}Y_{1},\ul\mu_{0}^{1}(w)\right\>. 
	\end{multline} 
	If 	$Y_{0}^{\fg}+\ul\mu_{2}^{\fg}(w)\in C_{0}^{\fg}$, we have 
	\begin{align}\label{eqPY23p}
&	\left\<P_{1}^{2}Y_{1},P_{1}^{2}Y_{2}\right\>\g 
0,&\left\<P_{0}^{1}Y_{1},\ul\mu_{0}^{1}(w)\right\>\l 0. 
\end{align} 
\end{prop}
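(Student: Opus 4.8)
The plan is to prove the identity \eqref{eqS0} by a direct completion-of-square computation, and then to prove \eqref{eqPY23p} by extracting sign information from the hypothesis $Y_0^{\fg}+\ul\mu_2^{\fg}(w)\in C_0^{\fg}$. For \eqref{eqS0} I would start from \eqref{eqSS2}. Since $\ul\mu_0^{\fg}(w)=\ul\mu_0^1(w)+\ul\mu_2^{\fg}(w)$ with the two summands orthogonal — this is the first relation of \eqref{eqvv0} applied with $v=\ul\mu_0^{\fg}(w)$, $v_*=\ul\mu_2^{\fg}(w)$, $v-v_*=\ul\mu_0^1(w)$ — we get $Y_0^{\fg}+\ul\mu_2^{\fg}(w)-\ul\mu_0^{\fg}(w)=Y_0^{\fg}-\ul\mu_0^1(w)$ and $|\ul\mu_0^{\fg}(w)|^2=|\ul\mu_0^1(w)|^2+|\ul\mu_2^{\fg}(w)|^2$. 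Expanding $|Y_0^{\fg}-\ul\mu_0^1(w)|^2$ and using $S_w(\ul\mu_2^{\fg}(w))=-\tfrac12|\ul\mu_2^{\fg}(w)|^2$ (the line above the proposition), everything collapses to $S_w(Y_0^{\fg}+\ul\mu_2^{\fg}(w))=S_w(\ul\mu_2^{\fg}(w))+\tfrac12|Y_0^{\fg}|^2-\langle Y_0^{\fg},\ul\mu_0^1(w)\rangle$. It then remains to expand the quadratic-minus-linear term using the orthogonal decomposition \eqref{eqt012w} together with the identities $P_0^1Y_0^{\fg}=P_0^1Y_1$, $P_1^2Y_0^{\fg}=P_1^2(Y_1+Y_2)$, $P_2^{\fg}Y_0^{\fg}=P_2^{\fg}(Y_1+Y_2)+Y_3$, which follow from $\omega_\alpha\perp\ft_0^1(w)$ for $\alpha\notin\Delta_0^1(w)$, etc. (the analogue of \eqref{eqan02}), exactly as in the proof of Proposition \ref{propintVVp}. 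Since $\ul\mu_0^1(w)\in\ft_0^1(w)$, the linear term equals $\langle P_0^1Y_1,\ul\mu_0^1(w)\rangle$; Pythagoras over the three blocks, the binomial expansion of $|P_1^2(Y_1+Y_2)|^2$, and the relation $|P_0^1Y_1|^2+|P_1^2Y_1|^2=|P_0^2Y_1|^2$ (from $P_0^2=P_0^1+P_1^2$ on $\ft_0^2(w)$) reassemble precisely the right-hand side of \eqref{eqS0}.

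For \eqref{eqPY23p}, the key preliminary step is to note that for every simple root $\gamma\in\Delta_0^2(w)$ one has $\langle\gamma,Y_0^{\fg}+\ul\mu_2^{\fg}(w)\rangle=y^\gamma$, where $Y_0^{\fg}=\sum_{\delta\in\Delta_0^{\fg}}y^\delta\omega_\delta$ as in \eqref{eqy123}: indeed $\langle\gamma,\omega_\delta\rangle=\delta_{\gamma\delta}$ by duality of $\Delta_0^{\fg}$ and $\check\Delta_0^{\fg}$, while $\langle\gamma,\ul\mu_2^{\fg}(w)\rangle=0$ because $\gamma\in\ft_0^2(w)$ is orthogonal to $\ft_2^{\fg}(w)\ni\ul\mu_2^{\fg}(w)$. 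Since $C_0^{\fg}=\{Y\in\ft_0^{\fg}:\langle\gamma,Y\rangle\g0\ \forall\gamma\in\Delta_0^{\fg}\}$ by \eqref{eqCg} and \eqref{eqCgot}, the hypothesis forces $y^\gamma\g0$ for all $\gamma\in\Delta_0^2(w)$. The second inequality of \eqref{eqPY23p} then follows at once: $-\ul\mu_0^1(w)\in{\rm Int}(\check C_0^1(w))$ is a positive combination $\sum_{\beta\in\Delta_0^1(w)}c_\beta\beta$ of $\Delta_0^1(w)$, so, using $P_0^1\ul\mu_0^1(w)=\ul\mu_0^1(w)$ and $\langle\beta,Y_1\rangle=y^\beta$, one finds $\langle P_0^1Y_1,\ul\mu_0^1(w)\rangle=-\sum_\beta c_\beta y^\beta\l0$.

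The main obstacle is the first inequality $\langle P_1^2Y_1,P_1^2Y_2\rangle\g0$. Writing $P_1^2Y_1=\sum_{\beta\in\Delta_0^1(w)}y^\beta P_1^2\omega_\beta$ and $P_1^2Y_2=\sum_{\alpha\in\Delta_0^2(w)\setminus\Delta_0^1(w)}y^\alpha P_1^2\omega_\alpha$, with all coefficients $\g0$, it suffices to show $\langle P_1^2\omega_\beta,P_1^2\omega_\alpha\rangle\g0$ for such $\beta,\alpha$. Here I would observe that, since $P_0^1\omega_\alpha=0$ for $\alpha\notin\Delta_0^1(w)$, we have $P_1^2\omega_\alpha=P_0^2\omega_\alpha$, an element of $\check\Delta_0^2(w)$, while for $\beta\in\Delta_0^1(w)$ the decomposition $P_0^2\omega_\beta=P_0^1\omega_\beta+P_1^2\omega_\beta$ holds with $P_0^1\omega_\beta\in\check\Delta_0^1(w)\subset\ft_0^1(w)$; hence $\langle P_1^2\omega_\beta,P_1^2\omega_\alpha\rangle=\langle P_0^2\omega_\beta-P_0^1\omega_\beta,\,P_0^2\omega_\alpha\rangle=\langle P_0^2\omega_\beta,P_0^2\omega_\alpha\rangle$, the last equality because $P_0^1\omega_\beta\in\ft_0^1(w)$ is orthogonal to $\ft_1^2(w)\ni P_0^2\omega_\alpha$. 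Finally $\langle P_0^2\omega_\beta,P_0^2\omega_\alpha\rangle\g0$ since $\check\Delta_0^2(w)$ is an acute family — this is Proposition \ref{propobact} applied to the pair $(\varnothing,\Delta_0^2(w))$, i.e. the classical fact that the fundamental weights of a simple system have nonnegative pairwise inner products. Summing over $\beta$ and $\alpha$ gives $\langle P_1^2Y_1,P_1^2Y_2\rangle\g0$, which together with the first paragraph completes the proof.
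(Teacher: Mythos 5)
Your proposal is correct and follows essentially the same route as the paper: complete the square starting from \eqref{eqSS2} to reach the intermediate identity $S_w(Y_0^\fg+\ul\mu_2^\fg(w))=S_w(\ul\mu_2^\fg(w))+\tfrac12|Y_0^\fg|^2-\langle Y_0^\fg,\ul\mu_0^1(w)\rangle$, then split $|Y_0^\fg|^2$ along the orthogonal decomposition \eqref{eqt012w}. For the first inequality of \eqref{eqPY23p}, where the paper simply asserts $P_1^2Y_1, P_1^2Y_2\in C_1^2(w)$, your direct computation $\langle P_1^2\omega_\beta,P_1^2\omega_\alpha\rangle=\langle P_0^2\omega_\beta,P_0^2\omega_\alpha\rangle\ge0$ via Proposition \ref{propobact} is a somewhat more transparent justification of the same sign statement.
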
 
\begin{proof}
		By \eqref{eqS}, we have 
	\begin{align}\label{eqS1}
		S_{w}\(Y_{0}^{\fg}+\ul\mu_{2}^{\fg}(w)\)=S_{w}\(\ul\mu_{2}^{\fg}(w)\)+\frac{1}{2}\left|Y_{0}^{\fg}\right|^{2}-\left\<Y_{0}^{\fg},\ul\mu_{0}^{1}(w)\right\>. 
	\end{align} 

Using the trivial  formula, 
	\begin{align}
		\left|Y_{0}^{\fg}\right|^{2}=\left|P_{0}^{2}Y_{0}^{\fg}\right|^{2}+\left|P_{2}^{\fg}Y_{0}^{\fg}\right|^{2},
	\end{align} 
and 
	\begin{align}
		&P_{0}^{2}Y_{0}^{\fg}=P_{0}^{2}Y_{1}+P_{1}^{2}Y_{2}, & 
		P_{2}^{\fg}Y_{0}^{\fg}=P_{2}^{\fg}(Y_{1}+Y_{2})+Y_{3}, 
	\end{align} 
we get 
	\begin{align}\label{eqS15}
	\frac{1}{2}\left|Y_{0}^{\fg}\right|^{2}=	
	\frac{1}{2}\left|P_{0}^{2}Y_{1}\right|^{2}	
	+\frac{1}{2}\left|P_{1}^{2}Y_{2}\right|^{2}	
	+\left\<P_{1}^{2}Y_{1},P_{1}^{2}Y_{2}\right\>+\frac{1}{2}\left|P_{2}^{\fg}\(Y_{1}+Y_{2}\)+Y_{3}\right|^{2}.
	\end{align} 
	
By \eqref{eqy123} and \eqref{eqvFG}, we have 
\begin{align}\label{eqS2}
	\left\<Y_{0}^{\fg},\ul\mu_{0}^{1}\left(w\right)\right\>=\left\<P_{0}^{1}Y_{1},\ul\mu_{0}^{1}\left(w\right)\right\>. 
\end{align}

By \eqref{eqS1}, \eqref{eqS15}, and \eqref{eqS2}, we 
get \eqref{eqS0}. 
	
Since $Y_0^{\fg}+\mu_{2}^{\fg}\in C_{0}^{\fg}$, 
$Y_{1}$ and $Y_{2}$ are non negative linear combinations of $\omega_{\alpha}$ with $\alpha\in \Delta_{0}^{2}(w)$, which gives the first estimate of \eqref{eqPY23p}.
Similarly, $P_{0}^{1}Y_{1}\in C_{0}^{1}$, which gives the second estimate of  \eqref{eqPY23p}. 
\end{proof} 

\subsection{Localisation of the problem} \label{sLoc}
  For 
$\e>0$, put 
\begin{align}\label{eqBve}
	B\(\ul\mu_{2}^{\fg}(w),\e\)=\left\{\ul\mu_{2}^{\fg}(w)+\sum_{\alpha\in 
	\Delta_{0}^{\fg}}y^{\alpha}\omega_{\alpha}\in \ft_{0}^{\fg}: 
	\left|y^{\alpha}\right|<\e\right\}.
\end{align} 
 Set 
	\begin{align}\label{eqIt3}
	J_{\e, t}\(w,\mu_{0}^{\fg}\)=\(\frac{t}{2\pi}\)^{r_{0}^{\fg}/2}\int_{ C_{0}^{\fg}\cap B\(\ul\mu_{2}^{\fg}(w),\e\)}	
	\widehat \pi_{0}^{\fg}\(w_{1}, tY_{0}^{\fg}\)\exp\(-tS_{w}\(Y_{0}^{\fg}\)\)dY_{0}^{\fg}.
	\end{align} 
	
 \begin{prop}\label{propLocal}
Given $\e>0$, there exist $\eta>0,C>0$ such that for $t\g1$, 
	\begin{align}\label{eqLocal}
		\left|J_{t}\(w,\mu_{0}^{\fg}\)-J_{\e,t}\(w,\mu_{0}^{\fg}\)\right|\l  C 
		\exp\(-tS_{w}\(\ul\mu_{2}^{\fg}(w)\)-t\eta\).
	\end{align} 
 \end{prop}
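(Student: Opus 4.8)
The plan is to carry out the standard localisation step of Laplace's method. Recall from \eqref{eqSS2} that $S_{w}$ is strictly convex on $\ft_{0}^{\fg}$, with Hessian equal to the identity, so that it has a unique minimiser on the convex cone $C_{0}^{\fg}$; by the computation opening Section \ref{sSCV}, that minimiser is $\ul\mu_{2}^{\fg}(w)$, with minimal value $S_{w}(\ul\mu_{2}^{\fg}(w))=-\frac{1}{2}|\ul\mu_{2}^{\fg}(w)|^{2}$.

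First I would establish, for the given $\e>0$, a uniform gap: there is $\eta_{0}=\eta_{0}(\e)>0$ such that $S_{w}(Y_{0}^{\fg})\g S_{w}(\ul\mu_{2}^{\fg}(w))+\eta_{0}$ for every $Y_{0}^{\fg}\in C_{0}^{\fg}\setminus B(\ul\mu_{2}^{\fg}(w),\e)$. The set $C_{0}^{\fg}\cap\partial B(\ul\mu_{2}^{\fg}(w),\e)$ is compact and avoids the unique minimiser, so on it $S_{w}$ attains a value $S_{w}(\ul\mu_{2}^{\fg}(w))+\eta_{0}$ with $\eta_{0}>0$; for an arbitrary $Y_{0}^{\fg}$ in the complement, the segment from $\ul\mu_{2}^{\fg}(w)$ to $Y_{0}^{\fg}$ lies in $C_{0}^{\fg}$ and meets $\partial B(\ul\mu_{2}^{\fg}(w),\e)$ at a point $Z$, and since $s\mapsto S_{w}((1-s)\ul\mu_{2}^{\fg}(w)+sY_{0}^{\fg})$ is convex with minimum at $s=0$ it is nondecreasing, so $S_{w}(Y_{0}^{\fg})\g S_{w}(Z)\g S_{w}(\ul\mu_{2}^{\fg}(w))+\eta_{0}$.

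Next, observing from \eqref{eqItv}, \eqref{eqIt3} and $\widehat\pi_{0}^{\fg}\g0$ on $C_{0}^{\fg}$ that $0\l J_{t}(w,\mu_{0}^{\fg})-J_{\e,t}(w,\mu_{0}^{\fg})$ equals $(t/2\pi)^{r_{0}^{\fg}/2}$ times the integral of $\widehat\pi_{0}^{\fg}(tY_{0}^{\fg})\,e^{-tS_{w}(Y_{0}^{\fg})}$ over $C_{0}^{\fg}\setminus B(\ul\mu_{2}^{\fg}(w),\e)$, I would bound this integral by combining three ingredients: the polynomial estimate $0\l\widehat\pi_{0}^{\fg}(tY_{0}^{\fg})\l Ct^{|R_{+}(\fg)|}(1+|Y_{0}^{\fg}|)^{|R_{+}(\fg)|}$ for $t\g1$, which follows from \eqref{eqTdexp} restricted to $\ft_{0}^{\fg}$ via \eqref{eqCgot}; the factorisation $e^{-tS_{w}(Y_{0}^{\fg})}=e^{-tS_{w}(\ul\mu_{2}^{\fg}(w))}e^{-(t-1)(S_{w}(Y_{0}^{\fg})-S_{w}(\ul\mu_{2}^{\fg}(w)))}e^{-(S_{w}(Y_{0}^{\fg})-S_{w}(\ul\mu_{2}^{\fg}(w)))}$, whose middle factor is $\l e^{-(t-1)\eta_{0}}$ on the domain of integration by the previous step, while by \eqref{eqSS2} (and orthogonality of $\ft_{0}^{1}(w)$ and $\ft_{2}^{\fg}(w)$) the last factor equals $e^{-\frac{1}{2}|Y_{0}^{\fg}-\ul\mu_{0}^{\fg}(w)|^{2}+\frac{1}{2}|\ul\mu_{0}^{1}(w)|^{2}}$; and the fact that the integral of $(1+|Y_{0}^{\fg}|)^{|R_{+}(\fg)|}$ against this Gaussian over all of $\ft_{0}^{\fg}$ is a finite constant depending on $\mu_{0}^{\fg}$. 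This yields $|J_{t}(w,\mu_{0}^{\fg})-J_{\e,t}(w,\mu_{0}^{\fg})|\l C\,t^{M}e^{-(t-1)\eta_{0}}e^{-tS_{w}(\ul\mu_{2}^{\fg}(w))}$ with $M=\frac{1}{2}r_{0}^{\fg}+|R_{+}(\fg)|$, and then absorbing $t^{M}e^{-(t-1)\eta_{0}}\l C_{\eta}e^{-t\eta}$, valid for $t\g1$ and any $0<\eta<\eta_{0}$, gives \eqref{eqLocal}.

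The whole argument is routine, so I expect the only delicate points to be the uniform gap of the second step, where the unboundedness of the cone $C_{0}^{\fg}$ must be handled — this is precisely why using the box-shaped neighbourhood $B(\ul\mu_{2}^{\fg}(w),\e)$ together with the quadratic form of $S_{w}$ makes the convexity argument clean — and the bookkeeping that correctly absorbs the $t$-dependent prefactors $(t/2\pi)^{r_{0}^{\fg}/2}$ and $t^{|R_{+}(\fg)|}$ into the exponential decay $e^{-t\eta}$.
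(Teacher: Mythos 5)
Your proof is correct and follows essentially the same Laplace-localisation strategy as the paper: establish a uniform positive gap between $S_{w}$ on $C_{0}^{\fg}\setminus B(\ul\mu_{2}^{\fg}(w),\e)$ and its minimum, then use polynomial growth of $\widehat\pi_{0}^{\fg}$ together with a Gaussian factor to absorb the $t$-powers into the exponential decay. The only (cosmetic) differences are that you establish the gap via compactness of $C_{0}^{\fg}\cap\partial B$ plus convexity along rays, while the paper reads it off directly from the strict inequality $\min_{C_{0}^{\fg}\setminus B}|Y_{0}^{\fg}-\ul\mu_{0}^{\fg}(w)|>|\ul\mu_{0}^{1}(w)|$ and a $\delta$-weighted split of the quadratic; both variants are routine and equivalent.
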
 	
\begin{proof}	
By \eqref{eqSS2}, \eqref{eqItv},  and \eqref{eqIt3}, 	we have 
		\begin{multline}\label{eqLa1}
		J_{t}\(w,\mu_{0}^{\fg}\)-J_{t,\e}\(w,\mu_{0}^{\fg}\)
		=	\(\frac{t}{2\pi}\)^{r_{0}^{\fg}/2}
		\exp\(\frac{t}{2} \left|\ul\mu_{0}^{\fg}(w)\right|^{2}\)\\
		\times \int_{C_{0}^{\fg}\setminus B\(\ul\mu_{2}^{\fg}(w),\e\)} \widehat \pi_{0 }^{\mathfrak{g} } \(w_{1}, tY_{0}^{\fg}\)\exp\(-\frac{t}{2}\left|Y_{0}^{\fg}-\ul\mu_{0}^{\fg}(w)\right|^{2}\)dY_{0}^{\fg}. 
	\end{multline} 
	
By the unicity of the projection $\mu_{2}^{\fg}$, we have 
\begin{align}\label{eqLa2}
\min_{Y_{0}^{\fg}\in C^{\fg}_{0}\setminus B\(\ul\mu_{2}^{\fg}(w),\e\)}		
\left|Y_{0}^{\fg}-\ul\mu_{0}^{\fg}(w)\right|>\left|\ul\mu_{0}^{1}(w)\right|. 
	\end{align} 
By \eqref{eqLa2}, there exist $\eta>0$ and $\delta\in (0,1)$ such 
that for $Y_{0}^{\fg}\in 
	C_{0}^{\fg}\setminus B\(\ul\mu_{2}^{\fg}(w),\e\)$, 
	\begin{align}\label{eqLa3}
	\frac{1}{2}	\left|Y_{0}^{\fg}-\ul\mu_{0}^{\fg}(w)\right|^{2}\g \frac{\delta}{2} 
		\left|Y_{0}^{\fg}-\ul\mu_{0}^{\fg}(w)\right|^{2}+\frac{1}{2}\left|\ul\mu_{0}^{1}(w)\right|^{2}+2\eta. 
	\end{align} 

By \eqref{eqLa1} and \eqref{eqLa3}, we have 
	\begin{multline}
	\left|J_{t}\(w,\mu_{0}^{\fg}\)-J_{t,\e}\(w,\mu_{0}^{\fg}\)\right|
	\l 
	\(\frac{t}{2\pi}\)^{r_{0}^{\fg}/2}\exp\(-t\(S_{w}\(\ul\mu_{2}^{\fg}(w)\)+2\eta\)\)\\
	\times \int_{ C_{0}^{\fg}\setminus B\(\ul\mu_{2}^{\fg}(w),\e\)}	
	\widehat{\pi} _{0}^{\fg}\(w_{1}, tY_{0}^{\fg}\)\exp\(-\frac{\delta 
	t}{2}\left|Y_{0}^{\fg}-\ul\mu_{0}^{\fg}(w)\right|^{2}\)dY_{0}^{\fg}.
	\end{multline} 
	
It remains to show that  the term 
	\begin{align}\label{eqL5}
 	t^{r_{0}^{\fg}/2}	\exp\(-\eta t\)\int_{ C_{0}^{\fg}\setminus B\(\ul\mu_{2}^{\fg}(w),\e\)}	
	\widehat{\pi} _{0}^{\fg}\(w_{1}, tY_{0}^{\fg}\)\exp\(-\frac{\delta 
	t}{2}\left|Y_{0}^{\fg}-\ul\mu_{0}^{\fg}(w)\right|^{2}\)dY_{0}^{\fg}
	\end{align} 
	is uniformly bounded when  $t \g 1$. 
	Indeed, by  \eqref{eqTdexp} and \eqref{eqEdexp11}, if $Y_{0}^{\fg}\in C_{0}^{\fg}$, we have 
	\begin{align}\label{eqLa4}
		\left|\widehat \pi\(w_{1}, tY_{0}^{\fg}\)\right|\l C\(1+\left|tY_{0}^{\fg}\right|\)^{\left|R_{+}(\fg)\right|}\l  
		C\(1+t\)^{\left|R_{+}(\fg)\right|}\(1+\left|Y_{0}^{\fg}\right|\)^{\left|R_{+}(\fg)\right|}. 
	\end{align} 
By \eqref{eqLa4}, for $t\g 1$, we have 
	\begin{multline}\label{eqL6}
		\int_{ C_{0}^{\fg}\setminus B\(\ul\mu_{2}^{\fg}(w),\e\)}\widehat{\pi} _{0}^{\fg}\(w_{1}, tY_{0}^{\fg}\)\exp\(-\frac{\delta 
	t}{2}\left|Y_{0}^{\fg}-\ul\mu_{0}^{\fg}(w)\right|^{2}\)dY_{0}^{\fg}\\
\l C 	\left(1+t\right)^{\left|R_{+}(\fg)\right|}\int_{ \ft_{0}^{\fg}}\(1+\left|Y_{0}^{\fg}\right|\)^{\left|R_{+}(\fg)\right|}\exp\(-\frac{\delta 
	}{2}\left|Y_{0}^{\fg}-\ul\mu_{0}^{\fg}(w)\right|^{2}\)dY_{0}^{\fg}.
	\end{multline} 
By \eqref{eqL6}, we get the claim   \eqref{eqL5}, which finishes the proof of our proposition. 
\end{proof} 

\subsection{A rescaling on the normal coordinates} \label{sTinfty}
We fix now $\e>0$ small enough such that   
\begin{align}\label{eqCvbe}
C_{0}^{\fg}\cap B\(\ul\mu_{2}^{\fg}(w),\e\)=	
\left\{\ul\mu_{2}^{\fg}(w)+\sum_{\alpha\in 
\Delta_{0}^{\fg}}y^{\alpha}\omega_{\alpha}\in \ft_{0}^{\fg}: 
\substack{\textstyle
 0\l y^{\alpha} < \e \text{ 
if } \alpha\in \Delta_{0}^{2}(w)\\ \textstyle \left|y^{\alpha}\right|<\e \text{ 
if } \alpha\notin \Delta_{0}^{2}(w)\hfill}\right\}. 
\end{align} 

We introduction a nonhomogenous rescaling
	\begin{align}\label{eqCvbe1}
	Y_{0}^{\fg}=\ul\mu_{2}^{\fg}(w)+\frac{Y_{1}}{t}+\frac{Y_{2}+Y_{3}}{\sqrt{t}},
	\end{align} 
where $Y_{1},Y_{2}$ and $Y_{3}$ are as in \eqref{eqy123}.  Set
	\begin{align}\label{eqCvbe2}
		\begin{aligned}
		C_{\e,t}(w)=&\left\{\sum_{\alpha\in \Delta_{0}^{\fg}} 
		y^{\alpha}\omega_{\alpha}\in \ft_{0}^{\fg}: \substack{\textstyle 0\l 
		 y^{\alpha}< \e t \text{ if } \alpha \in \Delta_{0}^{1}(w) \hfill\\ \textstyle
		 0\l y^{\alpha} <  
		 \e \sqrt{t} \text{ if } \alpha\in \Delta_{0}^{2}(w)\setminus 
		 \Delta_{0}^{1}(w), \\ \textstyle
		 \left|y^{\alpha}\right|< \e \sqrt{t} \text{ if } 
		 \alpha\in \Delta_{0}^{\fg}\backslash\Delta_{0}^{2}(w)\hfill }\right\},\\
		 C_{\infty}(w)=&\left\{\sum_{\alpha\in \Delta_{0}^{\fg}} 
		 y^{\alpha}\omega_{\alpha}\in \ft_{0}^{\fg}: y^{\alpha}\g 0 \text{ if 
		 }\alpha\in \Delta_{0}^{2}(w)\right\}.
		 \end{aligned}
	\end{align} 
Then,
\begin{align}\label{eqCetinfty}
	C_{\e,t}\left(w\right)\subset C_{\infty}\left(w\right). 
\end{align} 
By \eqref{eqCvbe} and \eqref{eqCvbe2}, we see that 	
\begin{align}\label{eqeqCC}
	\ul\mu_{2}^{\fg}(w)+\frac{Y_{1}}{t}+\frac{Y_{2}+Y_{3}}{\sqrt{t}}\in 
	C_{0}^{\fg}\cap B\(\ul\mu_{2}^{\fg}(w),\e\) \iff	Y_{1}+Y_{2}+Y_{2}\in C_{\e,t}. 
\end{align}

By \eqref{eqS0}, \eqref{eqIt3}, and by  the change of variables
\eqref{eqCvbe1}, we have 
	\begin{multline}
		J_{\e,t}\(w,\mu_{0}^{\fg}\)=t^{-r_{0}^{1}(w)/2}\exp\(-tS_{w}\(\ul\mu_{2}^{\fg}(w)\)\)\\\times\int_{Y_{1}+Y_{2}+Y_{3}\in C_{\e,t}\left(w\right)} 
		\widehat \pi_{0}^{\fg}\(w_{1}, t\ul\mu_{2}^{\fg}(w)+Y_{1}+\sqrt{t}\({Y_{2}+Y_{3}}\)\)	
		\exp\Bigg(-\frac{1}{2t}\left|P_{0}^{2}Y_{1}\right|^{2}	
-\frac{1}{2}\left|P_{1}^{2}Y_{2}\right|^{2}\\	- \frac{1}{\sqrt{t}}
\left\<P_{1}^{2}Y_{1},P_{1}^{2}Y_{2}\right\>
-\frac{1}{2}\left|P_{2}^{\fg}\(\frac{Y_{1}}{\sqrt{t}}+Y_{2}\)+Y_{3}\right|^{2}+\left\<P_{0}^{1}Y_{1},\ul\mu_{0}^{1}(w)\right\>\Bigg)\frac{dY_{0}^{\fg}}{(2\pi)^{r_{0}^{\fg}/2}}.
	\end{multline} 
		
After translating the variable $Y_{3}$  by 
$-P_{2}^{\fg}\(Y_{1}/\sqrt{t}+Y_{2}\)$, we get 
	\begin{multline}\label{eqdc12}
J_{\e,t}\(w,\mu_{0}^{\fg}\)=t^{-r_{0}^{1}(w)/2}\exp\(-tS_{w}\(\ul\mu_{2}^{\fg}(w)\)\)\\
\times\int_{Y_{1} +Y_{2} +Y_{3}\in \widetilde{C}_{\e,t}(w)} 
	\widehat \pi_{0}^{\fg}\(w_{1}, t\ul\mu_{2}^{\fg}(w)+P_{0}^{2}\(Y_{1}+\sqrt{t}Y_{2}\)+\sqrt{t}Y_{3}\)
	\exp\Bigg(-\frac{1}{2t}\left|P_{0}^{2}Y_{1}\right|^{2}	\\
-\frac{1}{2}\left|P_{0}^{2}Y_{2}\right|^{2}	- \frac{1}{\sqrt{t}}
\left\<P_{0}^{2}Y_{1},P_{0}^{2}Y_{2}\right\>
-\frac{1}{2}\left|Y_{3}\right|^{2}+\left\<P_{0}^{1}Y_{1},\ul\mu_{0}^{1}(w)\right\>\Bigg)\frac{dY_{0}^{\fg}}{(2\pi)^{r_{0}^{\fg}/2}}.
		\end{multline}
where $\widetilde{C}_{\e,t}(w)$ is some domain obtained by 
translation.  Since $C_{\infty}(w)$ is stable under translation by an element of 
$\ft_{2}^{\fg}$. By \eqref{eqCetinfty}. we know that 
	\begin{align}
		\widetilde{C}_{\e,t}(w)\subset {C}_{\infty}(w). 
	\end{align} 


	
\begin{prop}\label{propPQ}
There is $C>0$ such that for all $t\g 1$ and $Y_{0}^{\fg}=Y_{1}+Y_{2}+Y_{3}\in \widetilde{C}_{\e,t}(w), $ we have 
\begin{multline}\label{eqconvd1}
t^{-\dim_{\mathbf C} \fu_{1}^{2}(w)/2-\dim_{\mathbf C} \fu_{2}^{\fg}(w)}\left|	\widehat \pi_{0}^{\fg}\(w_{1}, t\ul\mu_{2}^{\fg}(w)+P_{0}^{2}\(Y_{1}+\sqrt{t}Y_{2}\)+\sqrt{t}Y_{3}\)\right|\\
\l C(1+|Y|)^{\left|R_{+}(\fg)\right|}.
	\end{multline} 
As $t\to \infty$, we have  the pointwise  convergence in the interior of $C_{\infty}(w)$, 
	\begin{multline}\label{eqcond2}
		t^{-\dim_{\mathbf C} \fu_{1}^{2}(w)/2-\dim_{\mathbf C} \fu_{2}^{\fg}(w)}{\widehat 
		\pi_{0}^{\fg}\(w_{1}, t\ul\mu_{2}^{\fg}(w)+P_{0}^{2}\(Y_{1}+\sqrt{t}Y_{2}\)+\sqrt{t}Y_{3}\)}\\
		\to 
	\widehat{\pi}_{0}^{1}\(w, P_{0}^{1}Y_{1}\)	{\pi}_{1}^{2}\(w, P_{1}^{2} 
	Y_{2}\){\pi}_{2}^{\fg}\(w, \ul\mu_{2}^{\fg}(w)\). 
	\end{multline} 
\end{prop}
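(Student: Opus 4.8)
The proof of Proposition \ref{propPQ} rests on analyzing how the polynomial-type function $\widehat\pi_{0}^{\fg}$ factorizes under the nonhomogeneous rescaling \eqref{eqCvbe1}. Recall that $\widehat\pi_{0}^{\fg}(w_{1},\cdot)$ is a product over $\alpha\in w_{1}R_{+}(\fk)$ of linear factors $\langle\alpha,\cdot\rangle$ and over $\alpha\in w_{1}R(\fp)\cap R_{+}(\fg)$ of factors ${\rm Td}(\langle\alpha,\cdot\rangle)$. The plan is to group these factors according to the partition $w_{1}R_{+}(\fk)=A_{0}^{1}(w)\sqcup A_{1}^{2}(w)\sqcup A_{2}(w)$ and $w_{1}R(\fp)\cap R_{+}(\fg)=B_{0}^{1}(w)\sqcup B_{1}^{2}(w)\sqcup B_{2}(w)$ of Definition \ref{defA0123}, and to track the order in $\sqrt{t}$ that each group contributes when evaluated at $t\ul\mu_{2}^{\fg}(w)+P_{0}^{2}(Y_{1}+\sqrt{t}Y_{2})+\sqrt{t}Y_{3}$.

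First I would compute, for a root $f$, the behavior of $\langle f, t\ul\mu_{2}^{\fg}(w)+P_{0}^{2}(Y_{1}+\sqrt{t}Y_{2})+\sqrt{t}Y_{3}\rangle$ as $t\to\infty$. If $f\in A_{2}(w)\cup B_{2}(w)$, i.e.\ $f_{|\ft_{2}^{\fg}(w)}\neq 0$, then since $\ul\mu_{2}^{\fg}(w)\in\ft_{2}^{\fg}(w)$ the pairing $\langle f,\ul\mu_{2}^{\fg}(w)\rangle=\langle f_{|\ft_{2}^{\fg}(w)},\ul\mu_{2}^{\fg}(w)\rangle>0$ (using that $\ul\mu_{2}^{\fg}(w)\in{\rm Int}(C_{2}^{\fg}(w))$ and the projections $P_{2}^{\fg}(A_{2}\cup B_{2})\subset\check C_{2}^{\fg}(w)\setminus\{0\}$ from \eqref{eqABCCC}), so $\langle f,\cdot\rangle\sim t\langle f,\ul\mu_{2}^{\fg}(w)\rangle$ and contributes one power of $t$; since ${\rm Td}(x)\sim x$ as $x\to+\infty$, a $B_{2}$-factor also contributes one power of $t$. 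There are $|A_{2}(w)|+|B_{2}(w)|=\dim\fu_{2}(w)$ such factors by Proposition \ref{propABB}, and the product of these factors divided by $t^{\dim\fu_{2}^{\fg}(w)}$ converges to $\prod_{f\in A_{2}\cup B_{2}}\langle f,\ul\mu_{2}^{\fg}(w)\rangle=\pi_{2}^{\fg}(w,\ul\mu_{2}^{\fg}(w))$. Next, if $f\in A_{1}^{2}(w)\cup B_{1}^{2}(w)$, then $f$ vanishes on $\ft_{2}^{\fg}(w)$ but not on $\ft_{1}^{\fg}(w)$; one finds $\langle f,\cdot\rangle=\langle f, P_{0}^{2}(Y_{1}+\sqrt{t}Y_{2})\rangle=\sqrt{t}\langle f, P_{1}^{2}Y_{2}\rangle+O(1)$ using $f_{|\ft_{2}^{\fg}(w)}=0$ and that $\langle f,P_{0}^{1}Y_{1}\rangle$ contributes only lower order; since ${\rm Td}(x)\sim x$, a $B_{1}^{2}$-factor behaves the same way. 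There are $\dim\fu_{1}^{2}(w)$ such factors, the product divided by $t^{\dim\fu_{1}^{2}(w)/2}$ converges to $\pi_{1}^{2}(w, P_{1}^{2}Y_{2})$. Finally, for $f\in A_{0}^{1}(w)\cup B_{0}^{1}(w)$, the root $f$ vanishes on $\ft_{1}^{\fg}(w)\supset\ft_{2}^{\fg}(w)$, so $\langle f, t\ul\mu_{2}^{\fg}(w)+\sqrt{t}Y_{2}+\sqrt{t}Y_{3}\rangle=0$ after using that $Y_{2},Y_{3}$ lie in $\ft_{1}^{2}(w)\oplus\ft_{2}^{\fg}(w)$, and what survives is $\langle f, P_{0}^{1}Y_{1}\rangle$; a linear factor gives $\langle f,P_{0}^{1}Y_{1}\rangle$ and a ${\rm Td}$-factor gives ${\rm Td}(\langle f,P_{0}^{1}Y_{1}\rangle)$, exactly matching the definition of $\widehat\pi_{0}^{1}(w, P_{0}^{1}Y_{1})$. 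These contribute no power of $t$. Multiplying the three groups gives \eqref{eqcond2}.

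For the uniform bound \eqref{eqconvd1}, I would argue more crudely: by \eqref{eqTdexp} (or rather \eqref{eqTdc}), each ${\rm Td}$-factor and each linear factor is bounded by $C(1+|\text{argument}|)$. The argument $t\ul\mu_{2}^{\fg}(w)+P_{0}^{2}(Y_{1}+\sqrt{t}Y_{2})+\sqrt{t}Y_{3}$ evaluated against any root is, after the projections kill the appropriate components as above, bounded by $C\sqrt{t}(1+|Y|)$ for the $\sqrt{t}$-order factors, by $Ct(1+|Y|)$ for the $t$-order factors in the $\fu_{2}$-group, and by $C(1+|Y|)$ for the $O(1)$-order factors in the $\fu_{0}^{1}$-group, on the translated domain $\widetilde C_{\e,t}(w)\subset C_{\infty}(w)$ where $Y_{1}$ has nonnegative coordinates in directions $\Delta_{0}^{1}(w)$ so that $\langle f, P_{0}^{1}Y_{1}\rangle\geq 0$ and the ${\rm Td}$-factors are evaluated at nonnegative arguments hence bounded by $C(1+|Y|)$. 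Collecting powers of $t$ and dividing by $t^{\dim\fu_{1}^{2}(w)/2+\dim\fu_{2}^{\fg}(w)}$ exactly cancels the accumulated positive powers, leaving the bound $C(1+|Y|)^{|R_{+}(\fg)|}$. The main obstacle, and where care is needed, is bookkeeping the cross terms: when a root $f$ is paired against $t\ul\mu_{2}^{\fg}(w)+P_{0}^{2}(Y_{1}+\sqrt{t}Y_{2})+\sqrt{t}Y_{3}$ one must verify that the projections $P_{0}^{2},P_{2}^{\fg}$ built into \eqref{eqCvbe1}, \eqref{eqdc12} are exactly right so that, e.g., an $A_{1}^{2}$-root sees $\sqrt{t}Y_{2}$ at leading order and the $Y_{1}$-contribution is genuinely lower order, not merely comparable; this is where the orthogonal decomposition \eqref{eqt012w} and the properties $f_{|\ft_{1}^{\fg}(w)}\ne 0$, $f_{|\ft_{2}^{\fg}(w)}=0$ from Definition \ref{defA0123} together with \eqref{eqABCCC} are used decisively. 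Once the leading order of each factor is pinned down and shown dominated, the pointwise limit and the bound follow by elementary estimates on ${\rm Td}$.
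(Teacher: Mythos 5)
Your proof is correct and follows essentially the same route as the paper's: both group the factors of $\widehat\pi_{0}^{\fg}$ according to the partitions $A_{0}^{1}\sqcup A_{1}^{2}\sqcup A_{2}$ and $B_{0}^{1}\sqcup B_{1}^{2}\sqcup B_{2}$, track the power of $t$ contributed by each group under the rescaling \eqref{eqCvbe1}, and use ${\rm Td}(x)\sim x$ and ${\rm Td}(x)\le C(1+|x|)$ on the nonnegative arguments supplied by $\widetilde C_{\e,t}(w)\subset C_\infty(w)$. The paper packages these same observations a bit more algebraically—it first extends $\widehat\pi_{0}^{1},\widehat\pi_{1}^{2},\widehat\pi_{2}^{\fg}$ to all of $\ft_{0}^{\fg}$, writes the clean factorization $\widehat\pi=\widehat\pi_{0}^{1}\widehat\pi_{1}^{2}\widehat\pi_{2}^{\fg}$, and uses the projection identities $\widehat\pi_{0}^{1}=\widehat\pi_{0}^{1}\circ P_{0}^{1}$, $\widehat\pi_{1}^{2}=\widehat\pi_{1}^{2}\circ P_{0}^{2}$ to isolate which variables each block sees—while you carry out the same factor-by-factor bookkeeping explicitly, but the underlying mechanism (the orthogonal decomposition \eqref{eqt012w}, the vanishing of roots on the appropriate subspaces from Definition \ref{defA0123}, and \eqref{eqABCCC}) is identical.
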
 
\begin{proof}
	Recall that $\widehat \pi_{0}^{1}\left(w,\cdot\right),\widehat \pi_{1}^{2}\left(w,\cdot\right),\widehat \pi_{2}^{\mathfrak{g} }\left(w,\cdot\right)$ introduced in Definition \ref{definQQQ} are defined by roots in $ R\left(\mathfrak{g} \right)$.
	They extend naturally to functions on $\ft_{0}^{\fg}$, which will still denote by $\widehat \pi_{0}^{1}\left(w,\cdot\right),\widehat \pi_{1}^{2}\left(w,\cdot\right),\widehat \pi_{2}^{\mathfrak{g} }\left(w,\cdot\right)$.	
	Then,  
	\begin{align}\label{eqPQ123}
		\widehat \pi_{0 }^{\mathfrak{g} }\left(w_{1} ,\cdot\right) =\widehat \pi_{0}^{1}\left(w,\cdot\right)\widehat \pi_{1}^{2}\left(w,\cdot\right)\widehat \pi_{2}^{\fg}\left(w,\cdot\right).
	\end{align} 	
Clearly,
\begin{align}\label{eqPQP}
&\widehat \pi_{0}^{1}\(w,Y_{0}^{\fg}\)=\widehat \pi_{0}^{1}\(w,P_{0}^{1}Y_{0}^{\fg}\),&	
\widehat \pi_{1}^{2}\(w,Y_{0}^{\fg}\)=\widehat \pi_{1}^{2}\(w,P_{0}^{2}Y_{0}^{\fg}\). 
\end{align} 
	
By \eqref{eqPQ123} and \eqref{eqPQP}, we have 
	\begin{multline}\label{eqPQ3}
	\widehat	\pi_{0 }^{\mathfrak{g} }  \(w_{1}, t\ul\mu_{2}^{\fg}(w)+P_{0}^{2}\(Y_{1}+\sqrt{t}Y_{2}\)+\sqrt{t}Y_{3}\) \\ =	\widehat \pi _{0}^{1}\(w,P^{1}_{0}Y_{1}\) \widehat	\pi _{1}^{2}\(w,P_{0}^{2}\(Y_{1}+\sqrt{t}Y_{2}\)\)  \widehat \pi _{2}^{\fg}\(w,t\ul\mu_{2}^{\fg}(w)+P_{0}^{2}\(Y_{1}+\sqrt{t}Y_{2}\)+\sqrt{t}Y_{3}\). 
	\end{multline}
By our definition of $\widetilde C_{\e,t}\left(w\right)$ and by \eqref{eqeqCC}, we have 
	\begin{align}
t\ul\mu_{2}^{\fg}(w)+P_{0}^{2}\(Y_{1}+\sqrt{t}Y_{2}\)+\sqrt{t}Y_{3}\in 
C_{0}^{\fg}. 		
	\end{align} 
  Therefore, 
  	\begin{align}\label{eqPQ3111}
&P_{0}^{1}Y_{1}\in C_{0}^{1}\left(w\right), 
&P_{0}^{2}\(Y_{1}+\sqrt{t}Y_{2}\)\in 
C^2_{0}\left(w\right).
\end{align} 
By estimates which are similar to \eqref{eqTdexp}, and by \eqref{eqPQ3}-\eqref{eqPQ3111}, we get \eqref{eqconvd1}. 
		
Observe that if $Y_{0}^{\fg}=Y_{1}+Y_{2}+Y_{3}$ is in the interior of 
$ C_{\infty}(w)$, if  $f\in 
A_{1}^{2}(w)\cup 
B_{1}^{2}(w)$ and $f'\in A_{2}(w)\cup B_{2}(w)$, then 
	\begin{align}
 &\left\<f,P_{0}^{2}Y_{2}\right\>>0, &		\left\langle 
 f',\ul\mu_{2}^{\fg}(w)\right\rangle >0. 
	\end{align} 
Moreover,  as $t\to 
\infty$, we have 
	\begin{align}
		\begin{aligned}
		\left\<f,P_{0}^{2}\(\sqrt{t}Y_{2}+Y_{3
		}\)\right\>&\sim \sqrt{t} 
		\left\<f,P_{0}^{2}Y_{2}\right\rangle,\\
		\left\<f',t\ul\mu_{2}^{\fg}(w)+P_{0}^{2}\(Y_{1}+\sqrt{t}Y_{2}\)+\sqrt{t}Y_{3}\right\>&\sim t\left\<f^{\prime } ,\ul\mu_{2}^{\fg}(w)\right\rangle. 
		\end{aligned}
	\end{align} 
By \eqref{eqPQ3} and by the above   observations, and by $P_{0}^{2}Y_{2}=P_{1}^{2}Y_{2}$,  we get \eqref{eqcond2}. 
\end{proof}

\begin{proof}[The proof of Theorem \ref{thmIt}.]
	By \eqref{eqPY23p}, and by our definition of $\widetilde C_{\e, 
	t}(w)$, if 
	$Y_{0}^{\fg}=Y_{1}+Y_{2}+Y_{3}\in \widetilde C_{\e, t}(w)$, we have   
\begin{multline}\label{eqGaul}
		\exp\(-\frac{1}{2t}\left|P_{0}^{2}Y_{1}\right|^{2}	
-\frac{1}{2}\left|P_{1}^{2}Y_{2}\right|^{2}	- \frac{1}{\sqrt{t}}
\left\<P_{1}^{2}Y_{1},P_{1}^{2}Y_{2}\right\>
-\frac{1}{2}\left|Y_{3}\right|^{2}+\left\<P_{0}^{1}Y_{1},\ul\mu_{0}^{1}(w)\right\>\)\\
\l \exp\(\left\<P_{0}^{1}Y_{1},\ul\mu_{0}^{1}(w)\right\>	
-\frac{1}{2}\left|P_{1}^{2}Y_{2}\right|^{2}
-\frac{1}{2}\left|Y_{3}\right|^{2}\).\end{multline} 

Since the function $	\(1+\left|Y_{0}^{\fg}\right|\)^{|R_{+}(\fg)|}
\exp\(\left\<P_{0}^{1}Y_{1},\ul\mu_{0}^{1}\left(w\right)\right\>	-\frac{1}{2}\left|P_{0}^{2}Y_{2}\right|^{2}-\frac{1}{2}\left|Y_{3}\right|^{2}\)
$ is integrable on $C_{\infty}(w)$, by  Proposition \ref{propPQ}, by 
	\eqref{eqGaul},  and by the 
dominated convergence theorem, as $t\to \infty$,  we get 
\begin{multline}\label{eqdc11}
	t^{-\dim_{\mathbf C} \fu_{1}^{2}(w)/2-\dim_{\mathbf C}  
	\fu_{2}^{\fg}(w)}\int_{\widetilde{C}_{\e,t}(w)} 
		\widehat \pi_{0}^{\fg}\(w_{1}, t\ul\mu_{2}^{\fg}(w)+P_{0}^{2}\(Y_{1}+\sqrt{t}Y_{2}\)+\sqrt{t}Y_{3}\)\\
		\exp\(-\frac{1}{2t}\left|P_{0}^{2}Y_{1}\right|^{2}	
-\frac{1}{2}\left|P_{1}^{2}Y_{2}\right|^{2}	- \frac{1}{\sqrt{t}}
\left\<P_{1}^{2}Y_{1},P_{1}^{2}Y_{2}\right\>
-\frac{1}{2}\left|Y_{3}\right|^{2}+\left\<P_{0}^{1}Y_{1},\ul\mu_{0}^{1}(w)\right\>\)\frac{dY_{0}^{\fg}}{(2\pi)^{r_{0}^{\fg}/2}}\\
		\to \pi _{2}^{\fg}\(w,\ul\mu_{2}^{\fg}(w)\)\int_{C_{\infty}(w) }	
		\widehat {\pi}_{0}^{1}\(w, P_{0}^{1}Y_{1}\)	{\pi}_{1}^{2}\(w,P_{1}^{2} 
	Y_{2}\) 
    \\
   \times \exp\(\left\<P_{0}^{1}Y_{1},\ul\mu_{0}^{1}(w)\right\>	
-\frac{1}{2}\left|P_{1}^{2}Y_{2}\right|^{2}
-\frac{1}{2}\left|Y_{3}\right|^{2}\)\frac{dY_{0}^{\fg}}{(2\pi)^{r_{0}^{\fg}/2}}. 
\end{multline} 
Since the isomorphism \eqref{eqpyFG} sends $C_{\infty}$ to 
$C_{0}^{1}\times C_{1}^{2}\times \ft_{2}^{\fg}$ , by 
Proposition \ref{propintVVp},  the last two lines of \eqref{eqdc11} is 
equal to $\alpha\(w,\mu_{0}^{\fg}\)$ given in \eqref{eqrAc}. 
	
By \eqref{eqLocal}, \eqref{eqdc12}, \eqref{eqdc11}, and the above observation, we get \eqref{eqItasy}. The proof of  Theorem \ref{thmIt} is completed. 
\end{proof} 

\subsection{The case $ G$ is equal rank, $ w= 1$,  and $ \Delta_{0}^{1} = \Delta_{0}^{2} = \varnothing $}\label{ssLast}
Now we assume that $ G$ is equal rank, $ w= 1$ and $ \Delta_{0}^{1} = \Delta_{0}^{2} = \varnothing $.
Since $ w= 1$, the notation like $ J_{t} \left(w,\cdot\right), \pi _{0 }^{\mathfrak{g} } \left(w_{1}, \cdot\right),\widehat{\pi}  _{0 }^{\mathfrak{g} } \left(w_{1}, \cdot\right),S_{w} $ will be denoted by $ J\left(\cdot\right), \pi _{0 }^{\mathfrak{g} } \left( \cdot\right),\widehat{\pi} _{0 }^{\mathfrak{g} } \left( \cdot\right),S $.

Then, 
\begin{align}\label{eq:ewxn}
 \underline{\mu }_{0 }^{\mathfrak{g} }= \underline{\mu }_{2}^{\mathfrak{g} }  \in {\rm Int}\left(C_{0 }^{\mathfrak{g} } \right).
\end{align}
Moreover, 
\begin{align}\label{eq:hniv}
		\min_{C_{0}^{\fg}} S= 
		S\(\ul\mu_{0 }^{\fg}\)=-\frac{1}{2}\left|\ul\mu_{0 }^{\fg}\right|^{2}.
\end{align}

For $ \epsilon > 0 $ small enough, we have 
\begin{align}\label{eq:telf}
	B\(\ul\mu_{2}^{\fg},\e\) \subset  C_{0}^{\fg}.
\end{align}

Put 
\begin{align}\begin{aligned}\label{eq:dddj}
	{K}_{ t}\left(\mu_{0}^{\mathfrak{g} } \right) &=\(\frac{t}{2\pi}\)^{r_{0}^{\fg}/2}\int_{ \mathfrak{t}_{0}^{\mathfrak{g} } }	
  \pi_{0}^{\fg}\( tY_{0}^{\fg}\)\exp\(-tS\(Y_{0}^{\fg}\)\)dY_{0}^{\fg},\\
 {K}_{\epsilon, t}\left(\mu_{0}^{\mathfrak{g} } \right) &=\(\frac{t}{2\pi}\)^{r_{0}^{\fg}/2}\int_{ B\(\ul\mu_{2}^{\fg}(w),\e\)} \pi_{0}^{\fg}\(tY_{0}^{\fg}\)\exp\(-tS\(Y_{0}^{\fg}\)\)dY_{0}^{\fg}.
\end{aligned}\end{align}

\begin{prop}\label{prop:kulj}
	Given $\e>0$, there exist $\eta>0,C>0$ such that for $t\g1$, 
	\begin{align}\begin{aligned}\label{eq:mdnp}
	 \left| J_{\epsilon, t}\left(\mu_{0}^{\mathfrak{g} } \right) - K_{\epsilon, t} \left(\mu_{0}^{\mathfrak{g} } \right) 		\right| &\l  C  \exp\(-tS\(\ul\mu_{0 }^{\fg}\)-t\eta\),	\\
	 \left| K_{\epsilon, t}\left(\mu_{0}^{\mathfrak{g} } \right)  - K_{ t} \left(\mu_{0}^{\mathfrak{g} } \right) 		\right| &\l  C  \exp\(-tS\(\ul\mu_{0 }^{\fg}\)-t\eta\).
	\end{aligned}\end{align}
\end{prop}
\begin{proof}
 The proof of the second equation of \eqref{eq:mdnp} is the same as the one given in \eqref{propLocal}.
 
 Let us show the first one. 
 There exists $ C> 0 $ such that for $ Y_{0 }^{\mathfrak{g} }\in {\rm Int}\left(C_{0 }^{\mathfrak{g} }\right) $, we have 
 \begin{align}\label{eq:2anr}
0 \l	\frac{1}{\prod_{f\in R_{+} \left(\mathfrak{g} \right)} \left(1-e^{-\left\langle Y_{0 }^{\mathfrak{g} } , f \right\rangle } \right)} -1 \l C \sum_{\alpha\in \Delta_{0}^{\mathfrak{g} } }^{} \frac{e^{-\left\langle Y_{0 }^{\mathfrak{g} } , \alpha \right\rangle } }{\prod_{f\in R_{+} \left(\mathfrak{g} \right)} \left(1-e^{-\left\langle Y_{0 }^{\mathfrak{g} } , f \right\rangle } \right)}.
 \end{align}
Multiplying \eqref{eq:2anr} by $ \pi_{0}^{\fg}\(Y_{0}^{\fg}\) $, we get
\begin{align}\label{eq:nzod}
 0 \l \widehat{\pi }_{0}^{\mathfrak{g} } \left(Y_{0 }^{\mathfrak{g} }\right) - \pi_{0}^{\fg}\(Y_{0}^{\fg}\) \l C \widehat{\pi }_{0}^{\mathfrak{g} } \left(Y_{0 }^{\mathfrak{g} }\right) \sum_{\alpha\in \Delta_{0}^{\mathfrak{g} } }^{} e^{-\left\langle Y_{0 }^{\mathfrak{g} } , \alpha \right\rangle }.
\end{align}

By \eqref{eq:nzod}, we have 
\begin{align}\label{eq:bqcr}
 0 \l J_{\epsilon, t}\left(\mu_{0}^{\mathfrak{g} } \right) - K_{\epsilon, t} \left(\mu_{0}^{\mathfrak{g} } \right) \l  C \sum_{\lambda\in \Delta_{0}^{\mathfrak{g} } }^{} J_{\epsilon, t} \left(\mu_{0}^{\mathfrak{g} } -\alpha \right).
\end{align}
By Proposition \ref{propv22delta}, using our asymptotic formula (\ref{eqItasy}) on $J_{\epsilon, t} \left(\mu_{0}^{\mathfrak{g} } -\alpha \right)$ and \eqref{eq:bqcr}, we get the first estimate in \eqref{eq:mdnp}.
\end{proof}

\begin{prop}\label{prop:d3bs}
 For $ t> 0 $, we have 
 \begin{align}\label{eq:rlqk}
	K_{ t} \left(\mu_{0}^{\mathfrak{g} } \right)= \pi_{0}^{\mathfrak{g} } \left(t \ul \mu_{0}^{\mathfrak{g} } \right)\exp\left({\frac{t}{2} \left| \ul \mu_{0}^{\mathfrak{g} }  \right|^{2} }\right).
 \end{align}
\end{prop}
\begin{proof}
	After a rescaling on the variable $Y_{0}^{\fg}$, we have an analogue of \eqref{eqItvr}, 
	\begin{align}\label{eqItvr1}
	K_{t}\(\mu_{0}^{\fg}\)=\int_{ \mathfrak{t}_{0 }^{\mathfrak{g} } }	
 \pi_{0}^{\fg}\(Y_{0}^{\fg}\)\exp\(\left\langle \ul \mu_{0}^{\fg},
	Y_{0}^{\fg}\right\rangle -\frac{\left|Y_{0}^{\fg}\right|^{2}}{2t}\) \frac{d Y_{0}^{\fg}}{(2\pi t)^{ 
	r_{0}^{\fg}/2}}. 
	\end{align} 
  If $ \Delta^{\mathfrak{t}_{0}^{\mathfrak{g} } } $ is the Laplacian on the Euclidean space $ \mathfrak{t}_{0} $, using the heat equation, we have 
	\begin{align}\label{eq:eowo}
		K_{t}\(\mu_{0}^{\ft}\)=e^{\frac{t}{2} \left|\ul \mu_{0}^{\mathfrak{g} }  \right|^{2} } \left(e^{t\Delta^{\mathfrak{t}_{0}^{\mathfrak{g} }  }} \pi_{0}^{\fg}\right)\left(t\ul\mu_{0}^{\mathfrak{g} } \right).
	\end{align}
	Since $ \pi_{0}^{\fg}$ is a harmonic polynomial \cite[(7.5.22)]{B09}, we have 
	\begin{align}\label{eq:rfah}
		e^{t\Delta^{\mathfrak{t}_{0}^{\mathfrak{g} }  }} \pi_{0}^{\fg}= \pi_{0}^{\fg}.
	\end{align}
	By \eqref{eq:eowo} and \eqref{eq:rfah}, we get our proposition. 
\end{proof}

\begin{proof}[Proof of Theorem \ref{thm:hjmy}]
 Theorem \ref{thm:hjmy} is a consequence of Propositions \ref{propLocal}, \ref{prop:kulj}, and \ref{prop:d3bs}.
\end{proof}

\subsection{Proof of Theorems \ref{thm32} and \ref{thm:ldme}}\label{sPthm32}
%
By \eqref{eqIJwmu} and \eqref{eqItasy}, as $ t \to \infty$, we have 
\begin{align}\label{eqIaas}
I_{t}^{\fg}\(\mu, w\) \sim \e_{w_{2}}\alpha\(w,\mu_{0}^{\fg}\) 
t^{\beta\(w,\mu_{0}^{\fg}\)}\exp\(\frac{t}{2}\left|\ul\mu_{2}(w)\right|^{2} \).
\end{align} 

Applying \eqref{eqIaas} to the case where
\begin{align}\label{eq:vygl}
 \mu  = \lambda^{E} + \varrho^{\mathfrak{k} }, 
\end{align}
we get Theorem \ref{thm32} with 
\begin{align}\label{eq:eri2}
 &\alpha _{w} = \alpha\(w,\mu_{0}^{\fg}\), & \beta_{w} = \beta\(w,\mu_{0}^{\fg}\),&& \gamma_{w} = \frac{1}{2}\left|\ul\mu_{2}(w)\right|^{2}.
\end{align}

Using \eqref{eq:3jnt} instead, by a similar argument, we get Theorem \ref{thm:ldme}.
 
\subsection{Proof of Theorem \ref{thm33}}\label{sPthm33}	
In this section, we assume that 
\begin{align}\label{eq:oanz}
& \mu\in {\rm Int}\left(C_{+} \left(\mathfrak{k} \right)\right),& \mu +\varrho^{\mathfrak{k} } \in C_{+} \left(\mathfrak{g} \right).
\end{align}
Recall that $ \ul\mu(w)$ and $ \underline{\mu } $ are defined in Definition \ref{defmuw1}.
	
\begin{prop}\label{propwpp12}
	If $w=w_{1}w_{2}\in W(\fg)$ satisfying \eqref{eqwww12},  we have 
	\begin{align}\label{eqlw12}
	&\mu+\varrho^{\fk}-w_{1}\(\mu+\varrho^{\fk}\)\in 
	\check C_{0}^{\fg},
	&w_{1}\mu-w\mu\in \check C_{0}^{\fg}. 
\end{align} 
	In particular, 
	\begin{align}\label{eqlw13}
	\ul\mu-\ul\mu(w)\in 
	\check C_{0}^{\fg}. 
\end{align} 
	\end{prop}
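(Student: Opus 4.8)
The plan is to prove the two inclusions in \eqref{eqlw12} separately, since \eqref{eqlw13} then follows by adding them: indeed
\begin{align*}
\ul\mu-\ul\mu(w)=\bigl(\mu+\varrho^{\fk}-\varrho^{\fg}\bigr)-\bigl(w\mu+w_{1}\varrho^{\fk}-\varrho^{\fg}\bigr)
=\bigl(\mu+\varrho^{\fk}-w_{1}(\mu+\varrho^{\fk})\bigr)+\bigl(w_{1}\mu-w\mu\bigr),
\end{align*}
and $\check C_{0}^{\fg}$ is a convex cone, hence closed under sums. So the real content is the two assertions
$w_{1}(\mu+\varrho^{\fk})\lc \mu+\varrho^{\fk}$ and $w\mu\lc w_{1}\mu$ in the ordering defined by $\check C_{0}^{\fg}$.

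For the first inclusion I would exploit that $w_{1}\in W(\fg,\fk)$. Recall $\mu+\varrho^{\fk}$ lies in $\ol{C_{+}(\fk)}$ (since $\mu=\lambda^{E}+\varrho^{\fk}$ when we apply this, but more relevantly $\mu$ is assumed in $C_{+}(\fg)\subset C_{+}(\fk)$ in the section where this is used, so $\mu+\varrho^{\fk}\in C_{+}(\fk)$). The element $u:=\mu+\varrho^{\fk}$ is thus $W(\fk)$-dominant; I want $u-w_{1}u$ to be a nonnegative combination of $\Delta_{0}^{\fg}$. The standard way: write $w_{1}$ as a reduced word in simple reflections $s_{\beta}$, $\beta\in\Delta_{0}^{\fg}$, and use the telescoping identity $u-w_{1}u=\sum (v_{j-1}u-s_{\beta_{j}}v_{j-1}u)$ where $v_{j}$ are the partial products; each term is $\frac{2\<\beta_{j},v_{j-1}u\>}{\<\beta_{j},\beta_{j}\>}\beta_{j}$, so it suffices that $\<\beta_{j},v_{j-1}u\>\ge0$ at each stage, which is the usual length-induction argument. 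The precise bookkeeping for a possibly non-reduced root system $R(\fg)$ uses \cite[Lemma 2.91]{KnappLie} to pass to the reduced subsystem, exactly as elsewhere in Section \ref{SRoot}. The point specific to $W(\fg,\fk)$ is that one needs $w_{1}$ to move $u$ ``downward'' relative to $C_{+}(\fg)$, not merely $C_{+}(\fk)$; I expect to deduce this from $w_{1}^{-1}C_{+}(\fg)\subset C_{+}(\fk)$ together with the compatibility \eqref{eqRkR+}, perhaps by instead establishing directly that $w_{1}u\in C_{+}(\fk)$ is the $W(\fk)$-dominant representative and then comparing the two dominant chambers.

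For the second inclusion, $w_{1}\mu-w\mu=w_{1}\mu-w_{1}w_{2}\mu=w_{1}(\mu-w_{2}\mu)$. Since $\mu\in C_{+}(\fk)$ and $w_{2}\in W(\fk)$, the vector $\mu-w_{2}\mu$ is a nonnegative combination of $R_{+}(\fk)$ (same telescoping argument inside $W(\fk)$), hence lies in $\check C^{\fk}_{+}$, the cone dual to $C_{+}(\fk)$. Applying $w_{1}$: by \eqref{eqwR+kR+g} we have $w_{1}R_{+}(\fk)\subset R_{+}(\fg)$, and every root in $R_{+}(\fg)$ is a nonnegative combination of $\Delta_{0}^{\fg}$, so $w_{1}(\mu-w_{2}\mu)$ is a nonnegative combination of $\Delta_{0}^{\fg}$, i.e. lies in $\check C_{0}^{\fg}$. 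This is the cleaner of the two halves. The main obstacle will be the first inclusion: making precise the passage from ``$w_{1}$ sends $C_{+}(\fg)$-data into $C_{+}(\fk)$-data'' to ``$u-w_{1}u$ is $\Delta_{0}^{\fg}$-nonnegative'' without circularity, especially handling the non-reduced case and the fact that $\ft_{0}$ need not be spanned by $R(\fg)$ (so one should work in $\ft_{0}^{\fg}$ and observe $w_{1}$ acts trivially on $\ft_{\fg}$, reducing to that subspace). Once the $\ft_{\fg}$-component is disposed of, the length induction in $W(\fg)$ acting on $\ft_{0}^{\fg}$ with the dual-cone positivity criterion should close the argument.
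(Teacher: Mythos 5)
Your treatment of the second inclusion, $w_{1}\mu-w\mu\in\check C_{0}^{\fg}$, is correct and essentially the paper's argument in primal form: you factor $w_{1}\mu-w\mu=w_{1}(\mu-w_{2}\mu)$, note that $\mu-w_{2}\mu$ is a nonnegative combination of $R_{+}(\fk)$ because $\mu\in C_{+}(\fk)$ and $w_{2}\in W(\fk)$, and push forward with $w_{1}$ via $w_{1}R_{+}(\fk)\subset R_{+}(\fg)$ from \eqref{eqwR+kR+g}. The paper instead pairs $\mu-w_{2}\mu\in\check C_{+}(\fk)$ against $w_{1}^{-1}C_{+}(\fg)\subset C_{+}(\fk)$, but the two are dual formulations of the same observation.

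The first inclusion is where there is a genuine gap, and you flag it yourself without closing it. The length-induction/telescoping argument you sketch for $u-w_{1}u$ being a nonnegative combination of $\Delta_{0}^{\fg}$ requires that at each step $\langle\beta_{j},v_{j-1}u\rangle\geq 0$ with $\beta_{j}\in\Delta_{0}^{\fg}$, and this needs $u$ to be dominant for $R_{+}(\fg)$ --- i.e. $u\in C_{+}(\fg)$ --- not merely $u\in C_{+}(\fk)$. The hypothesis you invoke is also misidentified: in the section where this proposition is applied one does not have $\mu\in C_{+}(\fg)$ (indeed $\mu=\lambda^{E}+\varrho^{\fk}$ lies only in ${\rm Int}\,C_{+}(\fk)$); what holds is $\mu+\varrho^{\fk}=\lambda^{E}+2\varrho^{\fk}\in C_{+}(\fg)$, which is exactly the defining condition \eqref{eqlambdaEinC} on $R_{+}(\fg)$. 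Once you have $\mu+\varrho^{\fk}\in C_{+}(\fg)$, the first inclusion follows immediately from Proposition~\ref{propuvC}: for any $Y_{0}\in C_{+}(\fg)$, inequality \eqref{eqw-} gives $\langle Y_{0},\mu+\varrho^{\fk}\rangle\geq\langle Y_{0},w_{1}(\mu+\varrho^{\fk})\rangle$, which says precisely that $\mu+\varrho^{\fk}-w_{1}(\mu+\varrho^{\fk})\in\check C_{+}(\fg)=\check C_{0}^{\fg}$. Your concerns about the non-reduced root system and the complement $\ft_{\fg}$ are already absorbed in the proof of Proposition~\ref{propuvC}, so there is no need to redo that bookkeeping here. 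In short: replace the (incorrect) assumption $\mu\in C_{+}(\fg)$ by the (correct) $\mu+\varrho^{\fk}\in C_{+}(\fg)$, and cite Proposition~\ref{propuvC} rather than re-running the telescoping; then your outline closes.
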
 
\begin{proof}
Since $\mu+\varrho^{\fk}\in C_{+}(\fg)$, the first relation  in \eqref{eqlw12} is a consequence of \eqref{eqw-}. 
Similarly, we have 
\begin{align}
	\mu-w_{2}\mu\in 
\check C_{+}(\fk). 
\end{align}  
Since $w_{1}^{-1}C_{+}(\fg)\subset C_{+}(\fk)$, then 
$\mu-w_{2}\mu$ is 
nonnegative on $w_{1}^{-1}C_{+}(\fg)$, from which we get the second relation  in  \eqref{eqlw12}. 
Taking the sum of two relations in \eqref{eqlw12}, we get \eqref{eqlw13}.
\end{proof} 

\begin{prop}\label{prop:616}
	If $ \mu\in \mathfrak{t}_{0} $ such that \eqref{eq:oanz} holds, then
\begin{align}\label{eqll1}
\left|\underline{\mu } _{2}\right|\g 
\left|\underline{\mu } _{2}\left(w\right)\right|, 
\end{align} 
where the equality holds if and only if 
\begin{align}\label{eqw12inW20}
	w\in W^{2}_{0}. 
\end{align} 
If one of the above equivalent condition holds, then
\begin{align}\label{eqD02wD}
&	\Delta_{0}^{1} \subset \Delta_{0}^{1}\left(w\right),&\Delta_{0}^{2} = \Delta_{0}^{2}(w). 
\end{align} 
\end{prop} 
\begin{proof}
The relations \eqref{eqll1} and \eqref{eqD02wD} are consequences of \eqref{eqLanC}, \eqref{eqDinD}, and \eqref{eqlw13}.

By \eqref{eqLanC0}, the equality in \eqref{eqll1} holds  if and only if 
%
\begin{align}\label{eqll2}
 		\left\langle \ul\mu_{0}^{\fg}-\ul\mu_{0}^{\fg}(w),\ul\mu_{2}^{\fg}\right\rangle =0. 
	\end{align} 
	By \eqref{eqlw12}, we see that \eqref{eqll2} is  equivalent to 
	\begin{align}\label{eqll3}
	&	\left\langle \mu_{0}^{\fg}+\varrho^{\fk}- 
		w_{1}\(\mu_{0}^{\fg}+\varrho^{\fk}\),\ul\mu_{2}^{\fg}\right\rangle =0,
&				\left\langle 
		\mu_{0}^{\fg}-w_{2}\mu_{0}^{\fg},w_{1}^{-1}\ul\mu_{2}^{\fg}\right\rangle =0. 
	\end{align} 

By \eqref{eqwww}, the first equation in \eqref{eqll3} is equivalent to 
$w_{1}=w'_{1}w''_{1}$ such that 
\begin{align}\label{eqw1pp}
&	w'_{1} 
\ul\mu^{\fg}_{2}=\ul\mu^{\fg}_{2},& 
w_{1}''\(\mu_{0}^{\fg}+\varrho^{\fk}\)=\mu_{0}^{\fg}+\varrho^{\fk}. 
\end{align} 
By Proposition \ref{propCVogan} and by the second equation in \eqref{eqw1pp}, we see that  $w_{1}''$ fixes the vector $\underline{\mu} ^{\fg}_{2}$. 
Combining with the first equation of \eqref{eqw1pp}, we see that $ w_{1} $ fixes $\underline{\mu} ^{\fg}_{2}$.
By Chevalley's Lemma, $w_{1}\in W_{0}^{2}$. 
On the anther hands, it is immediate that if $ w_{1}\in W_{0 }^{2} $, the first equation in \eqref{eqll3} holds.
In summary, we have shown that the first equation in \eqref{eqll3} holds if and only if $ w_{1}\in W_{0 }^{2} $.

Since $ w_{1}^{-1} \underline{\mu }_{2}^{\mathfrak{g} }\in w_{1}^{-1} C_{+} \left(\mathfrak{g} \right) \subset C_{+} \left(\mathfrak{k} \right) $, by Proposition \ref{propuvC}, using the fact that $\mu_{0}^{\fg}$ is in the open Weyl chamber ${\rm Int}\(C_{+}(\fk)\)$, we see that the second equation in (\ref{eqll3}) is equivalent to the property that $w_{2}$ fixes the vector $w_{1}^{-1}\ul\mu^{\fg}_{2}$. 

By the above observations, we see that \eqref{eqll3} is equivalent to 
\begin{align}
w_{1},w_{2}\in W_{0}^{2}. 
\end{align} 
By Proposition \ref{propw12l1}, this is equivalent to $w\in 
W_{0}^{2}$. 
\end{proof} 

\begin{proof}[Proof of Theorem \ref{thm33}]
	Applying Proposition \ref{prop:616} to $ \mu= \lambda^{E} + \varrho^{\mathfrak{k} }  $, we get Theorem \ref{thm33}. 
\end{proof}

\subsection{Proof of Theorem \ref{thm34}}\label{sPthm34}
In this section, we will assume that \eqref{eq:oanz} holds and  $ w\in W_{0}^{2}.$ 
By \eqref{eqD02wD}, $ \mathfrak{t}^{2}_{0 } \left(w\right)= \mathfrak{t}^{2}_{0} $ is independent $ w\in W_{0 }^{2} $, while $ \mathfrak{t}^{1}_{0 } \left(w\right)$ still depends on $ w\in W_{0 }^{2} $.
We have the decomposition 
\begin{align}
\ft_{0}^{2}=\ft_{0}^{1}(w)\oplus \ft_{1}^{2}(w). 
\end{align} 


By \eqref{eqrmu0}, we have 
\begin{align}\label{eqbw1}
\beta\(w,\mu_{0}^{\fg}\)=-r_{0}^{1}(w)+\frac{1}{2}\dim_{\mathbf{C} } \fu_{1}^{2}(w)+\dim_{\mathbf{C} } \fu_{2}. 
\end{align}

\begin{prop}\label{prop:rsfs}
 If $ \mu\in \mathfrak{t}_{0} $ such that \eqref{eq:oanz} holds and if $ w\in W_{0 }^{2} $, then 
 \begin{align}\label{eq:skyg}
	\beta\left(w,\mu_{0}^{\fg}\right)\l \beta\left(1,\mu_{0}^{\fg}\right),
 \end{align}
 where the equality holds if and only if 
 \begin{align}\label{eq:nxix}
	w\in W_{0 }^{1}. 
 \end{align}
\end{prop}
\begin{proof}
By the first equation of \eqref{eqD02wD} and by \eqref{eqbw1}, we have  
\begin{align}
 \beta\left(w,\mu_{0}^{\fg}\right)\l \beta\left(1,\mu_{0}^{\fg}\right),
\end{align} 
where the equality holds if and only if $\Delta_{0}^{1}=\Delta_{0}^{1}(w)$.
This is also equivalent to 
	\begin{align}\label{eqaw123}
 		\left\langle 
		\underline{\mu }_0^{\fg} -\underline{\mu }_0^{\fg} \left(w\right),\omega_{\alpha}\right\rangle =0, \text{ for all }\alpha\in \Delta_{0}^{2}\backslash \Delta_{0}^{1}.
	\end{align} 

By \eqref{eqlw13}, the condition \eqref{eqaw123} is equivalent to 
	\begin{align}\label{eq684}
		\begin{aligned}
		\left\langle 
	\mu_{0}^{\fg}+\varrho^{\fk}-	
	w_{1}\(\mu_{0}^{\fg}+\varrho^{\fk}\),\omega_{\alpha}\right\rangle =0, 
	&\text{ for all }\alpha\in \Delta_{0}^{2}\backslash \Delta_{0}^{1}.
 \\
	\left\langle 
	w_{1}\mu_{0}^{\fg}-	
	w\mu_{0}^{\fg},\omega_{\alpha}\right\rangle =0, &\text{ for all }\alpha\in \Delta_{0}^{2}\backslash \Delta_{0}^{1}.
	\end{aligned}
	\end{align} 
	
Using the first equation of \eqref{eq:lsos} and arguments similar to those given below \eqref{eqw1pp}, we see that the first equation of \eqref{eq684} is equivalent to $w_{1}\in W_{0}^{1}$.

If $w_{1}\in W_{0}^{1}$, the second equation becomes 
	\begin{align}
	\left\langle 
	\mu_{0}^{\fg}-	
	w_{2}\mu_{0}^{\fg},\omega_{\alpha}\right\rangle =0, &\text{ for all }\alpha\in \Delta_{0}^{2}\backslash \Delta_{0}^{1}.
	\end{align} 
	Since $\mu_{0}^{\fg}\in {\rm Int}\(C_{+}(\fk)\)$, $w_{2}$ fixes 
	$\omega_{\alpha}$ with $\alpha\in \Delta_{0}^{2}\backslash 
	\Delta_{0}^{1}$. This means $w_{2}\in W_{0}^{1}$.
	
	By the above observation, we see that 
	\begin{align}\label{eq:clgh}
	 \Delta_{0}^{1}=\Delta_{0}^{1}(w) \iff w_{1}, w_{2}\in W_{0 }^{1},  
	\end{align}
	which is also equivalent to $ w \in W_{0 }^{1}$ by Proposition \ref{propw12l1}.
\end{proof} 

%
%

\begin{proof}[Proof of Theorem \ref{thm34}]
	Applying Proposition \ref{prop:rsfs} to $ \mu= \lambda^{E} + \varrho^{\mathfrak{k} }  $, we get Theorem \ref{thm34}. 
\end{proof}

\subsection{Proof of Theorem \ref{thm35}}\label{sPthm35}
In this section, we will assume that \eqref{eq:oanz} holds and  $ w\in W_{0}^{1}.$ 
Then, $ \Delta_{0}^{1}\left(w\right), \Delta_{0}^{2} \left(w\right) $ are independent of $ w\in W_{0 }^{1} $, so that the decomposition  
\begin{align}
\ft=\ft_{0}^{1}\oplus \ft_{1}^{2}\oplus \ft_{2}
\end{align} 
is also independent of $w\in W_{0 }^{1} $.

By \eqref{eqa101212} and by the above observation, $\alpha_{1}^{2}\(w,\mu_{0}^{\fg}\)$ and $\alpha_{2}\(w,\mu_{0}^{\fg}\)$ do not depend on $w\in 
W_{0}^{1}$, so that 
	\begin{align}\label{eq689}
	&	\alpha_{1}^{2}\(w,\mu_{0}^{\fg}\)=\alpha_{1}^{2}\left(1,\mu_{0}^{\mathfrak{g} } \right), 
		&\alpha_{2}\(w,\mu_{0}^{\fg}\)=\alpha_{2}\left(1,\mu_{0}^{\mathfrak{g} } \right). 
	\end{align} 

Recall that $ \mathfrak{k}^{1}_{s}, \mathfrak{p}^{1}, \mathfrak{m}^{1} $ are defined in \eqref{eqmfks}. 
Let $ \mu_{0 }^{1} $ be the $ \ft_{0 }^{1} $-component of $ \mu$. 

\begin{prop}
If $ \mu\in \mathfrak{t}_{0} $ such that \eqref{eq:oanz} holds and if $w=w_{1}w_{2}\in W^{1}_{0} $ satisfying \eqref{eqwww12}, then the following integral converges so that  
\begin{align}\label{eq690}
	\e_{w_{2}}\alpha_{0}^{1}\(w,\mu_{0}^{\mathfrak{g} } \)=\int_{w^{-1}C_{0}^{1}}\pi^{\fk^{1}_{s} }\(Y_{0}^{1}\)\widehat{A}\(\ad\(Y_{0}^{1}\)_{|\fp^{1}}\)\exp\(\left\<{\mu} _{0}^{1},Y_{0}^{1}\right\>\)\frac{dY_0}{(2 \pi )^{r_{0}^{1}/2}}.
\end{align}
In particular, the following integral is well-defined so that 
	\begin{align}\label{eq691}
	\sum_{w\in W_{0}^{1}}	
	\e_{w_{2}}\alpha_{0}^{1}\(w,\mu_{0}^{\mathfrak{g} } \)=\int_{\ft_{0}^{1}}\pi^{\fk^{1}_s}\(Y_{0}^{1}\)\widehat{A}\(\ad\(Y_{0}^{1}\)_{|\fp^{1}}\)\exp\(\left\<{\mu} _{0}^{1},Y_{0}^{1}\right\>\)\frac{dY_0}{(2 \pi )^{r_{0}^{1}/2}}.
	\end{align} 
\end{prop} 
\begin{proof}
By \eqref{eqk1rho}, Definition \ref{defA0123}, and by the first formula of \eqref{eqa101212}, we have 
\begin{multline}
\alpha_{0}^{1}\left(w,\mu_{0}^{\mathfrak{g} } \right)=\int_{Y_{0}^{1}\in 
		C_{0}^{1}}\prod_{f\in w_{1} 
		R_{+}\(\fk^{1}_{s} \)} \left\<f,Y_{0}^{\fg}\right\>
 	{\prod_{{f\in w_{1} R\(\fp^{1}\) \cap R_{+} \left(\mathfrak{m}^{1} \right)}} {\rm 
	Td}\(\left\langle f,Y_{0}^{\fg}\right\rangle \)}\\
	\times \exp\(\left\langle	Y_{0}^{1},w\mu_{0}^{1}+w_{1}\varrho^{\fk^{1}_{s}}-\varrho^{\mathfrak m^{1}}\right\rangle 
		\)\frac{dY_{0}^{1}}{(2\pi)^{r_{0}^{1}/2}}.		
\end{multline} 
Proceeding as in the proof of \eqref{eqIgmuw1}, we know that the right hand side of \eqref{eq690} is well-defined, so that \eqref{eq690} holds.

Taking of sum of \eqref{eq690} over $ w\in W_{0 }^{1} $, we get \eqref{eq691}.
\end{proof} 

Assume now $\mu=\lambda^{E}+\varrho^{\fk}$.  Recall that $\left(\tau^{E,1}, E^{1}\right)$ is the $K^{1}_{s}$-representation introduced in Definition \ref{deftauE1}. 

\begin{prop}
The following identity holds, 
\begin{align}\label{eq693}
\sum_{w\in 
W_{0 }^{1}}\e_{w_{2}}\alpha_{0}^{1}\left(w,\mu_{0}^{\mathfrak{g} }  \right)=\pi^{\fk^{1}_{s}}\(\varrho^{\fk^{1}_{s}}\)\int_{ \sqrt{-1}{\fk}^{1}_{s}}\frac{\widehat 
		A\(\ad\(Y^{\fk^1_{s}}_{0}\)_{|{\fp}^{1}_{s}}\)}{\widehat 
A\(\ad\(Y^{\fk^{1}_{s}}_{0}\)_{\fk^{1}_{s}}\)}\Tr\[\tau^{E,1}\(e^{-Y^{\fk^1_{s}}_{0}}\)\]\frac{dY_{0}^{\fk^{1}_{s}}}{(2\pi)^{n_{s}^{1}/2}}. 
\end{align}
\end{prop}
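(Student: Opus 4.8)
The plan is to combine the preceding Proposition with a ``reverse'' application of Weyl's integration and character formulas for the compact semisimple group $K^{1}_{s}$; concretely, one replays the derivation of \eqref{eqTrGwi1} in Section~\ref{sWeylint}, but backwards and for the smaller data $\bigl(\fp^{1},\fk^{1}_{s},\tau^{E,1}\bigr)$. First I would apply the preceding Proposition with $\mu=\lambda^{E}+\varrho^{\fk}$; here the vector $\mu_{0}^{1}=P_{0}^{1}\mu$ occurring there equals $\lambda^{E,1}+\varrho^{\fk^{1}_{s}}$, since $\lambda^{E,1}=\lambda^{E}{}_{|\ft_{0}^{1}}=P_{0}^{1}\lambda^{E}$ by the definition of $\tau^{E,1}$ and $P_{0}^{1}\varrho^{\fk}=\varrho^{\fk^{1}_{s}}$ by \eqref{eqk1rho}, while $\pi^{\fk^{1}}=\pi^{\fk^{1}_{s}}$ and $\varrho^{\fk^{1}}=\varrho^{\fk^{1}_{s}}$ on $\ft_{0}^{1}$ because $R\(\fk^{1}\)=R\(\fk^{1}_{s}\)$. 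Thus the left-hand side of the asserted identity equals
\begin{align}\label{eqPlanA}
\int_{\ft_{0}^{1}}\pi^{\fk^{1}_{s}}\(Y_{0}^{1}\)\,\widehat{A}\(\ad\(Y_{0}^{1}\)_{|\fp^{1}}\)\exp\(\left\langle\lambda^{E,1}+\varrho^{\fk^{1}_{s}},Y_{0}^{1}\right\rangle\)\frac{dY_{0}^{1}}{(2\pi)^{r_{0}^{1}/2}},
\end{align}
and it remains to identify \eqref{eqPlanA} with the integral over $\sqrt{-1}\fk^{1}_{s}$ on the right-hand side.

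To that end, note that since $\fm^{1}=\fp^{1}\oplus\fk^{1}_{s}$ is a Cartan decomposition and $\fk^{1}_{s}$ acts on $\fp^{1}$, the function $Y\mapsto\widehat{A}\(\ad\(Y\)_{|\fp^{1}}\)\,\widehat{A}\(\ad\(Y\)_{|\fk^{1}_{s}}\)^{-1}\,\Tr^{E^{1}}\[e^{-Y}\]$ is $\Ad\(K^{1}_{s}\)$-invariant on $\sqrt{-1}\fk^{1}_{s}$. Applying the Weyl integration formula for $K^{1}_{s}$ with maximal torus $T^{1}_{0}$, exactly as in \eqref{eqWeyl}, and then the Weyl character formula for $E^{1}$ in the form \eqref{eqWeyl1} (for $R_{+}\(\fk^{1}_{s}\)$ and highest weight $\lambda^{E,1}$), the right-hand side of the asserted identity becomes
\begin{align}\label{eqPlanB}
\frac{\pi^{\fk^{1}_{s}}\(\varrho^{\fk^{1}_{s}}\)\,\vol\(K^{1}_{s}/T^{1}_{0}\)}{(2\pi)^{n_{s}^{1}/2}\,\left|W\(\fk^{1}_{s}\)\right|}\int_{\ft_{0}^{1}}\widehat{A}\(\ad\(Y_{0}^{1}\)_{|\fp^{1}}\)\sum_{v\in W\(\fk^{1}_{s}\)}\pi^{\fk^{1}_{s}}\(-vY_{0}^{1}\)\exp\(-\left\langle\lambda^{E,1}+\varrho^{\fk^{1}_{s}},vY_{0}^{1}\right\rangle\)dY_{0}^{1}.
\end{align}
In each summand of \eqref{eqPlanB} I would substitute $Y_{0}^{1}=-v^{-1}Z$; since $\widehat{A}$ is even and $Y\mapsto\widehat{A}\(\ad\(Y\)_{|\fp^{1}}\)$ is $W\(\fk^{1}_{s}\)$-invariant on $\ft_{0}^{1}$ (by the argument of \eqref{eqApw}), each of the $\left|W\(\fk^{1}_{s}\)\right|$ terms contributes the same integral $\int_{\ft_{0}^{1}}\pi^{\fk^{1}_{s}}\(Z\)\,\widehat{A}\(\ad\(Z\)_{|\fp^{1}}\)\exp\(\left\langle\lambda^{E,1}+\varrho^{\fk^{1}_{s}},Z\right\rangle\)dZ$.

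Substituting this into \eqref{eqPlanB} and using \eqref{eqvolKT} for $K^{1}_{s}$ --- that is, $\vol\(K^{1}_{s}/T^{1}_{0}\)\,\pi^{\fk^{1}_{s}}\(\varrho^{\fk^{1}_{s}}\)=(2\pi)^{\left|R_{+}\(\fk^{1}_{s}\)\right|}$, together with $\left|R_{+}\(\fk^{1}_{s}\)\right|=(n_{s}^{1}-r_{0}^{1})/2$ --- all the prefactors collapse to $(2\pi)^{-r_{0}^{1}/2}$, so that the right-hand side of the asserted identity equals \eqref{eqPlanA}; this completes the proof. There is no new idea here: the computation is the derivation of \eqref{eqTrGwi1} run backwards for the triple $\bigl(\fp^{1},\fk^{1}_{s},\tau^{E,1}\bigr)$, and the only real work is the bookkeeping of the normalisation constants (powers of $2\pi$, the factor $\vol\(K^{1}_{s}/T^{1}_{0}\)$, and $\left|W\(\fk^{1}_{s}\)\right|$) together with the identifications $\pi^{\fk^{1}}=\pi^{\fk^{1}_{s}}$, $\varrho^{\fk^{1}}=\varrho^{\fk^{1}_{s}}$, and the fact that $\fp^{1}$ is the $\fp$-part of both $\fl^{1}$ and $\fm^{1}$.
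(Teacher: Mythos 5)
Your proposal is correct and follows essentially the same route as the paper: you combine the preceding Proposition (applied with $\mu=\lambda^{E}+\varrho^{\fk}$, using $\mu_{0}^{1}=\lambda^{E,1}+\varrho^{\fk^{1}_{s}}$ and $\pi^{\fk^{1}}=\pi^{\fk^{1}_{s}}$) with the Weyl integration and character formulas for $(K^{1}_{s},T^{1}_{0},E^{1})$, which is exactly the derivation of \eqref{eqTrGwi1} run for the triple $(\fm^{1},\fk^{1}_{s},\tau^{E,1})$, matching the paper's one-line proof sketch. One remark: you read the left-hand side as a sum over $W_{0}^{1}$ (the index set appearing in the preceding Proposition and in Theorem \ref{thm35}), whereas the statement as printed sums over $W(\fk^{1})$; since $W(\fk^{1})\subsetneq W_{0}^{1}$ in general and only the $W_{0}^{1}$-sum gives the full integral over $\ft_{0}^{1}$, your reading is the consistent one, and the $W(\fk^{1})$ in the statement appears to be a misprint for $W_{0}^{1}$.
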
 
\begin{proof}
Proceeding as the proof of \eqref{eqTrGwi1}, using \eqref{eq691}, we get our proposition.
\end{proof} 

\begin{proof}[Proof of Theorem \ref{thm35}]
 Applying \eqref{eq689} to the case $\mu=\lambda^{E}+\varrho^{\fk}$, by \eqref{eq313} and \eqref{eqa101212}, we have 
 \begin{align}\label{eq:g2md}
	&\alpha_{1}^{2}\left(1,\mu_{0}^{\mathfrak{g} } \right)= \alpha_{1}^{2}, &\alpha_{2}\left(1,\mu_{0}^{\mathfrak{g} } \right)= \alpha_{2}.
 \end{align}
 Moreover, the right hand side of \eqref{eq693} coincides with $ \alpha_{0}^{1} $.
 Theorem \ref{thm35} follows from the above observations.
\end{proof}

\begin{bibdiv}
\begin{biblist}



\bib{AtiyahL2}{incollection}{
      author={Atiyah, Michael},
       title={Elliptic operators, discrete groups and von {N}eumann algebras},
        date={1976},
   booktitle={Colloque ``{A}nalyse et {T}opologie'' en l'{H}onneur de {H}enri
  {C}artan ({O}rsay, 1974)},
   publisher={Soc. Math. France, Paris},
       pages={43\ndash 72. Ast\'erisque, No. 32\ndash 33},
}

\bib{AS77}{article}{
    AUTHOR = {Michael Atiyah and Wilfried Schmid},
     TITLE = {A geometric construction of the discrete series for semisimple
              {L}ie groups},
   JOURNAL = {Invent. Math.},
  FJOURNAL = {Inventiones Mathematicae},
    VOLUME = {42},
      YEAR = {1977},
     PAGES = {1--62},
      ISSN = {0020-9910,1432-1297},
   MRCLASS = {22E45},
  MRNUMBER = {463358},
MRREVIEWER = {P.\ C.\ Trombi},
       DOI = {10.1007/BF01389783},
       URL = {https://doi.org/10.1007/BF01389783},
}

\bib{BCH}{article}{
    AUTHOR = {Paul Baum{,} Alain Connes{,} and Nigel Higson},
     TITLE = {Classifying space for proper actions and {$K$}-theory of group
              {$C^\ast$}-algebras},
 BOOKTITLE = {{$C^\ast$}-algebras: 1943--1993 ({S}an {A}ntonio, {TX}, 1993)},
    SERIES = {Contemp. Math.},
    VOLUME = {167},
     PAGES = {240--291},
 PUBLISHER = {Amer. Math. Soc., Providence, RI},
      YEAR = {1994},
      ISBN = {0-8218-5175-6},
   MRCLASS = {46L80 (19K56 22D25 46L85)},
  MRNUMBER = {1292018},
MRREVIEWER = {Ronghui\ Ji},
       DOI = {10.1090/conm/167/1292018},
       URL = {https://doi.org/10.1090/conm/167/1292018},
}
 
\bib{BGV}{book}{
    AUTHOR = {Nicole Berline{,} Ezra Getzler{,} and Michèle Vergne},
     TITLE = {Heat kernels and {D}irac operators},
    SERIES = {Grundlehren Text Editions},
      NOTE = {Corrected reprint of the 1992 original},
 PUBLISHER = {Springer-Verlag, Berlin},
      YEAR = {2004},
     PAGES = {x+363},
      ISBN = {3-540-20062-2},
   MRCLASS = {58J35 (35K05 35Q40 47G30 58J60)},
  MRNUMBER = {2273508},
}

\bib{B05}{article}{
   author={Bismut, Jean-Michel},
   title={The hypoelliptic Laplacian on the cotangent bundle},
   journal={J. Amer. Math. Soc.},
   volume={18},
   date={2005},
   number={2},
   pages={379--476},
   issn={0894-0347},
   doi={10.1090/S0894-0347-05-00479-0},
}

\bib{B08a}{article}{
   author={Bismut, Jean-Michel},
   title={The hypoelliptic Dirac operator},
   conference={
      title={Geometry and dynamics of groups and spaces},
   },
   book={
      series={Progr. Math.},
      volume={265},
      publisher={Birkh\"auser, Basel},
   },
   isbn={978-3-7643-8607-8},
   date={2008},
   pages={113--246},
   doi={10.1007/978-3-7643-8608-5\_3},
}

\bib{B08}{article}{
   author={Bismut, Jean-Michel},
   title={The hypoelliptic Laplacian on a compact Lie group},
   journal={J. Funct. Anal.},
   volume={255},
   date={2008},
   number={9},
   pages={2190--2232},
   issn={0022-1236},
   doi={10.1016/j.jfa.2008.07.017},
}

\bib{B09}{article}{
    AUTHOR = {Bismut, Jean-Michel},
     TITLE = {Hypoelliptic {L}aplacian and orbital integrals},
    SERIES = {Annals of Mathematics Studies},
    VOLUME = {177},
 PUBLISHER = {Princeton University Press, Princeton, NJ},
      YEAR = {2011},
     PAGES = {xii+330},
      ISBN = {978-0-691-15130-4},
   MRCLASS = {58J35 (35H10 58J20 58J65)},
  MRNUMBER = {2828080},
MRREVIEWER = {Maria\ Gordina},
       DOI = {10.1515/9781400840571},
       URL = {https://doi.org/10.1515/9781400840571},
}

\bib{B19}{article}{
   author={Bismut, Jean-Michel},
   title={Eta invariants and the hypoelliptic Laplacian},
   journal={J. Eur. Math. Soc. (JEMS)},
   volume={21},
   date={2019},
   number={8},
   pages={2355--2515},
   issn={1435-9855},
   doi={10.4171/jems/887},
}

\bib{BL08}{book}{
   author={Bismut, Jean-Michel},
   author={Lebeau, Gilles},
   title={The hypoelliptic Laplacian and Ray-Singer metrics},
   series={Annals of Mathematics Studies},
   volume={167},
   publisher={Princeton University Press, Princeton, NJ},
   date={2008},
   pages={x+367},
   isbn={978-0-691-13732-2},
   doi={10.1515/9781400829064},
}

\bib{BMZ}{article}{
      author={Bismut, Jean-Michel},
      author={Ma, Xiaonan},
      author={Zhang, Weiping},
       title={Asymptotic torsion and {T}oeplitz operators},
        date={2017},
        ISSN={1474-7480},
     journal={J. Inst. Math. Jussieu},
      volume={16},
      number={2},
       pages={223\ndash 349},
         url={http://dx.doi.org/10.1017/S1474748015000171},
}

\bib{BSh22}{article}{
   author={Bismut, Jean-Michel},
   author={Shen, Shu},
   title={Geometric orbital integrals and the center of the enveloping
   algebra},
   journal={Compos. Math.},
   volume={158},
   date={2022},
   number={6},
   pages={1189--1253},
   issn={0010-437X},
   doi={10.1112/s0010437x22007412},
}

\bib{Borel-Wallach}{book}{
   author={Borel, Armand},
   author={Wallach, Nolan R.},
   title={Continuous cohomology, discrete subgroups, and representations of
   reductive groups},
   series={Annals of Mathematics Studies},
   volume={No. 94},
   publisher={Princeton University Press, Princeton, NJ; University of Tokyo
   Press, Tokyo},
   date={1980},
   pages={xvii+388},
   isbn={0-691-08248-0},
   review={\MR{0554917}},
}

\bib{BrockerDieck}{book}{
    AUTHOR = {Theodor Br\"ocker and Tammo tom Dieck},
     TITLE = {Representations of compact {L}ie groups},
    SERIES = {Graduate Texts in Mathematics},
    VOLUME = {98},
      NOTE = {Translated from the German manuscript,
              Corrected reprint of the 1985 translation},
 PUBLISHER = {Springer-Verlag, New York},
      YEAR = {1995},
     PAGES = {x+313},
      ISBN = {0-387-13678-9},
   MRCLASS = {22E45 (22-01 57-01)},
  MRNUMBER = {1410059},
}

\bib{Carmona83}{article}{
    AUTHOR = {Carmona, Jacques},
     TITLE = {Sur la classification des modules admissibles irreductibles},
 BOOKTITLE = {Noncommutative harmonic analysis and {L}ie groups
              ({M}arseille, 1982)},
    SERIES = {Lecture Notes in Math.},
    VOLUME = {1020},
     PAGES = {11--34},
 PUBLISHER = {Springer, Berlin},
      YEAR = {1983},
      ISBN = {3-540-12717-8},
   MRCLASS = {22E46},
  MRNUMBER = {733459},
MRREVIEWER = {D.\ Mili\v ci\'c},
       DOI = {10.1007/BFb0071495},
       URL = {https://doi.org/10.1007/BFb0071495},
}

\bib{CHST}{article}{
    AUTHOR = {Pierre Clare{,} Nigel Higson{,} Yanli Song{,} and Xiang Tang},
     TITLE = {On the {C}onnes-{K}asparov isomorphism, {I}},
   JOURNAL = {Jpn. J. Math.},
  FJOURNAL = {Japanese Journal of Mathematics},
    VOLUME = {19},
      YEAR = {2024},
    NUMBER = {1},
     PAGES = {67--109},
      ISSN = {0289-2316,1861-3624},
   MRCLASS = {22D25 (19K35 43A85 46L80)},
  MRNUMBER = {4717317},
       DOI = {10.1007/s11537-024-2220-2},
       URL = {https://doi.org/10.1007/s11537-024-2220-2},
}
 
\bib{CHS}{article}{
    AUTHOR = {Pierre Clare{,} Nigel Higson{,} and Yanli Song},
     TITLE = {On the {C}onnes-{K}asparov isomorphism, {II}},
   JOURNAL = {Jpn. J. Math.},
  FJOURNAL = {Japanese Journal of Mathematics},
    VOLUME = {19},
      YEAR = {2024},
    NUMBER = {1},
     PAGES = {111--141},
      ISSN = {0289-2316,1861-3624},
   MRCLASS = {22D25 (22E45 22E47 46L80)},
  MRNUMBER = {4717318},
       DOI = {10.1007/s11537-024-2221-1},
       URL = {https://doi.org/10.1007/s11537-024-2221-1},
}

\bib{Connes82}{article}{
    AUTHOR = {Alain Connes and Henri Moscovici},
     TITLE = {The {$L\sp{2}$}-index theorem for homogeneous spaces of {L}ie
              groups},
   JOURNAL = {Ann. of Math. (2)},
  FJOURNAL = {Annals of Mathematics. Second Series},
    VOLUME = {115},
      YEAR = {1982},
    NUMBER = {2},
     PAGES = {291--330},
      ISSN = {0003-486X},
   MRCLASS = {58G12 (22E27 22E45 22E46 43A25 46L99 55R50)},
  MRNUMBER = {647808},
MRREVIEWER = {Jonathan\ M.\ Rosenberg},
       DOI = {10.2307/1971393},
       URL = {https://doi.org/10.2307/1971393},
}
\bib{Dieudonne68}{book}{
    AUTHOR = {Dieudonn\'e, Jean},
     TITLE = {Calcul infinit\'esimal},
 PUBLISHER = {Hermann, Paris},
      YEAR = {1968},
     PAGES = {479 pp. (loose errata)},
   MRCLASS = {00.00 (26.00)},
  MRNUMBER = {226971},
MRREVIEWER = {T.\ M.\ Apostol},
}

\bib{Efremov-Shubin}{article}{
    AUTHOR = {D. V. Efremov and  Mikhail A. Shubin},
     TITLE = {The spectral asymptotics of elliptic operators of
              {S}chr\"odinger type on a hyperbolic space},
   JOURNAL = {Trudy Sem. Petrovsk.},
  FJOURNAL = {Moskovski\u i\ Universitet. Trudy Seminara imeni I. G.
              Petrovskogo},
    NUMBER = {15},
      YEAR = {1991},
     PAGES = {3--32, 235},
      ISSN = {0321-2971},
   MRCLASS = {58G18 (35P20 47F05 58G25)},
  MRNUMBER = {1294388},
}

\bib{HC65}{article}{
      author={Harish-Chandra},
       title={Discrete series for semisimple {L}ie groups. {I}. {C}onstruction
  of invariant eigendistributions},
        date={1965},
        ISSN={0001-5962,1871-2509},
     journal={Acta Math.},
      volume={113},
       pages={241\ndash 318},
         url={https://doi.org/10.1007/BF02391779},
}

\bib{HC_DSII}{article}{
      author={Harish-Chandra},
       title={Discrete series for semisimple {L}ie groups. {II}. {E}xplicit
  determination of the characters},
        date={1966},
        ISSN={0001-5962},
     journal={Acta Math.},
      volume={116},
       pages={1\ndash 111},
         url={http://dx.doi.org/10.1007/BF02392813},
}

\bib{HC75}{article}{
      author={Harish-Chandra},
       title={Harmonic analysis on real reductive groups. {I}. {T}he theory of
  the constant term},
        date={1975},
     journal={J. Functional Analysis},
      volume={19},
       pages={104\ndash 204},
         url={https://doi.org/10.1016/0022-1236(75)90034-8},
}

\bib{HC76}{article}{
      author={Harish-Chandra},
       title={Harmonic analysis on real reductive groups. {II}. {W}avepackets
  in the {S}chwartz space},
        date={1976},
        ISSN={0020-9910,1432-1297},
     journal={Invent. Math.},
      volume={36},
       pages={1\ndash 55},
         url={https://doi.org/10.1007/BF01390004},
}

\bib{HC1976}{article}{
      author={Harish-Chandra},
       title={Harmonic analysis on real reductive groups. {III}. {T}he
  {M}aass-{S}elberg relations and the {P}lancherel formula},
        date={1976},
        ISSN={0003-486X},
     journal={Ann. of Math. (2)},
      volume={104},
      number={1},
       pages={117\ndash 201},
}

\bib{KnappLie}{book}{
    AUTHOR = {Knapp, Anthony W.},
     TITLE = {Lie groups beyond an introduction},
    SERIES = {Progress in Mathematics},
    VOLUME = {140},
 PUBLISHER = {Birkh\"auser Boston, Inc., Boston, MA},
      YEAR = {1996},
     PAGES = {xvi+604},
      ISBN = {0-8176-3926-8},
   MRCLASS = {22-01 (17-01)},
  MRNUMBER = {1399083},
MRREVIEWER = {H.\ de Vries},
       DOI = {10.1007/978-1-4757-2453-0},
       URL = {https://doi.org/10.1007/978-1-4757-2453-0},
}

\bib{Knappsemi}{book}{
    AUTHOR = {Knapp, Anthony W.},
     TITLE = {Representation theory of semisimple groups},
    SERIES = {Princeton Landmarks in Mathematics},
 PUBLISHER = {Princeton University Press},
   ADDRESS = {Princeton, Oxford},
      YEAR = {1986},
      ISBN = {0-691-09089-0}
}

\bib{KnappCohomology}{book}{
    AUTHOR = {Anthony W. Knapp and David A. Vogan{,} Jr},
     TITLE = {Cohomological induction and unitary representations},
    SERIES = {Princeton Mathematical Series},
    VOLUME = {45},
 PUBLISHER = {Princeton University Press, Princeton, NJ},
      YEAR = {1995},
     PAGES = {xx+948},
      ISBN = {0-691-03756-6},
   MRCLASS = {22E46 (22-02 22E47)},
  MRNUMBER = {1330919},
MRREVIEWER = {William\ M.\ McGovern},
       DOI = {10.1515/9781400883936},
       URL = {https://doi.org/10.1515/9781400883936},
}

\bib{LabesseWaldspruger13}{book}{
    AUTHOR = {Jean-Pierre Labesse and Jean-Loup Waldspurger},
     TITLE = {La formule des traces tordue d'apr\`es le {F}riday {M}orning
              {S}eminar},
    SERIES = {CRM Monograph Series},
    VOLUME = {31},
      NOTE = {With a foreword by Robert Langlands [dual English/French
              text]},
 PUBLISHER = {American Mathematical Society, Providence, RI},
      YEAR = {2013},
     PAGES = {xxvi+234},
      ISBN = {978-0-8218-9441-5},
   MRCLASS = {11F72 (11R56 20G35)},
  MRNUMBER = {3026269},
MRREVIEWER = {Erez\ M.\ Lapid},
       DOI = {10.1090/crmm/031},
       URL = {https://doi.org/10.1090/crmm/031},
}
\bib{Larfforgue}{article}{
    AUTHOR = {Lafforgue, Vincent},
     TITLE = {{$K$}-th\'{e}orie bivariante pour les alg\`ebres de {B}anach et
              conjecture de {B}aum-{C}onnes},
   JOURNAL = {Invent. Math.},
  FJOURNAL = {Inventiones Mathematicae},
    VOLUME = {149},
      YEAR = {2002},
    NUMBER = {1},
     PAGES = {1--95},
      ISSN = {0020-9910},
   MRCLASS = {19K35 (46H25 46L80 46M20 58J22)},
  MRNUMBER = {1914617},
MRREVIEWER = {Georges Skandalis},
       DOI = {10.1007/s002220200213},
       URL = {https://doi.org/10.1007/s002220200213},
}

\bib{Langlands89}{article}{
    AUTHOR = {Langlands, Robert P.},
     TITLE = {On the classification of irreducible representations of real
              algebraic groups},
 BOOKTITLE = {Representation theory and harmonic analysis on semisimple
              {L}ie groups},
    SERIES = {Math. Surveys Monogr.},
    VOLUME = {31},
     PAGES = {101--170},
 PUBLISHER = {Amer. Math. Soc., Providence, RI},
      YEAR = {1989},
      ISBN = {0-8218-1526-1},
   MRCLASS = {22E45 (22E46)},
  MRNUMBER = {1011897},
MRREVIEWER = {D.\ Mili\v ci\'c},
       DOI = {10.1090/surv/031/03},
       URL = {https://doi.org/10.1090/surv/031/03},
}

\bib{Liu21}{article}{
   author={Liu, Bingxiao},
   title={Asymptotic equivariant real analytic torsions for compact locally
   symmetric spaces},
   journal={J. Funct. Anal.},
   volume={281},
   date={2021},
   number={7},
   pages={Paper No. 109117, 54},
   issn={0022-1236},
   doi={10.1016/j.jfa.2021.109117},
}

\bib{Liu23}{article}{
   author={Liu, Bingxiao},
   title={Hypoelliptic Laplacian and twisted trace formula},
   language={English, with English and French summaries},
   journal={Ann. Inst. Fourier (Grenoble)},
   volume={73},
   date={2023},
   number={5},
   pages={1909--1985},
   issn={0373-0956},
   doi={10.5802/aif.3566},
}

\bib{Liu24}{article}{
   author={Liu, Bingxiao},
   title={On full asymptotics of real analytic torsions for compact locally
   symmetric orbifolds},
   journal={Anal. PDE},
   volume={17},
   date={2024},
   number={4},
   pages={1261--1329},
   issn={2157-5045},
   doi={10.2140/apde.2024.17.1261},
}

\bib{Lott}{article}{
    AUTHOR = {Lott, John},
     TITLE = {Delocalized {$L^2$}-invariants},
   JOURNAL = {J. Funct. Anal.},
  FJOURNAL = {Journal of Functional Analysis},
    VOLUME = {169},
      YEAR = {1999},
    NUMBER = {1},
     PAGES = {1--31},
      ISSN = {0022-1236,1096-0783},
   MRCLASS = {58J52 (58J28 58J35 58J50)},
  MRNUMBER = {1726745},
MRREVIEWER = {Thomas\ Schick},
       DOI = {10.1006/jfan.1999.3451},
       URL = {https://doi.org/10.1006/jfan.1999.3451},
}

\bib{Lo-Me}{article}{
    author={Lohoue, No\"el},
    author={Mehdi, Salah},    title={The Novikov-Shubin invariants for locally symmetric spaces},
    language={English, with English and French summaries},
    journal={J. Math. Pures Appl. (9)},
    volume={79},
    date={2000},
    number={2},
    pages={111--140},
    issn={0021-7824},
    review={\MR{1749155}},
    doi={10.1016/S0021-7824(00)00148-3},
 }

\bib{Macdonald82}{article}{
    AUTHOR = {Macdonald, Ian G.},
     TITLE = {Some conjectures for root systems},
   JOURNAL = {SIAM J. Math. Anal.},
  FJOURNAL = {SIAM Journal on Mathematical Analysis},
    VOLUME = {13},
      YEAR = {1982},
    NUMBER = {6},
     PAGES = {988--1007},
      ISSN = {0036-1410},
   MRCLASS = {17B20 (22E60 33A15)},
  MRNUMBER = {674768},
MRREVIEWER = {S.\ Milne},
       DOI = {10.1137/0513070},
       URL = {https://doi.org/10.1137/0513070},
}

\bib{McKean72}{article}{
   author={McKean, Henry P.},
   title={Selberg's trace formula as applied to a compact Riemann surface},
   journal={Comm. Pure Appl. Math.},
   volume={25},
   date={1972},
   pages={225--246},
   issn={0010-3640},
   doi={10.1002/cpa.3160250302},
}

\bib{NarasimhanOkamoto70}{article}{
      author={Narasimhan, Mudumbai~S.},
      author={Okamoto, Kiyosato},
       title={An analogue of the {B}orel-{W}eil-{B}ott theorem for {H}ermitian
  symmetric pairs of non-compact type},
        date={1970},
        ISSN={0003-486X},
     journal={Ann. of Math. (2)},
      volume={91},
       pages={486\ndash 511},
         url={https://doi.org/10.2307/1970635},
}

\bib{Novikov-Shubin}{article}{
    AUTHOR = {Sergei P. Novikov and Mikhail A. Shubin},
     TITLE = {Morse inequalities and von {N}eumann {${\rm II}_1$}-factors},
   JOURNAL = {Dokl. Akad. Nauk SSSR},
  FJOURNAL = {Doklady Akademii Nauk SSSR},
    VOLUME = {289},
      YEAR = {1986},
    NUMBER = {2},
     PAGES = {289--292},
      ISSN = {0002-3264},
   MRCLASS = {58G12 (46L35 58E05)},
  MRNUMBER = {856461},
MRREVIEWER = {Krzysztof\ P.\ Wojciechowski},
}

\bib{Olbrich}{article}{
   author={Martin Olbrich},
   title={$L^2$-invariants of locally symmetric spaces},
   journal={Doc. Math.},
   volume={7},
   date={2002},
   pages={219--237},
   issn={1431-0635},
   review={\MR{1938121}},
}

\bib{Opdam89}{article}{
    AUTHOR = {Opdam, Eric M.},
     TITLE = {Some applications of hypergeometric shift operators},
   JOURNAL = {Invent. Math.},
  FJOURNAL = {Inventiones Mathematicae},
    VOLUME = {98},
      YEAR = {1989},
    NUMBER = {1},
     PAGES = {1--18},
      ISSN = {0020-9910,1432-1297},
   MRCLASS = {33C80 (17B20 32S40)},
  MRNUMBER = {1010152},
MRREVIEWER = {S.\ I.\ Gel\cprime fand},
       DOI = {10.1007/BF01388841},
       URL = {https://doi.org/10.1007/BF01388841},
}

\bib{Parthasarathy72}{article}{
      author={Parthasarathy, Rajagopalan},
       title={Dirac operator and the discrete series},
        date={1972},
        ISSN={0003-486X},
     journal={Ann. of Math. (2)},
      volume={96},
       pages={1\ndash 30},
         url={https://doi.org/10.2307/1970892},
}

\bib{Vogan98}{article}{
    AUTHOR = {Susana A. Salamanca-Riba and David. A. Vogan{,} Jr.},
     TITLE = {On the classification of unitary representations of reductive
              {L}ie groups},
   JOURNAL = {Ann. of Math. (2)},
  FJOURNAL = {Annals of Mathematics. Second Series},
    VOLUME = {148},
      YEAR = {1998},
    NUMBER = {3},
     PAGES = {1067--1133},
      ISSN = {0003-486X,1939-8980},
   MRCLASS = {22E46 (22E47)},
  MRNUMBER = {1670073},
MRREVIEWER = {Edward\ G.\ Dunne},
       DOI = {10.2307/121036},
       URL = {https://doi.org/10.2307/121036},
}

\bib{Selberg56}{article}{
    AUTHOR = {Selberg, Atle},
     TITLE = {Harmonic analysis and discontinuous groups in weakly symmetric
              {R}iemannian spaces with applications to {D}irichlet series},
   JOURNAL = {J. Indian Math. Soc. (N.S.)},
  FJOURNAL = {The Journal of the Indian Mathematical Society. New Series},
    VOLUME = {20},
      YEAR = {1956},
     PAGES = {47--87},
      ISSN = {0019-5839,2455-6475},
   MRCLASS = {10.1X},
  MRNUMBER = {88511},
MRREVIEWER = {F.\ V.\ Atkinson},
}

\bib{Shen18}{article}{
   author={Shen, Shu},
   title={Analytic torsion, dynamical zeta functions, and the Fried
   conjecture},
   journal={Anal. PDE},
   volume={11},
   date={2018},
   number={1},
   pages={1--74},
   issn={2157-5045},
   doi={10.2140/apde.2018.11.1},
}

\bib{Shen21}{article}{
   author={Shen, Shu},
   title={Analytic torsion, dynamical zeta function, and the Fried
   conjecture for admissible twists},
   journal={Comm. Math. Phys.},
   volume={387},
   date={2021},
   number={2},
   pages={1215--1255},
   issn={0010-3616},
   doi={10.1007/s00220-021-04113-y},
}

\bib{Shen23}{article}{
   author={Shen, Shu},
   title={Complex valued analytic torsion and dynamical zeta function on
   locally symmetric spaces},
   journal={Int. Math. Res. Not. IMRN},
   date={2023},
   number={5},
   pages={3676--3745},
   issn={1073-7928},
   doi={10.1093/imrn/rnab335},
}

\bib{ShenYu22}{article}{
   author={Shen, Shu},
   author={Yu, Jianqing},
   title={Flat vector bundles and analytic torsion on orbifolds},
   journal={Comm. Anal. Geom.},
   volume={30},
   date={2022},
   number={3},
   pages={575--656},
   issn={1019-8385},
   doi={10.4310/cag.2022.v30.n3.a3},
}

\bib{wassermann}{article}{
    AUTHOR = {Wassermann, Antony},
     TITLE = {Une d\'emonstration de la conjecture de {C}onnes-{K}asparov
              pour les groupes de {L}ie lin\'eaires connexes r\'eductifs},
   JOURNAL = {C. R. Acad. Sci. Paris S\'er. I Math.},
  FJOURNAL = {Comptes Rendus des S\'eances de l'Acad\'emie des Sciences.
              S\'erie I. Math\'ematique},
    VOLUME = {304},
      YEAR = {1987},
    NUMBER = {18},
     PAGES = {559--562},
      ISSN = {0249-6291},
   MRCLASS = {22D25 (19K99 22E45 46L80)},
  MRNUMBER = {894996},
MRREVIEWER = {R.\ J.\ Plymen},
}

\bib{Vogan79}{book}{
    AUTHOR = {David A. Vogan{,} Jr.},
     TITLE = {The algebraic structure of the representation of semisimple
              {L}ie groups. {I}},
   JOURNAL = {Ann. of Math. (2)},
  FJOURNAL = {Annals of Mathematics. Second Series},
    VOLUME = {109},
      YEAR = {1979},
    NUMBER = {1},
     PAGES = {1--60},
      ISSN = {0003-486X},
   MRCLASS = {22E46 (22E47)},
  MRNUMBER = {519352},
MRREVIEWER = {A.\ U.\ Klimyk},
       DOI = {10.2307/1971266},
       URL = {https://doi.org/10.2307/1971266},
}

\bib{Vogan-book}{book}{
    AUTHOR = {David A. Vogan{,} Jr.},
     TITLE = {Representations of real reductive {L}ie groups},
    SERIES = {Progress in Mathematics},
    VOLUME = {15},
 PUBLISHER = {Birkh\"auser, Boston, MA},
      YEAR = {1981},
     PAGES = {xvii+754},
      ISBN = {3-7643-3037-6},
   MRCLASS = {22E47 (22E46)},
  MRNUMBER = {632407},
MRREVIEWER = {Joe\ Repka},
}

\bib{VoganZuckerman}{article}{
      author={David A. Vogan{,} Jr.},
      author={Zuckerman, G.~J.},
       title={Unitary representations with nonzero cohomology},
        date={1984},
        ISSN={0010-437X},
     journal={Compositio Math.},
      volume={53},
      number={1},
       pages={51\ndash 90},
         url={http://www.numdam.org/item?id=CM_1984__53_1_51_0},
}

\end{biblist}
\end{bibdiv}
\end{document}